\tikzset{string diagram/.style={%
    scale=0.33
  , baseline={($(current bounding box.center) + (0, -.8ex)$)}
  , string/.style={
      draw
    , preaction={draw, line width=6mm, opacity=0}
    }
  , top label/.style={above, text depth=.25ex}
  , bottom label/.style={below, text height=1.5ex}
  }}
\newcommand{\dotnode}[1]{%
  node[circle, fill=black, minimum size=1mm, inner sep=0mm,%
  label={#1}]{}}
\newcommand{\citestacks}[1]
  {\cite[\href{https://stacks.math.columbia.edu/tag/#1}
              {\texttt{Tag #1}}]
        {stacks-project}}
\theoremstyle{definition}
\newtheorem{dummy}{Dummy Name}[subsection]
\newtheorem{definition}[dummy]{Definition}
\newtheorem{remark}[dummy]{Remark}
\newtheorem{example}[dummy]{Example}
\newtheorem{conjecture}[dummy]{Conjecture}
\theoremstyle{plain}
\newtheorem{theorem}[dummy]{Theorem}
\newtheorem{lemma}[dummy]{Lemma}
\newtheorem{proposition}[dummy]{Proposition}
\newtheorem{corollary}[dummy]{Corollary}
\newcommand{\defeq}{\vcentcolon=}
\newcommand{\turnstile}[1]{\;\vdash_{#1}\;}
\newcommand{\doubleturnstile}[1]{\;\dashv\vdash_{#1}\;}
\newcommand{\ex}[2]{\exists #1 {\,:\,} #2 . \;}
\newcommand{\exx}[1]{\exists #1. \;}
\newcommand{\oftype}{\,{:}\,}
\newcommand{\Zar}{\mathrm{Zar}}
\newcommand{\Zarfp}{{\mathrm{Zar}_\mathrm{fp}}}
\newcommand{\Cris}{\mathrm{Cris}}
\newcommand{\Crisfp}{{\mathrm{Cris}_\mathrm{fp}}}
\newcommand{\fp}{\mathrm{fp}}
\newcommand{\lofp}{\mathrm{lofp}}
\newcommand{\Sch}{\mathrm{Sch}}
\DeclareMathOperator{\Spec}{Spec}
\DeclareMathOperator{\Sh}{Sh}
\DeclareMathOperator{\PSh}{PSh}
\DeclareMathOperator{\Ob}{Ob}
\DeclareMathOperator{\Hom}{Hom}
\DeclareMathOperator*{\colim}{colim}
\newcommand{\hole}{-}
\DeclarePairedDelimiter{\angles}{\langle}{\rangle}
\newcommand{\Set}{\mathrm{Set}}
\newcommand{\id}{\mathrm{id}}
\newcommand{\Id}{\mathrm{Id}}
\DeclarePairedDelimiter{\interpretation}{\llbracket}{\rrbracket}
\DeclarePairedDelimiter{\abs}{\lvert}{\rvert}
\renewcommand{\mod}[2]{{#1\operatorname{-mod}(#2)}}
\newcommand{\dashmod}{\text{-}\allowbreak\mathrm{mod}}
\newcommand{\op}{\mathrm{op}}
\newcommand{\Geom}{\mathrm{Geom}}
\DeclareMathOperator{\scn}{scn}
\DeclareMathOperator{\Gl}{Gl}
\newcommand{\sortsset}[1]{{#1\operatorname{-Sort}}}
\newcommand{\relsset}[1]{{#1\operatorname{-Rel}}}
\newcommand{\funsset}[1]{{#1\operatorname{-Fun}}}
\newcommand{\E}{\mathcal{E}}
\newcommand{\F}{\mathcal{F}}
\newcommand{\TT}{\mathbb{T}}
\newcommand{\EE}{\mathbb{E}}
\newcommand{\NN}{\mathbb{N}}
\newcommand{\ZZ}{\mathbb{Z}}
\newcommand{\QQ}{\mathbb{Q}}
\newcommand{\CC}{\mathbb{C}}
\newcommand{\PP}{\mathbb{P}}
\newcommand{\Ring}{\mathrm{Ring}}
\newcommand{\loc}{\mathrm{loc}}
\newcommand{\nil}{\mathrm{nil}}
\DeclareMathOperator{\inv}{inv}
\newcommand{\AlgStr}[1]{{#1\text{-}\mathrm{AlgStr}}}
\newcommand{\Alg}[1]{{#1\text{-}\mathrm{Alg}}}
\newcommand{\AlgQuot}[2]
  {#1\text{-}\mathrm{Alg}\text{-}#2\text{-}\mathrm{Quot}}
\newcommand{\AlgAlg}[2]
  {#1\text{-}\mathrm{Alg}\text{-}#2\text{-}\mathrm{Alg}}
\newcommand{\surj}{\mathrm{surj}}
\newcommand{\PD}{\mathrm{PD}}
\newcommand{\Ideal}{\mathrm{Ideal}}
\newcommand{\PDIdeal}{\mathrm{PD}\text{-}\mathrm{Ideal}}
\newcommand{\downset}[1]{#1{\downarrow}}
\newcommand{\downdownset}[1]{#1{\downarrow}{\downarrow}}
\newcommand{\divides}{\mid}
\begin{document}

\begin{center}
  \thispagestyle{empty}

  \vspace*{25mm}
  {\huge\bfseries
  Syntactic presentations for glued toposes \\[2mm]
  and for crystalline toposes
  }
  \vspace{30mm}

  {\bfseries Dissertation}
  \vspace{3mm}

  zur Erlangung des akademischen Grades
  \vspace{3mm}

  Dr.~rer.~nat.
  \vspace{10mm}

  eingereicht an der
  \vspace{3mm}

  Mathematisch-Naturwissenschaftlich-Technischen Fakultät
  \vspace{3mm}

  der Universität Augsburg
  \vspace{10mm}

  von
  \vspace{3mm}

  {\bfseries Matthias Hutzler}
  \vspace{20mm}

  \includegraphics[width=0.4\textwidth]{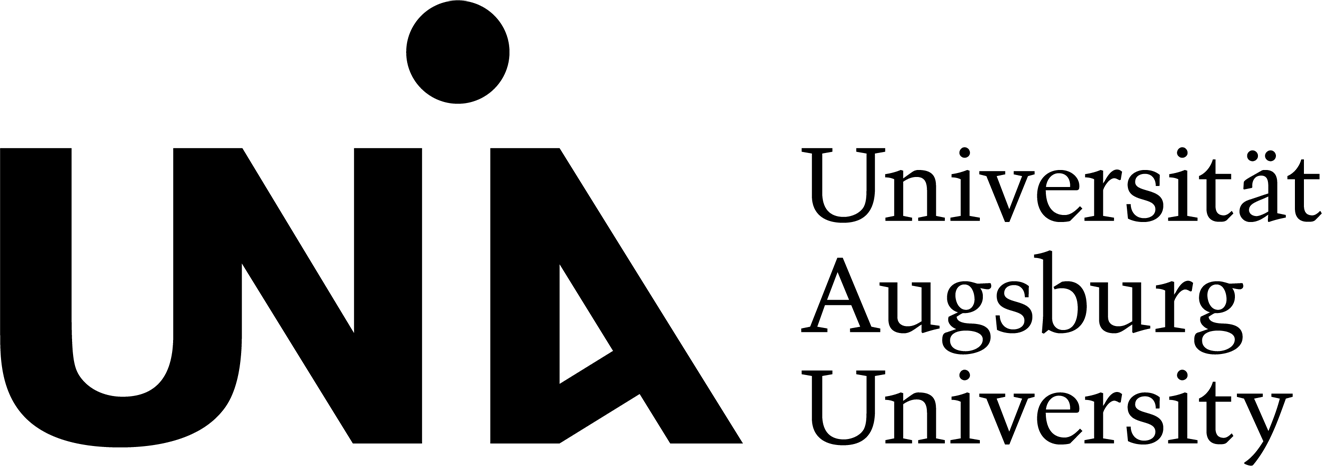}
  \vspace{10mm}

  Augsburg, Oktober 2021
\end{center}

\newpage
\vspace*{170mm}
\noindent
Gutachter: \\
Marc Nieper-Wißkirchen, Universität Augsburg \\
Thierry Coquand, University of Gothenburg
\vspace{5mm}

\noindent
Datum der mündlichen Prüfung: \\
03.12.2021

\newpage
\vspace*{60mm}
\begin{center}
  \textit{
  Im Gedenken an meinen Papa, \\
  der mich auf seine Schultern hob.
  }
\end{center}

\newpage

\section*{Abstract}

We regard a geometric theory classified by a topos
as a syntactic presentation for the topos
and develop tools for finding such presentations.
Extensions (or expansions) of geometric theories,
which can not only add axioms
but also symbols and sorts,
are treated as objects in their own right,
to be able to build up complex theories from parts.
The role of equivalence extensions,
which leave the theory the same up to Morita equivalence,
is investigated.

Motivated by the question
what the big Zariski topos of a non-affine scheme classifies,
we show how to construct a syntactic presentation
for a topos if syntactic presentations for
a covering family of open subtoposes are given.
For this,
we introduce a transformation
of theory extensions
such that
when the result,
dubbed a conditional extension,
is added to a theory,
it requires part of the data a model is made of
only under some condition
given in the form of a closed geometric formula.
We also give a general definition for
systems of interdependent theory extensions,
to be able to talk about compatible syntactic presentations
not only for the open subtoposes in a given cover
but also for their finite intersections.

An important concept
for finding classified theories of toposes
in concrete situations
is that of theories of presheaf type.
We develop several techniques for extending a theory
while preserving the presheaf type property,
and give a list of examples of simple extensions
which can destroy it.

Finally,
we determine a syntactic presentation of
the big crystalline topos of a scheme.
In the case of an affine scheme,
this is accomplished by showing that
the biggest part of the classified theory
is of presheaf type
and transforming the site defining the crystalline topos
into the canonical presheaf site for this theory,
while the remaining axioms induce the Zariski topology.
Then we can apply our results on gluing classifying toposes
to obtain a classified theory even in the non-affine case.

\newpage
\tableofcontents

\newpage

\section{Introduction}

This thesis elaborates in various ways
on the theme in topos theory
that a Grothendieck topos can be viewed as
\emph{the essence of a geometric theory}.
Formally,
one says that a Grothendieck topos $\E$
\emph{classifies} the geometric theory $\TT$
if the models of $\TT$ in any Grothendieck topos $\E'$
correspond to the geometric morphisms from $\E'$ to $\E$.
Since the topos $\E$ is then uniquely determined
by the theory $\TT$,
we can take the dual standpoint
that the theory $\TT$ is a \emph{presentation}
for the topos $\E$.
(All theories that we will meet
will be geometric theories,
and the term topos will always mean Grothendieck topos.)

One reason why we might seek such a presentation for a given topos
is that it can be much more concise
and, we would argue, even more intuitive
than a definition of the same topos by a site.
The prime example for this,
originating in \cite{hakim},
is the big Zariski topos $\E = (\Spec \ZZ)_\Zar$
of the affine scheme $\Spec \ZZ$.
A site of definition for $\E$
is given by the opposite category of
the category of all finitely presentable rings,
equipped with a certain Grothendieck topology
called the Zariski topology,
which involves localizations $A_a$
of a ring $A$ at an element $a \in A$
and the condition that some elements
generate the unit ideal $(1) = A$.
On the other hand,
one can also define $\E$ as
the classifying topos of the theory of local rings.
Here,
a full definition consists simply in
writing down the usual algebraic operations and axioms
defining a (commutative, unitary) ring
and the extra axiom that the ring be local,
stated in the elementary form that if a sum of elements is invertible,
then one of these elements is invertible too.
Of course, to get an actual Grothendieck topos out of this,
one needs the whole machinery of classifying toposes,
but the presentation itself is quite short and very approachable.
To a certain degree,
it is even possible to judge
manipulations of the syntactic presentation correctly,
based on nothing but intuition from elementary algebra.
For example,
the classifying topos stays the same if we add to the list of axioms
a redundant one like $(xy)z = (zy)x$,
but not if we add an axiom like $x = -x$.

Another reason is simply that
a classified geometric theory for a topos $\E$
is a description of the representable functor $\Hom(\hole, \E)$,
that is,
it is a definition for $\E$ by a universal property.
We would like to stress that
since Grothendieck toposes form a 2-category,
the representable functor $\Hom(\hole, \E)$
is in fact a pseudofunctor,
from the 2-category of Grothendieck toposes
to the 2-category of categories.
Such a pseudofunctor comprises
a huge amount of data,
and it is notoriously difficult to keep track of
the coherence conditions that this data must satisfy.
In contrast,
it is simple to check whether a geometric theory is well-defined,
and while it can in general contain arbitrarily big sets
(of relation and function symbols, say) as well,
the examples showing up in practice
are often more or less finitary.

It should be mentioned here that
syntactic presentations of toposes do always exist,
and there is a clear procedure
for constructing a classified geometric theory
out of a given site presentation of a topos.
But a presentation constructed in this way
will of course generally not tell us anything more about the topos
then the site itself does.
Whenever we speak of searching for syntactic presentations,
we intend to find a \emph{concise} presentation,
or one that is interesting for some other reason.

The first of our two main goals,
which will occupy us in Sections
\ref{section-extensions}
and
\ref{section-gluing},
will be to give
a construction on the level of geometric theories
for an operation which is very natural
when viewing toposes as generalized topological spaces,
namely the operation of gluing toposes
along open subtoposes.
More precisely,
our setup will be that $\E$ is a topos
covered by open subtoposes $\E_i = \E_{o(U_i)}$,
and syntactic presentations for the $\E_i$ are given.
Then we ask
how to construct a syntactic presentation for $\E$,
and what additional data might be needed for this.
The appropriate gluing data will consist, unsurprisingly,
of syntactic presentations of the intersections of the $\E_i$,
but not given independently of those for the $\E_i$,
but rather compatible with them, or, really, \emph{extending} them.
Here, the notion of extensions of geometric theories
will be crucial,
which will therefore be investigated first.
The formula we give for the theory classified by $\E$
(see Theorem \ref{theorem-main-gluing})
will then be quite elegant,
it simply adds up all the given theory extensions,
after transforming them into theory extensions
for \enquote{partial models} over the respective open subtoposes.
This gluing technique
is then applied to deduce a syntactic presentation
for the big Zariski topos of a non-affine scheme
(see Theorem \ref{theorem-gluing-Zar})
from the well-known result in the affine case.

Our second objective,
in Sections
\ref{section-presheaf-type}
and
\ref{section-crystalline},
is to give syntactic presentations
for another family of toposes from algebraic geometry,
namely the crystalline toposes of schemes.
These toposes were introduced around 1970
to study crystalline cohomology,
a tool for extracting geometric information from schemes,
similar to de Rham cohomology,
but specifically adapted to schemes
over ground fields of positive characteristic.
While more and more classified theories
for other toposes from algebraic geometry
were found over the years,
a syntactic presentation of the crystalline topos
was up to now missing.
The construction of a crystalline topos depends in fact
not on a single scheme,
as for the Zariski topos,
but on two schemes with some additional structure.
This is reflected in the more involved classified theory we give
(see Theorem \ref{theorem-affine-crystalline-classifies}),
but it is still very much related to the theory of local rings,
and the universal model living in the crystalline topos
consists precisely of its structure sheaf
and some additional data associated to it.
The case of affine base schemes is treated first,
and relies heavily on techniques
for recognizing theories of presheaf type
which we develop for this purpose.
It is then simply another application of the gluing theorem
to generalize to the non-affine case
(see Theorem \ref{theorem-gluing-Cris}),
although some extra care is needed
in constructing an open cover
and a system of syntactic presentations for it.

\section*{Acknowledgements}

I thank Marc Nieper-Wißkirchen
for guiding me towards the topics of this thesis
and for his trusting supervision.
I thank Ingo Blechschmidt,
who played a very special role in my mathemetical socialization,
for tons of encouragement
and for letting so much knowledge
diffuse from his mind to mine.
And I thank Theresa Ritter
for being infinitely patient with me
and for supporting me in every way possible!

\newpage

\section{Extensions of geometric theories}
\label{section-extensions}

\subsection{Background on geometric theories}

For a full definition of geometric theories,
we refer to \cite[Chapter D1.1]{elephant}.
But we want to mention that
a geometric theory $\TT$ can be thought of
as consisting of three \enquote{layers},
first come the \emph{sorts},
then the \emph{relation} and \emph{function symbols},
and finally the \emph{axioms}.
The first two layers are often called the \emph{signature}
of the theory.
What is allowed in each layer
depends on the data in the previous layers:
The set of sorts of $\TT$
is just a set without any additional structure.
The relation and function symbols
have their own signatures,
which in this case just means a list of sorts,
and which we denote
\[ R \subseteq A_1 \times \dots \times A_n
   \qquad\text{and}\qquad
   f : A_1 \times \dots \times A_n \to B
   \rlap{,} \]
where $A_1, \dots, A_n$ and $B$ are sorts of $\TT$.
And the axioms are sequents of the form
\[ \phi \turnstile{x_1 \oftype A_1, \dots, x_n \oftype A_n} \psi
   \rlap{,} \]
to be read as
\enquote{$\phi$ implies $\psi$
in the context $x_1 \oftype A_1, \dots, x_n \oftype A_n$},
where the geometric formulas $\phi$ and $\psi$
can use the relation and function symbols of $\TT$.
The context will sometimes be abbreviated
$\vec{x} \oftype \vec{A}$.

A relation symbol with the empty signature, $n = 0$,
which we might denote $R \subseteq 1$,
is called a \emph{proposition symbol},
and we will rather use the letter $p$ for it.
Similarly,
a function symbol with empty domain, $n = 0$,
is called a \emph{constant symbol},
and instead of $f : 1 \to B$,
we simply denote it as $f : B$,
or rather $c : B$.
A geometric theory is \emph{propositional}
if it has no sorts,
and therefore also no function symbols
and no relation symbols except proposition symbols.

A \emph{model} $M$ of a geometric theory $\TT$
in a topos $\E$
consists of objects
\[ \interpretation{A}_M \in \E \]
for all sorts $A$ of $\TT$,
subobjects
\[ \interpretation{R}_M \subseteq
   \interpretation{A_1}_M \times \dots
   \times \interpretation{A_n}_M \]
for all relation symbols $R$ of $\TT$
and morphisms
\[ \interpretation{f}_M : 
   \interpretation{A_1}_M \times \dots
   \times \interpretation{A_n}_M
   \to \interpretation{B}_M \]
for all function symbols $f$ of $\TT$,
such that the axioms of $\TT$ are fulfilled.
Given a model $M$,
we can not only \emph{interpret}
individual sorts and symbols in $M$,
but also any geometric formula $\phi$
of $\TT$
in a context $x_1 \oftype A_1, \dots, x_n \oftype A_n$,
yielding a subobject
\[ \interpretation{\phi}_M \subseteq
   \interpretation{A_1}_M \times \dots
   \times \interpretation{A_n}_M
   \rlap{.} \]
The requirement that an axiom
\[ \phi \turnstile{\vec{x} \oftype \vec{A}} \psi \]
is fulfilled in $M$
means that there is an inclusion of subobjects
\[ \interpretation{\phi}_M \leq \interpretation{\psi}_M \rlap{.} \]

There is also a notion of \emph{morphism}
between models of the same theory in the same topos,
and the resulting category of $\TT$-models in $\E$
will be denoted
\[ \TT\dashmod(\E) \rlap{.} \]
Furthermore,
the requirement that the axioms of $\TT$ are geometric sequents
ensures that pulling back
the individual parts of a model $M \in \TT\dashmod(\E)$
along a geometric morphism $f : \E' \to \E$
yields a model of $\TT$ in $\E'$.
For a fixed model $M \in \TT\dashmod(\E)$
and varying $f$,
this constitutes a functor
\[ \Geom(\E', \E) \to \TT\dashmod(\E') \rlap{,} \]
and the model $M$ is called a \emph{universal model} of $\TT$
if this functor is an equivalence of categories
for all (Grothendieck) toposes $\E'$.
The topos $\E$ is then called a \emph{classifying topos}
for the geometric theory $\TT$,
and we will call the pair $(\TT, M)$
a \emph{syntactic presentation} of the topos $\E$.

It is a theorem that every (Grothendieck) topos
classifies some geometric theory
and every geometric theory admits a classifying topos,
that is, a universal model in some topos.
The classifying topos of a theory
is also unique up to equivalence,
which justifies writing
\[ \Set[\TT] \]
for a classifying topos of a theory $\TT$.
But it is not at all true that
the classified theory of a topos is unique.
Instead,
two theories admitting universal models in the same topos
are called \emph{Morita equivalent} theories.

There is of course also a notion of provability
for geometric theories,
which we will not define here.
A geometric sequent which is provable in a geometric theory $\TT$
is fulfilled in any model of $\TT$ in any topos,
so it could just as well be added as an axiom of $\TT$.
Two theories $\TT_1$ and $\TT_2$
over the same signature
(same sorts and symbols)
are called \emph{syntactically equivalent}
if every axiom of $\TT_1$ is provable in $\TT_2$
and vice versa.
Note
that this is a much stronger condition on $\TT_1$ and $\TT_2$
than being Morita equivalent,
since Morita equivalent theories
can have different signatures.
We will simply write syntactic equivalence as equality,
\[ \TT_1 = \TT_2 \rlap{.} \]

The universal model of a theory $\TT$
is unique in the sense that
for any two universal models in toposes $\E_1$ and $\E_2$,
there is an equivalence $\E_1 \simeq \E_2$
sending one to the other.
With respect to provability,
it has the following strong property.
A geometric sequent of a theory $\TT$
is fulfilled in the universal model of $\TT$
if and only if
it is provable in $\TT$.

Finally,
we would like to make the point that
when manipulating geometric theories,
one has to think intuitionistically.
This is not the case when the matter is only about axioms;
it is a well-known theorem
that any geometric sequent
which is provable from the axioms of a geometric theory
using full classical first-order logic
is also provable from these axioms in geometric logic.
But if we are also interested in adding sorts and symbols,
the intuitionistic nature shows clearly.
For example,
the theory consisting of a single proposition symbol $p$
has two models (up to isomorphism) in $\Set$,
as the interpretation of $p$ can be either true or false.
The same is true for the theory with
two proposition symbols $p$ and $q$
and the axioms
\[ \top \turnstile{[]} p \lor q
   \quad\text{and}\quad
   p \land q \turnstile{[]} \bot
   \rlap{.}
   \]
But these are two completely different theories,
as a model of the first theory in a topos $\E$
is just an open subtopos of $\E$,
while a model of the second theory
is a decomposition of $\E$ into two subtoposes
which are both open and closed.
And even the categories of models in $\Set$
are not equivalent,
since proposition symbols are allowed to become true
but not to become false
under model homomorphisms.

\subsection{Theory extensions as presentations of geometric morphisms}

\begin{definition}
  A (geometric) \emph{extension} $\EE$ of a geometric theory $\TT$
  consists of a set $\sortsset{\EE}$ of sorts,
  sets $\relsset{\EE}$ and $\funsset{\EE}$
  of relation and function symbols
  over the sorts $\sortsset{\TT} \sqcup \sortsset{\EE}$
  and a set of geometric axioms
  over the sorts $\sortsset{\TT} \sqcup \sortsset{\EE}$
  and the symbols $\relsset{\TT} \sqcup \relsset{\EE}$
  and $\funsset{\TT} \sqcup \funsset{\EE}$.
  We denote $\TT + \EE$ the theory obtained by
  adding these sorts, symbols and axioms to~$\TT$.
  The extension $\EE$ is \emph{localic}
  if $\sortsset{\EE} = \varnothing$;
  it is a \emph{quotient extension}
  if additionally $\relsset{\EE} = \varnothing$
  and $\funsset{\EE} = \varnothing$.
\end{definition}

If $\EE$ is an extension of $\TT$,
we have a forgetful functor
\[ U_\EE : \mod{(\TT + \EE)}{\E} \to \mod{\TT}{\E} \]
for every Grothendieck topos $\E$.
Note that this functor is an isofibration.
Also,
after fixing classifying toposes $\Set[\TT]$ and $\Set[\TT + \EE]$,
the $\TT$-model part of the universal $(\TT + \EE)$-model
induces a canonical geometric morphism
\[ \pi_\EE : \Set[\TT + \EE] \to \Set[\TT] \rlap{,} \]
which in turn acts on generalized points
by the functors $U_\EE$
(up to natural isomorphism).
This is the geometric morphism
\emph{presented} by the extension $\EE$.

The following theorem
says that every geometric morphism
can be presented in this way,
thus generalizing the result that
every Grothendieck topos classifies a geometric theory
to the relative situation
over some base topos $\Set[\TT]$
with an already chosen syntactic presentation.

\begin{theorem}[{\cite[Theorem 7.1.5]{caramello:tst}}]
  \label{theorem-extension-from-geometric-morphism}
  Let a geometric morphism
  \[ p : \E \to \Set[\TT] \]
  to the classifying topos of a geometric theory $\TT$
  be given.
  Then $p$ is, up to isomorphism,
  of the form $\pi_\EE : \Set[\TT + \EE] \to \Set[\TT]$
  for some extension $\EE$ of $\TT$.
  If $p$ is localic (respectively an embedding),
  then we can take $\EE$ to be a localic extension
  (respectively a quotient extension).
\end{theorem}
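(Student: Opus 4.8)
The plan is to present $\E$ \emph{relative to the base} $\Set[\TT]$ by an internal site and then to externalise that presentation into sorts, symbols and axioms sitting on top of $\TT$.

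First I would reduce to an internal site. Since $\E$ and $\Set[\TT]$ are both Grothendieck toposes, the morphism $p$ is automatically bounded, so by the relative form of Giraud's theorem, $\E$ is, as a $\Set[\TT]$-topos, equivalent to $\Sh_{\Set[\TT]}(\mathbb{C})$ for an internal site $\mathbb{C}$ in $\Set[\TT]$, with $p$ corresponding to the structure morphism. Now the interpretations $\interpretation{\{\vec x \oftype \vec A \mid \phi\}}$ of geometric formulas in context, taken in the universal model of $\TT$, form a generating family of $\Set[\TT]$; so every object of $\Set[\TT]$ — in particular the object of objects and the object of morphisms of $\mathbb{C}$, their structure maps, and the covering sieves of $\mathbb{C}$ — is expressible from such $\TT$-definable objects by geometric operations (finite limits, set-indexed coproducts, images, quotients by definable equivalence relations). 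After massaging $\mathbb{C}$ into such a definably presented shape, it can be referred to entirely inside geometric formulas over $\TT$.

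Next comes the externalisation. Let $\EE$ consist of new sorts realising the components of $\mathbb{C}$ and of an internal diagram $P$ on $\mathbb{C}$; function and relation symbols for all the structure maps, including the action of $\mathbb{C}_1$ on $P$ and the comparison maps to the relevant $\TT$-definable objects; and geometric axioms expressing that $\mathbb{C}$ is an internal category carrying a Grothendieck topology, that the new sorts are indeed the intended objects, and that $P$ is a flat, cover-preserving internal diagram on $\mathbb{C}$ — the category and action laws are equational, while filtering and cover-preservation have the geometric ``for all $\dots$ there exist $\dots$'' shape. Unwinding the relative form of Diaconescu's theorem, a model of $\TT + \EE$ in a topos $\F$ is then exactly a $\TT$-model $N$ in $\F$ together with a flat cover-preserving internal diagram on the pullback of $\mathbb{C}$ along the morphism classifying $N$, equivalently a geometric morphism $\F \to \Sh_{\Set[\TT]}(\mathbb{C}) = \E$ over $\Set[\TT]$ lifting $N$; and this correspondence is an equivalence, natural in $\F$ and compatible with the reduct functors. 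Hence $\E$ is a classifying topos of $\TT + \EE$, its universal model is the one corresponding to $\id_\E$, and under this identification $U_\EE$ becomes composition with $p$, so that $\pi_\EE \cong p$.

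For the two refinements I would specialise the choice of $\mathbb{C}$. If $p$ is localic then $\E$ is generated over $\Set[\TT]$ by subterminal objects, so $\mathbb{C}$ can be taken posetal with object of objects a coproduct of definable subterminals $\interpretation{\{[] \mid \phi\}}$ with $\phi$ a closed geometric formula; the externalisation then requires no new sorts, only proposition symbols and axioms, so $\EE$ is localic. If $p$ is an embedding then $\E$ is a subtopos of $\Set[\TT]$, which under the correspondence between subtoposes of $\Set[\TT]$ and quotient theories of $\TT$ amounts to adjoining only geometric axioms to $\TT$, so $\EE$ can be taken to be a quotient extension. The step I expect to be the main obstacle is the externalisation itself: one has to check carefully that an arbitrary internal site in $\Set[\TT]$ really can be replaced, up to the Morita equivalence relevant here, by one assembled from $\TT$-definable data; that every axiom written down is genuinely \emph{geometric} (in particular that ``being a covering sieve'' and ``being a flat cover-preserving diagram'' are expressible by geometric, not merely first-order, sequents); and that $\mod{(\TT + \EE)}{\F}$ is \emph{equivalent} — with the correct $2$-categorical naturality and the correct reduct functor — to the category of lifts of $N$ through $p$, not merely in bijection on objects. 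This bookkeeping is the real content of the cited theorem.
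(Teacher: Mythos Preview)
The paper does not give its own proof of this theorem: it simply cites \cite[Theorem~7.1.5]{caramello:tst} for the general and localic statements, and invokes the subtopos/quotient-theory duality \cite[Theorem~3.2.5]{caramello:tst} for the embedding case. Your embedding case is handled in exactly the same way.

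Your overall strategy for the general case---present $\E$ by an internal site in $\Set[\TT]$ via the relative Giraud theorem and then externalise the theory of flat, cover-preserving internal diagrams on it into new sorts, symbols and geometric axioms over $\TT$---is precisely the engine behind Caramello's proof, and your candid assessment of where the real bookkeeping lies (geometric expressibility of the site data, of flatness and cover-preservation, and the $2$-categorical naturality of the resulting equivalence) is accurate.

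There is, however, a genuine slip in your treatment of the localic case. Being ``generated by subterminal objects'' is the characterisation of $\E$ being localic \emph{over $\Set$} (i.e.\ of the global-sections morphism being localic), not of $p$ being localic. The correct characterisation is that every object of $\E$ is a subquotient of some $p^*X$; equivalently, the internal site $\mathbb{C}$ may be taken to be a preorder, but its object of objects is an arbitrary object of $\Set[\TT]$, certainly not a coproduct of definable subterminals $\interpretation{\{[] \mid \phi\}}$. Consequently your externalisation as written would still need a new sort for $C_0$. To actually get a localic $\EE$, one instead chooses a generating family for $\E$ consisting of subobjects $U \hookrightarrow p^*\interpretation{\vec{x}\,.\,\phi}$ (which exists precisely because $p$ is localic and the $\interpretation{\vec{x}\,.\,\phi}$ generate $\Set[\TT]$), introduces for each such $U$ a relation symbol $R_U \subseteq \vec{A}$, and writes down axioms encoding the inclusions and covers among these generators. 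That yields new relation symbols over the existing sorts of $\TT$ but no new sorts, as required.
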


\begin{proof}
  See \cite[Theorem 7.1.5]{caramello:tst}.
  The case of an embedding,
  which is not mentioned there,
  is instead part of the duality
  between subtoposes and quotient theories
  \cite[Theorem 3.2.5]{caramello:tst}.
\end{proof}

Given two extensions $\EE_1$ and $\EE_2$ of a theory $\TT$,
it is clear that we can also regard $\EE_2$
as an extension of $\TT + \EE_1$ and vice versa.
Then we have
a strict pullback diagram of categories
\[ \begin{tikzcd}
  (\TT + \EE_1 + \EE_2)\dashmod(\E) \ar[r] \ar[d]
  \ar[dr, phantom, very near start, "\lrcorner"] &
  (\TT + \EE_2)\dashmod(\E) \ar[d] \\
  (\TT + \EE_1)\dashmod(\E) \ar[r] &
  \TT\dashmod(\E)
\end{tikzcd} \]
for any topos $\E$,
which is also a weak pullback,
since the the forgetful functors are isofibrations.
This means that $\Set[\TT + \EE_1 + \EE_2]$
is the pullback topos
of $\Set[\TT + \EE_1]$ and $\Set[\TT + \EE_2]$ over $\Set[\TT]$.
In particular,
the product
(as generalized spaces, not as categories)
of two classifying toposes $\Set[\TT_1]$ and $\Set[\TT_2]$
classifies the theory $\TT_1 + \TT_2$.

In the same way in which
we prefer to have a symbol for an extension $\EE$
instead of only for the extended theory $\TT' = \TT + \EE$,
we will also want to regard the data
that is missing in a $\TT$-model
compared to a $(\TT + \EE)$-model
as an object in its own right.

\begin{definition}
  A \emph{model extension} $E$ of a model $M \in \mod{\TT}{\E}$
  along a theory extension $\EE$ of $\TT$
  consists of interpretations for the sorts and symbols of $\EE$
  that extend $M$ to a model $M + E \in \mod{(\TT + \EE)}{\E}$.
  A \emph{homomorphism} of model extensions of $M$
  is a family of maps,
  one for each sort of $\EE$,
  that constitutes a $(\TT + \EE)$-model homomorphism
  when combined with the identity homomorphism of $M$.
  That is,
  the category of model extensions of $M$ along $\EE$
  is isomorphic to the strict preimage of $M$ under $U_\EE$;
  it will be denoted $\mod{\EE}{\E, M}$
  or simply $\mod{\EE}{M}$.
\end{definition}

\begin{remark}
  Note that our terminology here
  is somewhat in conflict with
  the usage of for example \enquote{elementary extension}
  in set theory,
  which means a bigger model of the same theory.
\end{remark}

Using the notion of model extensions,
Theorem \ref{theorem-extension-from-geometric-morphism}
can be formulated as follows.
Given any model $M \in \TT\dashmod(\E)$
of a geometric theory in some topos,
there is always an extension $\EE$ of $\TT$
and a model extension $E$ of $M$ along $\EE$
such that the model $(M + E) \in (\TT + \EE)\dashmod(\E)$
is universal.

\subsection{Equivalence extensions}

\begin{definition}
  An \emph{equivalence extension}
  is an extension $\EE$ of $\TT$
  such that the forgetful functor
  $U_\EE : \mod{(\TT + \EE)}{\E} \to \mod{\TT}{\E}$
  is an equivalence of categories
  for every Grothendieck topos $\E$.
  Equivalently,
  the geometric morphism $\pi_\EE : \Set[\TT + \EE] \to \Set[\TT]$
  presented by $\EE$ is an equivalence.
\end{definition}

If $\EE$ is an equivalence extension of a theory $\TT$,
then it is also an equivalence extension of $\TT + \EE'$,
for any other extension $\EE'$ of $\TT$.
This is clear from the (weak) pullback property
of the category $(\TT + \EE_1 + \EE_2)\dashmod(\E)$.

\begin{lemma}
  \label{lemma-equivalence-extension-if-unique-model-extensions}
  An extension $\EE$ of a theory $\TT$
  is an equivalence extension
  if and only if
  every model $M \in \mod{\TT}{\E}$
  in every Grothendieck topos $\E$
  admits exactly one model extension along $\EE$
  up to isomorphism.
\end{lemma}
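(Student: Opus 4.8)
The plan is to unwind both sides of the claimed equivalence into statements about the forgetful functor $U_\EE : \mod{(\TT + \EE)}{\E} \to \mod{\TT}{\E}$ and compare them fibrewise. Recall that, by the definition of model extensions, the category $\mod{\EE}{\E, M}$ is precisely the strict fibre of $U_\EE$ over $M$. So the condition on the right-hand side says exactly that every strict fibre of $U_\EE$ is a category with a single isomorphism class of objects, for every $\E$ and every $M \in \mod{\TT}{\E}$.

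First I would prove the easy direction ($\Rightarrow$): if $\EE$ is an equivalence extension, then each $U_\EE$ is an equivalence of categories, hence in particular essentially surjective and full and faithful. Essential surjectivity gives, for each $M$, at least one model extension; to see that there is only one up to isomorphism, I would use that $U_\EE$ is an isofibration (remarked right after the definition of extensions) together with fullness and faithfulness. Concretely, given two model extensions $E, E'$ of the same $M$, the identity $\id_M$ lifts — because $U_\EE$ is full — to a morphism $M + E \to M + E'$ in $\mod{(\TT + \EE)}{\E}$, and this morphism is an isomorphism because $U_\EE$ is faithful (so it reflects isomorphisms; its image $\id_M$ is invertible). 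The lifted morphism restricts to the identity on $M$, so it is by definition a homomorphism of model extensions, giving $E \cong E'$ in $\mod{\EE}{\E, M}$.

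For the converse ($\Leftarrow$), I would fix a topos $\E$ and show $U_\EE$ is an equivalence by checking it is full, faithful, and essentially surjective. Essential surjectivity is immediate from the "at least one model extension" part of the hypothesis. For faithfulness and fullness I want to compare, for a fixed morphism $h : M \to N$ in $\mod{\TT}{\E}$, the set of lifts of $h$ along $U_\EE$ with a fibre over a single object. The standard trick is to realize the set of such lifts as (isomorphic to) the set of model-extension homomorphisms into a suitable pulled-back model extension: given a model extension $E_N$ of $N$ and the morphism $h$, one can form the model extension $h^*E_N$ of $M$ obtained by reindexing along $h$ — this is functorial and uses only that $U_\EE$ is an isofibration, or more directly that extensions are preserved under the relevant operations — and then lifts of $h$ with codomain $M + E_N$ correspond bijectively to model-extension homomorphisms $E_M \to h^*E_N$ in $\mod{\EE}{\E, M}$, for any model extension $E_M$ of $M$. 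Since $\mod{\EE}{\E, M}$ has, by hypothesis, a unique object up to unique-up-to-what? — here one needs that the automorphism group of that object is trivial, which does \emph{not} follow merely from "unique up to isomorphism".

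The main obstacle is exactly this last point: "exactly one model extension up to isomorphism" a priori allows the unique model extension to have nontrivial automorphisms, and then $U_\EE$ would fail to be faithful. I expect this is handled by a change-of-topos argument: to rule out automorphisms of a model extension $E$ of $M \in \mod{\TT}{\E}$, one applies the hypothesis not over $\E$ but over a larger topos, for instance an appropriate presheaf topos $[\mathcal{C}, \Set] \to \E$ or the topos $\E/X$ of objects over some well-chosen $X$, into which one can pull back $M$ together with a putative nontrivial automorphism and arrange a contradiction with uniqueness. Alternatively — and this is probably the cleaner route the author takes — one exploits that a model extension of $M$ together with an endomorphism is itself (a model extension along) a larger theory extension, or one uses the interval/arrow object: a nonidentity automorphism of a model extension over $\E$ would yield \emph{two} non-isomorphic model extensions of the pullback of $M$ along one of the two projections $\E \times S_2 \to \E$ (where $S_2$ is the Sierpiński-type object distinguishing two global points), contradicting the hypothesis applied over $\E \times S_2$ or a similar topos built from $\E$. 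Once faithfulness is secured this way, fullness follows by the same lifting-and-reflecting-isomorphisms argument as in the forward direction, and essential surjectivity is already in hand, so $U_\EE$ is an equivalence for every $\E$, i.e. $\EE$ is an equivalence extension.
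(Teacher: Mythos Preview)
You have correctly isolated the real difficulty: ``exactly one model extension up to isomorphism'' says only that each $U_\EE$ is bijective on isomorphism classes, and this does \emph{not} rule out nontrivial automorphisms in the fibres, so the direct attempt to verify fullness and faithfulness of a single $U_\EE$ stalls exactly where you say it does. However, your proposed repairs (pulling back along $\E \times S_2 \to \E$, or reinterpreting an automorphism as a model in an arrow-type topos) are too vague to constitute a proof, and the Remark immediately following the Lemma in the paper warns against precisely this kind of argument: it exhibits a situation (the theory of $G$-torsors over the empty theory) where $U_\EE$ is bijective on isomorphism classes both for $\E = \Set$ and for $\E = \Set^{\to}$, yet is not an equivalence for $\E = \Set$. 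So a fix using only one or two auxiliary toposes built from $\E$ cannot work in general.

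The paper's argument bypasses the fibrewise analysis entirely. Rather than trying to upgrade ``bijective on iso classes'' to ``equivalence'' for each $\E$ separately, it passes to the classifying toposes and uses Yoneda at the level of the $1$-category of toposes. The hypothesis says that for every $\E$ the functor
\[
  \Geom(\E, \Set[\TT + \EE]) \longrightarrow \Geom(\E, \Set[\TT])
\]
induced by $\pi_\EE$ is a bijection on isomorphism classes. This means $\pi_\EE$ is an isomorphism in the $1$-category whose objects are Grothendieck toposes and whose morphisms are isomorphism classes of geometric morphisms. But an isomorphism in that homotopy $1$-category is nothing other than an equivalence of toposes (if $g \circ \pi_\EE \cong \id$ and $\pi_\EE \circ g \cong \id$ as geometric morphisms, then $\pi_\EE^*$ is an equivalence of categories). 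Hence $\pi_\EE$ is an equivalence, which is one of the definitions of $\EE$ being an equivalence extension.

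The moral is that the quantification over \emph{all} Grothendieck toposes $\E$ is what does the work; it is not used as a source of particular test toposes to probe a fixed fibre, but globally via representability.
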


\begin{proof}
  The given condition means
  that $U_\EE : \mod{(\TT + \EE)}{\E} \to \mod{\TT}{\E}$
  is bijective on isomorphism classes
  for every topos $\E$,
  which at first sight seems weaker than being an equivalence.
  But it means that the functors
  $\Geom(\E, \Set[\TT + \EE]) \to \Geom(\E, \Set[\TT])$
  induced by the geometric morphism
  $\pi_\EE : \Set[\TT + \EE] \to \Set[\TT]$
  are bijective on isomorphism classes,
  in other words,
  $\pi_\EE$ is an isomorphism in the 1-category
  of toposes and geometric morphisms up to isomorphism,
  which is the same as an equivalence of toposes.
\end{proof}

\begin{remark}
  Lemma \ref{lemma-equivalence-extension-if-unique-model-extensions}
  is a version of the slogan that
  if we know the models of a geometric theory,
  we also know the morphisms between them,
  see \cite[below Lemma B4.2.3]{elephant}.
  However, here it does not suffice to simply say that
  a morphism of $\TT$-models in $\E$
  is the same as a $\TT$-model in the topos $\E^{\to}$
  (the arrow category of $\E$),
  as $U_\EE$ can be bijective on isomorphism classes
  for both $\E$ and $\E^{\to}$
  without being an equivalence for $\E$.
  For example,
  let $\TT = \varnothing$,
  let $\EE$ be the theory of $G$-torsors for a group $G$
  and let $\E = \Set$.
  Denote $\underline{G}$
  the one object groupoid associated to $G$.
  Then both of the functors
  \begin{gather*}
    \underline{G} \simeq \mod{(\TT + \EE)}{\Set} \to
    \mod{\TT}{\Set} \simeq \{*\} \rlap{,}
    \\
    \underline{G} \simeq \underline{G}^{\to}
    \simeq (\TT + \EE)\dashmod(\Set^{\to})
    \to
    \TT\dashmod(\Set^{\to}) \simeq
    \{*\}^{\to} \simeq \{*\} \rlap{,}
  \end{gather*}
  are bijective on isomorphism classes,
  but the first is not an equivalence.
\end{remark}

\begin{lemma}
  \label{lemma-equivalence-extension-of-universal-model}
  Let $M \in \mod{\TT}{\E}$ be a universal model,
  $\EE$ an extension of $\TT$
  and $E$ a model extension of $M$ along $\EE$.
  Then $\EE$ is an equivalence extension
  if and only if
  $M + E \in \mod{(\TT + \EE)}{\E}$ is again universal.
\end{lemma}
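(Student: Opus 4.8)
The plan is to reduce the statement to the characterization of classifying toposes in terms of universal models, via the geometric morphism $\pi_\EE \colon \Set[\TT+\EE] \to \Set[\TT]$. Since $M$ is universal, we may take $\Set[\TT] = \E$ with $M$ the universal model; likewise, since $M+E$ extends $M$ along $\EE$, the pair $(\TT+\EE, M+E)$ is a candidate syntactic presentation and there is a canonical geometric morphism $q \colon \Set[\TT+\EE] \to \E$ that, on generalized points, is the forgetful functor $U_\EE$ up to natural isomorphism (and sends the universal $(\TT+\EE)$-model to $E$ over $M$). The key observation is that, by definition, $\EE$ is an equivalence extension precisely when $U_\EE \colon \mod{(\TT+\EE)}{\E'} \to \mod{\TT}{\E'}$ is an equivalence for every topos $\E'$; equivalently, precomposition with $q$ gives an equivalence $\Geom(\E', \Set[\TT+\EE]) \to \Geom(\E', \E)$ for every $\E'$, which is exactly the statement that $q$ is an equivalence of toposes.

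So the argument runs in two directions. First I would assume $M+E$ is universal. Then both $\E$ and $\Set[\TT+\EE]$ are classifying toposes for their respective theories, and the canonical morphism $q$ corresponds under the universal property to the $\TT$-model $M$ sitting inside $\Set[\TT+\EE]$ (obtained by forgetting $E$ from the universal $(\TT+\EE)$-model). For an arbitrary topos $\E'$, naturality of the universal-model correspondence identifies the functor $\Geom(\E',\Set[\TT+\EE]) \to \Geom(\E',\E) \simeq \Geom(\E',\Set[\TT])$ induced by $q$ with $U_\EE \colon \mod{(\TT+\EE)}{\E'} \to \mod{\TT}{\E'}$, up to natural isomorphism. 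Since universality of $M+E$ makes the left side $\simeq \mod{(\TT+\EE)}{\E'}$ and universality of $M$ makes the right side $\simeq \mod{\TT}{\E'}$, the functor $U_\EE$ is an equivalence for every $\E'$, i.e.\ $\EE$ is an equivalence extension. Conversely, if $\EE$ is an equivalence extension, then $\pi_\EE$ (equivalently $q$) is an equivalence, so $\Set[\TT+\EE] \simeq \E$ via an equivalence carrying the universal $(\TT+\EE)$-model to one whose underlying $(\TT+\EE)$-model data is $M+E$; hence $M+E$ is universal.

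The only point requiring care — and the place I would spend the most words — is the naturality claim: that the square relating the action of $q$ on points to $U_\EE$ on model extensions commutes up to coherent natural isomorphism, for varying $\E'$ and compatibly with composition of geometric morphisms. This is essentially the content already recorded in the excerpt, namely that $\pi_\EE$ "acts on generalized points by the functors $U_\EE$ (up to natural isomorphism)," together with the fact that pulling back the universal model along $q$ recovers $M+E$ by construction; so it is bookkeeping rather than a genuine obstacle. Everything else is a direct application of Lemma \ref{lemma-equivalence-extension-if-unique-model-extensions} or of the defining property of universal models, namely that a model is universal iff the point-wise pullback functors $\Geom(\E',-) \to (\hole)\dashmod(\E')$ are equivalences for all $\E'$.
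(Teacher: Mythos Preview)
Your argument is essentially correct but considerably more circuitous than necessary, and the exposition is muddled in one place. In your forward direction you write that ``universality of $M+E$ makes the left side $\simeq \mod{(\TT+\EE)}{\E'}$'', but the left side is $\Geom(\E',\Set[\TT+\EE])$, which is already equivalent to $\mod{(\TT+\EE)}{\E'}$ by definition of classifying topos, independently of $M+E$. What you actually need there is that, when $M+E$ is universal, you may take $\Set[\TT+\EE]=\E$ with universal model $M+E$, and then $q=\pi_\EE:\E\to\E$ corresponds to the $\TT$-model $U_\EE(M+E)=M$, hence is the identity up to equivalence. That fixes the gap, but also shows the detour through an auxiliary $\Set[\TT+\EE]$ is unnecessary.

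The paper's proof is a one-liner: for every topos $\E'$ there is a triangle
\[
\begin{tikzcd}[row sep=small, column sep=large]
  & \mod{(\TT + \EE)}{\E'} \ar[dd, "U_\EE"] \\
  \Geom(\E', \E) \ar[ru, "\hole^*(M+E)"] \ar[rd, "\hole^*M"'] & \\
  & \mod{\TT}{\E'}
\end{tikzcd}
\]
commuting up to natural isomorphism (since $U_\EE(f^*(M+E)) = f^*M$). The lower edge is an equivalence because $M$ is universal; by two-out-of-three, the upper edge is an equivalence (i.e.\ $M+E$ is universal) if and only if $U_\EE$ is one (i.e.\ $\EE$ is an equivalence extension). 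No separate classifying topos for $\TT+\EE$, no $\pi_\EE$, no naturality bookkeeping---everything lives over the single topos $\E$. Your appeal to Lemma~\ref{lemma-equivalence-extension-if-unique-model-extensions} at the end is also superfluous.
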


\begin{proof}
  This is immediate from the two-out-of-three property
  of equivalences of categories
  in the diagram
  \[ \begin{tikzcd}[row sep=small, column sep=large,
      baseline=(current bounding box.south)]
    & \mod{(\TT + \EE)}{\E'} \ar[dd, "U_\EE"] \\
    \mathrm{Geom}(\E', \E)
    \ar[ru, "\hole^*(M + E)"] \ar[rd, "\hole^*M"'] & \\
    & \mod{\TT}{\E'}
    \rlap{.}
  \end{tikzcd} \qedhere \]
\end{proof}

Apart from syntactic equivalences,
one way to produce an equivalence extension of a theory $\TT$
is to add new symbols
for relations or functions that were already definable in $\TT$.
We now want to show that
all localic equivalence extensions are of this form
up to syntactic equivalence,
meaning that we have a simple syntactic characterization
of equivalence extensions among the localic extensions.

\begin{definition}
  \label{definition-extension-by-definitions}
  Let $\TT$ be a geometric theory,
  let $R_i \subseteq \vec{A}^i$
  and $f_j : \vec{B}^j \to C^j$
  be families of new relation and function symbols
  (with signatures consisting of sorts of $\TT$)
  and let $\{ \vec{x} \oftype \vec{A}^i \,.\; \phi_i(\vec{x}) \}$
  and $\{ \vec{y} \oftype \vec{B}^j, z \oftype C^j \,.\;
  \psi_j(\vec{y}, z) \}$
  be formulas of $\TT$ in corresponding contexts,
  where every $\psi_j$ is provably functional.
  Then the \emph{extension by definitions} for this data
  is the localic extension of $\TT$
  consisting of the symbols $R_i$, $f_j$
  and the axioms
  \[ R_i(\vec{x}) \doubleturnstile{\vec{x} \oftype \vec{A}^i}
  \phi_i(\vec{x})
  \qquad \text{and} \qquad
  f_j(\vec{y}) = z \doubleturnstile{\vec{y} \oftype \vec{B}^j, z \oftype C^j}
  \psi_j(\vec{y}, z) . \]
\end{definition}

\begin{remark}
  \label{remark-recognizing-extensions-by-definitions}
  To show that a given localic extension $\EE$
  of a theory $\TT$ is (syntactically equivalent to)
  an extension by definitions,
  one has to find formulas $\phi_i$, $\psi_j$ of $\TT$,
  the latter provably functional,
  for the relation and function symbols of $\EE$,
  such that the axioms in
  Definition \ref{definition-extension-by-definitions}
  are provable in $\TT + \EE$,
  and furthermore,
  one has to check that all axioms of $\EE$
  are already provable in $\TT$
  if the symbols $R_i$, $f_j$
  are replaced by the formulas $\phi_i$, $\psi_j$.
  However,
  this last check can be omitted
  if a model extension $E$ along $\EE$
  of a universal $\TT$-model $M$ is available,
  since then any $\TT$-sequent which is true in $M + E$
  is trivially also true in $M$
  and therefore provable in $\TT$.
  For the same reason,
  it is then automatic that
  the $\psi_j$ are provably functional.
  This will be relevant in applications of
  Corollary \ref{corollary-localic-gluing}.
\end{remark}

\begin{proposition}
  \label{proposition-extensions-by-definitions}
  A localic extension
  is an equivalence extension
  if and only if it is
  (syntactically equivalent to)
  an extension by definitions.
\end{proposition}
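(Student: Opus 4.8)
One direction is essentially immediate: if $\EE$ is an extension by definitions, then for any $\TT$-model $M$ in any topos $\E$ the formulas $\phi_i$, $\psi_j$ interpret to a subobject $\interpretation{\phi_i}_M$ and (using that $\psi_j$ is provably functional, hence functional in the universal model, hence functional in every model) to a morphism $\interpretation{\psi_j}_M$. These provide a model extension of $M$ along $\EE$, and the biconditional axioms of Definition~\ref{definition-extension-by-definitions} force any model extension to agree with this one on the nose (equality of subobjects, equality of morphisms), so the model extension is not just unique up to isomorphism but literally unique. By Lemma~\ref{lemma-equivalence-extension-if-unique-model-extensions}, $\EE$ is an equivalence extension. (Here one should also note that a syntactic equivalence does not change the category of models in any topos, so it suffices to treat $\EE$ itself rather than something syntactically equivalent to it.)

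For the converse, suppose $\EE$ is a localic equivalence extension of $\TT$. The plan is to work with a universal model $M$ of $\TT$ in $\Set[\TT]$, which by Lemma~\ref{lemma-equivalence-extension-of-universal-model} extends uniquely to a model extension $E$ such that $M+E$ is a universal model of $\TT+\EE$. For each relation symbol $R_i \subseteq \vec{A}^i$ of $\EE$ and each function symbol $f_j : \vec{B}^j \to C^j$ of $\EE$, the interpretations $\interpretation{R_i}_{M+E}$ and $\interpretation{f_j}_{M+E}$ are a subobject of $\interpretation{\vec{A}^i}_M$ and a morphism between powers of $\TT$-sorts in $\Set[\TT]$; since $M$ is a universal $\TT$-model, every subobject of (a finite product of) the $\interpretation{A}_M$ is definable, i.e.\ of the form $\interpretation{\phi_i}_M$ for some geometric formula $\phi_i$ over $\TT$, and the graph of $f_j$ is likewise $\interpretation{\psi_j}_M$ for some $\psi_j$ over $\TT$, with $\psi_j$ provably functional because it is true-in-$M$-functional and $M$ is universal. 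This is precisely the setting of Remark~\ref{remark-recognizing-extensions-by-definitions}: the biconditional axioms of the extension by definitions hold in $M+E$ (they are equalities of subobjects / morphisms that hold by construction), hence are provable in $\TT+\EE$; and conversely every axiom of $\EE$ becomes, after substituting $\phi_i$ for $R_i$ and $\psi_j$ for $f_j$, a $\TT$-sequent true in the universal model $M$, hence provable in $\TT$. Therefore $\EE$ and the extension by definitions built from the $\phi_i$, $\psi_j$ have the same signature and mutually provable axioms, i.e.\ they are syntactically equivalent.

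The one genuine subtlety — and the step I would be most careful about — is the definability of the interpretations $\interpretation{R_i}_{M+E}$ and $\interpretation{f_j}_{M+E}$ over $\TT$ \emph{with parameters only from $\TT$-sorts}. Since $\EE$ is localic, $\sortsset{\EE} = \varnothing$, so the signatures of $R_i$ and $f_j$ involve only sorts of $\TT$; hence $\interpretation{R_i}_{M+E}$ genuinely is a subobject of a finite product of the objects $\interpretation{A}_M$, which live in $\Set[\TT]$. The relevant fact is that for a universal model $M$ of $\TT$ the induced functor from the geometric syntactic category of $\TT$ to $\Set[\TT]$ is full and faithful (indeed an equivalence onto a generating subcategory closed under subobjects), so every such subobject is of the form $\interpretation{\phi}_M$ and every such morphism has a definable graph; this is exactly the standard characterization of universal models via the syntactic site, and I would invoke it rather than reprove it. Once definability over $\TT$ is in hand, the rest is the bookkeeping with provability recorded in Remark~\ref{remark-recognizing-extensions-by-definitions}, and the claim that $\psi_j$ may be taken provably functional follows because "true in the universal model" equals "provable in $\TT$" for geometric sequents.
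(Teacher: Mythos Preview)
Your proof is correct and follows essentially the same approach as the paper: both directions use the same ideas, and for the converse you invoke definability of subobjects in the universal model (the paper cites \cite[Theorem 6.1.3]{caramello:tst} for this) to produce the formulas $\phi_i$, $\psi_j$. The only cosmetic difference is in the final wrap-up: you verify the conditions of Remark~\ref{remark-recognizing-extensions-by-definitions} directly, while the paper instead writes $\EE = \EE_1 + \EE_2$ with $\EE_1$ the extension by definitions and $\EE_2$ a quotient, then uses two-out-of-three to conclude that the equivalence extension $\EE_2$ is syntactically trivial---but these amount to the same thing.
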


\begin{proof}
  If $\EE$ is an extension by definitions of $\TT$
  then any model $M \in \mod{\TT}{\E}$
  admits exactly one model extension $E$ along $\EE$,
  namely $\interpretation{R_i}_{M + E} \defeq
  \interpretation{\phi_i}_M$
  and $\interpretation{f_i}_{M + E}$ is the morphism with graph
  $\interpretation{\psi_j}_M$.
  So $\EE$ is an equivalence extension by Lemma
  \ref{lemma-equivalence-extension-if-unique-model-extensions}.

  On the other hand,
  if $\EE$ is an equivalence extension of $\TT$,
  then consider a universal model $M \in \mod{\TT}{\Set[\TT]}$
  and the unique (up to isomorphism) extension $E$ of $M$ along $\EE$.
  By Lemma \ref{lemma-equivalence-extension-of-universal-model},
  $M + E$ is a universal model of $\TT + \EE$.
  Now, for every relation symbol $R \subseteq \vec{A}$
  introduced in $\EE$,
  $\interpretation{R}_{M + E}$ is a subobject of
  $\interpretation{A_1}_M \times \dots \times \interpretation{A_n}_M$
  and since $M$ is universal,
  \cite[Theorem 6.1.3]{caramello:tst} tells us that
  there is a formula $\phi_R$ of $\TT$
  with $\interpretation{\phi_R}_M = \interpretation{R}_{M + E}$.
  Similarly, for every new function symbol $f$
  we find a provably functional formula $\psi_f$ of $\TT$
  such that $\interpretation{\psi_f}_M$
  is the graph of $\interpretation{f}_{M + E}$.
  That is, the axioms for defining $R$ by $\phi_R$ and $f$ by $\psi_f$
  are fulfilled for $M + E$
  and therefore provable in $\TT + \EE$.
  Thus,
  we have $\EE = \EE_1 + \EE_2$
  where $\EE_1$ is an extension by definitions
  and $\EE_2$ is a quotient.
  But since both $\EE_1$ and $\EE_1 + \EE_2$ are equivalence extensions,
  $\EE_2$ must be one too,
  and we have (up to syntactic equivalence) $\EE_2 = \varnothing$
  and $\EE = \EE_1$.
\end{proof}

The following lemma is about \enquote{reverting}
an extension $\EE$ of $\TT$ by applying another extension $\EE'$,
subject to extensibility of the universal $\TT$-model along $\EE$.

\begin{lemma}
  \label{lemma-revert-an-extension}
  Let $M \in \mod{\TT}{\E}$ be a universal model
  and $E$ an extension of $M$ along some theory extension $\EE$.
  Then there is a localic extension $\EE'$ of $\TT + \EE$
  and a model extension $E'$ of $M + E$ along $\EE'$
  such that $(M + E + E') \in \mod{(\TT + \EE + \EE')}{\E}$
  is again universal.
  (In particular, $\EE + \EE'$ is an equivalence extension.)
  If $\EE$ is localic,
  $\EE'$ can be chosen as a quotient;
  if $\EE$ is a quotient,
  $\EE' = \varnothing$ fits the bill.
  \[ \begin{tikzcd}[row sep=small]
    \TT + \EE + \EE' \\
    \TT + \EE \ar[u, no head] \\
    \TT \ar[u, no head]
    \ar[uu, bend left=60, no head, "\sim"] 
  \end{tikzcd} \]
\end{lemma}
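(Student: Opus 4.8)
The plan is to build $\EE'$ directly from the universal $(\TT+\EE)$-model $N \defeq M + E \in \mod{(\TT+\EE)}{\E}$, using the fact that the base topos $\E$ already carries the universal $\TT$-model $M$, so that every object and subobject of $\E$ is \enquote{definable over $\TT$}. Concretely, for each sort $A$ of $\EE$ the interpretation $\interpretation{A}_N$ is an object of $\E = \Set[\TT]$, hence by \cite[Theorem 6.1.3]{caramello:tst} (definability of objects over the classifying topos) it can be presented as $\interpretation{\{\vec x \oftype \vec B . \theta_A\}}_M$ for some formula $\theta_A$ of $\TT$ in a context over sorts of $\TT$. I would then let $\EE'$ contain, for each such $A$, a new relation symbol $e_A \subseteq A \times B_1 \times \dots \times B_k$ together with axioms expressing that $e_A$ is (the graph of) an isomorphism $\interpretation{A}_N \xrightarrow{\sim} \interpretation{\{\vec x . \theta_A\}}$ — i.e. $e_A$ is total and single-valued in both directions, and its image in $\vec B$ is exactly $\theta_A$. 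Since a model of $\TT + \EE + \EE'$ is the same as a model of $\TT + \EE$ together with, for each sort $A$ of $\EE$, an identification of $\interpretation A$ with a subobject cut out by a $\TT$-formula, no sorts are added by $\EE'$ and it is indeed localic; and if $\EE$ was already localic (no new sorts), there is nothing to define and $\EE'$ adds only relation/function symbols and axioms, so it is a quotient — here I would instead take $\EE'$ to realise each relation and function symbol of $\EE$ by a $\TT$-formula, exactly as in the second half of the proof of Proposition \ref{proposition-extensions-by-definitions}.

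Next I would produce the model extension $E'$ of $N$ along $\EE'$: interpret each $e_A$ as the graph of the chosen isomorphism $\interpretation A_N \cong \interpretation{\theta_A}_M$, which exists by construction, and check the $\EE'$-axioms hold — this is immediate since they merely assert that $e_A$ \emph{is} that isomorphism. So $N + E' \in \mod{(\TT+\EE+\EE')}{\E}$. It remains to see that $N + E'$ is again universal, equivalently (by the uniqueness of universal models and Lemma \ref{lemma-equivalence-extension-of-universal-model} applied to $\TT + \EE \subseteq \TT+\EE+\EE'$, or directly) that $\EE'$ is an equivalence extension of $\TT+\EE$. By Lemma \ref{lemma-equivalence-extension-if-unique-model-extensions} it suffices to show every model $M'' \in \mod{(\TT+\EE)}{\E''}$ in any topos admits a unique model extension along $\EE'$ up to isomorphism. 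Existence: the relation symbols of $\EE'$ have no choice — $e_A$ must be interpreted so that the axioms force it to be the graph of the canonical map from $\interpretation A_{M''}$ to $\interpretation{\theta_A}_{M''}$, and one checks this map is an isomorphism because the axioms were chosen to say so, and such an interpretation exists. Uniqueness up to isomorphism: two model extensions differ only by automorphisms of the $\interpretation A$-parts compatible with the $e_A$, and the axioms pin $e_A$ down completely, so the identity of $M''$ extends uniquely. (For the localic/quotient case one instead re-runs the argument of Proposition \ref{proposition-extensions-by-definitions} verbatim.)

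Finally, the parenthetical \enquote{$\EE+\EE'$ is an equivalence extension} follows from Lemma \ref{lemma-equivalence-extension-of-universal-model}: $M$ is universal and $M + E + E' = N + E'$ is universal, so the extension $\EE+\EE'$ of $\TT$ is an equivalence extension. The main obstacle is getting the axioms for the $e_A$ exactly right so that (i) they are satisfiable over $\TT+\EE$ in the first place — this is where definability of objects, \cite[Theorem 6.1.3]{caramello:tst}, and the hypothesis that $M$ (not just $N$) is universal are essential — and (ii) they admit a \emph{unique} (not merely nonempty) solution in every topos, which is what upgrades \enquote{$\EE'$ adds no real data} into \enquote{$\EE'$ is an equivalence extension}. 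A secondary subtlety is the bookkeeping for function symbols of $\EE$ in the quotient case: each must be recovered by a provably functional $\TT$-formula, and as noted in Remark \ref{remark-recognizing-extensions-by-definitions} the availability of the model extension $E$ of the universal $M$ makes the functionality and the residual axiom-checking automatic.
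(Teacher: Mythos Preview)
There is a genuine gap in the general (non-localic) case. Your key step is the claim that $\EE'$ is an equivalence extension of $\TT + \EE$, which you try to verify via Lemma~\ref{lemma-equivalence-extension-if-unique-model-extensions} by showing that every $(\TT+\EE)$-model $M''$ admits a (unique) extension along $\EE'$. But this is false: the axioms of $\EE'$ force $e_A$ to be an isomorphism $\interpretation{A}_{M''} \cong \interpretation{\theta_A}_{M''}$, and for an arbitrary $(\TT+\EE)$-model there is no reason whatsoever for these two objects to be isomorphic --- you only arranged this for the single model $N = M+E$ in $\E$. Your sentence \enquote{one checks this map is an isomorphism because the axioms were chosen to say so} is circular: the axioms \emph{demand} an isomorphism, they do not \emph{produce} one. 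Relatedly, your appeal to Lemma~\ref{lemma-equivalence-extension-of-universal-model} \enquote{applied to $\TT + \EE \subseteq \TT+\EE+\EE'$} is illegitimate, since that lemma requires the base model ($M+E$ here) to be universal for the base theory ($\TT+\EE$), which is precisely what is \emph{not} assumed. What you actually need is that $\EE+\EE'$ is an equivalence extension of $\TT$, a different and genuinely harder statement.

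The paper avoids this by a short abstract argument: the model $M+E$ corresponds to a geometric morphism $s : \E \to \Set[\TT+\EE]$ which is a section of the forgetful $u : \Set[\TT+\EE] \to \E$; since $u \circ s$ is an equivalence, $s$ is localic, and Theorem~\ref{theorem-extension-from-geometric-morphism} then furnishes a localic $\EE'$ with a universal model of $\TT+\EE+\EE'$ in $\E$ extending $M+E$. Your direct construction can in principle be completed --- after the $e_A$'s one must also define the relation and function symbols of $\EE$ by $\TT$-formulas (transported through the $e_A$'s) and observe that the axioms of $\EE$, rewritten over $\TT$, hold in $M$ and are therefore provable --- but the correctness argument must go through $\TT$, not through $\TT+\EE$. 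For the localic and quotient special cases your sketch is essentially the paper's \enquote{more direct} argument and is fine.
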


\begin{proof}
  Fix a classifying topos $\E_{\TT + \EE}$ for $\TT + \EE$.
  We have the forgetful geometric morphism $u : \E_{\TT + \EE} \to \E$,
  and $M + E \in \mod{(\TT + \EE)}{\E}$ corresponds to
  a section (up to isomorphism) $s : \E \to \E_{\TT + \EE}$ of $u$.
  As such, $s$ is localic (since $u \circ s$ is).
  By Theorem \ref{theorem-extension-from-geometric-morphism},
  it is therefore presented by some localic extension $\EE'$
  of $\TT + \EE$,
  that is,
  we have a universal model of $\TT + \EE + \EE'$ in $\E$
  extending $M + E$, as required.

  The two special cases follow since
  a section of a localic geometric morphism is an embedding
  and a section of an embedding is an equivalence,
  but we can also show them more directly.
  If $\EE = \QQ$ is a quotient,
  then the existence of $E$ just means that
  the new axioms are fulfilled in $M$.
  But this means that they were already provable in $\TT$
  and $M = M + E$ is indeed a universal model of $\TT + \QQ$.
  For $\EE$ localic,
  we have to deal with the new relation symbols $R$
  and function symbols $f$.
  From \cite[Theorem 6.1.3]{caramello:tst}
  we know that
  $\interpretation{R(\vec{x})}_{M + E} =
  \interpretation{\phi_R(\vec{x})}_M$
  and
  $\interpretation{f(\vec{x}) = y}_{M + E} =
  \interpretation{\phi_f(\vec{x}, y)}_M$
  for some formulas $\phi_R$, $\phi_f$,
  where $\phi_f$ is provably functional.
  So taking for $\EE'$ the axioms
  $R(\vec{x}) \doubleturnstile{\vec{x}} \phi_R(\vec{x})$
  and
  $f(\vec{x}) = y \doubleturnstile{\vec{x}, y} \phi_f(\vec{x}, y)$,
  we obtain as $\EE + \EE'$ an extension by definitions.
\end{proof}

As a corollary,
we find that an equivalence between two theories,
and more generally an equivalence between
two extensions of some base theory
(meaning that they present the same geometric morphism),
can always be captured in a syntactic way.

\begin{corollary}
  \label{corollary-diagonal-extension}
  Let $\EE_1$, $\EE_2$ be two equivalent extensions of a theory $\TT$,
  that is,
  there exists a model $M \in \mod{\TT}{\E}$
  with extensions to universal models
  $M + E_1$ and $M + E_2$ of $\TT + \EE_1$ respectively $\TT + \EE_2$
  in the same topos $\E$.
  Then there is a localic extension $\EE_{1, 2}$
  of $\TT + \EE_1 + \EE_2$
  such that both $\EE_2 + \EE_{1, 2}$ and $\EE_1 + \EE_{1, 2}$
  are equivalence extensions
  (of $\TT + \EE_1$ respectively $\TT + \EE_2$)
  \[ \begin{tikzcd}[column sep=tiny]
    & \TT + \EE_1 + \EE_2 + \EE_{1, 2} & \\
    \TT + \EE_1 \ar[ur, no head, "\sim"] & &
    \TT + \EE_2 \ar[ul, no head, "\sim"'] \\
    & \TT \ar[ul, no head] \ar[ur, no head] &
  \end{tikzcd} \]
  and there is a model extension $E_{1, 2}$ along $\EE_{1, 2}$
  such that $M_0 + E_1 + E_2 + E_{1, 2}$ is universal.
  If $\EE_1$ or $\EE_2$ is localic,
  then $\EE_{1, 2} = \QQ_{1, 2}$ can be chosen as
  a quotient extension.
\end{corollary}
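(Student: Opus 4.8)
The plan is to reduce the statement to a single application of Lemma~\ref{lemma-revert-an-extension}, with Lemma~\ref{lemma-equivalence-extension-of-universal-model} taking care of the second half of the conclusion almost for free. First I would assemble the model $M + E_1 + E_2 \in \mod{(\TT + \EE_1 + \EE_2)}{\E}$: the extensions $E_1$ and $E_2$ interpret the (disjoint) sorts and symbols of $\EE_1$ and $\EE_2$, the axioms of $\EE_1$ hold because they already hold in $M + E_1$, and likewise for $\EE_2$ in $M + E_2$ --- here one only uses that the axioms of a theory extension never mention the symbols of a parallel extension. Thus $E_2$ may be read as a model extension of the \emph{universal} model $M + E_1$ of $\TT + \EE_1$ along $\EE_2$, now viewed as an extension of $\TT + \EE_1$.

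Next I would invoke Lemma~\ref{lemma-revert-an-extension} with base theory $\TT + \EE_1$, extension $\EE_2$, universal model $M + E_1$ and model extension $E_2$. This yields a localic extension $\EE_{1,2}$ of $\TT + \EE_1 + \EE_2$ and a model extension $E_{1,2}$ of $M + E_1 + E_2$ along it, with $M + E_1 + E_2 + E_{1,2}$ a universal model of $\TT + \EE_1 + \EE_2 + \EE_{1,2}$; in particular $\EE_2 + \EE_{1,2}$ is an equivalence extension of $\TT + \EE_1$, which is the first of the two claims, and the required universal model is in hand. If $\EE_2$ happens to be localic, the same lemma lets us take $\EE_{1,2}$ to be a quotient; if instead it is $\EE_1$ that is localic, I would simply run the whole argument with the roles of $\EE_1$ and $\EE_2$ interchanged.

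For the remaining claim I would note that the universal model $M + E_1 + E_2 + E_{1,2}$ of $\TT + \EE_1 + \EE_2 + \EE_{1,2}$ forgets down to $M + E_2$, which is a universal model of $\TT + \EE_2$ by hypothesis. Regarding $\EE_1 + \EE_{1,2}$ as an extension of $\TT + \EE_2$ and $E_1 + E_{1,2}$ as a model extension of $M + E_2$ along it, Lemma~\ref{lemma-equivalence-extension-of-universal-model} directly gives that $\EE_1 + \EE_{1,2}$ is an equivalence extension of $\TT + \EE_2$, completing the proof. I do not anticipate a real obstacle: the only thing demanding attention is the bookkeeping of which theory is being extended by which (the pieces $\EE_1$, $\EE_2$, $\EE_{1,2}$ are shuffled between \enquote{base} and \enquote{extension} between the two steps), and the symmetric handling of the localic special case, both of which are routine once the above skeleton is fixed.
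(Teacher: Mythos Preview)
Your proposal is correct and follows essentially the same approach as the paper: apply Lemma~\ref{lemma-revert-an-extension} to the universal model $M + E_1$ with the extension $\EE_2$ (viewed over $\TT + \EE_1$), then use Lemma~\ref{lemma-equivalence-extension-of-universal-model} to conclude that $\EE_1 + \EE_{1,2}$ is an equivalence extension of $\TT + \EE_2$, handling the localic special case by swapping the roles of $\EE_1$ and $\EE_2$ as needed. Your write-up is somewhat more explicit about the bookkeeping (assembling $M + E_1 + E_2$, noting which model forgets to which), but the argument is the same.
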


\begin{proof}
  Regard $\EE_2$ as an extension of $\TT + \EE_1$ for the moment,
  then we have an extension $E_2$ of the universal model $M + E_1$,
  so by Lemma \ref{lemma-revert-an-extension}
  there exists a localic extension $\EE_{1, 2}$
  of $\TT + \EE_1 + \EE_2$
  together with a model extension $E_{1, 2}$
  of $M + E_1 + E_2$
  such that $M + E_1 + E_2 + E_{1, 2}$ is universal.
  Then, by Lemma \ref{lemma-equivalence-extension-of-universal-model},
  both $\EE_2 + \EE_{1, 2}$ and $\EE_1 + \EE_{1, 2}$
  are equivalence extensions, as required.
  If $\EE_1$ is localic,
  we get a quotient extension $\EE_{1, 2} = \QQ_{1, 2}$ from
  Lemma \ref{lemma-revert-an-extension},
  and if $\EE_2$ is localic, we swap the two.
\end{proof}

A completely different proof of
Corollary \ref{corollary-diagonal-extension}
in the absolute case $\TT = \varnothing$
can be found in \cite[Theorem 5.1]{syntactic-morita}.

\begin{definition}
  \label{definition-diagonal-extension}
  We call an extension $\EE_{1, 2}$ as in
  Corollary \ref{corollary-diagonal-extension}
  a \emph{diagonal extension} for $\EE_1$ and $\EE_2$
  over $\TT$,
  because it presents the diagonal geometric morphism
  of the pullback topos
  \[ \Set[\TT + \EE_1 + \EE_2] =
     \Set[\TT + \EE_1] \times_{\Set[\TT]} \Set[\TT + \EE_2] =
     \E \times_{\Set[\TT]} \E
     \rlap{.} \]
\end{definition}

Diagonal extensions are not unique.
For example,
if $\TT_1 = \angles{p_1} + \angles{p_2}$
and $\TT_2 = \angles{p_3} + \angles{p_4}$
both consist of two proposition symbols,
there are clearly two different diagonal quotient extensions
for $\TT_1$ and $\TT_2$ over $\varnothing$.
Corollary \ref{corollary-diagonal-extension}
produces a diagonal extension
in accordance with the two universal models
$M_0 + E_1$ and $M_0 + E_2$ living in the same topos.

Another easy consequence of
Lemma \ref{lemma-revert-an-extension}
is that any object of the classifying topos of a theory
can be introduced into the theory as a new sort
by means of an equivalence extension.

\begin{corollary}
  Let $M \in \mod{\TT}{\E}$ be a universal model
  and let $X \in \E$ be any object.
  Then there is an equivalence extension $\EE$ of $\TT$
  containing exactly one new sort $A$,
  such that the unique (up to unique isomorphism)
  model extension $E$ of $M$ along $\EE$
  interprets $A$ as $X$.
\end{corollary}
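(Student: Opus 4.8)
The plan is to build $\EE$ in two steps: first freely adjoin the sort, and then repair the resulting extension into an equivalence extension by means of Lemma~\ref{lemma-revert-an-extension}.

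First I would let $\EE_0$ be the extension of $\TT$ whose only datum is a single new sort $A$, with no new relation or function symbols and no new axioms. A model extension of an arbitrary $\TT$-model $N \in \mod{\TT}{\F}$ along $\EE_0$ is then nothing more than a choice of object of $\F$, so interpreting $A$ as $X$ determines a model extension $E_0$ of $M$ along $\EE_0$, giving $M + E_0 \in \mod{(\TT + \EE_0)}{\E}$. This $M + E_0$ is certainly not universal — $\TT + \EE_0$ is the theory of a $\TT$-model together with an \emph{arbitrary} object, whereas $X$ is a fixed one — so $\EE_0$ on its own is not an equivalence extension.

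Next I would apply Lemma~\ref{lemma-revert-an-extension} to the universal model $M$, the extension $\EE_0$, and the model extension $E_0$. It yields a \emph{localic} extension $\EE'$ of $\TT + \EE_0$ and a model extension $E'$ of $M + E_0$ along $\EE'$ such that $M + E_0 + E'$ is a universal model of $\TT + \EE_0 + \EE'$; in particular $\EE \defeq \EE_0 + \EE'$ is an equivalence extension of $\TT$. Since $\EE'$ is localic, $\sortsset{\EE} = \sortsset{\EE_0} = \{A\}$, so $\EE$ has exactly one new sort; and since $E'$ contributes only interpretations of localic symbols, the model extension $E \defeq E_0 + E'$ of $M$ along $\EE$ still interprets $A$ as $X$. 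Because $\EE$ is an equivalence extension and $U_\EE$ is an isofibration, every $\TT$-model admits a model extension along $\EE$, which is unique up to isomorphism by Lemma~\ref{lemma-equivalence-extension-if-unique-model-extensions} and up to \emph{unique} isomorphism because $U_\EE$ is moreover faithful; thus $E$ is, up to unique isomorphism, the model extension we want.

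The only point to watch is that the repair step introduces no further sorts, i.e.\ that $\EE'$ is localic — but this is exactly the localicity clause in Lemma~\ref{lemma-revert-an-extension}, coming from the fact that the section of the forgetful geometric morphism corresponding to $M + E_0$ is localic, being a section of a localic morphism. Beyond that there is no real obstacle: Lemma~\ref{lemma-revert-an-extension} already packages all of the work.
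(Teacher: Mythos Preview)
Your proof is correct and essentially identical to the paper's: both freely adjoin the sort and then invoke Lemma~\ref{lemma-revert-an-extension} to obtain a localic repair $\EE'$, setting $\EE \defeq \EE_0 + \EE'$. One small inaccuracy in your closing aside: the section $s$ in the proof of Lemma~\ref{lemma-revert-an-extension} is localic not because $\pi_{\EE_0}$ is localic (it is not --- $\EE_0$ adds a sort), but because the composite $\pi_{\EE_0} \circ s$ is an equivalence and hence localic, and localic morphisms satisfy cancellation on the left; this does not affect your argument, since you are only citing the lemma's conclusion.
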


\begin{proof}
  Apply Lemma \ref{lemma-revert-an-extension}
  to the extension $\EE_1$ adding nothing but the sort $A$
  (and the model extension $E_1$
  given by $\interpretation{A}_{E_1} \defeq X$)
  to obtain a localic extension $\EE_2$,
  and set $\EE \defeq \EE_1 + \EE_2$.
\end{proof}


\subsection{Some examples of equivalence extensions}

Giving examples of localic equivalence extensions
does not seem necessary after
Proposition \ref{proposition-extensions-by-definitions},
but here are some non-localic equivalence extensions
that will be of use later.

\begin{example}
  \label{example-materialize-subobject}
  Let $A$ be a sort of a geometric theory $\TT$
  and let $\phi(x)$ be a geometric formula
  in the context $x \oftype A$.
  Then there is an equivalence extension of $\TT$
  consisting of
  a sort $S_\phi$,
  a function symbol $\iota : S_\phi \to A$
  and the axioms
  \[ \iota(x) = \iota(y) \turnstile{x, y \oftype S_\phi} x = y
     \qquad\text{and}\qquad
     \ex{x}{S_\phi} \iota(x) = y \doubleturnstile{y \oftype A} \phi(y)
     \rlap{,} \]
  the first of which forces the interpretation of $\iota$ in a model
  to be a monomorphism,
  while the second ensures that
  $\interpretation{S_\phi} \hookrightarrow \interpretation{A}$
  is the same subobject as
  $\interpretation{\phi} \hookrightarrow \interpretation{A}$.
\end{example}

\begin{example}
  \label{example-materialize-quotient}
  Let $A$ be a sort of a geometric theory $\TT$
  and let $x \sim x'$ be a geometric formula
  in the context $x, x' \oftype A$
  such that the usual axioms of an equivalence relation
  (which are Horn sequents)
  are provable for $\sim$.
  Then there is an equivalence extension of $\TT$
  consisting of a sort $A/{\sim}$,
  a function symbol $\pi : A \to A/{\sim}$
  and the axioms
  \[ \top \turnstile{y \oftype A/{\sim}} \ex{x}{A} \pi(x) = y
     \qquad\text{and}\qquad
     \pi(x) = \pi(x') \doubleturnstile{x, x' \oftype A} x \sim x'
     \rlap{.} \]
  These force the interpretation
  $\interpretation{A} \to \interpretation{A/{\sim}}$
  of $\pi$
  to be an epimorphism
  with kernel pair
  $\interpretation{\sim} \rightrightarrows \interpretation{A}$.
\end{example}

\begin{remark}
  An intermediate notion between
  syntactic equivalence and Morita equivalence
  is the notion of (geometric) \emph{bi-interpretability},
  see \cite[Definition 2.1.13]{caramello:tst}.
  Two geometric theories are bi-interpretable
  if and only if their syntactic sites are equivalent categories.
  So while this notion is still syntactic in nature,
  it does not assume any previously given relation
  between the signatures of the two theories.
  For example, a sort $A$ of the first theory,
  which is represented in the syntactic site
  by the object $\{x \oftype A.\; \top\}$,
  can correspond to any formula in context
  $\{ \vec{y} \oftype \vec{B}.\; \phi(\vec{y}) \}$
  of the second theory.

  The equivalence extension in
  Example \ref{example-materialize-subobject}
  induces a bi-interpretation,
  interpreting the new sort $S_\phi$
  by the formula in context
  $\{x \oftype A.\; \phi(x) \}$
  of $\TT$.
  But already Example \ref{example-materialize-quotient}
  shows that bi-interpretability is a stronger condition
  than Morita equivalence,
  since the formula in context $\{x \oftype A/{\sim}.\; \top\}$
  can not be expressed as any formula in context of $\TT$.
\end{remark}

\begin{example}
  \label{example-import-a-set}
  Given a set $A$,
  define a theory $\underline{A}$
  (named after its only sort)
  consisting of
  a sort $\underline{A}$,
  constant symbols $c_a : \underline{A}$
  for every element $a \in A$
  and the axioms
  \[ c_a = c_{a'} \turnstile{[]} \bot
     \quad\text{for $a \neq a' \in A$},
     \qquad \qquad
     \top \turnstile{x : \underline{A}} \bigvee_{a \in A} (x = c_a)
     \rlap{.} \]
  One can check that the unique model (up to unique isomorphism)
  in any Grothendieck topos is the constant sheaf
  associated to the set $A$,
  which is also denoted $\underline{A}$.
  The theory $\underline{A}$
  is therefore Morita-equivalent to the empty theory,
  and adding $\underline{A}$ to any given theory
  is an equivalence extension.
  In other words,
  we can always \emph{import} a set $A$
  into our theory
  without changing it up to Morita-equivalence.

  If we have a function like $f : A \to B$
  or a relation like $R \subseteq A$,
  we can import it together with the respective sets.
  Namely,
  after adding a function symbol
  $\underline{f} : \underline{A} \to \underline{B}$
  or a relation symbol
  $\underline{R} \subseteq \underline{A}$,
  the axioms
  \[ \top \turnstile{[]} \underline{f}(c_a) = c_{f(a)}
     \quad\text{for $a \in A$} \]
  respectively
  \[ \top \turnstile{[]} \underline{R}(c_a)
     \quad\text{for $a \in R$},
     \qquad \qquad
     \underline{R}(c_a) \turnstile{[]} \bot
     \quad\text{for $a \in A \setminus R$}
     \]
  produce an extension by definitions
  in presence of the axioms of $\underline{A}$.

  Another perspective on this is
  that if we have any model of a geometric theory $\TT$ in $\Set$,
  then by Lemma \ref{lemma-revert-an-extension}
  there is a localic extension to a universal model in $\Set$,
  which then yields,
  by pulling back along the unique geometric morphism to $\Set$,
  the unique model of the extended theory in any topos.
\end{example}

\newpage

\section{Gluing classifying toposes}
\label{section-gluing}

\subsection{Introduction}

In this section
we explicitly construct a geometric theory
classified by a given topos
from a cover by open subtoposes
with known classified theories.

We already saw in
Section \ref{section-extensions}
that taking the product of two toposes,
regarded as generalized spaces,
corresponds to the simplest possible operation
involving two unrelated theories,
namely forming the sum $\TT_1 + \TT_2$.
The present topic is a generalization of
the dual question
what the coproduct of two toposes classifies.
This is much less obvious,
since a geometric morphism
$\E \to \Set[\TT_1] \amalg \Set[\TT_2]$
will neither define a $\TT_1$-model
nor a $\TT_2$-model in $\E$.
Instead,
it first of all defines a decomposition of $\E$
into two clopen subtoposes,
and then a $\TT_1$-model in one and a $\TT_2$-model in the other
of these subtoposes.

Another good example is
the big Zariski topos of
the projective line $\PP^1_K$
over a ring $K$.
The big Zariski topos $X_\Zar$ of an affine scheme $X = \Spec R$
classifies the geometric theory of local $R$-algebras.
Now,
$\PP^1_K$ is not affine,
but it can be covered by two copies of the affine line,
$\Spec K[X]$ and $\Spec K[Y]$,
such that the intersection is $\Spec K[X, Y]/(XY - 1)$.
We will see
that this induces an open cover
of the big Zariski topos $(\PP^1_K)_\Zar$
by open subtoposes $(\Spec K[X])_\Zar$ and $(\Spec K[Y])_\Zar$,
which both classify local $K$-algebras with one distinguished element.
This suggests that
$(\PP^1_K)_\Zar$ classifies
local $K$-algebras which are equipped with an element $X$
\emph{or} with an element $Y$,
where the two possibilities are not mutually exclusive
but rather,
their intersection is described by the condition $XY = 1$.
A formulation of this idea
as a geometric theory,
which indeed presents $(\PP^1)_\Zar$,
is given in Proposition \ref{proposition-gluing-projective-line}.



\subsection{Conditional extensions}

If a topos $\E$ has an open subtopos $\E_1 \subseteq \E$
which classifies some theory $\TT_1$,
then a geometric morphism $f : \tilde{\E} \to \E$
does not give us a model of $\TT_1$ in $\tilde{\E}$,
so to find a classified theory for $\E$,
we should not look among extensions of $\TT_1$.
However, $f$ does give us a model of $\TT_1$
in some open subtopos of $\tilde{\E}$,
namely in the preimage of $\E_1$.
We now show how to capture syntactically
the requirement of
\enquote{a model in some open subtopos}.

To avoid technical complications,
we exclude function symbols from our discussion.
Recall
that a function symbol $f : \vec{A} \to B$
can be considered an abbreviation for a relation symbol
$R \subseteq \vec{A} \times B$
together with axioms ensuring that $R$ is provably functional,
as long as we don't mind replacing axioms
with nested function applications
like $g(f(x)) = z$
by versions with auxiliary variables
like $\exists y .\, (f(x) = y) \land (g(y) = z)$.
That is,
function symbols can be considered \enquote{syntactic sugar}.

\begin{definition}
  \label{definition-conditional-extension}
  Let $\TT$ be a geometric theory,
  $\phi$ a closed formula of $\TT$
  and $\EE$ an extension
  (without function symbols)
  of $\TT + \phi$.
  We define the \emph{conditional extension} $\EE/\phi$ of $\TT$
  to consist of the following.
  \begin{itemize}
    \item
      For every sort $S \in \sortsset{\EE}$ of $\EE$,
      a sort $S$
      and the axiom $\top \turnstile{x : S} \phi$.
    \item
      For every relation symbol
      $R \subseteq S_1 \times \dots \times S_n$ of $\EE$,
      a relation symbol $R \subseteq S_1 \times \dots \times S_n$
      and the axiom $R(\vec{x}) \turnstile{\vec{x} : \vec{S}} \phi$.
    \item
      For every axiom $\psi_1 \turnstile{\Gamma} \psi_2$ of $\EE$,
      the axiom
      $\psi_1 \land \phi \turnstile{\Gamma} \psi_2$.
  \end{itemize}
\end{definition}


If the theory $\TT + \phi + \EE$
asks us to specify a model of $\TT$ that satisfies $\phi$
and furthermore a model extension along $\EE$,
then the theory $\TT + \EE/\phi$
instead asks us to specify a model of $\TT$
and a model extension along $\EE$
\emph{in case} our $\TT$-model happens to satisfy $\phi$
--- or, more geometrically speaking,
\emph{wherever} it satisfies $\phi$.

\begin{lemma}
  \label{lemma-conditional-extension-first-properties}
  The assignment $\EE \mapsto \EE/\phi$
  is well-defined with respect to syntactic equivalence.
  Furthermore,
  we have the following syntactic equivalences.
  \begin{enumerate}[label=(\roman*)]
    \item
      $\TT + \EE/\phi + \phi = \TT + \phi + \EE$
    \item
      $\TT + \EE_1/\phi + \EE_2/\phi = \TT + (\EE_1 + \EE_2)/\phi$
  \end{enumerate}
\end{lemma}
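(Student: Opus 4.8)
The plan is to establish the three assertions in the order (i), then well-definedness, then (ii), since well-definedness will reduce to a lemma whose proof reuses (i) and (ii) is essentially formal once (i) is known. First observe that the construction $\EE \mapsto \EE/\phi$ of Definition~\ref{definition-conditional-extension} reintroduces exactly the sorts and relation symbols of $\EE$ and leaves the language of $\TT$ untouched, so all the theories occurring in (i) and in (ii) range over the expected common signatures; in each case only the provable sequents remain to be compared. For (i), both $\TT + \phi + \EE$ and $\TT + \EE/\phi + \phi$ contain the axioms of $\TT$ together with the closed axiom $\top \turnstile{[]} \phi$, so it suffices to check that, modulo these, the axioms of $\EE$ and the axioms of $\EE/\phi$ are interderivable. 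This is a routine manipulation in geometric logic: from $\top \turnstile{[]} \phi$ one gets $\top \turnstile{\Gamma} \phi$ in every context $\Gamma$ by weakening, which at once yields the axioms $\top \turnstile{x \oftype S} \phi$ and $R(\vec{x}) \turnstile{\vec{x}} \phi$ of $\EE/\phi$, and makes any axiom $\psi_1 \turnstile{\Gamma} \psi_2$ of $\EE$ equivalent, by a single cut, to the corresponding axiom $\psi_1 \land \phi \turnstile{\Gamma} \psi_2$ of $\EE/\phi$. I expect no difficulty here.

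For well-definedness, take syntactically equivalent extensions $\EE$ and $\EE'$ of $\TT + \phi$. They have the same sorts and relation symbols, so $\EE/\phi$ and $\EE'/\phi$ share their signature and all of their sort- and relation-axioms, and the task reduces, symmetrically, to showing that each axiom $\psi_1 \turnstile{\Gamma} \psi_2$ of $\EE'$ gives a sequent $\psi_1 \land \phi \turnstile{\Gamma} \psi_2$ provable in $\TT + \EE/\phi$. Since $\psi_1 \turnstile{\Gamma} \psi_2$ is provable in $\TT + \phi + \EE' = \TT + \phi + \EE = \TT + \EE/\phi + \phi$ (the last equality being (i)), everything comes down to the following general lemma, which is the heart of the statement: \emph{for a geometric theory $\mathbb{S}$ and a closed formula $\phi$ of $\mathbb{S}$, if $\psi_1 \turnstile{\Gamma} \psi_2$ is provable in $\mathbb{S} + \phi$, then $\psi_1 \land \phi \turnstile{\Gamma} \psi_2$ is provable in $\mathbb{S}$.}

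I would prove this lemma semantically, via the universal model. Let $U$ be a universal model of $\mathbb{S}$; since a geometric sequent is provable in a theory exactly when it holds in its universal model, it is enough to verify $\psi_1 \land \phi \turnstile{\Gamma} \psi_2$ in $U$. As $\phi$ is closed, $W \defeq \interpretation{\phi}_U$ is a subterminal object of $\Set[\mathbb{S}]$ and determines an open subtopos $j \colon \Set[\mathbb{S}]_{o(W)} \hookrightarrow \Set[\mathbb{S}]$. The quotient extension adding the single axiom $\top \turnstile{[]} \phi$ presents, by Theorem~\ref{theorem-extension-from-geometric-morphism}, an embedding $\Set[\mathbb{S} + \phi] \hookrightarrow \Set[\mathbb{S}]$, and this embedding is (equivalent to) $j$: a geometric morphism $f \colon \F \to \Set[\mathbb{S}]$ factors through it precisely when the $\mathbb{S}$-model $f^* U$ satisfies $\top \turnstile{[]} \phi$, i.e. when $f^* W = 1_{\F}$, which is exactly the condition for $f$ to factor through $\Set[\mathbb{S}]_{o(W)}$. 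Hence $U' \defeq j^* U$ is a universal model of $\mathbb{S} + \phi$, and provability of $\psi_1 \turnstile{\Gamma} \psi_2$ in $\mathbb{S} + \phi$ gives $j^* \interpretation{\psi_1}_U \leq j^* \interpretation{\psi_2}_U$. Now I would invoke the standard fact that for such an open inclusion $j$ and subobjects $A, B$ of an object $X \in \Set[\mathbb{S}]$ one has $j^* A \leq j^* B$ if and only if $A \land X_W \leq B$, where $X_W \hookrightarrow X$ is the pullback of $W \hookrightarrow 1$ along $X \to 1$; applied with $X \defeq \interpretation{\Gamma}_U$, $A \defeq \interpretation{\psi_1}_U$ and $B \defeq \interpretation{\psi_2}_U$, and noting that $X_W$ is just the interpretation of the closed formula $\phi$ in the context $\Gamma$, this reads $\interpretation{\psi_1 \land \phi}_U \leq \interpretation{\psi_2}_U$, so $\psi_1 \land \phi \turnstile{\Gamma} \psi_2$ holds in $U$ and is therefore provable in $\mathbb{S}$. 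The one mildly delicate point, and the main obstacle, is pinning down that $\Set[\mathbb{S} + \phi]$ is the \emph{open} subtopos $\Set[\mathbb{S}]_{o(W)}$ (rather than the closed one) together with the accompanying subobject computation; everything else is bookkeeping.

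Finally, for (ii): by (i) applied to $\EE_1$ we have $(\TT + \EE_1/\phi) + \phi = \TT + \phi + \EE_1$, so $\EE_2$, being an extension of $\TT + \phi + \EE_1$, is also an extension of $(\TT + \EE_1/\phi) + \phi$; thus $\EE_2/\phi$ is a conditional extension of $\TT + \EE_1/\phi$ and the left-hand side of (ii) is well-formed — and this is the only place (i) is needed. It then remains to observe that the operation $(\hole)/\phi$ depends only on the sorts, relation symbols and axioms of its argument, and that $\EE_1 + \EE_2$ is by construction the extension whose sorts, relation symbols and axioms are those of $\EE_1$ together with those of $\EE_2$; hence $(\EE_1 + \EE_2)/\phi$ is literally the same extension as $\EE_1/\phi + \EE_2/\phi$. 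In particular the two sides of (ii) are syntactically equal, a fortiori syntactically equivalent.
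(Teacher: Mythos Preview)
Your proof is correct and follows the same overall architecture as the paper's: establish (i) by direct manipulation, reduce well-definedness to (i), and observe that (ii) holds because both sides have literally the same axiomatization. You are in fact more careful than the paper in one respect, namely in verifying that the left-hand side of (ii) is well-formed by invoking (i) for $\EE_1$.

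The one substantive difference lies in how you justify the reduction step for well-definedness, i.e.\ the implication ``$\psi_1 \turnstile{\Gamma} \psi_2$ provable in $\mathbb{S} + \phi$ implies $\psi_1 \land \phi \turnstile{\Gamma} \psi_2$ provable in $\mathbb{S}$'' for closed $\phi$. The paper states this equivalence in a single clause and moves on; it is a deduction-theorem fact for closed geometric formulas, provable by an easy induction over derivations (conjoin $\phi$ to every antecedent). You instead argue semantically, identifying $\Set[\mathbb{S} + \phi]$ with the open subtopos $\Set[\mathbb{S}]_{o(\interpretation{\phi})}$ and using the characterization $j^*A \leq j^*B \iff A \wedge X_W \leq B$ for subobjects along an open inclusion. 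Your route is heavier machinery for this particular step, but it is entirely valid and makes explicit the link between the closed formula $\phi$ and its open subtopos --- which is, after all, the conceptual content that Proposition~\ref{proposition-conditional-model-extensions} exploits immediately afterwards. A minor quibble: the reference to Theorem~\ref{theorem-extension-from-geometric-morphism} is slightly misdirected, since that theorem produces extensions from morphisms rather than the converse; what you actually need is the subtopos/quotient duality cited in its proof.
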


\begin{proof}
  To show that the construction is well-defined,
  let $\psi \turnstile{\Gamma} \chi$ be a geometric sequent
  which is provable in $\TT + \phi + \EE$.
  If it was added as an axiom to $\EE$,
  the axiom $\psi \land \phi \turnstile{\Gamma} \chi$
  would be added to $\EE / \phi$,
  so we must show that this sequent
  is already provable in $\TT + \EE/\phi$.
  This is equivalent to showing that
  $\psi \turnstile{\Gamma} \chi$
  is provable in $\TT + \EE/\phi + \phi$,
  so we are done if (i) is true.
  For (i),
  we observe that the additional axioms of $\EE / \phi$
  are indeed trivial
  and $\psi_1 \land \phi \turnstile{\Gamma} \psi_2$
  is indeed equivalent to $\psi_1 \turnstile{\Gamma} \psi_2$
  if our theory already contains $\phi$ as an axiom.
  For (ii),
  Definition \ref{definition-conditional-extension}
  produces the exact same axiomatization for both sides.
\end{proof}

\begin{proposition}
  \label{proposition-conditional-model-extensions}
  Let $\EE$ be an extension of $\TT + \phi$,
  $M \in \mod{\TT}{\E}$
  and $\F \defeq \E_{o(\interpretation{\phi}_M)}$
  the open subtopos of $\E$
  corresponding to the subterminal object
  $\interpretation{\phi}_M \subseteq 1_\E$.
  Then there is an equivalence of categories
  \[ \mod{\EE}{\F, M|_\F} \simeq \mod{(\EE/\phi)}{\E, M}
     \rlap{.} \]
\end{proposition}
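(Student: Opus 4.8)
We want to show $\mod{\EE}{\F, M|_\F} \simeq \mod{(\EE/\phi)}{\E, M}$, where $\F = \E_{o(\interpretation{\phi}_M)}$. Let me think about what each side is.

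The left side: model extensions of $M|_\F$ along $\EE$. Here $M|_\F$ is the restriction of $M$ to the open subtopos $\F$. Since $\F$ is an open subtopos corresponding to subterminal $\interpretation{\phi}_M \subseteq 1$, and $M$ satisfies $\phi$ on $\F$ (by construction $\interpretation{\phi}_{M|_\F} = 1_\F$), $M|_\F$ is a model of $\TT + \phi$ in $\F$. A model extension along $\EE$ gives interpretations of the sorts/relations of $\EE$ in $\F$.

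The right side: model extensions of $M$ along $\EE/\phi$ in $\E$. By the definition of conditional extension, this means: for each sort $S$ of $\EE$, an object $\interpretation{S}$ in $\E$ such that $\top \vdash_{x:S} \phi$ holds, i.e. $\interpretation{S} \to 1$ factors through $\interpretation{\phi}_M$; for each relation symbol $R$, a subobject satisfying $R(\vec{x}) \vdash \phi$; and the axioms $\psi_1 \wedge \phi \vdash \psi_2$ for axioms of $\EE$.

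Key idea: The open subtopos inclusion $j: \F \hookrightarrow \E$ has $j^*$ (restriction) left adjoint... wait, $j_!$ is left adjoint to $j^*$ which is left adjoint to $j_*$. For an open subtopos, $j^* X = X \times \interpretation{\phi}_M$ (well, $X|_\F$), and $j_!$ is fully faithful, as is $j_*$. Actually $j^*$ is a localization. The crucial point: objects of $\F$ are equivalent to objects $X \in \E$ equipped with... no. Objects of $\F = \E/\interpretation{\phi}$... wait $\F$ is the open subtopos, so $\Sh(\F)$... Actually for an open subtopos given by subterminal $U$, we have $\F \simeq \E_{/U}$ as a topos? No — $\E_{/U}$ for $U$ subterminal: objects are $X \to U$, which since $U$ is subterminal means objects $X$ with $X \to 1$ factoring through $U$. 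Yes! So $\F \simeq \E_{/U}$ where $U = \interpretation{\phi}_M$, and $j^*: \E \to \E_{/U}$ is $X \mapsto X \times U$, $j_! : \E_{/U} \to \E$ is the forgetful functor (fully faithful since $U$ subterminal).

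So: objects of $\F$ = objects $X$ of $\E$ with $X \to 1$ factoring through $U = \interpretation{\phi}_M$; subobjects in $\F$ of such = subobjects in $\E$. And a sort interpretation on the right side is exactly an object with structure map through $U$ — that is, an object of $\F$! Similarly relation symbols. And finite products: in $\E_{/U}$, the product of $X, Y$ (both over $U$) is $X \times_U Y = X \times Y$ since $U$ subterminal. So products match. Hence interpretations of $\EE$-signature in $\F$ (extending $M|_\F$) = interpretations of $\EE/\phi$-signature in $\E$ (extending $M$) — this is a bijection on the level of data, even an isomorphism of the relevant structures, and it's functorial in morphisms.

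It remains to check the axioms match up. An axiom $\psi_1 \vdash_\Gamma \psi_2$ of $\EE$ holds in the $\F$-model iff $\interpretation{\psi_1}|_\F \leq \interpretation{\psi_2}|_\F$ in $\F$; the corresponding axiom $\psi_1 \wedge \phi \vdash_\Gamma \psi_2$ of $\EE/\phi$ holds in the $\E$-model iff $\interpretation{\psi_1 \wedge \phi} \leq \interpretation{\psi_2}$ in $\E$. Since the context $\Gamma$ involves only $\EE$-sorts (or $\TT$-sorts — need to be careful here), wait — the context of an axiom of $\EE$ can mix $\TT$-sorts and $\EE$-sorts. Here's the subtle point I expect to be the main obstacle: one must verify that for a formula $\psi$ over the combined signature, the interpretation $\interpretation{\psi}$ in the $\E$-model of $\EE/\phi$, restricted appropriately, agrees with $\interpretation{\psi}|_\F$ in the $\F$-model, after intersecting with $\interpretation{\phi}_M$. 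Because the $\EE$-sorts are forced to live over $U$, any context containing at least one $\EE$-sort already has its product-of-sorts mapping into $U$, so $- \wedge \phi$ is automatic there; but for formulas in a pure $\TT$-context one genuinely needs the $\wedge \phi$. One shows by induction on the structure of geometric formulas that $\interpretation{\psi}_{\text{($\E$, cond.)}} = j_!(\interpretation{\psi}_{\text{($\F$)}})$ as subobjects, using that $j_!$ (being an inclusion $\E_{/U} \hookrightarrow \E$ that preserves finite limits and arbitrary colimits, hence all geometric structure, when the sorts of the context all lie over $U$) commutes with the geometric connectives; the base cases for $\TT$-sorts and $\TT$-symbols are handled because $j^*M$ and the given structure on $M$ are related by $j^* = (-)\times U$, and one uses $\interpretation{\psi}_{M|_\F} = \interpretation{\psi}_M \times U = \interpretation{\psi \wedge \phi}_M$. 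Then $\interpretation{\psi_1}|_\F \leq \interpretation{\psi_2}|_\F$ in $\F$ translates under the fully faithful $j_!$ exactly to $\interpretation{\psi_1 \wedge \phi}_M \wedge (\cdots) \leq \interpretation{\psi_2 \wedge \phi}_M$, which unwinds to the axiom of $\EE/\phi$ — note $\psi_2 \wedge \phi$ versus $\psi_2$ is harmless since the inequality already forces everything under $\phi$ once $\psi_1 \wedge \phi$ is on the left or the context is over $U$.

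So the plan is: (1) identify $\F$ with $\E_{/U}$ and spell out $j_!, j^*$; (2) exhibit the bijection between model-extension data on the two sides via "an object over $U$ in $\E$ is the same as an object of $\F$", checking it's functorial in model-extension homomorphisms, giving the equivalence of the underlying "pre-model" groupoids/categories; (3) prove the formula-interpretation lemma $\interpretation{\psi}^{\E}_{\text{cond}} = j_!\interpretation{\psi}^{\F}$ by induction on geometric formulas, using $\interpretation{\phi}_M = U$ and preservation of geometric structure by $j_!$ on contexts lying over $U$ (and by $j^*$ in general for the $\TT$-part); (4) conclude that the axioms of $\EE$ in the $\F$-model are equivalent to the axioms of $\EE/\phi$ in the $\E$-model, so the bijection restricts to the full subcategories cutting out actual models, giving the stated equivalence $\mod{\EE}{\F, M|_\F} \simeq \mod{(\EE/\phi)}{\E, M}$. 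The main obstacle is step (3): getting the bookkeeping right for formulas whose context or subformulas mix $\TT$-sorts (not over $U$) with $\EE$-sorts (forced over $U$), and making precise in exactly which variable-contexts $j_!$ preserves the relevant geometric operations — this is where one must be careful that the "$\wedge \phi$" inserted into each axiom of $\EE/\phi$ is precisely what compensates for $j_!$ failing to preserve the top object and finite products on the nose over a pure $\TT$-context.
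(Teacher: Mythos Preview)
Your proposal is correct and follows essentially the same route as the paper: identify $\F$ with $\E/U$ for $U = \interpretation{\phi}_M$ via the further left adjoint $i_!$ of the open inclusion, observe that the added axioms on sorts and relation symbols in $\EE/\phi$ say precisely that their interpretations lie in $\E/U$, and then check that the modified axiom $\psi_1 \land \phi \turnstile{\Gamma} \psi_2$ in $\E$ is equivalent to $\psi_1 \turnstile{\Gamma} \psi_2$ in $\F$.

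The one place where the paper is more economical than your plan is your step (3). You propose an induction on geometric formulas to establish $\interpretation{\psi}^{\E}_{\text{cond}} = i_!\interpretation{\psi}^{\F}$. The paper avoids this: since $i^*$ is the inverse image part of a geometric morphism, it automatically preserves interpretations of all geometric formulas, so $\interpretation{\psi}_{M|_\F + E|_\F} = i^*\interpretation{\psi}_{M+E}$ comes for free. Combined with $i_! i^* X = X \times U$, this immediately gives $\interpretation{\psi \land \phi}_{M+E} = \interpretation{\psi}_{M+E} \times U = i_!\interpretation{\psi}_{M|_\F + E|_\F}$, and since $i_!$ is a full inclusion, the subobject inequalities transfer directly. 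So your anticipated ``main obstacle'' dissolves once you invoke this standard fact about geometric morphisms rather than redoing the induction by hand; the mixed-context bookkeeping you worry about is absorbed into that single observation.
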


\begin{proof}
  The open inclusion geometric morphism $i : \F \to \E$
  admits a further (full and faithful) left adjoint $i_! : \F \to \E$
  with essential image $\E / U$,
  where $U \defeq \interpretation{\phi}_M$.
  The composition $i_! \circ i^*$ is $X \mapsto X \times U$.
  \[ \begin{tikzcd}
    \E/U \simeq \F
    \ar[r, hook, shift left=2, "i_!"]
    \ar[r, phantom, "\rotatebox{90}{$\vdash$}"]
    & \E
    \ar[l, shift left=2, "i^*"]
  \end{tikzcd} \]

  For a sort $S$ of $\EE$,
  the axiom $\top \turnstile{x : S} \phi$ of $\EE / \phi$
  means precisely that
  for every $E \in (\EE/\phi)\dashmod(M)$,
  the object $\interpretation{S}_{M + E}$ must lie in
  the full subcategory $\E / U$,
  so we have established the equivalence
  for the case that $\EE$ contains only sorts.
  The quasi-inverse is of course
  the restriction functor $i^* : \E \to \F$,
  applied to the interpretation of every sort.
  For a relation symbol $R \subseteq \vec{S}$ of $\EE$
  (where the list $\vec{S}$ may contain sorts
  from both $\TT$ and $\EE$),
  the axiom $R(\vec{x}) \turnstile{\vec{x} : \vec{S}} \phi$
  similarly means
  $\interpretation{R}_{M + E} \subseteq \interpretation{\vec{S}}_{M + E} \times U$,
  so that $\interpretation{R}_{M + E}$
  corresponds to a subobject of
  $i^*(\interpretation{\vec{S}}_{M + E}) =
  \interpretation{\vec{S}}_{M|_\F + E|_\F}$.
  Finally, the modified axiom
  $\psi_1 \land \phi \turnstile{\Gamma} \psi_2$
  (or equivalently
  $\psi_1 \land \phi \turnstile{\Gamma} \psi_2 \land \phi$)
  is satisfied by $M + E$
  if and only if
  $\interpretation{\psi_1} \times U \subseteq
  \interpretation{\psi_2} \times U$,
  that is, if and only if
  $\psi_1 \turnstile{\Gamma} \psi_2$
  is satisfied by $M|_\F + E|_\F$.
\end{proof}

For a model $M \in \TT\dashmod(\E)$
and an extension
$E \in \EE\dashmod(M|_{\E_{o(\interpretation{\phi}_M)}})$,
the corresponding extension of $M$ along $\EE/\phi$
will be denoted $E/\phi$.
That means that we have
\[ (M + E/\phi)|_{\E_{o(\interpretation{\phi}_M)}} \cong
   M|_{\E_{o(\interpretation{\phi}_M)}} + E \]
as models of $\TT + \EE/\phi + \phi = \TT + \phi + \EE$.

\begin{lemma}
  \label{lemma-conditional-extension-equivalence-extension}
  An extension $\EE$ of $\TT + \phi$
  is an equivalence extension if and only if
  $\EE/\phi$ is an equivalence extension of $\TT$.
\end{lemma}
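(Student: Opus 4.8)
The plan is to reduce everything to the characterisation of equivalence extensions by \enquote{unique model extensions} from Lemma~\ref{lemma-equivalence-extension-if-unique-model-extensions}, and then to transport this property across the equivalence of categories of model extensions provided by Proposition~\ref{proposition-conditional-model-extensions}. Concretely, I would record first that, by Lemma~\ref{lemma-equivalence-extension-if-unique-model-extensions}, $\EE$ is an equivalence extension of $\TT + \phi$ if and only if every model of $\TT + \phi$ in every Grothendieck topos admits exactly one model extension along $\EE$ up to isomorphism, and similarly $\EE/\phi$ is an equivalence extension of $\TT$ if and only if the analogous statement holds for $\EE/\phi$ over $\TT$. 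Since an equivalence of categories preserves the property of having exactly one isomorphism class of objects, the whole statement will follow once these two families of model-extension categories are matched up correctly.

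For the \enquote{only if} direction, assume $\EE$ is an equivalence extension of $\TT + \phi$. Let $M \in \mod{\TT}{\E}$ be arbitrary and put $\F \defeq \E_{o(\interpretation{\phi}_M)}$, with open inclusion $i \colon \F \to \E$. The key observation is that $M|_\F$ is not merely a $\TT$-model but a model of $\TT + \phi$: indeed $\interpretation{\phi}_{M|_\F} = i^*\interpretation{\phi}_M$ is the terminal object of $\F$ by the very definition of the open subtopos attached to the subterminal object $\interpretation{\phi}_M$. Hence $M|_\F$ admits a unique model extension along $\EE$ up to isomorphism, so $\mod{\EE}{\F, M|_\F}$ has exactly one isomorphism class, and by Proposition~\ref{proposition-conditional-model-extensions} so does $\mod{(\EE/\phi)}{\E, M}$. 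As $M$ and $\E$ were arbitrary, Lemma~\ref{lemma-equivalence-extension-if-unique-model-extensions} shows $\EE/\phi$ is an equivalence extension of $\TT$.

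For the \enquote{if} direction, assume $\EE/\phi$ is an equivalence extension of $\TT$, and let $N$ be any model of $\TT + \phi$ in any topos $\F$. Viewed merely as a $\TT$-model, $N$ satisfies $\interpretation{\phi}_N = 1_\F$, so the open subtopos $\F_{o(\interpretation{\phi}_N)}$ is all of $\F$ and the restriction of $N$ to it is $N$ itself. Applying Proposition~\ref{proposition-conditional-model-extensions} to the $\TT$-model $N$ in the topos $\F$ therefore gives $\mod{\EE}{\F, N} \simeq \mod{(\EE/\phi)}{\F, N}$, and the latter has exactly one isomorphism class because $\EE/\phi$ is an equivalence extension of $\TT$. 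Thus every model of $\TT + \phi$ admits a unique model extension along $\EE$, and Lemma~\ref{lemma-equivalence-extension-if-unique-model-extensions} yields that $\EE$ is an equivalence extension of $\TT + \phi$.

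Since Proposition~\ref{proposition-conditional-model-extensions} already does the real work, I do not expect a serious obstacle; the one point I would be careful about is the interplay of the two invocations of that proposition --- in the \enquote{only if} direction the open subtopos $\F$ genuinely varies with $M$, whereas in the \enquote{if} direction one must recognise that an arbitrary model of $\TT + \phi$ lives over the trivial open subtopos, so that the proposition applies to it with $\F$ equal to the whole ambient topos. Everything else is formal transport of the \enquote{unique model extensions} criterion of Lemma~\ref{lemma-equivalence-extension-if-unique-model-extensions} along an equivalence of categories.
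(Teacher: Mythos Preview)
Your proof is correct. The \enquote{only if} direction is essentially identical to the paper's argument, combining Lemma~\ref{lemma-equivalence-extension-if-unique-model-extensions} with Proposition~\ref{proposition-conditional-model-extensions} exactly as you do.

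For the \enquote{if} direction, however, the paper takes a different and somewhat shorter route: rather than invoking Proposition~\ref{proposition-conditional-model-extensions} a second time, it observes that an equivalence extension of $\TT$ is automatically an equivalence extension of $\TT + \phi$ (stability of equivalence extensions under adding further extensions, stated right after the definition), and then uses the syntactic identity $\TT + \EE/\phi + \phi = \TT + \phi + \EE$ from Lemma~\ref{lemma-conditional-extension-first-properties}~(i) to conclude that $\EE/\phi$ and $\EE$ are the \emph{same} extension of $\TT + \phi$ up to syntactic equivalence. Your approach has the virtue of uniformity --- the same semantic tool handles both directions --- while the paper's argument avoids the model-extension bookkeeping entirely for this direction and makes the reduction to Lemma~\ref{lemma-conditional-extension-first-properties}~(i) explicit.
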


\begin{proof}
  If $\EE/\phi$ is an equivalence extension of $\TT$,
  then it is also an equivalence extension of $\TT + \phi$.
  But by
  Lemma \ref{lemma-conditional-extension-first-properties} (i),
  $\EE/\phi$ is syntactically equivalent to $\EE$
  as an extension of $\TT + \phi$.

  Conversely,
  if $\EE$ is an equivalence extension of $\TT + \phi$,
  then by
  Lemma \ref{lemma-equivalence-extension-if-unique-model-extensions},
  and
  Proposition \ref{proposition-conditional-model-extensions}
  every $\TT$-model admits a unique (up to isomorphism)
  model extension along $\EE/\phi$,
  so we are done by the other direction of
  Lemma \ref{lemma-equivalence-extension-if-unique-model-extensions}.
\end{proof}

\subsection{Systems of theory extensions}

Given two equivalence extensions $\EE_1, \EE_2$ of a theory $\TT$,
their sum $\EE_1 + \EE_2$ is again
an equivalence extension of $\TT$.
The same is true if $\EE_2$ is not an extension of $\TT$
but of $\TT + \EE_1$,
that is, if $\EE_2$ depends on $\EE_1$.
To formulate such statements in greater generality,
we first need to clarify
in which ways an extension can be built up
from smaller extensions,
possibly with dependencies among them.

\begin{definition}
  Let $\TT$ be a geometric theory.
  A \emph{system of extensions} over $\TT$
  is a family $(\EE_i)_{i \in I}$,
  indexed by some partially ordered set $I$,
  where each $\EE_i$ consists of
  a set of sorts,
  sets of relation and function symbols
  --- whose signatures may contain sorts
  from $\TT$ and all $\EE_j$ with $j \leq i$,
  treated disjointly ---
  and a set of (geometric) axioms
  --- which may again use all sorts and symbols
  from $\TT$ and $\EE_j$ with $j \leq i$,
  treated disjointly.
  Taking the disjoint union of the sorts, symbols and axioms
  of all $\EE_i$
  thus yields an extension of $\TT$,
  which we denote $\sum_{i \in I} \EE_i$.
\end{definition}

For a subset $J \subseteq I$
of the partially ordered set $I$,
we use the notation
\[ \downset{J} \defeq \{ i \in I \mid \exists j \in J : i \leq j\}
   \qquad\text{and}\qquad
   \downdownset{J} \defeq \downset{J} \setminus J
   \rlap{.} \]
Given an initial segment $J \subseteq I$
(that is, $\downset{J} = J$),
we can of course restrict a system $(\EE_i)_{i \in I}$ to $J$,
obtaining a system $(\EE_i)_{i \in J}$ over the same base theory $\TT$.
In particular, we see that for every $i \in I$,
$\EE_i$ is an extension of $\TT + \sum_{j < i} \EE_j$.
More generally,
the index set $I$ can be restricted
to any \emph{inward closed} subset $J \subseteq I$
(if $i \leq j \leq k$ and $i, k \in J$ then $j \in J$),
resulting in a system $(\EE_i)_{i \in J}$
over $\TT + \sum_{i \in \downdownset{J}} \EE_i$ instead of $\TT$.

We can also push a system $(\EE_i)_{i \in I}$ over $\TT$ forward
along an order-preserving map $f : I \to J$
to another partially ordered set $J$
by setting $\tilde{\EE}_j \defeq \sum_{i \in f^{-1}(j)} \EE_i$.
Note that $f^{-1}(j)$ is inward closed
and $\tilde{\EE}_j$ is an extension of
$\TT + \sum_{i \in \downdownset{f^{-1}(j)}} \EE_i$
and therefore also of
$\TT + \sum_{j' < j} \tilde{\EE}_{j'}$.

\begin{definition}
  Let $(\EE_i)_{i \in I}$ be a system of extensions
  over a theory $\TT$.
  A \emph{system of model extensions} $(E_i)_{i \in I}$
  for $(\EE_i)_{i \in I}$
  in a Grothendieck topos $\E$
  over some $M \in \mod{\TT}{\E}$
  is just a model extension $\sum_{i \in I} E_i$
  of $M$ along $\sum_{i \in I} \EE_i$,
  regarded as a family of model extensions $E_i$
  of $M + \sum_{j < i} E_j$ along $\EE_i$.

  For a family of subtoposes $(\E_i)_{i \in I}$ of $\E$
  with $\E_i \subseteq \E_j$ for $j < i$,
  a system of model extensions \emph{in the $\E_i$}
  is similarly a family $(E_i)_{i \in I}$,
  where each $E_i$ is an extension
  of $M|_{\E_i} + \sum_{j < i} E_j|_{\E_i}$
  along $\EE_i$.
  If $M|_{\E_i} + \sum_{j \leq i} E_j|_{\E_i}$
  is a universal model of $\TT + \sum_{j \leq i} \EE_j$
  for every $i \in I$,
  then we call $(\EE_i, E_i)_{i \in I}$
  a \emph{system of presentations}
  of the $\E_i$ over ($\TT$ and) $M$.
\end{definition}

The following lemma
is our general formulation of how equivalence extensions
can be built up from smaller equivalence extensions.
We will also use it
(in Corollary \ref{corollary-system-of-presentations-over-presentation}
below)
to clarify the role of systems of presentations
which all present the same topos.

\begin{lemma}
  \label{lemma-system-of-equivalence-extensions}
  Let $(\EE_i)_{i \in I}$ be a system over $\TT$
  where the partial order $I$ is well-founded.
  Then the assertions
  \begin{enumerate}[label=(\roman*)]
    \item
      For every $i \in I$,
      $\EE_i$ is an equivalence extension
      of $\TT + \sum_{j < i} \EE_j$.
    \item
      For every $i \in I$,
      $\sum_{j \leq i} \EE_j$ is an equivalence extension of $\TT$.
  \end{enumerate}
  are equivalent and they imply:
  \begin{enumerate}[label=(\roman*)]
    \setcounter{enumi}{2}
    \item
      $\sum_{i \in I} \EE_i$ is an equivalence extension of $\TT$.
  \end{enumerate}
\end{lemma}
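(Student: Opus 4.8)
The statement has three parts: (i) $\iff$ (ii), and (i)/(ii) $\implies$ (iii). The plan is to prove (i) $\iff$ (ii) by induction on the well-founded order $I$, and then to deduce (iii) from (i) using a transfinite/colimit argument over all initial segments.

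The statement breaks into (i)$\Leftrightarrow$(ii) and (i)/(ii)$\Rightarrow$(iii), and the only real content is the implication (i)$\Rightarrow$(iii); the rest is formal, using two facts about equivalence extensions already recorded above: an equivalence extension of a theory remains one after enlarging the base theory, and the geometric morphisms $\pi_\EE$ have the two-out-of-three property (since $\pi_{\EE_1+\EE_2}\cong\pi_{\EE_1}\circ\pi_{\EE_2}$ and equivalences of toposes satisfy two-out-of-three). Write $\TT_J\defeq\TT+\sum_{i\in J}\EE_i$ for an initial segment $J\subseteq I$.

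Granting (i)$\Rightarrow$(iii), I would get (i)$\Rightarrow$(ii) by applying it to the subsystem $(\EE_k)_{k\leq i}$ over $\TT$: this is again indexed by a well-founded poset, and for $k\leq i$ the hypothesis ``$\EE_k$ is an equivalence extension of $\TT+\sum_{j<k}\EE_j$'' is just (i), since the set $\{j \mid j<k\}$ does not change when passing from $I$ to $\{j \mid j\leq i\}$. For (ii)$\Rightarrow$(i) I would run a well-founded induction on $I$, proving for each $i$ the conjunction ``$\sum_{j<i}\EE_j$ is an equivalence extension of $\TT$, and $\EE_i$ is an equivalence extension of $\TT_{\{j \mid j<i\}}$''. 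In the step for $i$, the second conjuncts of the hypotheses for all $k<i$ say precisely that the subsystem $(\EE_k)_{k<i}$ over $\TT$ satisfies (i); hence (i)$\Rightarrow$(iii) applied to it shows $\sum_{k<i}\EE_k$ is an equivalence extension of $\TT$, and then two-out-of-three together with (ii) for $i$ (which says $(\sum_{k<i}\EE_k)+\EE_i$ is an equivalence extension of $\TT$) gives that $\EE_i$ is an equivalence extension of $\TT_{\{j \mid j<i\}}$. Note that one must induct over $I$ and not over the inclusion order of initial segments, as the latter need not be well-founded (take $I$ an infinite antichain).

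To prove (i)$\Rightarrow$(iii), fix an arbitrary Grothendieck topos $\E$; it suffices to show that the forgetful functor $U\colon\TT_I\dashmod(\E)\to\TT\dashmod(\E)$ is an equivalence. Unwinding definitions (this is the notion of a system of model extensions), a $\TT_I$-model over a $\TT$-model $M$ is the same as a family $(E_i)_{i\in I}$ with $E_i$ a model extension of $M+\sum_{j<i}E_j$ along $\EE_i$, and similarly for morphisms. For essential surjectivity, given $M$ I would construct $(E_i)_{i\in I}$ by well-founded recursion on $I$: at stage $i$ the already-built $(E_j)_{j<i}$ assemble $M+\sum_{j<i}E_j$ into a $\TT_{\{j \mid j<i\}}$-model, and since $\EE_i$ is an equivalence extension of $\TT_{\{j \mid j<i\}}$ its forgetful functor is in particular essentially surjective, so an $E_i$ exists (a choice function over the recursion makes this precise). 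For fullness and faithfulness, given $N,N'\in\TT_I\dashmod(\E)$ and a morphism $g$ of their underlying $\TT$-models, I would extend $g$ over the sorts of the $\EE_i$ by well-founded recursion, using at stage $i$ that the forgetful functor of the equivalence extension $\EE_i$ over $\TT_{\{j \mid j<i\}}$ is \emph{fully faithful}, so that the partial morphism already built over $\TT_{\{j \mid j<i\}}$ has a unique lift over $\TT_{\{j \mid j\leq i\}}$; uniqueness at each stage yields a unique global lift, so $U$ is fully faithful, hence an equivalence, and this holds for every $\E$.

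The main obstacle is purely organizational: setting up the two well-founded recursions so that at each stage $i$ one genuinely has a model (resp.\ a morphism of models) of $\TT+\sum_{j<i}\EE_j$ --- which needs the observation that satisfaction of a geometric sequent in a model depends only on the interpretations of the symbols occurring in it, so that the axioms of each $\EE_j$ with $j<i$ are not disturbed by the later, irrelevant data --- and being careful, as noted, to recurse over $I$ rather than over its possibly non-well-founded lattice of initial segments. With the recursions in place, every individual stage is immediate from the relevant instance of the hypothesis that $\EE_i$ is an equivalence extension, and the reduction of the whole statement to (i)$\Rightarrow$(iii) is just bookkeeping with two-out-of-three.
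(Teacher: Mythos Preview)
Your proposal is correct and follows essentially the same route as the paper: prove (i)$\Rightarrow$(iii) first by well-founded recursion on $I$ (faithfulness, fullness, and essential surjectivity of the total forgetful functor handled separately), then obtain (i)$\Rightarrow$(ii) by restricting to the subsystem $(\EE_j)_{j\leq i}$, and (ii)$\Rightarrow$(i) by well-founded induction together with (i)$\Rightarrow$(iii) on the subsystem $(\EE_j)_{j<i}$ and two-out-of-three. The only cosmetic difference is that where you appeal to a choice function in the essential-surjectivity recursion, the paper instead observes that each $U_{\EE_i}$ is an isofibration and hence strictly surjective on objects, which makes the recursion go through without transporting along isomorphisms.
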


\begin{proof}
  $(i) \Rightarrow (iii)$.
  We show this implication first,
  because it will be used for the others.
  To see that the functor
  \[ U_{\sum_{i \in I} \EE_i} :
  \mod{(\TT + \sum_{i \in I} \EE_i)}{\E} \to \mod{\TT}{\E} \]
  is an equivalence,
  start by taking two models $M, M'$
  of $\TT + \sum_{i \in I} \EE_i$ in $\E$
  and homomorphisms $f, g : M \to M'$
  with $U_{\sum_{i \in I} \EE_i}(f) = U_{\sum_{i \in I} \EE_i}(g)$.
  Then we can show $f = g$
  by well-founded induction over $i \in I$
  using the faithfulness of each
  \[ U_{\EE_i} : \mod{(\TT + \sum_{j \leq i} \EE_j)}{\E} \to
  \mod{(\TT + \sum_{j < i} \EE_j)}{\E} \text{.}\]
  Given instead only
  $f_0 : U_{\sum_{i \in I} \EE_i}(M) \to
  U_{\sum_{i \in I} \EE_i}(M')$,
  we use the fullness of the $U_{\EE_i}$
  to construct $f : M \to M'$
  with $U_{\sum_{i \in I} \EE_i}(f) = f_0$
  by well-founded recursion.
  Finally,
  let $M_0 \in \mod{\TT}{\E}$
  and notice that the $U_{\EE_i}$ are not only essentially surjective
  but strictly surjective on objects (since they are isofibrations).
  This means we can again use well-founded recursion
  to construct a model extension $\sum_{i \in I} E_i$ of $M_0$
  and therefore a (strict) preimage $M_0 + \sum_{i \in I} E_i$
  of $M_0$ under $U_{\sum_{i \in I} \EE_i}$.

  $(i) \Rightarrow (ii)$.
  Fix $i \in I$.
  Then this follows immediately
  by applying $(i) \Rightarrow (iii)$
  to the restricted system $(\EE_j)_{j \leq i}$.

  $(ii) \Rightarrow (i)$.
  By well-founded induction,
  let $i \in I$
  with $\EE_j$ an equivalence extension for all $j < i$.
  Then by $(i) \Rightarrow (iii)$
  applied to $(\EE_j)_{j < i}$,
  $\sum_{j < i} \EE_j$ is an equivalence extension.
  But since $\sum_{j \leq i} \EE_j = \sum_{j < i} \EE_j + \EE_i$
  is an equivalence extension by assumption,
  $\EE_i$ must be one too.
\end{proof}

\begin{remark}
  The assumption that $I$ is well-founded
  is necessary in Lemma \ref{lemma-system-of-equivalence-extensions}.
  Indeed, we can take $\TT$ to be the empty theory
  and consider the system of theory extensions
  $(\EE_n)_{n \in \ZZ}$
  where every $\EE_n$ is the quotient extension
  consisting of the contradictory axiom $\top \turnstile{[]} \bot$.
  Then none of the $\TT + \sum_{m \leq n} \EE_m$
  is equivalent to $\TT$,
  and neither is $\TT + \sum_{n \in \ZZ} \EE_n$,
  but every $\EE_n$ is an equivalence extension of
  $\TT + \sum_{m < n} \EE_m$.
  If we insist on treating syntactically equivalent theories as equal,
  we can even say that every $\EE_n$ is the \emph{empty} extension
  of its respective base theory.
  In this sense,
  a system of theory extensions $(\EE_i)_{i \in I}$
  is not fully determined by the individual extensions $\EE_i$,
  for $I$ not well-founded.
  But we would rather argue that
  the notion of knowing an extension
  without knowing its base theory
  is ill-defined.
  (In the language of type theory,
  the type of extensions is inherently a dependent type.)
\end{remark}

\begin{corollary}
  \label{corollary-system-of-presentations-over-presentation}
  Let $M \in \mod{\TT}{\E}$ be universal.
  Then a system of extensions $(\EE_i, E_i)_{i \in I}$
  over $(\TT, M)$ in $\E$
  with $I$ well-founded
  is a system of presentations of $\E$
  if and only if
  all $\EE_i$ are equivalence extensions.
  And in this case,
  $M + \sum_{i \in I} E_i \in \mod{(\TT + \sum_{i \in I} \EE_i)}{\E}$
  is also universal.
\end{corollary}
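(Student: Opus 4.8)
The plan is to assemble this directly from Lemma~\ref{lemma-equivalence-extension-of-universal-model} and Lemma~\ref{lemma-system-of-equivalence-extensions}, with the only real work being to match up the definition of a system of presentations. First I would unwind what "system of presentations of $\E$" means in this setting: since the system lives in $\E$ itself, all the subtoposes $\E_i$ occurring in the definition are equal to $\E$, so the restriction functors $(\hole)|_{\E_i}$ are identities, and the condition reduces to the statement that for every $i \in I$ the model $M + \sum_{j \leq i} E_j$ is a universal model of $\TT + \sum_{j \leq i} \EE_j$.

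Next I would observe that $\sum_{j \leq i} E_j$ is, after regrouping the data, a model extension of $M$ along the extension $\sum_{j \leq i} \EE_j$ of $\TT$, and $M$ is universal; so Lemma~\ref{lemma-equivalence-extension-of-universal-model} applies and tells us that $M + \sum_{j \leq i} E_j$ is universal if and only if $\sum_{j \leq i} \EE_j$ is an equivalence extension of $\TT$. Therefore $(\EE_i, E_i)_{i \in I}$ is a system of presentations of $\E$ exactly when condition (ii) of Lemma~\ref{lemma-system-of-equivalence-extensions} holds for the system $(\EE_i)_{i \in I}$. Since $I$ is well-founded, that lemma gives the equivalence with condition (i): each $\EE_i$ is an equivalence extension of $\TT + \sum_{j < i} \EE_j$, which is precisely what "all $\EE_i$ are equivalence extensions" means for a system of extensions. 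This proves the stated equivalence.

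For the final assertion I would invoke Lemma~\ref{lemma-system-of-equivalence-extensions}~(iii), which under the equivalent conditions (i)/(ii) yields that $\sum_{i \in I} \EE_i$ is an equivalence extension of $\TT$; then one more application of Lemma~\ref{lemma-equivalence-extension-of-universal-model}, again using that $M$ is universal and that $\sum_{i \in I} E_i$ is a model extension of $M$ along $\sum_{i \in I} \EE_i$, gives that $M + \sum_{i \in I} E_i$ is universal.

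I do not expect a genuine obstacle here: the argument is a short chain of implications through results already established. The only point that requires care is the bookkeeping in the first step — checking that "system of presentations of $\E$" with all subtoposes equal to $\E$ really does collapse to the pointwise universality of the partial sums, and that a partial sum $\sum_{j \leq i} E_j$ legitimately counts as a single model extension of the original universal model $M$ (rather than of some intermediate model) along the corresponding partial sum of extensions.
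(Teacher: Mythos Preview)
Your proposal is correct and follows exactly the paper's approach: translate the system-of-presentations condition via Lemma~\ref{lemma-equivalence-extension-of-universal-model} into condition~(ii) of Lemma~\ref{lemma-system-of-equivalence-extensions}, use the equivalence (i)$\Leftrightarrow$(ii) there, and then invoke (iii) plus Lemma~\ref{lemma-equivalence-extension-of-universal-model} again for the final universality claim. The paper's proof is simply a two-sentence compression of what you wrote out, and your bookkeeping remarks about the subtoposes collapsing to $\E$ are exactly the unwinding needed.
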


\begin{proof}
  By Lemma \ref{lemma-equivalence-extension-of-universal-model},
  the universality of the models $M + \sum_{j \leq i} E_j$
  for all $i \in I$
  is just condition $(ii)$ of Lemma
  \ref{lemma-system-of-equivalence-extensions}
  and the universality of $M + \sum_{i \in I} E_i$
  is condition $(iii)$ of Lemma
  \ref{lemma-system-of-equivalence-extensions}.
\end{proof}

\subsection{Systems of conditional extensions}

We now briefly describe how to apply
the construction of conditional extensions
to systems of theory extensions.
Given a system $(\EE_i)_{i \in I}$ over $\TT$
and a family of closed geometric formulas $\phi_i$ of $\TT$,
we can of course blindly apply the rules of
Definition \ref{definition-conditional-extension}
to every $\EE_i$,
and we do get a new system of extensions
\[ (\EE_i/\phi_i)_{i \in I} \]
over $\TT$,
simply because the signature of $\EE_i/\phi_i$
is the same as that of $\EE_i$.
But then we can not say that
the extension $\EE_i/\phi_i$
of $\TT + \sum_{j < i} \EE_j/\phi_j$
is an honest conditional extension,
since there is no sensible way
to regard $\EE_i$ as an extension of
$\TT + \sum_{j < i} \EE_j/\phi_j + \phi_i$.
For example,
fixing some $\TT$, $\EE_1$ and $\phi_1$
and using $\phi_2 = \top$,
the operation of mapping an extension $\EE_2$
of $\TT + \phi_1 + \EE_1$
to $\TT + \EE_1/\phi_1 + \EE_2$
is not well-defined with respect to syntactic equivalence.

To remedy this,
we impose the condition that
\[ \phi_i \turnstile{[]} \phi_j \qquad\text{for $j \leq i$} \]
is provable in $\TT$.
Then we can regard each $\EE_i$ as an extension of
\[ \TT + \sum_{j < i} \EE_j/\phi_j + \phi_i =
   \TT + \sum_{j < i} \EE_j + \phi_i
   \rlap{,} \]
and obtain the system $(\EE_i/\phi_i)_{i \in I}$
of conditional extensions.

In Definition \ref{definition-conditional-extension},
we treated $\EE$ as an extension of $\TT + \phi$,
not of $\TT$,
which is justified in view of
Lemma \ref{lemma-conditional-extension-equivalence-extension}.
This refinement is dropped here
in order to be able to use
our definition of a system of theory extensions
without modification.
In our application
(see Theorem \ref{theorem-main-gluing} below),
the extensions $\EE_i$ are such that
$\phi_i$ is provable from $\TT + \sum_{j \leq i} \EE_i$.

We also want to be able to construct model extensions,
to be denoted as
\[ M + \sum_{i \in I} E_i/\phi_i \rlap{,} \]
for systems of conditional extensions,
as we did in
Proposition \ref{proposition-conditional-model-extensions}
for a single conditional extension.
Let $(E_i)_{i \in I}$ be a family of model extensions
for $(\EE_i)_{i \in I}$,
over some $M \in \TT\dashmod(\E)$,
in the subtoposes
\[ \E_i \defeq \E_{o(\interpretation{\phi_i}_M)} \rlap{.} \]
This makes sense because
our requirement on the $\phi_i$
ensures $\E_i \subseteq \E_j$ for $j \leq i$.
Let $\iota_i : \E_i \to \E$
be the open embeddings,
and also $\iota_i^j : \E_i \to \E_j$
for $j \leq i$.
\[ \begin{tikzcd}
  \E_j \ar[r, hook, "\iota_j"] & \E \\
  \E_i \ar[ur, hook, "\iota_i"'] \ar[u, hook, "\iota_i^j"] &
\end{tikzcd} \]

Then,
like in Proposition \ref{proposition-conditional-model-extensions},
applying $(\iota_i)_!$ to the interpretations
of the sorts of $\EE_i$ in $E_i$
produces objects in $\E / \interpretation{\phi_i}_M$,
as required for
a model extension of $M$ along $\sum_{i \in I} \EE_i/\phi_i$.
The signatures of the relation symbols of $\EE_i$
can contain sorts $S$ from $\EE_i$,
sorts $S'$ from various $\EE_j$ with $j \leq i$
and sorts $S''$ from $\TT$,
so the interpretation of such a symbol is a subobject
of an object like
\[ \interpretation{S} \times
   (\iota_i^j)^* \interpretation{S'} \times
   (\iota_i)^* \interpretation{S''}
   \;\in\; \E_i
   \rlap{.} \]
Applying $(\iota_i)_!$
yields precisely a subobject of
$(\iota_i)_! \interpretation{S} \times
(\iota_j)_! \interpretation{S'} \times
\interpretation{S''} \times \interpretation{\phi_i}$,
since
\[ (\iota_i)_! (\iota_i^j)^* \interpretation{S'} =
   (\iota_i)_! (\iota_i)^* (\iota_j)_! \interpretation{S'} =
   (\iota_j)_! \interpretation{S'} \times \interpretation{\phi_i}
   \rlap{.} \]
Also,
the modified axioms of $\EE_i$ in $\EE_i/\phi_i$
hold for the structure in $\E$ defined this way
because they can be tested after pulling back to $\E_i$,
as before.
In summary,
we have constructed a model extension
$\sum_{i \in I} E_i / \phi_i$ of $M$
(in the topos $\E$),
such that for every $i$,
the extension $E_i / \phi_i$
is obtained from the model extension $E_i$
(in the topos $\E_i$)
as in Proposition \ref{proposition-conditional-model-extensions},
in accordance with our previous use of the notation $E/\phi$.


\subsection{Syntactic presentation of a glued topos}

We need a lemma about
testing the property of a geometric morphism to be an equivalence
on an open cover of the target topos.
Or rather,
we need its reformulation in terms of models of theories
(Corollary \ref{corollary-check-universality-locally} below),
saying that the property of a model to be universal
can be tested on an \enquote{open cover of the theory}.

\begin{lemma}
  \label{lemma-check-equivalence-locally}
  Let $f : \E \to \F$
  be a geometric morphism
  and let a cover $\F = \bigcup_{i \in I} \F_i$
  by open subtoposes $\F_i = \F_{o(U_i)}$ be given
  such that
  the induced geometric morphisms $f_i : \E_i \to \F_i$,
  where $\E_i = \E_{o(f^*U_i)}$,
  are all equivalences.
  \[ \begin{tikzcd}
    \E_i \ar[r, "f_i", "\simeq"'] \ar[d, hook]
      \ar[dr, phantom, "\lrcorner", very near start] &
    \F_i \ar[d, hook] \\
    \E \ar[r, "f"] & \F
  \end{tikzcd} \]
  Then $f$ is an equivalence.
\end{lemma}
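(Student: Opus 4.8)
The plan is to reduce the claim to a purely topos-theoretic statement about local triviality of geometric morphisms, and then to verify that statement by descent along the open cover. Concretely, I would recall that a geometric morphism $f : \E \to \F$ is an equivalence if and only if the unit and counit of the adjunction $f^* \dashv f_*$ are isomorphisms; equivalently, $f^*$ is full, faithful and essentially surjective. The open subtoposes $\F_i = \F_{o(U_i)}$ are (up to equivalence) the slices $\F/U_i$, with $\F_i \to \F$ given by $(i_i)^* = (-) \times U_i$ and $(i_i)_!$ the forgetful functor; similarly $\E_i \simeq \E/f^*U_i$, and the square in the statement commutes because $f^*$ commutes with products and $f^*U_i$ is the pullback of $U_i$.

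The key steps, in order: First I would observe that since the $U_i$ cover $\F$, the functors $(i_i)^* : \F \to \F/U_i$ are jointly conservative, and likewise the $(j_i)^* : \E \to \E/f^*U_i$ are jointly conservative (an open cover of a topos induces a jointly conservative family of restriction functors; this is the localic analogue of an open cover of a space). Second, given an object $Y \in \F$, I would use that $Y|_{U_i} = Y \times U_i$ lies in the essential image of $f_i^* = f^*|_{U_i}$ — say $Y \times U_i \cong f_i^*(X_i)$ for some $X_i \in \E/f^*U_i$ — and then glue the $X_i$ along the canonical isomorphisms over $U_i \times U_j$ to obtain an object $X \in \E$ with $f^*X \cong Y$; here one uses that $\E$ is a topos, hence has effective descent for the open cover $\{f^*U_i\}$, and that $f^*$ being a left exact left adjoint, together with its local quasi-inverses $f_i^{-1}$, transports the descent datum. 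Third, for fullness and faithfulness of $f^*$ one argues pointwise: a morphism $f^*X \to f^*X'$ in $\F$ restricts over each $U_i$ to a morphism that, by the equivalence $f_i$, comes from a unique morphism $X|_{f^*U_i} \to X'|_{f^*U_i}$ in $\E/f^*U_i$; these agree on overlaps by uniqueness, so they glue to a unique morphism $X \to X'$ in $\E$ mapping to the given one. Faithfulness is the same argument with the gluing replaced by joint conservativity.

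The main obstacle I expect is making the gluing step (essential surjectivity) precise: one must check that the isomorphisms $f_i^{-1}(Y\times U_i)|_{f^*U_i\times f^*U_j} \cong f_j^{-1}(Y\times U_j)|_{f^*U_i\times f^*U_j}$ assemble into a genuine descent datum satisfying the cocycle condition over triple overlaps, and that the resulting glued object $X$ really satisfies $f^*X \cong Y$ rather than merely $f^*X|_{U_i} \cong Y|_{U_i}$ for each $i$. This is where the effectivity of descent for open covers in a topos, plus the compatibility of $f^*$ with these descent data (which follows from $f^*$ being a morphism of toposes and from the naturality of the equivalences $f_i$), does the real work; everything else is a routine diagram chase. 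Alternatively, and perhaps more cleanly, I would phrase the whole argument in the language of geometric morphisms: the condition says $f$ becomes an equivalence after pulling back along each $\F_i \hookrightarrow \F$, the $\F_i \hookrightarrow \F$ are jointly surjective open inclusions, hence form an effective epimorphic family in the (2-)category of toposes over $\F$, and "being an equivalence" is a property local on the target for such families — so $f$ is an equivalence. The proof then amounts to citing or re-proving this locality principle, which is exactly the content of the gluing-of-toposes formalism.
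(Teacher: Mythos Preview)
Your descent-plus-conservativity strategy is sound, but watch the direction of $f^*$: for $f : \E \to \F$ the inverse image goes $\F \to \E$, so essential surjectivity of $f^*$ asks that every $X \in \E$ be of the form $f^*Y$ for some $Y \in \F$, and it is the $Y_i \in \F_i$ that must be glued (in $\F$), not the $X_i \in \E_i$. As written, ``$Y|_{U_i}$ lies in the essential image of $f_i^*$'' and ``$f^*X \cong Y$'' with $X \in \E$, $Y \in \F$ have the roles reversed. With that corrected, the argument goes through.

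The paper's route is genuinely different and avoids gluing entirely. Rather than building a quasi-inverse to $f^*$ object by object, it shows directly that the unit $\eta_f$ and counit $\varepsilon_f$ of $f^* \dashv f_*$ are isomorphisms, using joint conservativity of the restriction functors $a_{U_i} : \F \to \F_i$ and $a_{V_i} : \E \to \E_i$: it suffices that $a_{U_i}\eta_f$ and $a_{V_i}\varepsilon_f$ be isomorphisms for each $i$. The substance of the proof is a string-diagram computation exhibiting explicit natural isomorphisms identifying $a_{U_i}\eta_f$ with $\eta_{f_i}a_{U_i}$ (up to an invertible comparison of codomains coming from the compatibility of $f^*$ and $f_*$ with the open-subtopos adjunctions $j_U \dashv a_U \dashv i_U$), and likewise for the counit. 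Since each $f_i$ is an equivalence, $\eta_{f_i}$ and $\varepsilon_{f_i}$ are invertible, and the claim follows. This trades the cocycle verification you correctly flag as the main obstacle for a purely formal manipulation of adjunction data; your approach makes the geometric content (gluing local inverses into a global one) more transparent, while the paper's is cleaner to execute once the relevant compatibility isomorphisms $\alpha, \beta$ between $f^*j_Ua_U \cong j_Va_Vf^*$ and $a_Uf_*j_Va_V \cong a_Uj_Ua_Uf_*$ are written down.
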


\begin{proof}
  The fact that the $\F_i$ cover $\F$ means
  $1_\F = \bigvee_{i \in I} U_i$
  and, equivalently,
  the sheafification functors $a_{U_i} : \F \to \F_i$
  are jointly conservative.
  Setting $V_i \defeq f^*U_i$
  we similarly have $1_\E = \bigvee_{i \in I} V_i$
  since $f^*$ preserves colimits.
  We will show that the unit $\eta_f$ and counit $\varepsilon_f$
  of the adjunction $f^* \dashv f_*$
  are isomorphisms
  by showing that $a_{U_i} \eta_f$ and $a_{V_i} \varepsilon_f$
  are isomorphisms for all $i \in I$.
  More precisely,
  if $\eta_{f_i}$ and $\varepsilon_{f_i}$
  are the unit and counit of ${f_i}^* \dashv {f_i}_*$
  we will show that
  $a_{U_i} \eta_f$ and $\eta_{f_i} a_{U_i}$
  only differ by an isomorphism of their codomains
  $a_{U_i} f_* f^* \cong {f_i}_* {f_i}^* a_{U_i}$,
  and similarly
  $a_{V_i} \varepsilon_f$ and $\varepsilon_{f_i} a_{V_i}$
  only differ by an isomorphism of their domains
  $a_{V_i} f^* f_* \cong {f_i}^* {f_i}_* a_{V_i}$.

  Recall that $a_U : \F \to \F_{o(U)}$ has two fully faithful adjoints
  $j_U \dashv a_U \dashv i_U$,
  where $j_U(a_U(X)) = X \times U$ and $i_U(a_U(X)) = X^U$.
  Thus we have the following functors and adjunctions
  \[ \begin{tikzcd}
        [row sep=large, column sep=large, cells={minimum width=14mm, minimum height=8mm}]
    \E_{o(V)} & \F_{o(U)} \\
    \E
      \ar[from=u, "i_V", hook, shift left=6]
      \ar[u, phantom, "\dashv", shift left=3]
      \ar[u, "a_V"', very near start]
      \ar[u, phantom, "\dashv", shift right=3]
      \ar[from=u, "j_V"', hook, shift right=6]
      \ar[from=r, "f^*"', shift right=2]
      \ar[r, phantom, "\dashv"{rotate=-90}]
      \ar[r, "f_*"', shift right=2]
      &
    \F
      \ar[from=u, "i_U", hook, shift left=6]
      \ar[u, phantom, "\dashv", shift left=3]
      \ar[u, "a_U"', very near start]
      \ar[u, phantom, "\dashv", shift right=3]
      \ar[from=u, "j_U"', hook, shift right=6]
    \rlap{,}
  \end{tikzcd} \]
  where
  $\varepsilon_U : a_U i_U \to \Id_{\F_{o(U)}}$,
  $\tilde{\eta}_U : \Id_{\F_{o(U)}} \to a_U j_U$,
  $\varepsilon_V : a_V i_V \to \Id_{\E_{o(V)}}$ and
  $\tilde{\eta}_V : \Id_{\E_{o(V)}} \to a_V j_V$
  are isomorphisms.
  The geometric morphism $\E_{o(V)} \to \F_{o(U)}$
  is given by $a_V f^* j_U \dashv a_U f_* i_V$.
  Our assumption therefore means that
  the following two natural transformations
  (denoted as string diagrams)
  are isomorphisms.
  \[ \begin{tikzpicture}[string diagram]
    \coordinate (tl) at (-3,  0);
    \coordinate (br) at ( 3, -6);
    \path
      (0,  0) coordinate (1) \dotnode{north:$\tilde{\eta}_U$}
      (0, -2) coordinate (2) \dotnode{north:$\eta_f$}
      (0, -4) coordinate (3) \dotnode{north:$\eta_V$}
      ;
    \path[string]
      ({$(1) + (-5, 0)$} |- br) node[bottom label]{$j_U$}
      to ($(1) + (-5, -5)$)
      arc[start angle=180, end angle=0, radius=5]
      to ({$(1) + (5, 0)$} |- br) node[bottom label]{$a_U$};
    \path[string, dashed, thick]
      ({$(2) + (-3, 0)$} |- br) node[bottom label]{$f^*$}
      to ($(2) + (-3, -3)$)
      arc[start angle=180, end angle=0, radius=3]
      to ({$(2) + (3, 0)$} |- br) node[bottom label]{$f_*$};
    \path[string]
      ({$(3) + (-1, 0)$} |- br) node[bottom label]{$a_V$}
      to ($(3) + (-1, -1)$)
      arc[start angle=180, end angle=0, radius=1]
      to ({$(3) + (1, 0)$} |- br) node[bottom label]{$i_V$};
  \end{tikzpicture}
  \qquad
  \begin{tikzpicture}[string diagram]
    \coordinate (tl) at (-3,  2);
    \coordinate (br) at ( 3, -4);
    \path
      (0,  0) coordinate (1) \dotnode{south:$\tilde{\varepsilon}_U$}
      (0, -2) coordinate (2) \dotnode{south:$\varepsilon_f$}
      (0, -4) coordinate (3) \dotnode{south:$\varepsilon_V$}
      ;
    \path[string]
      ({$(3) + (-5, 0)$} |- tl) node[top label]{$i_V$}
      to ($(3) + (-5, 5)$)
      arc[start angle=-180, end angle=0, radius=5]
      to ({$(3) + (5, 0)$} |- tl) node[top label]{$a_V$};
    \path[string, dashed, thick]
      ({$(2) + (-3, 0)$} |- tl) node[top label]{$f_*$}
      to ($(2) + (-3, 3)$)
      arc[start angle=-180, end angle=0, radius=3]
      to ({$(2) + (3, 0)$} |- tl) node[top label]{$f^*$};
    \path[string]
      ({$(1) + (-1, 0)$} |- tl) node[top label]{$a_U$}
      to ($(1) + (-1, 1)$)
      arc[start angle=-180, end angle=0, radius=1]
      to ({$(1) + (1, 0)$} |- tl) node[top label]{$j_U$};
  \end{tikzpicture} \]

  The only additional ingredient we need
  is a certain compatibility of~$f^*$ and~$f_*$
  with the adjunctions $j_U \dashv a_U$ and $j_V \dashv a_V$.
  Since $f^*(X \times U) = f^*X \times V$
  and $(\tilde{\varepsilon}_U)_X : X \times U \to X$
  and $(\tilde{\varepsilon}_V)_Y : Y \times V \to Y$
  are the projections on the first factor,
  we have a natural isomorphism
  \[ \begin{tikzpicture}[string diagram]
    \path
      (0, 0) \dotnode{west:$\alpha$};
    \path[string]
      (-2, 2) node[top label, xshift=-1mm]{$a_U$}
      to[out=south, in=north west] (0, 0)
      to[out=south east, in=north] (1, -2) node[bottom label, xshift=-1mm]{$a_V$};
    \path[string]
      (-1, 2) node[top label, xshift=+1mm]{$j_U$}
      to[out=south, in=north west] (0, 0)
      to[out=south east, in=north] (2, -2) node[bottom label, xshift=+1mm]{$j_V$};
    \path[string, dashed, thick]
      (1, 2) node[top label]{$f^*$}
      to[out=south, in=north east] (0, 0)
      to[out=south west, in=north] (-1, -2) node[bottom label]{$f^*$};
  \end{tikzpicture}
  \quad\text{with}\quad
  \begin{tikzpicture}[string diagram]
    \coordinate (tl) at (-3, 2);
    \coordinate (br) at (4, -4);
    \path
      ( 0,  0) coordinate (1) \dotnode{west:$\alpha$}
      ( 2, -3) coordinate (2) \dotnode{south:$\tilde{\varepsilon}_V$};
    \path[string]
      ($(1) + (-2, 2)$) to[out=south, in=north west] (1)
      to[out=south east, in=north] ($(2) + (-1, 1)$)
      arc[start angle=-180, end angle=0, radius=1]
      to[out=north, in=south east] (1)
      to[out=north west, in=south] ($(1) + (-1, 2)$);
    \path[string, dashed, thick]
      ($(1) + (1, 2)$) to[out=south, in=north east] (1)
      to[out=south west, in=north] ($(1) + (-1, -2)$)
      to ({$(1) + (-1, 0)$} |- br);
  \end{tikzpicture}
  =
  \begin{tikzpicture}[string diagram]
    \path
      (0, 0) \dotnode{south:$\tilde{\varepsilon}_U$};
    \draw[string]
      (-1, 2) to (-1, 1)
      arc[start angle=-180, end angle=0, radius=1]
      to (1, 2);
    \path[string, dashed, thick]
      (2, 2) -- (2, -2) node[midway, right]{$f^*$};
  \end{tikzpicture}
  . \]
  For~$f_*$ the situation is slightly different
  as $f_*(V) \neq U$,
  but we have an arrow $(\eta_f)_U : U \to f_*(V)$
  which becomes an isomorphism after applying $a_U$
  (since $f_*(V) \times U = U$).
  Thus we have an isomorphism
  \[ \begin{tikzpicture}[string diagram]
    \path
      (0, 0) \dotnode{west:$\beta$};
    \path[string]
      (-2, 2) node[top label, xshift=-1mm]{$a_V$}
      to[out=south, in=north west] (0, 0) to[out=south east, in=north]
      (1, -2) node[bottom label, xshift=-1mm]{$a_U$};
    \path[string]
      (-1, 2) node[top label, xshift=+1mm]{$j_V$}
      to[out=south, in=north west] (0, 0) to[out=south east, in=north]
      (2, -2) node[bottom label, xshift=+1mm]{$j_U$};
    \path[string, dashed, thick]
      (1, 2) node[top label]{$f_*$}
      to[out=south, in=north east] (0, 0) to[out=south west, in=north]
      (-1, -2) node[bottom label]{$f_*$};
    \path[string]
      (4, 2) node[top label]{$a_U$}
      to[out=south, in=east] (.25, 0)
      to[out=east, in=north] (4, -2) node[bottom label]{$a_U$};
  \end{tikzpicture}
  \quad\text{with}\quad
  \begin{tikzpicture}[string diagram]
    \coordinate (tl) at (-3, 2);
    \coordinate (br) at (4, -4);
    \path
      ( 0,  0) coordinate (1) \dotnode{west:$\beta$}
      ( 2, -3) coordinate (2) \dotnode{south:$\tilde{\varepsilon}_U$};
    \path[string]
      ($(1) + (-2, 2)$) to[out=south, in=north west] (1)
      to[out=south east, in=north] ($(2) + (-1, 1)$)
      arc[start angle=-180, end angle=0, radius=1]
      to[out=north, in=south east] (1)
      to[out=north west, in=south] ($(1) + (-1, 2)$);
    \path[string, dashed, thick]
      ($(1) + (1, 2)$) to[out=south, in=north east] (1)
      to[out=south west, in=north] ($(1) + (-1, -2)$)
      to ({$(1) + (-1, 0)$} |- br);
    \path[string]
      ($(1) + (4, 2)$)
      to[out=south, in=east] ($(1) + (.25, 0)$)
      to[out=east, in=north] ($(1) + (4, -2)$)
      to ({$(1) + (4, 0)$} |- br);
  \end{tikzpicture}
  =
  \begin{tikzpicture}[string diagram]
    \path
      (0, 0) \dotnode{south:$\tilde{\varepsilon}_U$};
    \path[string]
      (-1, 2) to (-1, 1)
      arc[start angle=-180, end angle=0, radius=1]
      to (1, 2);
    \path[string, dashed, thick]
      (2, 2) -- (2, -2) node[midway, right]{$f_*$};
    \path[string]
      (4, 2) -- (4, -2) node[midway, right]{$a_U$};
  \end{tikzpicture} .\]

  Now we can piece together isomorphisms
  reducing to $a_U \eta_f$ and $a_V \varepsilon_f$ respectively.
  (Start by introducing a new squiggle
  involving $\tilde{\varepsilon}_V$ and $\tilde{\eta}_V$
  below $\tilde{\eta}_V^{-1}$ in the first diagram,
  and a new squiggle involving $\eta_V$ and $\varepsilon_V$
  above $\varepsilon_V^{-1}$ in the second diagram.)
  \[ \begin{tikzpicture}[string diagram]

    \coordinate (tl) at (- 8,   1);
    \coordinate (br) at ( 10, -20);

    \path
      (  0,   0) coordinate  (1) \dotnode{north:$\tilde{\eta}_U$}
      (  0, - 2) coordinate  (2) \dotnode{north:$\eta_f$}
      (  0, - 4) coordinate  (3) \dotnode{north:$\eta_V$}
      (  8, - 7) coordinate  (4) \dotnode{north:$\tilde{\eta}_V$}
      (- 5, - 8) coordinate  (5) \dotnode{ west:$\alpha$}
      (  5, -10) coordinate  (6) \dotnode{ west:$\beta^{-1}$}
      (- 2, -11) coordinate  (7) \dotnode{north:$\tilde{\eta}_V^{-1}$}
      (  2, -13) coordinate  (8) \dotnode{north:$\varepsilon_V$}
      (  5, -16) coordinate  (9) \dotnode{ west:$\beta$}
      (  8, -19) coordinate (10) \dotnode{south:$\tilde{\eta}_U^{-1}$}
      ;

    \path[string]
      (5) to[out=south east, in=north] ($(7) + (-1, 1)$)
      arc[start angle=-180, end angle=0, radius=1]
      to[out=north, in=south] ($(3) + (-1, -1)$)
      arc[start angle=180, end angle=0, radius=1]
      to[out=south, in=north] ($(8) + (-1, 1)$)
      arc[start angle=-180, end angle=0, radius=1]
      to[out=north, in=south west] (6)
      to[out=north east, in=south] ($(1) + (6, -6)$)
      arc[start angle=0, end angle=180, radius=6]
      to[out=south, in=north west] (5);
    \path[string, dashed, thick]
      ({$(5) + (-1, 0)$} |- br) to ($(5) + (-1, -2)$)
      to[out=north, in=south west] (5)
      to[out=north east, in=south] ($(2) + (-4, -4)$)
      arc[start angle=180, end angle=0, radius=4]
      to[out=south, in=north west] (6)
      to[out=south east, in=north] ($(6) + (1, -2)$)
      to[out=south, in=north] ($(9) + (1, 2)$)
      to[out=south, in=north east] (9)
      to[out=south west, in=north] ($(9) + (-1, -2)$)
      to ({$(9) + (-1, 0)$} |- br);
    \path[string]
      (tl -| {$(5) + (-2, 0)$}) to ($(5) + (-2, 3)$)
      to[out=south, in=north west] (5)
      to[out=south east, in=north] ($(5) + (1, -2)$)
      to[out=south, in=north west] (9)
      to[out=south east, in=north] ($(9) + (1, -2)$)
      to ({$(9) + (1, -2)$} |- br);
    \path[string]
      (6) to[out=south west, in=north] ($(6) + (-1, -2)$)
      to[out=south, in=north] ($(9) + (-1, 2)$)
      to[out=south, in=north west] (9)
      to[out=south east, in=north] ($(10) + (-1, 1)$)
      arc[start angle=-180, end angle=0, radius=1]
      to[out=north, in=east] ($(9) + (.25, 0)$)
      to[out=east, in=south] ($(9) + (3, 2)$)
      to[out=north, in=south] ($(6) + (3, -2)$)
      to[out=north, in=east] ($(6) + (.25, 0)$)
      to[out=east, in=south] ($(4) + (1, -1)$)
      arc[start angle=0, end angle=180, radius=1]
      to[out=south, in=north east] (6);

    \draw[dotted] (-8, -5) -- (9, -5);

  \end{tikzpicture}
  =
  \begin{tikzpicture}[string diagram]
    \path
      (0, 0) \dotnode{north:$\eta_f$};
    \path[string, dashed, thick]
      (-1, -3) to (-1, -1)
      arc[start angle=180, end angle=0, radius=1]
      to (1, -3);
    \path[string]
      (2, 1) -- (2, -3) node[midway, right]{$a_U$};
  \end{tikzpicture} .\]

  \[ \begin{tikzpicture}[string diagram]
    \coordinate (tl) at (-15,   1);
    \coordinate (br) at (  3, -26);

    \path
      (  0,   0) coordinate  (1) \dotnode{north:$\tilde{\eta}_V$}
      (- 3, - 3) coordinate  (2) \dotnode{east:$\alpha^{-1}$}
      (- 6, - 6) coordinate  (3) \dotnode{south:$\tilde{\eta}_U$}
      (- 9, - 9) coordinate  (4) \dotnode{west:$\beta^{-1}$}
      (-12, -11) coordinate  (5) \dotnode{south:$\varepsilon_V^{-1}$}
      (- 9, -14) coordinate  (6) \dotnode{west:$\beta$}
      (- 3, -16) coordinate  (7) \dotnode{east:$\alpha$}
      (  0, -19) coordinate  (8) \dotnode{north:$\tilde{\eta}_V^{-1}$}
      (- 7, -21) coordinate  (9) \dotnode{south:$\tilde{\varepsilon}_U$}
      (- 7, -23) coordinate (10) \dotnode{south:$\varepsilon_f$}
      (- 7, -25) coordinate (11) \dotnode{south:$\varepsilon_V$}
      ;

    \path[string, dashed, thick]
      ({$(4) + (-1, 0)$} |- tl) to ($(4) + (-1, 2)$)
      to[out=south, in=north west] (4)
      to[out=south east, in=north] ($(4) + (1, -2)$)
      to[out=south, in=north] ($(6) + (1, 2)$)
      to[out=south , in=north east] (6)
      to[out=south west, in=north] ($(6) + (-1, -2)$)
      to[out=south, in=north] ($(10) + (-3, 3)$)
      arc[start angle=-180, end angle=0, radius=3]
      to[out=north, in=south west] (7)
      to[out=north east, in=south] ($(7) + (1, 2)$)
      to[out=north, in=south] ($(2) + (1, -2)$)
      to[out=north, in=south east] (2)
      to[out=north west, in=south] ($(2) + (-1, 2)$)
      to ({$(2) + (-1, 0)$} |- tl);
    \path[string]
      (2) to[out=south west, in=north] ($(2) + (-1, -2)$)
      to[out=south, in=north] ($(7) + (-1, 2)$)
      to[out=south, in=north west] (7)
      to[out=south east, in=north] ($(8) + (-1, 1)$)
      arc[start angle=-180, end angle=0, radius=1]
      to[out=north, in=south] ($(1) + (1, -1)$)
      arc[start angle=0, end angle=180, radius=1]
      to[out=south, in=north east] (2);
    \path[string]
      ({$(2) + (1, 0)$} |- tl) to ($(2) + (1, 2)$)
      to[out=south, in=north east] (2)
      to[out=south west, in=north] ($(4) + (1, 2)$)
      to[out=south, in=north east] (4)
      to[out=south west, in=north] ($(5) + (-2, -1)$)
      to ({$(5) + (-2, 0)$} |- br);
    \path[string]
      (4) to[out=south west, in=north] ($(4) + (-1, -2)$)
      to[out=south, in=north] ($(6) + (-1, 2)$)
      to[out=south, in=north west] (6)
      to[out=south east, in=north] ($(6) + (3, -3)$)
      to[out=south, in=north] ($(9) + (1, 1)$)
      arc[start angle=0, end angle=-180, radius=1]
      to[out=north, in=south] ($(6) + (1, -2)$)
      to[out=north, in=south east] (6)
      to[out=north west, in=south] ($(5) + (1, -1)$)
      arc[start angle=0, end angle=180, radius=1]
      to[out=south, in=north] ($(11) + (-5, 5)$)
      arc[start angle=-180, end angle=0, radius=5]
      to[out=north, in=south] ($(7) + (1, -2)$)
      to[out=north, in=south east] (7)
      to[out=north west, in=east] ($(6) + (.25, 0)$)
      to[out=east, in=south] ($(6) + (3, 2)$)
      to[out=north, in=south] ($(4) + (3, -2)$)
      to[out=north, in=east] ($(4) + (.25, 0)$)
      to[out=east, in=south] ($(3) + (1, -1)$)
      arc[start angle=0, end angle=180, radius=1]
      to[out=south, in=north east] (4);

    \draw[dotted]
      (-15, -20) -- (1, -20);

  \end{tikzpicture}
  =
  \begin{tikzpicture}[string diagram]
    \path
      (0, 0) \dotnode{south:$\varepsilon_f$};
    \path[string, dashed, thick]
      (-1, 3) to (-1, 1)
      arc[start angle=-180, end angle=0, radius=1]
      to (1, 3);
    \path[string]
      (2, 3) -- (2, -1) node[midway, right]{$a_V$};
  \end{tikzpicture}
  \]
\end{proof}

\begin{remark}
  Lemma \ref{lemma-check-equivalence-locally}
  does not generalize to covers by arbitrary subtoposes,
  if cover just means that the join of these subtoposes is $\F$.
  For example,
  the intersection of any open subtopos $\F_{o(U)}$
  with its closed complement $\F_{c(U)}$
  is empty,
  so the geometric morphism
  \[ \F_{o(U)} \amalg \F_{c(U)} \to \F \]
  from their disjoint union
  (which is given by the product of categories)
  becomes an equivalence when pulled back to
  either $\F_{o(U)}$ or $\F_{c(U)}$.
\end{remark}

\begin{corollary}
  \label{corollary-check-universality-locally}
  Let $M \in \mod{\TT}{\E}$ be a model
  and let $(\phi_i)_{i \in I}$ be closed formulas
  such that $\TT$ proves $\bigvee_{i \in I} \phi_i$.
  If $M|_{\E_i}$ is a universal model of $\TT + \phi_i$
  for every $i \in I$,
  where $\E_i \defeq \E_{o(\interpretation{\phi_i}_M)}$,
  then $M$ was already a universal model of $\TT$.
\end{corollary}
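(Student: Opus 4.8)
The plan is to translate the statement entirely into the language of geometric morphisms and then invoke Lemma~\ref{lemma-check-equivalence-locally}. The model $M \in \mod{\TT}{\E}$ classifies a geometric morphism $f \colon \E \to \Set[\TT]$, and $M$ is universal exactly when $f$ is an equivalence; so it will suffice to produce an open cover of $\Set[\TT]$ along which $f$ restricts to equivalences.

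The cover I would use is $\Set[\TT] = \bigcup_{i \in I} \Set[\TT + \phi_i]$. First I would recall that for a closed geometric formula $\phi$ of $\TT$ the quotient extension $\TT + \phi$ presents an embedding of $\Set[\TT + \phi]$ into $\Set[\TT]$ (this is the quotient-extension case of Theorem~\ref{theorem-extension-from-geometric-morphism}, via the duality between subtoposes and quotient theories), and that this embedding is the open subtopos $\Set[\TT]_{o(U)}$ with $U \defeq \interpretation{\phi}_G \subseteq 1$, where $G$ is the universal $\TT$-model --- the point being that the only extra condition imposed on a $(\TT + \phi)$-model is that it turn the subterminal $\interpretation{\phi}$ into the terminal object. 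Writing $U_i \defeq \interpretation{\phi_i}_G$, the hypothesis that $\TT$ proves $\bigvee_{i \in I} \phi_i$ is exactly the statement $\bigvee_{i \in I} U_i = 1$ in $\Set[\TT]$, so the $\Set[\TT + \phi_i]$ do form an open cover of $\Set[\TT]$.

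Next I would pull this cover back along $f$. Since open subtoposes are stable under pullback along geometric morphisms, the preimage of $\Set[\TT]_{o(U_i)}$ under $f$ is $\E_{o(f^* U_i)}$; and since interpretations of closed geometric formulas commute with inverse image functors, $f^* U_i = \interpretation{\phi_i}_M$, so this preimage is precisely the given $\E_i = \E_{o(\interpretation{\phi_i}_M)}$. The induced geometric morphism $f_i \colon \E_i \to \Set[\TT + \phi_i]$ classifies the restricted model $M|_{\E_i}$, which is a universal model of $\TT + \phi_i$ by hypothesis, so each $f_i$ is an equivalence. At this point Lemma~\ref{lemma-check-equivalence-locally} applies and yields that $f$ is an equivalence, i.e.\ that $M$ was already a universal model of $\TT$.

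The only step requiring genuine care is the bookkeeping in the middle: identifying $\Set[\TT + \phi_i]$ with the specific open subtopos $\Set[\TT]_{o(U_i)}$, and checking that the pullback square of $f$ along $\Set[\TT + \phi_i] \hookrightarrow \Set[\TT]$ is literally the square to which Lemma~\ref{lemma-check-equivalence-locally} refers (lower-left corner $\E_i$, left map classifying $M|_{\E_i}$). Everything else is a routine dictionary translation between the model-theoretic and the geometric-morphism formulations, and no new ideas beyond Lemma~\ref{lemma-check-equivalence-locally} are needed.
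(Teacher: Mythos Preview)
Your proof is correct and follows essentially the same approach as the paper's: both set up the geometric morphism $f \colon \E \to \Set[\TT]$ classifying $M$, use the provability of $\bigvee_i \phi_i$ to obtain the open cover $U_i = \interpretation{\phi_i}$ of $\Set[\TT]$, identify the pulled-back subtoposes with the $\E_i$, and then apply Lemma~\ref{lemma-check-equivalence-locally}. The only difference is that you spell out more of the dictionary between quotient extensions and open subtoposes, whereas the paper leaves this implicit.
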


\begin{proof}
  Fixing a classifying topos $\Set[\TT]$
  with universal model $M_\TT \in \mod{\TT}{\Set[\TT]}$,
  we have a geometric morphism $f : \E \to \Set[\TT]$
  with $f^*M_\TT \cong M$.
  Since $\bigvee_{i \in I} \phi_i$ is provable,
  $U_i \defeq \interpretation{\phi_i}_{M_\TT}$
  define an open cover of $\Set[\TT]$
  with $M_\TT|_{\Set[\TT]_{o(U_i)}}$
  a universal model of $\TT + \phi_i$
  and $f^*U_i \cong \interpretation{\phi_i}_M$.
  The induced geometric morphisms
  $f_i : \E_i \to \Set[\TT]_{o(U_i)}$
  are equivalences
  since $f_i^*(M|_{\Set[\TT]_{o(U_i)}}) \cong M|_{\E_i}$
  is also a universal model of $\TT + \phi_i$ by assumption.
  So Lemma \ref{lemma-check-equivalence-locally} tells us
  that $f$ is an equivalence,
  that is,
  $f^*M_\TT \cong M$ is a universal $\TT$-model.
\end{proof}

We can now give our construction of
a syntactic presentation of a topos $\E$
covered by open subtoposes $\E_i$, $i \in I$,
from syntactic presentations of the $\E_i$
and of the intersections $\E_S \defeq \bigcap_{i \in S} \E_i$
for $S \subseteq I$ finite and nonempty.
The notation
\[ \Delta(I) \defeq
   \{\, S \subseteq I \mid 1 \leq \abs{S} < \infty \,\} \]
(thinking of simplices with vertices drawn from $I$)
for the partial order
indexing the required system of presentations
will be useful.
Note that we can write $S' \in \Delta(S)$
instead of $S' \leq S$ or $S' \subseteq S$
in this partial order.

In order to give an elegant description of the resulting theory,
the open subtoposes $\E_i$
will be assumed
to be given by the interpretations of formulas $\phi_i$
of some base theory $\TT_0$
in a (not necessarily universal)
base model $M_0 \in \TT_0\dashmod(\E)$.
This requirement can always be met,
starting from an arbitrary base theory (e.g. the empty one),
by adding proposition symbols $p_i$
and the axiom $\bigvee_{i \in I} p_i$
to it,
and then,
since the $p_i$ necessarily also show up
in the presentation of each $\E_j$,
add axioms to these presentations
defining the $p_i$ by already available formulas
for the open subtoposes $\E_i \cap \E_j \subseteq \E_j$.

\begin{theorem}
  \label{theorem-main-gluing}
  Let $\E$ be a topos,
  $\TT_0$ a geometric theory
  with closed formulas $\phi_i$
  such that $\bigvee_{i \in I} \phi_i$ is provable,
  and $M_0 \in \mod{\TT_0}{\E}$ a model.
  For $S \in \Delta(I)$,
  set $\phi_S \defeq \bigwedge_{i \in S} \phi_i$
  and $\E_S \defeq \E_{o(\interpretation{\phi_S}_{M_0})}$.
  Let a system of presentations $(\EE_S, E_S)_{S \in \Delta(I)}$
  of the $\E_S$ over $M_0$ be given.
  Then $\E$ classifies the theory
  \[ \TT \defeq \TT_0 + \sum_{S \in \Delta(I)} \EE_S / \phi_S \]
  with universal model
  \[ M \defeq M_0 + \sum_{S \in \Delta(I)} E_S / \phi_S . \]
\end{theorem}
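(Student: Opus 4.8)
The plan is to verify that $M$ is universal for $\TT$ by checking universality locally, then to peel off the conditional extensions one ``simplex'' at a time. First, $M$ genuinely is a model of $\TT = \TT_0 + \sum_{S}\EE_S/\phi_S$: since $\phi_S$ entails $\phi_{S'}$ whenever $S' \subseteq S$, the construction of the model extension $\sum_S E_S/\phi_S$ of $M_0$ along $\sum_S \EE_S/\phi_S$ carried out in the subsection on systems of conditional extensions applies verbatim. Now $\TT_0$ — hence $\TT$ — proves $\bigvee_i \phi_i$, and $\interpretation{\phi_i}_M = \interpretation{\phi_i}_{M_0}$, so Corollary \ref{corollary-check-universality-locally} reduces the theorem to showing, for each fixed $i \in I$, that $M|_{\E_{\{i\}}}$ is a universal model of $\TT + \phi_i$.

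Fix $i$. Over $\TT + \phi_i$ every $\phi_S$ is provably equivalent to $\phi_{S \setminus \{i\}}$ (and $\phi_{\{i\}}$ to $\top$), and $\phi_i$ is provable from $\TT_0 + \EE_{\{i\}}$ because it holds in the universal model there; combining this with parts (i) and (ii) of Lemma \ref{lemma-conditional-extension-first-properties} and regrouping the summands by the value of $S \setminus \{i\}$ yields the syntactic equality
\[ \TT + \phi_i \;=\; \bigl(\TT_0 + \EE_{\{i\}}\bigr) \,+\, \sum_{W \in \Delta(I \setminus \{i\})}\bigl(\EE_W + \EE_{W \cup \{i\}}\bigr)/\phi_W . \]
By the case $S = \{i\}$ of the hypothesis, $\TT_0 + \EE_{\{i\}}$ is classified by $\E_{\{i\}}$ with universal model $M_0|_{\E_{\{i\}}} + E_{\{i\}}$, and $M|_{\E_{\{i\}}}$ restricts to exactly this model and is the corresponding model extension along the displayed sum. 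So by Lemma \ref{lemma-equivalence-extension-of-universal-model} it suffices to show that this sum is an equivalence extension of $\TT_0 + \EE_{\{i\}}$. As $\Delta(I \setminus \{i\})$ is well-founded, Lemma \ref{lemma-system-of-equivalence-extensions} reduces this to each $(\EE_W + \EE_{W \cup \{i\}})/\phi_W$ being an equivalence extension of its base theory, and by Lemma \ref{lemma-conditional-extension-equivalence-extension} that amounts, after unwinding that base together with the axiom $\phi_W$, to: $\EE_W + \EE_{W\cup\{i\}}$ is an equivalence extension of $\TT_0 + \phi_W + \sum_{R \in \Delta(\{i\} \cup W),\, W \not\subseteq R}\EE_R$.

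This is the case $S = \{i\}$ of the following claim, which I would prove by induction on $\abs{W}$: \emph{for all disjoint nonempty finite $S, W \subseteq I$, the extension $\sum_{V \subseteq S}\EE_{W \cup V}$ is an equivalence extension of $\TT_0 + \phi_W + \sum_{R \in \Delta(S \cup W),\, W \not\subseteq R}\EE_R$; equivalently, this base theory classifies $\E_{S \cup W}$ with universal model the restriction of $M_0|_{\E_{S\cup W}} + \sum_{R \in \Delta(S\cup W)}E_R|_{\E_{S\cup W}}$.} For $\abs{W} = 1$, $W = \{w\}$, the base is $\TT_0 + \phi_w + \sum_{R \in \Delta(S)}\EE_R$; since $\TT_0 + \sum_{R\in\Delta(S)}\EE_R$ classifies $\E_S$, adjoining the closed axiom $\phi_w$ passes to the open subtopos where $\phi_w$ holds, i.e. $\E_S \cap \E_{\{w\}} = \E_{S \cup \{w\}}$, and adjoining $\sum_{V\subseteq S}\EE_{\{w\}\cup V}$ then completes the signature to $\TT_0 + \sum_{R\in\Delta(S\cup\{w\})}\EE_R$, which classifies $\E_{S\cup\{w\}}$ with the expected universal model, so Lemma \ref{lemma-equivalence-extension-of-universal-model} applies. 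For $\abs{W} \geq 2$, pick $w_0 \in W$, put $W_1 := W \setminus \{w_0\}$, and check that $\{R \in \Delta(S\cup W) : W \not\subseteq R\} = \mathcal{A} \cup \mathcal{B}$ with $\mathcal{A} := \Delta(S \cup W_1)$ and $\mathcal{B} := \{R \in \Delta((S\cup\{w_0\}) \cup W_1) : W_1 \not\subseteq R\}$ down-closed, $\mathcal{A} \cap \mathcal{B} = \{R \in \Delta(S\cup W_1) : W_1 \not\subseteq R\}$, and $\mathcal{A} \setminus \mathcal{B} = \{W_1 \cup V : V \subseteq S\}$. Then $\TT_0 + \phi_W + \sum_{R\in\mathcal{B}}\EE_R$ classifies $\E_{S\cup W}$ by the inductive hypothesis for $(S\cup\{w_0\}, W_1)$ (the extra conjunct $\phi_{w_0}$ of $\phi_W$ being redundant there), while the inductive hypothesis for $(S, W_1)$ says $\sum_{R\in\mathcal{A}\setminus\mathcal{B}}\EE_R = \sum_{V\subseteq S}\EE_{W_1\cup V}$ is an equivalence extension of $\TT_0 + \phi_{W_1} + \sum_{R\in\mathcal{A}\cap\mathcal{B}}\EE_R$, hence — by stability of equivalence extensions under extra axioms and under further extension — also of $\TT_0 + \phi_W + \sum_{R\in\mathcal{B}}\EE_R$. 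So $\TT_0 + \phi_W + \sum_{R\in\mathcal{A}\cup\mathcal{B}}\EE_R$ still classifies $\E_{S\cup W}$ with the expected universal model, and a final application of Lemma \ref{lemma-equivalence-extension-of-universal-model} (adjoining $\sum_{V\subseteq S}\EE_{W\cup V}$ to reach $\TT_0 + \sum_{R\in\Delta(S\cup W)}\EE_R$, classified by $\E_{S\cup W}$ by hypothesis) gives the claim for $(S,W)$.

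The routine part is the bookkeeping chain of the extension lemmas above; the real work, and the main obstacle, is the inductive claim of the last paragraph — specifically, verifying the combinatorial identities for the sub-posets $\mathcal{A},\mathcal{B} \subseteq \Delta(S\cup W)$, confirming that the pieces split into genuinely independent extensions over their overlap, and tracking that all the universal models that occur are restrictions of one another coherently, so that Lemma \ref{lemma-equivalence-extension-of-universal-model} is legitimately applicable at each stage.
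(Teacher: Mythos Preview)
Your proof is correct and follows the paper's strategy closely through the reduction to local universality (Corollary~\ref{corollary-check-universality-locally}), the rewriting of $\TT + \phi_i$ over the base $\TT_0 + \EE_{\{i\}}$, and the reduction via Lemmas~\ref{lemma-equivalence-extension-of-universal-model}, \ref{lemma-system-of-equivalence-extensions}, and~\ref{lemma-conditional-extension-equivalence-extension} to showing that each $\EE_W + \EE_{W\cup\{i\}}$ is an equivalence extension of its base theory. The divergence is only in this last step. The paper avoids your induction entirely: for fixed $S \ni i$ it sets up the system $(\EE^i_{S'}, E^i_{S'})_{S' \in \Delta_i(S)}$, where $\EE^i_{S'} = \EE_{S'\setminus\{i\}} + \EE_{S'}$ and $E^i_{S'} = E_{S'\setminus\{i\}}|_{\E_S} + E_{S'}|_{\E_S}$, over the universal base model $M_0|_{\E_S} + E_{\{i\}}|_{\E_S}$ of $\TT_0 + \EE_{\{i\}} + \phi_S$, and observes that for every $S' \in \Delta_i(S)$ the partial sum up to $S'$ is precisely $\TT_0 + \phi_S + \sum_{R \in \Delta(S')}\EE_R$, which classifies $\E_S$ simply by restricting the given presentation of $\E_{S'}$ along the open inclusion $\E_S \hookrightarrow \E_{S'}$. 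This verifies condition~(ii) of Lemma~\ref{lemma-system-of-equivalence-extensions} directly, so Corollary~\ref{corollary-system-of-presentations-over-presentation} immediately gives that the top piece $\EE^i_S$ is an equivalence extension.

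Your inductive claim on $(S,W)$ is correct, and the combinatorics with $\mathcal{A}$, $\mathcal{B}$ checks out, but it is doing by hand what Corollary~\ref{corollary-system-of-presentations-over-presentation} already packages: the key point you prove inductively --- that the ``incomplete'' base $\TT_0 + \phi_W + \sum_{R : W \not\subseteq R}\EE_R$ still classifies $\E_{S\cup W}$ --- is a consequence of the fact that \emph{every} prefix sum in the paper's reindexed system classifies the same topos. So your route is valid but more laborious; the paper's is shorter because it recognizes that the relevant partial theories are exactly those already covered by the hypothesis (up to adding one closed formula), and lets the equivalence $(i)\Leftrightarrow(ii)$ of Lemma~\ref{lemma-system-of-equivalence-extensions} absorb the induction.
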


\begin{proof}
  By Corollary \ref{corollary-check-universality-locally},
  we only have to show that $M|_{\E_{\{i\}}}$
  is a universal model of $\TT + \phi_i$
  for every $i \in I$.
  So fix $i$.
  We know that $M_0|_{\E_{\{i\}}} + E_{\{i\}}$,
  which lives in the same topos as $M|_{\E_{\{i\}}}$,
  is a universal model of $\TT_0 + \EE_{\{i\}}$
  (in particular, $\TT_0 + \EE_{\{i\}}$ proves $\phi_i$),
  and fortunately,
  $\TT + \phi_i$ can be written as 
  \[ \TT + \phi_i = \TT_0 + \EE_{\{i\}} +
     \sum_{S \in \Delta_i(I)} (\EE_{S \setminus \{i\}} + \EE_S) / \phi_S
     \rlap{,} \]
  where $\Delta_i(I) \defeq
  \{\, S \cup \{i\} \mid S \in \Delta(I \setminus \{i\}) \,\}
  = \{\, S \in \Delta(I) \mid \{i\} \subsetneq S \,\}$.
  Also,
  the $(\TT_0 + \EE_{\{i\}})$-part of $M|_{\E_{\{i\}}}$
  is $M_0|_{\E_{\{i\}}} + E_{\{i\}}$.
  Thus,
  by Lemma \ref{lemma-equivalence-extension-of-universal-model},
  we are reduced to showing that
  the remaining extension
  $\sum_{S \in \Delta_i(I)} (\EE_{S \setminus \{i\}} + \EE_S) / \phi_S$
  is an equivalence extension,
  which, by Lemma \ref{lemma-system-of-equivalence-extensions}
  and Lemma \ref{lemma-conditional-extension-equivalence-extension},
  we may do by showing that
  for every $S \in \Delta_i(I)$,
  the extension
  \[ \EE^i_S \defeq \EE_{S \setminus \{i\}} + \EE_S \]
  is an equivalence extension of
  \[ \TT_0 + \EE_{\{i\}} + \phi_S +
     \sum_{S' \in \Delta_i(S), S' \neq S} \EE^i_{S'}
     \rlap{.} \]

  For this,
  observe that $M_0|_{\E_S} + E_{\{i\}}|_{\E_S}$
  is a universal model in $\E_S$
  of $\TT_0 + \EE_{\{i\}} + \phi_S$,
  which we now treat as a base theory.
  And if we set
  \[ E^i_{S'} \defeq E_{S' \setminus \{i\}}|_{\E_S} + E_{S'}|_{\E_S}
     \rlap{,} \]
  for every $S' \in \Delta(S)$,
  this in fact constitutes a system of presentations of the topos $\E_S$,
  since by assumption,
  every
  \[ M_0|_{\E_{S'}} + \sum_{S'' \in \Delta(S')} E_{S''}|_{\E_{S'}} \]
  is a universal model,
  which can be restricted to a universal model in $\E_S$
  while adding $\phi_S$ to the theory.
  Then Corollary \ref{corollary-system-of-presentations-over-presentation}
  allows us to conclude that
  in particular
  the topmost theory extension $\EE^i_S$ of this system
  is an equivalence extension,
  as needed.
\end{proof}

Let us also show that
a system of presentations as required in
Theorem \ref{theorem-main-gluing}
always exists,
starting from presentations of the $\E_i$
(which exist simply by
Theorem \ref{theorem-extension-from-geometric-morphism}).
In particular,
it becomes clear that presentations are only needed
for two- and threefold intersections of the open subtoposes $\E_i$,
not for arbitrary finite intersections.

\begin{proposition}
  \label{proposition-construct-system-of-presentations}
  Let $M_0 \in \mod{\TT_0}{\E}$,
  let $\phi_i$, $i \in I$ be closed formulas of $\TT_0$
  and let $\phi_S$, $\E_S$ for $S \in \Delta(I)$ be as above.
  Then any family of presentations $(\EE_{\{i\}}, E_{\{i\}})$
  of the $\E_{\{i\}}$
  over $(\TT_0, M_0)$
  can be extended layer-wise
  to a system of presentations $(\EE_S, E_S)_{S \in \Delta(I)}$
  of the $\E_S$,
  where $\EE_S$ can be chosen localic for $\abs{S} = 2$,
  a quotient for $\abs{S} = 3$
  and empty for $\abs{S} \geq 4$.
  If the $\E_{\{i\}}$ are localic,
  then $\EE_S$ can even be chosen a quotient for $\abs{S} = 2$
  and empty for $\abs{S} \geq 3$.
\end{proposition}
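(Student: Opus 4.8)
The plan is to construct the extensions by induction on $n \defeq \abs{S}$, the case $n=1$ being the given family of presentations. Fix $S$ with $\abs{S}=n\geq 2$ and suppose $(\EE_{S'},E_{S'})$ already chosen for all $S'\subsetneq S$; write $\TT'_S\defeq\TT_0+\sum_{S'\subsetneq S}\EE_{S'}$ and $M'_S\defeq M_0|_{\E_S}+\sum_{S'\subsetneq S}E_{S'}|_{\E_S}$. Since restricting model extensions along geometric morphisms again yields model extensions, $M'_S\in\mod{\TT'_S}{\E_S}$, and the task is to produce $\EE_S$ of the prescribed shape together with a model extension $E_S$ of $M'_S$ along $\EE_S$ in $\E_S$ such that $M'_S+E_S$ is a univers
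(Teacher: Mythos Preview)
Your inductive setup is correct and matches the paper's, but the proposal is cut off precisely before the substantive step, and what you have written so far gives no indication of how you intend to control the \emph{shape} of $\EE_S$. Simply invoking Theorem~\ref{theorem-extension-from-geometric-morphism} on the geometric morphism $\E_S \to \Set[\TT'_S]$ corresponding to $M'_S$ would only tell you that \emph{some} extension $\EE_S$ exists, not that it can be taken localic for $\abs{S}=2$, a quotient for $\abs{S}=3$, and empty for $\abs{S}\geq 4$.

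The paper's key idea, which your setup does not yet contain, is this: fix some $i\in S$ and set $\widetilde{S}\defeq S\setminus\{i\}$. One shows that the model
\[
  M_0|_{\E_S} + \sum_{S'\in\Delta(S)\setminus\{\widetilde{S},S\}} E_{S'}|_{\E_S}
\]
is \emph{already universal} for $\TT_0+\phi_S+\sum_{S'\in\Delta(S)\setminus\{\widetilde{S},S\}}\EE_{S'}$. This is done by regrouping the extensions as $\EE^i_{S'}\defeq\EE_{S'}+\EE_{S'\cup\{i\}}$ for $S'\in\Delta(\widetilde{S})\setminus\{\widetilde{S}\}$, observing that these form a system of presentations all of the same topos $\E_S$ over the universal model $M_0|_{\E_S}+E_{\{i\}}|_{\E_S}$, and applying Corollary~\ref{corollary-system-of-presentations-over-presentation}. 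Once this is established, only the single leftover extension $\EE_{\widetilde{S}}$ needs to be reverted via Lemma~\ref{lemma-revert-an-extension}, and that lemma gives precisely the control you need: reverting an arbitrary extension yields a localic $\EE_S$; reverting a localic one yields a quotient; reverting a quotient yields nothing. Since $\abs{\widetilde{S}}=\abs{S}-1$, the claimed pattern follows by induction.
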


\begin{proof}
  Let $S \in \Delta(I)$ with $\abs{S} \geq 2$
  and let $\EE_{S'}$, $E_{S'}$ be already defined
  for $S' \in \Delta(S), S' \neq S$.
  Fixing any $i \in S$
  and setting $\widetilde{S} \defeq S \setminus \{i\}$,
  we regroup these extensions as
  \[ \EE^i_{S'} \defeq \EE_{S'} + \EE_{S' \cup \{i\}} \]
  and include $\phi_S$ from the beginning
  to obtain a system of presentations
  $(\EE^i_{S'}, E^i_{S'}|_{\E_S})$,
  $S' \in \Delta(\widetilde{S}) \setminus \{\widetilde{S}\}$
  over $\TT_0 + \EE_{\{i\}} + \phi_S$
  and $M_0|_{\E_S} + E_{\{i\}}|_{\E_S}$,
  all presenting $\E_S$.
  By Corollary
  \ref{corollary-system-of-presentations-over-presentation},
  the sum of this system,
  which is
  \[ \TT_0 + \phi_S +
     \sum_{S' \in \Delta(S) \setminus \{\widetilde{S}, S\}} \EE_{S'}
     \rlap{,} \]
  also presents $\E_S$.
  We can therefore apply Lemma \ref{lemma-revert-an-extension}
  to revert the left-over extension $\EE_{\widetilde{S}}$
  (with accompanying model extension $E_{\widetilde{S}}$),
  obtaining $\EE_S$ and $E_S$
  such that
  \[ M_0|_{\E_S} + \sum_{S' \in \Delta(S)} E_{S'}|_{\E_S}
  \;\in\; \Bigl( \TT_0 + \sum_{S' \in \Delta(S)} \EE_{S'} \Bigr)\dashmod(\E_S) \]
  is universal.
  (We can drop $\phi_S$,
  since we definitely include all $\EE_{\{j\}}$, $j \in S$ now.)
  And indeed
  $\EE_S$ is localic,
  and even a quotient (respectively empty)
  if $\EE_{\widetilde{S}}$ is localic (respectively a quotient).
\end{proof}

\subsection{The localic case}

Proposition \ref{proposition-construct-system-of-presentations}
suggests that a more concrete formulation of
Theorem \ref{theorem-main-gluing}
might be feasible
if the open subtoposes $\E_i$
can be presented by localic extensions
of an appropriate common base theory.
This is the case in our applications
to algebro-geometric toposes.
We don't assume the subterminal objects $U_i$
to be expressible by formulas of the base theory here.
The theory extension adding a proposition symbol $p$
will be denoted $\angles{p}$.

\begin{corollary}
  \label{corollary-localic-gluing}
  Let $\E$ be a topos
  covered by open subtoposes $\E_i = \E_{o(U_i)}$,
  let $M_0 \in \TT_0\dashmod(\E)$
  and let $E_i \in \EE_i\dashmod(M_0|_{\E_i})$
  be model extensions along localic extensions $\EE_i$ of $\TT_0$
  such that
  \[ M_0|_{\E_i} + E_i \in (\TT_0 + \EE_i)\dashmod(\E_i) \]
  is universal for every $i \in I$.
  Then:
  \begin{enumerate}[label=(\roman*)]
    \item
      For every $i$,
      there are closed formulas $\phi_{i, j}$ of $\TT_0 + \EE_i$
      presenting the open subtoposes $\E_i \cap \E_j \subseteq \E_i$,
      and for every $i \neq j$,
      there is a diagonal quotient extension
      $\QQ_{\{i, j\}}$
      for $\EE_i + \phi_{i, j}$ and $\EE_j + \phi_{j, i}$
      over $\TT_0$
      (see Definition \ref{definition-diagonal-extension})
      consisting of axioms fulfilled by $M_0|_{\E_i \cap \E_j} +
      E_1|_{\E_i \cap \E_j} + E_2|_{\E_i \cap \E_j}$.

    \item
      For any such $\phi_{i, j}$ and $\QQ_{\{i, j\}}$,
      the topos $\E$ classifies the theory
      \begin{align*}
        \TT \;\defeq\; \TT_0
        &+ \angles{p_i}_{i \in I}
        + (\bigvee_{i \in I} p_i)
        + (\EE_i/p_i)_{i \in I}
        \\
        &+ \Bigl( (p_j \doubleturnstile{[]} \phi_{i, j})/p_i
           \Bigr)_{i \neq j \in I}
        + \Bigl( \QQ_{\{i, j\}} / (p_i \land p_j)
          \Bigr)_{i \neq j \in I}
      \end{align*}
      with universal model
      \[ M \defeq
         M_0 + (\interpretation{p_i} \defeq U_i)_{i \in I}
         + (E_i/p_i)_{i \in I}
         \rlap{.} \]
  \end{enumerate}
\end{corollary}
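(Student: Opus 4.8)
The plan is to deduce (ii) from Theorem~\ref{theorem-main-gluing} after enlarging the base theory by proposition symbols that expose the open subtoposes $\E_i$, with part (i) produced along the way out of Corollary~\ref{corollary-diagonal-extension}.

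For (i): since $M_0|_{\E_i} + E_i$ is a universal model of $\TT_0 + \EE_i$, the subterminal object of $\E_i$ cutting out the open subtopos $\E_i \cap \E_j$ is, by \cite[Theorem 6.1.3]{caramello:tst}, the interpretation of some closed formula $\phi_{i, j}$ of $\TT_0 + \EE_i$. Restricting $M_0|_{\E_i} + E_i$ to the open subtopos it defines --- which is $\E_i \cap \E_j$ --- yields a universal model $M_0|_{\E_i \cap \E_j} + E_i|_{\E_i \cap \E_j}$ of $\TT_0 + \EE_i + \phi_{i, j}$, and symmetrically with the roles of $i$ and $j$ exchanged. So $\EE_i + \phi_{i, j}$ and $\EE_j + \phi_{j, i}$ are equivalent extensions of $\TT_0$ in the sense of Corollary~\ref{corollary-diagonal-extension}, and since both are localic, that corollary produces a diagonal \emph{quotient} extension $\QQ_{\{i, j\}}$ together with a model extension $E_{\{i, j\}}$ making $M_0|_{\E_i \cap \E_j} + E_i|_{\E_i \cap \E_j} + E_j|_{\E_i \cap \E_j} + E_{\{i, j\}}$ universal; as $\QQ_{\{i, j\}}$ is a quotient, $E_{\{i, j\}}$ is trivial, which says exactly that the axioms of $\QQ_{\{i, j\}}$ are already fulfilled by $M_0|_{\E_i \cap \E_j} + E_i|_{\E_i \cap \E_j} + E_j|_{\E_i \cap \E_j}$.

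For (ii): I would set $\TT_0' \defeq \TT_0 + \angles{p_i}_{i \in I} + (\bigvee_{i \in I} p_i)$ and $M_0' \defeq M_0 + (\interpretation{p_i} \defeq U_i)_{i \in I}$; since the $\E_i$ cover $\E$, this is a model of $\TT_0'$ in $\E$, and writing $p_S \defeq \bigwedge_{i \in S} p_i$ one has $\E_{o(\interpretation{p_S}_{M_0'})} = \bigcap_{i \in S} \E_i$, which I abbreviate $\E_S$. The idea is then to apply Theorem~\ref{theorem-main-gluing} to the base $(\TT_0', M_0')$, the formulas $p_i$, and the system of presentations indexed by the well-founded poset $\Delta(I)$ defined by: $\EE_{\{i\}}$ adds, on top of $\EE_i$, the axiom $\top \turnstile{[]} p_i$ and the axioms $p_j \doubleturnstile{[]} \phi_{i, j}$ for $j \neq i$, with model extension $E_{\{i\}}$ built from $E_i$ (the new axioms hold in $M_0'|_{\E_i} + E_i$ by the choice of the $\phi_{i, j}$); $\EE_{\{i, j\}} \defeq \QQ_{\{i, j\}}$ with $E_{\{i, j\}}$ trivial (its axioms hold by (i)); and $\EE_S \defeq \varnothing$ for $\abs{S} \geq 3$. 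One then checks that this really is a system of presentations of the $\E_S$ over $(\TT_0', M_0')$: for $\abs{S} = 1$, $\TT_0' + \EE_{\{i\}}$ is an equivalence extension of $\TT_0 + \EE_i$ (essentially an extension by definitions --- each $p_k$ becomes provably $\top$ or provably $\phi_{i, k}$, and $\bigvee_k p_k$ becomes provable), hence classified by $\E_i$ with the stated universal model; for $\abs{S} = 2$, $\TT_0' + \EE_{\{i\}} + \EE_{\{j\}} + \QQ_{\{i, j\}}$ is in the same way an equivalence extension of $\TT_0 + \EE_i + \phi_{i, j} + \EE_j + \phi_{j, i} + \QQ_{\{i, j\}}$ --- here one uses that $\phi_{i, l}$ and $\phi_{j, l}$ receive the same interpretation in the universal model, both being the subterminal object for $\E_i \cap \E_j \cap \E_l$ --- which by Corollary~\ref{corollary-diagonal-extension} is classified by $\E_i \cap \E_j$, again with the stated model; and for $\abs{S} \geq 3$, the layers below $S$ already sum to a presentation of $\E_S$ (because $\interpretation{p_S}$ is the meet of the $\interpretation{p_i \land p_j}$ over pairs $\{i, j\} \subseteq S$), so reverting the trivial left-over extension as in the proof of Proposition~\ref{proposition-construct-system-of-presentations} gives $\EE_S = \varnothing$. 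Finally, $\TT_0' + \sum_{S \in \Delta(I)} \EE_S / p_S$ is syntactically equal to the displayed theory $\TT$: by Lemma~\ref{lemma-conditional-extension-first-properties}(ii) the layer $\EE_{\{i\}} / p_i$ breaks up into $\EE_i / p_i$, the conditional versions of the axioms $p_j \doubleturnstile{[]} \phi_{i, j}$, and the conditional version of $\top \turnstile{[]} p_i$, the last of which is syntactically trivial; likewise $M_0' + \sum_S E_S / p_S$ works out to $M$. Theorem~\ref{theorem-main-gluing} then gives the claim.

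The main obstacle is the verification that the proposed layers form a genuine \emph{system of presentations}: that the two-element layers classify the double intersections $\E_i \cap \E_j$ --- where the full universality output of Corollary~\ref{corollary-diagonal-extension}, not just the existence of $\QQ_{\{i, j\}}$, is what is needed --- and that all higher layers may be taken empty. Both points come down to tracking the auxiliary proposition symbols $p_i$ and their restrictions to the various $\E_S$ carefully, that is, to re-running the bookkeeping of Proposition~\ref{proposition-construct-system-of-presentations} in this slightly more rigid setting.
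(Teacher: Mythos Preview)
Your proof is correct and follows essentially the same approach as the paper: enlarge the base theory by the proposition symbols $p_i$, package $\EE_i$ together with $p_i$ and the defining axioms for the other $p_j$ into the singleton layers, put the diagonal quotients $\QQ_{\{i,j\}}$ at the pair layers, take empty layers above, and then apply Theorem~\ref{theorem-main-gluing}. The paper's write-up is terser --- it dispatches the universality of the singleton and pair layers in one line (\enquote{only differs from $\TT_0 + \EE_i$ by an extension by definitions, and similarly for $\widetilde{\EE}_{\{i,j\}}$}) and invokes Proposition~\ref{proposition-construct-system-of-presentations} directly for the emptiness of the higher layers, whereas you spell out more of the $\abs{S}=2$ verification and then gesture toward that proposition's proof for $\abs{S}\geq 3$ --- but the substance is the same.
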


Note that finding appropriate quotient extensions $\QQ_{\{i, j\}}$
just means stating enough properties of
$M_0|_{\E_i \cap \E_j} + E_1|_{\E_i \cap \E_j} + E_2|_{\E_i \cap \E_j}$
as axioms
to make the relation symbols of $\EE_2$
definable by formulas of $\TT_0 + \EE_1$
and vice versa,
see Remark \ref{remark-recognizing-extensions-by-definitions}.

\begin{proof}
  The existence of the $\phi_{i, j}$ is clear.
  For $\QQ_{\{i, j\}}$,
  observe that
  \[ \TT_0 + \EE_i + \phi_{i, j}
     \qquad\text{and}\qquad
     \TT_0 + \EE_j + \phi_{j, i} \]
  both present the topos $\E_i \cap \E_j$,
  with universal models agreeing in the $\TT_0$-part.
  Thus, we obtain $\QQ_{\{i, j\}}$
  from Corollary \ref{corollary-diagonal-extension}.

  Now set
  \[ \widetilde{\TT}_0 \defeq \TT_0 + \angles{p_i}_{i \in I}
     + (\bigvee_{i \in I} p_i),
     \quad
     \widetilde{\EE}_{\{i\}} \defeq \EE_i + p_i
     + (p_j \doubleturnstile{[]} \phi_{i, j})_{j \neq i},
     \quad
     \widetilde{\EE}_{\{i, j\}} \defeq \QQ_{\{i, j\}}.
     \]
  Then $\widetilde{M}_0 \defeq M_0 +
  (\interpretation{p_i} \defeq U_i)_{i \in I}$
  is a model of $\widetilde{\TT}_0$ in $\E$,
  and $\widetilde{M}_0|_{\E_i} + E_i$
  is a universal model of
  $\widetilde{\TT}_0 + \widetilde{\EE}_{\{i\}}$,
  since this only differs from $\TT_0 + \EE_i$
  by an extension by definitions,
  and similarly for $\widetilde{\EE}_{\{i, j\}}$.
  By Proposition \ref{proposition-construct-system-of-presentations},
  this system of presentations can be extended
  by setting $\widetilde{\EE}_S = \varnothing$
  for all $S \in \Delta(I)$ with $\abs{S} \geq 3$.
  Then we can apply
  Theorem \ref{theorem-main-gluing}
  and obtain the theory $\TT$ with universal model $M$ as stated.
  (We only split up $\widetilde{\EE}_{\{i\}} / p_i$
  and dropped the trivial extension $p_i/p_i$.)
\end{proof}

\subsection{Application to Zariski toposes}
\label{subsection-gluing-Zar}

We now apply our results
to deduce a syntactic presentation
for the big Zariski topos of an arbitrary scheme,
which can be viewed as glued from big Zariski toposes
of affine schemes.
We first recall the definition,
and the classifying property in the affine case.

The \emph{big Zariski site} of a scheme $S$
is the category $\Sch/S$ of schemes over $S$,
equipped with the Zariski topology $J_\Zar$
in which a sieve on an object $T \to S$
is covering if and only if
there is an open cover $T = \bigcup_i T_i$
of the scheme $T$
such that the sieve contains the open embeddings
\[ \begin{tikzcd}[column sep=tiny]
  T_i \ar[rr, hook] \ar[dr] & & T \ar[dl] \\
  & S \rlap{.} &
\end{tikzcd} \]
The site defined in this way
does not admit any (small) dense set of objects,
so one has to restrict the class of objects in an appropriate way
before the sheaf topos can be formed.
(The resulting topos is still called a \emph{big} Zariski topos,
unless the site only contains open subschemes of $S$.)
We choose the subcategory $(\Sch/S)_\lofp$
of schemes locally of finite presentation over $S$,
yielding the \emph{big Zariski topos}
\[ S_\Zarfp \defeq \Sh((\Sch/S)_\lofp, J_\Zar) \rlap{.} \]

The motivation for this particular class of objects is
that in the affine case $S = \Spec K$
(where $K$ is any commutative unitary ring),
the big Zariski topos $(\Spec K)_\Zarfp$
is the classifying topos of the theory of local $K$-algebras.

\begin{definition}
  \label{definition-theories-for-Zar}
  \leavevmode
  \begin{enumerate}[label=(\roman*)]
    \item
      The (algebraic) theory of rings will be denoted $\Ring$.
      It has one sort $A$,
      function symbols $0, 1 : A$, ${-} : A \to A$,
      ${+}, {\cdot} : A \times A \to A$
      and the usual axioms of a commutative unitary ring.

    \item
      In the theory $\Ring$ (or any extension of it),
      we use the abbreviation
      \[ \inv(x) \quad\defeq\quad
         \ex{\overline{x}}{A} (x \overline{x} = 1)
         \rlap{.} \]

    \item
      We can require a ring to be local
      by the quotient extension
      \[ (\loc) \defeq \quad
         \Bigl\{\;
         \ex{\vec{y}}{A^n}
         (\sum_{i = 1}^n x_i y_i = 1)
         \turnstile{\vec{x} : A^n}
         \bigvee_{i \in \{1, \dots, n\}} \inv(x_i)
         \quad\mid\quad
         n \in \NN
         \;\Bigr\}
         \rlap{.} \]
      (An alternative, syntactically equivalent axiomatization
      is given by the two sequents
      $0 = 1 \turnstile{[]} \bot$
      and
      $\top \turnstile{x : A} \inv(x) \lor \inv(1 - x)$.)

    \item
      For a ring $K$,
      the extension $\AlgStr{K}$ of the theory $\Ring$
      consists of
      constant symbols $c_\lambda : A$ for all $\lambda \in K$
      and the axioms (for all $\lambda, \mu \in K$ as needed)
      \[ c_0 = 0, \quad
         c_\lambda + c_\mu = c_{\lambda + \mu}, \quad
         c_1 = 1, \quad
         c_\lambda c_\mu = c_{\lambda \mu}
         \rlap{.} \]

    \item
      The theory of $K$-algebras is
      \[ \Alg{K} \defeq \Ring + \AlgStr{K} \rlap{.} \]
  \end{enumerate}
\end{definition}

A quick sketch of a proof
that $(\Spec K)_\Zarfp$
classifies $\Alg{K} + (\loc)$,
using notions from Section \ref{section-presheaf-type},
goes as follows.
Since the theory $\Alg{K}$ is algebraic,
a presheaf site for its classifying topos
is given by the opposite of the category of
finitely presented $K$-algebras.
But this is equivalent to the full subcategory
on the affine objects of $(\Sch/\Spec K)_\lofp$,
which is a dense subcategory with respect to $J_\Zar$,
and therefore,
equipped with the topology induced by $J_\Zar$,
an alternative site for the same topos
by the Comparison Lemma.
Finally,
the subtopos defined by
this Zariski topology on the finitely presented $K$-algebras
is the subtopos presented by the quotient extension $(\loc)$.

The universal model of $\Alg{K} + (\loc)$ in $(\Spec K)_\Zarfp$
is the structure sheaf
\[ \mathcal{O} = \mathcal{O}_{S_\Zarfp} \;:\;
   {(\Sch/S)_\lofp}^\op \to \Set, \quad
   T \mapsto \mathcal{O}_T(T) \rlap{.} \]

\begin{remark}
  \label{remark-economical-formulation-of-Alg}
  The theory extension $\AlgStr{K}$
  (and therefore the theory $\Alg{K}$)
  can be formulated more economically,
  using a presentation of the ring $K$ by generators and relations.
  Namely,
  if $K = \ZZ[X_i]/(r_j)$,
  where $i$ and $j$ run over any two index sets,
  it suffices to add constant symbols $c_i : A$
  and axioms $r_j(c_i) = 0$
  to the theory of rings,
  where $r_j(c_i)$ is to be interpreted as a closed term,
  using the available ring structure.
  The theory $\Alg{K}$ from
  Definition \ref{definition-theories-for-Zar}
  can then be obtained by an extension by definitions.
  So for example,
  the topos $(\Spec \ZZ[X])_\Zarfp$
  classifies the theory of local rings
  with one distinguished element.
\end{remark}

The big Zariski topos is functorial in the scheme $S$,
in that a morphism $S' \to S$ of schemes
induces a geometric morphism
$S'_\Zarfp \to S_\Zarfp$,
see \citestacks{0210}.
But we only need the special case where
$S'$ is an open subscheme of $S$ here.
In this case,
observe that $S'$ can be regarded as
a subterminal object of the site $(\Sch/S)_\lofp$
and,
since the Zariski topology $J_\Zar$ is subcanonical,
determines a subterminal sheaf
\[ U_{S'} \defeq \Hom_S(\hole, S')
   \;:\; {(\Sch/S)_\lofp}^\op \to \Set
   \rlap{.} \]

\begin{lemma}
  \label{lemma-for-gluing-Zar}
  \leavevmode
  \begin{enumerate}[label=(\roman*)]
    \item
      The mapping
      \[ S' \mapsto (S_\Zarfp)_{o(U_{S'})} \]
      from open subschemes of $S$
      to open subtoposes of $S_\Zarfp$
      is monotone and
      preserves finite intersections and arbitrary unions.

    \item
      For any open subscheme $S'$ of $S$,
      the open subtopos $(S_\Zarfp)_{o(U_{S'})}$
      is equivalent to ${S'}_\Zarfp$.
      The inverse image $\iota^*$
      and the further left adjoint $\iota_!$
      of the open embedding
      $\iota : {S'}_\Zarfp \hookrightarrow S_\Zarfp$
      are given by
      \[ (\iota^*F)(T') = F(T') \qquad (T' \in (\Sch/S')_\lofp) \]
      and
      \[ (\iota_!F')(T) =
         \begin{cases}
           F'(T), & \text{if $T \to S$ factors through $S'$} \\
           \varnothing, & \text{otherwise}
         \end{cases}
         \qquad (T \in (\Sch/S)_\lofp) \rlap{.} \]

    \item
      If $S = \Spec K$ is affine
      (and therefore $S_\Zarfp$ classifies $\Alg{K} + (\loc)$)
      and $f \in K$,
      then the open subtopos $(S_\Zarfp)_{o(U_{D(f)})}$
      corresponding to the standard open $D(f) \subseteq S$
      is presented by the closed geometric formula
      \[ \inv(c_f) \quad\defeq\quad \ex{x}{A} (x c_f = 1) \rlap{.} \]
  \end{enumerate}
\end{lemma}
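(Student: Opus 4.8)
The plan is to reduce the whole lemma to computations with subterminal objects of $S_\Zarfp$, using that open subtoposes correspond order-isomorphically to subobjects of the terminal sheaf, with $(S_\Zarfp)_{o(U_{S'})}$ tracked by $U_{S'} = \Hom_S(\hole, S')$. The observation used throughout is that for $T \in (\Sch/S)_\lofp$ the set $U_{S'}(T)$ is inhabited exactly when the structure morphism $T \to S$ factors through $S' \hookrightarrow S$, equivalently when $\abs{T}$ maps into $\abs{S'} \subseteq \abs{S}$; in particular this is a condition local on $T$.

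For (i), monotonicity and preservation of finite intersections are then immediate objectwise: $U_S = 1$, and $T \to S$ factors through $S_1' \cap S_2'$ iff it factors through both, so $U_{S_1' \cap S_2'} = U_{S_1'} \wedge U_{S_2'}$ in $\mathrm{Sub}(1)$. For arbitrary unions I would unwind the join in a sheaf topos: $\bigvee_i U_{S_i'}$ is the image of $\coprod_i U_{S_i'} \to 1$, so $(\bigvee_i U_{S_i'})(T)$ is inhabited iff $T$ has a $J_\Zar$-cover each member of which factors through some $S_i'$, which by the locality remark is equivalent to $T \to S$ factoring through $\bigcup_i S_i'$. This equivalence is the only point where the Zariski topology genuinely enters.

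For (ii), I would construct the equivalence ${S'}_\Zarfp \simeq (S_\Zarfp)_{o(U_{S'})}$ by hand. Since a scheme locally of finite presentation over $S'$ is one over $S$, and any morphism over $S$ between two such is automatically over $S'$ ($S' \to S$ being a monomorphism), $(\Sch/S')_\lofp$ is a full subcategory of $(\Sch/S)_\lofp$; set $\iota^* F \defeq F|_{(\Sch/S')_\lofp}$ and let $\iota_! F'$ be the presheaf given by the stated formula. One then checks in turn: $\iota_! F'$ is a sheaf (for a $T$ not lying over $S'$ the sheaf condition over any cover of $T$ is vacuous, since by locality some member of the cover also fails to lie over $S'$); $\iota_! \dashv \iota^*$ (a transformation out of $\iota_! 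F'$ has forced components at the objects not over $S'$); $\iota^*$ is left exact and $\iota_!$ is fully faithful (as $\iota^*\iota_! \cong \id$), so $\iota$ is an embedding; and the essential image of $\iota_!$ is precisely the class of $U_{S'}$-supported sheaves, which is $S_\Zarfp/U_{S'} = (S_\Zarfp)_{o(U_{S'})}$ with $\iota^*$ restricted there the inverse equivalence and $\iota^* = (\hole)\times U_{S'}$ unwinding to restriction. Alternatively one invokes the standard description of the slice of a sheaf topos at a representable subterminal together with $(\Sch/S)_\lofp/S' \cong (\Sch/S')_\lofp$ and the fact that the induced topology is again $J_\Zar$. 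I expect this part to carry the bulk of the bookkeeping, although no individual step is hard.

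For (iii), recall that $(\Spec K)_\Zarfp$ classifies $\Alg K + (\loc)$ with universal model the structure sheaf $\mathcal{O}$, so being presented by the closed formula $\inv(c_f)$ means being $(S_\Zarfp)_{o(\interpretation{\inv(c_f)}_\mathcal{O})}$; it therefore suffices to show $\interpretation{\inv(c_f)}_\mathcal{O} = U_{D(f)}$ in $\mathrm{Sub}(1)$. For $T \in (\Sch/S)_\lofp$, $\interpretation{\inv(c_f)}_\mathcal{O}(T)$ is inhabited iff locally on $T$ the image of $f$ under $K \to \Gamma(T, \mathcal{O}_T)$ has an inverse; but a section of a structure sheaf that is locally invertible is invertible (local inverses glue by their uniqueness), so this holds iff $f$ is a unit in $\Gamma(T, \mathcal{O}_T)$. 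On the other hand $U_{D(f)}(T) = \Hom_S(T, \Spec K_f)$, the set of $K$-algebra maps $K_f \to \Gamma(T,\mathcal{O}_T)$, which is inhabited iff $f$ becomes a unit in $\Gamma(T,\mathcal{O}_T)$. Thus the two subterminals agree objectwise, which gives (iii); and via (ii) applied to $S' = D(f) = \Spec K_f$ this subtopos is moreover $(\Spec K_f)_\Zarfp$, as expected.
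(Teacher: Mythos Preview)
Your proof is correct, and for parts (i) and (ii) it is essentially the paper's argument: both reduce to subterminal sheaves and identify the slice $S_\Zarfp/U_{S'}$ with sheaves on $(\Sch/S')_\lofp$, with the same explicit formulas for $\iota^*$ and $\iota_!$. For unions in (i) there is a cosmetic difference: you compute the sheaf-theoretic join $\bigvee_i U_{S'_i}$ on sections, while the paper exploits representability, using Yoneda to observe that $U_{S'_i} \leq F$ iff $\abs{F(S'_i)} = 1$, and then the sheaf condition on $F$ over the cover $S' = \bigcup_i S'_i$ to conclude $U_{S'} \leq F$.

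Part (iii) is where your route genuinely diverges. You compute the subterminal $\interpretation{\inv(c_f)}_{\mathcal{O}}$ directly on sections, using that local invertibility of $f$ in $\mathcal{O}_T$ is equivalent to global invertibility, and compare with $U_{D(f)}$ objectwise. The paper instead invokes the known classifying property of $(\Spec K_f)_\Zarfp$ for $\Alg{K_f} + (\loc) \simeq \Alg{K} + (\loc) + \inv(c_f)$, identifies this topos with the open subtopos via (ii), and checks that $\iota^*$ carries the universal model $\mathcal{O}$ to the universal model $\mathcal{O}'$. Your approach is more elementary and self-contained; the paper's approach avoids unwinding the interpretation of an existential formula in a sheaf topos, at the price of importing the classifying statement for $(\Spec K_f)_\Zarfp$ as a black box.
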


\begin{proof}
  \begin{enumerate}[label=(\roman*)]
    \item
      We can identify open subtoposes with subterminal sheaves
      $F \subseteq 1_{S_\Zarfp}$,
      which can in turn be thought of
      as certain classes of objects of $(\Sch/S)_\lofp$,
      where $(S_\Zarfp)_{o(U_{S'})}$
      corresponds to the class of all
      scheme morphisms $T \to S$
      (locally of finite presentation)
      that factor through $S'$.
      Monotonicity is then clear,
      as well as $U_S = 1_{S_\Zarfp}$.
      For binary intersections,
      we have to check that $T \to S$
      factors through $S'_1 \cap S'_2$
      if and only if it factors through $S'_1$ and $S'_2$.
      But this is also clear.
      For unions,
      let $(S'_i)_{i \in I}$ be some family of open subschemes
      and let $F \subseteq 1_{S_\Zarfp}$
      be given with $U_{S'_i} \leq F$ for all $i$,
      that is, $\abs{F(S'_i)} = 1$.
      Because $S' \defeq \bigcup_{i \in I} S'_i$
      is covered by the $S'_i$ with respect to $J_\Zar$,
      this implies $\abs{F(S')} = 1$,
      so $U_{S'} \leq F$, as needed.

    \item
      The underlying category of
      the open subtopos $(S_\Zarfp)_{o(U_{S'})}$
      is equivalent to
      \[ \Sh((\Sch/S)_\lofp, J_\Zar) / \Hom_S(\hole, S') \rlap{,} \]
      the category of all those sheaves $F$ on $(\Sch/S)_\lofp$
      for which $F(T) \neq \varnothing$ implies that
      $T \to S$ factors through $S'$.
      The category of presheaves on $(\Sch/S)_\lofp$ with this property
      is clearly equivalent to $\PSh((\Sch/S')_\lofp)$,
      and one can check that the sheaf conditions
      with respect to the two Zariski topologies
      are then equivalent.
      Then we have
      \[ (\iota^*F)(T') = (F \times U_{S'})(T') = F(T') \]
      for $T' \to S'$,
      and the further left adjoint $\iota_!$
      is the forgetful functor
      $S_\Zarfp / U_{S'} \to S_\Zarfp$,
      as stated.

    \item
      We know that $S_\Zarfp$
      classifies $\Alg{K} + (\loc)$
      with universal model
      $\mathcal{O} = \mathcal{O}_{S_\Zarfp}$,
      and $(D(f))_\Zarfp = (\Spec K_f)_\Zarfp$
      classifies $\Alg{K_f} + (\loc)$,
      which is equivalent to $\Alg{K} + (\loc) + (\inv(c_f))$,
      with universal model
      $\mathcal{O}' = \mathcal{O}_{(D(f))_\Zarfp}$.
      So to show that the embedding $\iota$
      is presented by the quotient extension $(\loc)$,
      all we have to check is that
      $\iota^* \mathcal{O} \cong \mathcal{O}'$
      as models of $\Alg{K}$.
      And this follows from the description of $\iota^*$ in (ii).
  \end{enumerate}
\end{proof}

\begin{remark}
  Not all open subtoposes of $S_\Zarfp$
  correspond to open subschemes of $S$ in this way.
  For example,
  $\Hom_S(\hole, S')$
  is also a subterminal object
  if $S'$ is a \emph{closed} subscheme of $S$.
\end{remark}

Before we formulate the theory classified by $S_\Zarfp$
in generality,
we discuss the example of the projective line.

\begin{proposition}
  \label{proposition-gluing-projective-line}
  The big Zariski topos $(\PP^1_K)_\Zarfp$
  of the projective line over a ring $K$
  classifies the theory $\TT_{\PP^1_K}$,
  which is $\Alg{K} + (\loc)$
  expanded by two relation symbols
  $\widetilde{c}_1, \widetilde{c}_2 \subseteq A$
  and certain axioms as follows.
  \begin{align*}
    \TT_{\PP^1_K} \;\defeq\;
    {}&\Alg{K} + (\loc)
    + \angles{\widetilde{c}_1, \widetilde{c}_2 \subseteq A}
    \\
    &+ \Bigl(
         x \in \widetilde{c}_i \land x' \in \widetilde{c}_i
         \turnstile{x, x'} x = x'
       \Bigr)_{i = 1, 2}
    \\
    &+ \Bigl(
         \top \turnstile{[]}
         (\exx{x} x \in \widetilde{c}_1) \lor
         (\exx{x} x \in \widetilde{c}_2)
       \Bigr)
    \\
    &+ \Bigl(
         x_1 \in \widetilde{c}_1 \land x_2 \in \widetilde{c}_2
         \turnstile{x_1, x_2} x_1 x_2 = 1
       \Bigr)
    \\
    &+ \Bigl(
         x \in \widetilde{c}_i \land \inv(x) \turnstile{x}
         \exx{y} y \in \widetilde{c}_{i'}
       \Bigr)_{(i, i') = (1, 2), (2, 1)}
  \end{align*}
\end{proposition}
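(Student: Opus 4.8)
The plan is to realise $(\PP^1_K)_\Zarfp$ as a glued topos and apply Corollary~\ref{corollary-localic-gluing} with index set $I = \{1, 2\}$. Recall that $\PP^1_K = U_1 \cup U_2$ with $U_1 = \Spec K[X]$, $U_2 = \Spec K[Y]$, glued along $U_1 \cap U_2 = D(X) \subseteq U_1$ via $Y = X^{-1}$. By Lemma~\ref{lemma-for-gluing-Zar}(i)--(ii) this yields an open cover of $\E \defeq (\PP^1_K)_\Zarfp$ by open subtoposes $\E_i$ equivalent to $(U_i)_\Zarfp$, and by Remark~\ref{remark-economical-formulation-of-Alg} each $(U_i)_\Zarfp$ classifies the theory of local $K$-algebras with one distinguished element. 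I would take $\TT_0 \defeq \Alg{K} + (\loc)$ with base model $M_0$ the structure sheaf $\mathcal{O}$ of $\E$, and present each $\E_i$ by the \emph{localic} extension $\EE_i$ that adjoins a relation symbol $\widetilde{c}_i \subseteq A$ together with the axioms $x \in \widetilde{c}_i \land x' \in \widetilde{c}_i \turnstile{x, x' \oftype A} x = x'$ and $\top \turnstile{[]} \exx{x} x \in \widetilde{c}_i$. This is an extension by definitions away from the presentation using a constant symbol, so $M_0|_{\E_i} + E_i$ is again universal; the point of using a relation symbol is that $\EE_i$ then contains no function symbols and can be fed into the conditional-extension machinery of Definition~\ref{definition-conditional-extension}.

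Next I would fix the gluing data. By Lemma~\ref{lemma-for-gluing-Zar}(iii), applied to the affine scheme $U_1 = \Spec K[X]$ and the element $X \in K[X]$, the open subtopos $\E_1 \cap \E_2 \subseteq \E_1$, which corresponds to $D(X) \subseteq U_1$, is presented by the closed formula $\phi_{1, 2} \defeq \exx{x} (x \in \widetilde{c}_1 \land \inv(x))$ of $\TT_0 + \EE_1$, and symmetrically $\phi_{2, 1} \defeq \exx{x} (x \in \widetilde{c}_2 \land \inv(x))$. For the diagonal quotient extension I would propose $\QQ_{\{1, 2\}} \defeq \{\, x_1 \in \widetilde{c}_1 \land x_2 \in \widetilde{c}_2 \turnstile{x_1, x_2 \oftype A} x_1 x_2 = 1 \,\}$ and verify, via Remark~\ref{remark-recognizing-extensions-by-definitions}, that this is admissible: it holds in the universal model $\mathcal{O}$ restricted to $\E_1 \cap \E_2 = (\Spec K[X, X^{-1}])_\Zarfp$ when $\widetilde{c}_1, \widetilde{c}_2$ are interpreted as $\{X\}$ and $\{X^{-1}\}$, so the axiom is automatically provable; and over $\TT_0 + \EE_1 + \phi_{1, 2}$ the element of $\widetilde{c}_1$ is invertible, so $\widetilde{c}_2$ is provably the singleton $\{\, y : \exx{x} (x \in \widetilde{c}_1 \land x y = 1) \,\}$, and symmetrically $\widetilde{c}_1$ is definable over $\TT_0 + \EE_2 + \phi_{2, 1}$ --- exactly what the note after Corollary~\ref{corollary-localic-gluing} asks of $\QQ_{\{1, 2\}}$. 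Since $\mathcal{O}$ restricts to a universal model on the overlap, the final check in Remark~\ref{remark-recognizing-extensions-by-definitions} may be skipped.

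Then I would substitute this data into Corollary~\ref{corollary-localic-gluing}(ii) and simplify the resulting theory $\TT_0 + \angles{p_1, p_2} + (p_1 \lor p_2) + \EE_1/p_1 + \EE_2/p_2 + (p_2 \doubleturnstile{[]} \phi_{1, 2})/p_1 + (p_1 \doubleturnstile{[]} \phi_{2, 1})/p_2 + \QQ_{\{1, 2\}}/(p_1 \land p_2)$. Conditionalising $\EE_i/p_i$ produces the axioms $x \in \widetilde{c}_i \turnstile{x} p_i$ and $p_i \turnstile{[]} \exx{x} x \in \widetilde{c}_i$, which make each $p_i$ provably equivalent to $\exx{x} x \in \widetilde{c}_i$, so an extension by definitions removes the $p_i$ without changing the classifying topos up to equivalence. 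After this substitution and after discarding axioms that have become trivial or provable from the others, one is left with precisely the five families of axioms in the statement: the disjunction $\top \turnstile{[]} (\exx{x} x \in \widetilde{c}_1) \lor (\exx{x} x \in \widetilde{c}_2)$ from $(p_1 \lor p_2)$; the two singleton axioms from $\EE_i/p_i$; the axiom $x_1 \in \widetilde{c}_1 \land x_2 \in \widetilde{c}_2 \turnstile{x_1, x_2} x_1 x_2 = 1$ from $\QQ_{\{1, 2\}}/(p_1 \land p_2)$; and the two axioms $x \in \widetilde{c}_i \land \inv(x) \turnstile{x} \exx{y} y \in \widetilde{c}_{i'}$ from the surviving half of each conditionalised biconditional. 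The stated universal model then falls out of the one produced by Corollary~\ref{corollary-localic-gluing}.

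I expect the real work to be this last simplification: matching the mechanically produced (and rather redundant) list of axioms coming out of the conditional extensions against the streamlined list in the statement, in particular verifying that the discarded half of each conditionalised biconditional $(p_j \doubleturnstile{[]} \phi_{i, j})/p_i$ --- the direction $p_j \land p_i \turnstile{[]} \phi_{i, j}$ --- is indeed provable from $\QQ_{\{1, 2\}}$ together with the singleton axioms, and keeping straight which simplifications are genuine syntactic equivalences and which are merely Morita equivalences (only the latter is needed for the statement about the classifying topos). A secondary point is the verification that $\QQ_{\{1, 2\}}$ is an admissible diagonal quotient extension, which rests on having $\mathcal{O}|_{\E_1 \cap \E_2}$ available as a universal model so that Remark~\ref{remark-recognizing-extensions-by-definitions} applies.
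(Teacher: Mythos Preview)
Your proposal is correct and follows essentially the same route as the paper's proof: the paper likewise applies Corollary~\ref{corollary-localic-gluing} with $\TT_0 = \Alg{K} + (\loc)$, $M_0 = \mathcal{O}$, the two one-constant extensions (which it first writes with constant symbols $c_i$ and then desugars to relation symbols $\widetilde{c}_i$, whereas you work with the relation symbols from the start), the formulas $\phi_{i,i'} = \inv(c_i)$, and the diagonal quotient $c_1 c_2 = 1$, and then eliminates the $p_i$ via the equivalence $p_i \doubleturnstile{[]} \exx{x}\, x \in \widetilde{c}_i$. Your identification of the remaining simplification work, including why the redundant half of each conditionalised biconditional follows from $\QQ_{\{1,2\}}$, matches what the paper leaves implicit.
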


The relation symbols $\widetilde{c}_1, \widetilde{c}_2 \subseteq A$
should be understood as
\enquote{partial constants} $c_1, c_2 : A$,
with the following properties.
At least one of them is defined,
and if one is defined then the other is its inverse,
which may or may not be defined.

\begin{proof}
  An affine open cover of the scheme $\PP^1_K$
  is given by two copies of the affine line $\mathbb{A}^1_K$,
  we write
  \[ \PP^1_K = \Spec K[X_1] \cup \Spec K[X_2] \rlap{,} \]
  and their intersection is
  \[ \Spec K[X_1] \cap \Spec K[X_2] = \Spec K[X_1, X_2]/(X_1 X_2 - 1)
     \rlap{,} \]
  with the open inclusions corresponding to
  the $K$-algebra maps suggested by the notation.
  In other words,
  as an open subscheme of each $\Spec K[X_i]$,
  the overlap is the standard open $D(X_i)$,
  and the identification corresponds to
  the isomorphism of rings is
  \[ \varphi : K[X_1]_{X_1} \to K[X_2]_{X_2}, \quad
     X_1 \mapsto {X_2}^{-1} \rlap{.} \]

  We want to apply Corollary \ref{corollary-localic-gluing}
  to the open cover of toposes
  \[ (\PP^1_K)_\Zarfp = \E = \E_1 \cup \E_2 \]
  induced by the above
  via Lemma \ref{lemma-for-gluing-Zar}.
  Consider the structure sheaf of $(\PP^1_K)_\Zarfp$,
  which is a sheaf of $K$-algebras,
  as a model
  \[ M_0 \defeq \mathcal{O} \in (\Alg{K})\dashmod(\E) \rlap{.} \]
  Then the $(\Alg{K})$-models $M_0|_{\E_i}$
  are the respective structure sheaves
  (by Lemma \ref{lemma-for-gluing-Zar} (ii)),
  that is,
  they become universal models of
  $\Alg{K} + \angles{c_i : A} + (\loc)$
  when the constant symbol $c_i$
  is interpreted as the global section
  of $M_0|_{\E_i} = \mathcal{O}_{(\Spec K[X_i])_\Zarfp}$
  corresponding to $X_i$.
  In particular,
  $M_0$ satisfies $(\loc)$,
  so we can use as our base theory
  \[ \TT_0 \defeq \Alg{K} + (\loc) \rlap{,} \]
  and we have presentations $(\TT_0 + \EE_i, M_0|_{\E_i} + E_i)$
  of the $\E_i$ with
  \[ \EE_i \defeq \angles{c_i : A} \rlap{.} \]
  Using Lemma \ref{lemma-for-gluing-Zar} (ii),
  the subtopos $\E_1 \cap \E_2$
  is presented in $\E_i$
  by the closed geometric formula
  \[ \phi_i \defeq \inv(c_i) \rlap{.} \]

  It remains to find a diagonal quotient extension
  of $\EE_1 + \phi_1$ and $\EE_2 + \phi_2$ over $\TT_0$
  satisfied by $\mathcal{O}_{\E_1 \cap \E_2}$
  (with the interpretations of $c_i$ coming from $\mathcal{O}_{\E_i}$).
  Since $\mathcal{O}_{\E_1 \cap \E_2}$ is
  the universal local $K[X_1, X_2]/(X_1 X_2 - 1)$-algebra,
  we can take
  \[ \QQ_{\{1, 2\}} \defeq
     \Bigl( \top \turnstile{[]} c_1 c_2 = 1 \Bigr)
     \rlap{.} \]

  Now,
  to form conditional extensions like $\EE_i/p_i$,
  we have to replace the constant symbols $c_i$
  by relation symbols $\widetilde{c}_i \subseteq A$
  with appropriate axioms,
  and rewrite the formulas in which they appear.
  (We need to \enquote{desugar} the syntax.)
  Thus we obtain
  \begin{align*}
    \EE_i &= \angles{\widetilde{c}_i \subseteq A}
      + \Bigl( \top \turnstile{[]} \exx{x} x \in \widetilde{c}_i \Bigr)
      + \Bigl( x \in \widetilde{c}_i \land x' \in \widetilde{c}_i
               \turnstile{x, x'} x = x' \Bigr),
    \\
    \phi_i &= \exx{x} (x \in \widetilde{c}_i \land \inv(x)),
    \\
    \QQ_{\{1, 2\}} &= \exx{x_1, x_2}
      ( x_1 \in \widetilde{c}_1 \land
        x_2 \in \widetilde{c}_2 \land
        x_1 x_2 = 1 ).
  \end{align*}
  Then Corollary \ref{corollary-localic-gluing}
  gives us a theory $\TT$ classified by $(\PP^1_K)_\Zarfp$.
  This $\TT$ contains proposition symbols $p_1$, $p_2$,
  which can however be eliminated since $\TT$ proves
  \[ p_i \doubleturnstile{[]} \exx{x} x \in \widetilde{c}_i
     \rlap{.} \]
  (This is always the case
  when the extensions $\EE_i$ contain at least one constant symbol.)
  Then one can check that the resulting theory
  is syntactically equivalent to $\TT_{\PP^1_K}$
  as defined in the statement.
\end{proof}

In the general case,
we will of course have more than two affine schemes in the covering,
but also,
their intersections
will not be given as single standard open subschemes.
Recall, however,
that the intersection $S_1 \cap S_2$
of two affine open subschemes
of a scheme $S$
can always be covered by open subschemes
which are standard opens in both $S_1$ and $S_2$
\cite[5.3.1. Proposition]{vakil}.

\begin{theorem}
  \label{theorem-gluing-Zar}
  Let $S = \bigcup_{i \in I} S_i$ be a scheme
  covered by affine open subschemes $S_i = \Spec K_i$,
  and for every $i \neq i' \in I$,
  let an open cover
  \[ S_i \cap S_{i'} = \bigcup_{j \in J_{\{i, i'\}}} S_{\{i, i'\}}^j \]
  be given,
  such that
  \[ \Spec K_i \supseteq D(f_{i, i'}^j)
     = S_{\{i, i'\}}^j =
     D(f_{i', i}^j) \subseteq \Spec K_{i'}
     \rlap{,} \]
  with corresponding ring isomorphisms
  \[ \varphi_{i, i'}^j = (\varphi_{i', i}^j)^{-1} :
     (K_i)_{f_{i, i'}^j} \to (K_{i'})_{f_{i', i}^j}
     \rlap{.} \]

  Then the big Zariski topos $S_\Zarfp$
  classifies the theory
  \begin{align*}
    \TT_S \;\defeq\;
    {}&\Ring + (\loc)
    + \angles{p_i}_{i \in I}
    + (\bigvee_{i \in I} p_i)
    + ((\AlgStr{K_i})/p_i)_{i \in I}
    \\
    &+ \Bigl(
         x \in \widetilde{c}_{f_{i, i'}^j} \land \inv(x)
         \turnstile{x}
         p_{i'}
       \Bigr)_{i \neq i' \in I, j \in J_{\{i, i'\}}}
    \\
    &+ \Bigl(
         p_i \land p_{i'}
         \turnstile{[]}
         \bigvee_{j \in J_{\{i, i'\}}} \exx{x}
           (x \in \widetilde{c}_{f_{i, i'}^j} \land \inv(x))
       \Bigr)_{i \neq i' \in I, j \in J_{\{i, i'\}}}
    \\
    &+ \Bigl(
         x \in \widetilde{c}_{f_{i', i}^j} \land
         \inv(x) \land
         y \in \widetilde{c}_\lambda \land
         z \in \widetilde{c}_{\lambda'}
         \turnstile{x, y, z}
         x^n y = z
       \Bigr)_{i \neq i' \in I, j \in J_{\{i, i'\}}, \lambda \in K_i}
       \rlap{,}
  \end{align*}
  where in the last family of axioms,
  $\lambda' \in K_{i'}$ and $n \in \NN$
  are chosen for each $\lambda \in K_i$
  such that
  $\varphi_{i, i'}^j(\lambda) = (f_{i', i}^j)^{-n} \lambda'$.
\end{theorem}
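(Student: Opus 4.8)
The plan is to obtain this as an application of Corollary~\ref{corollary-localic-gluing}, following the template of the proof of Proposition~\ref{proposition-gluing-projective-line}; the only genuinely new features are that the affine charts $S_i$ are arbitrary in number and size and that the overlaps $S_i \cap S_{i'}$ are no longer single standard opens but only covered by them. First I would set up the geometric side: Lemma~\ref{lemma-for-gluing-Zar} turns the affine open cover $S = \bigcup_i S_i$ into an open cover $S_\Zarfp = \bigcup_i \E_i$ with $\E_i = (S_\Zarfp)_{o(U_{S_i})} \simeq (S_i)_\Zarfp = (\Spec K_i)_\Zarfp$, and $(\Spec K_i)_\Zarfp$ classifies $\Alg{K_i} + (\loc)$. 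Since $S$ need not be affine there is no global base ring, so I take as base theory $\TT_0 \defeq \Ring + (\loc)$ and as base model $M_0 \defeq \mathcal{O}_{S_\Zarfp}$; this is a $\TT_0$-model because each restriction $M_0|_{\E_i}$ is the structure sheaf of $(S_i)_\Zarfp$, hence a local ring, and validity of the geometric sequents in $(\loc)$ may be tested on the open cover $(\E_i)_i$. Taking $\EE_i \defeq \AlgStr{K_i}$ --- a localic extension of $\TT_0$, which I desugar into relation symbols $\widetilde{c}_\lambda \subseteq A$ so that conditional extensions can be formed --- and letting $E_i$ interpret each $\widetilde{c}_\lambda$ ($\lambda \in K_i$) via the $K_i$-algebra structure of the structure sheaf of $(S_i)_\Zarfp$, the model $M_0|_{\E_i} + E_i$ is the universal model of $\TT_0 + \EE_i = \Alg{K_i} + (\loc)$. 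This is exactly the input required by Corollary~\ref{corollary-localic-gluing}.

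Next I would pin down the data that Corollary~\ref{corollary-localic-gluing}(i) asks for. For the closed formula presenting $\E_i \cap \E_{i'} \subseteq \E_i$: by Lemma~\ref{lemma-for-gluing-Zar}(i) the subtopos $\E_i \cap \E_{i'}$ corresponds to $U_{S_i \cap S_{i'}}$, and since $S_i \cap S_{i'} = \bigcup_j S_{\{i,i'\}}^j$ with $S_{\{i,i'\}}^j = D(f_{i,i'}^j)$ and the map $S' \mapsto (S_\Zarfp)_{o(U_{S'})}$ preserves unions, $\E_i \cap \E_{i'}$ is the join of the open subtoposes of the $D(f_{i,i'}^j)$; under $\E_i \simeq (S_i)_\Zarfp$ these are presented, by Lemma~\ref{lemma-for-gluing-Zar}(iii), by the formulas $\inv(c_{f_{i,i'}^j})$, so the right choice is
\[ \phi_{i,i'} \;\defeq\; \bigvee_{j \in J_{\{i,i'\}}} \exists x.\,\bigl( \widetilde{c}_{f_{i,i'}^j}(x) \wedge \inv(x) \bigr). \]
For the diagonal quotient extension $\QQ_{\{i,i'\}}$: the structure sheaf $\mathcal{O}_{\E_i \cap \E_{i'}}$ is a universal model of both $\TT_0 + \EE_i + \phi_{i,i'}$ and $\TT_0 + \EE_{i'} + \phi_{i',i}$, agreeing on $\TT_0$, so Corollary~\ref{corollary-diagonal-extension} guarantees that a diagonal quotient extension exists and I only need to verify that the last family of axioms in the statement is a valid choice. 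That these axioms hold in $\mathcal{O}_{\E_i \cap \E_{i'}}$ is the computation that on $D(f_{i',i}^j)$ the two chart structures are identified through $\varphi_{i,i'}^j$, so that for $\lambda \in K_i$ with $\varphi_{i,i'}^j(\lambda) = (f_{i',i}^j)^{-n}\lambda'$ one has, with $x = c_{f_{i',i}^j}$, $y = c_\lambda$, $z = c_{\lambda'}$, the identity $x^n y = z$ there; the form $(f_{i',i}^j)^{-n}\lambda'$ is available precisely because every element of a localization is such a fraction. That, conversely, these axioms together with the $\AlgStr{K_i}$-, $\AlgStr{K_{i'}}$- and $\phi$-axioms make each constant of $K_{i'}$ definable over $\TT_0 + \EE_i + \phi_{i,i'}$ (and symmetrically) follows by exhibiting the explicit defining formula, a disjunction over $j$ of $\exists x\,(\widetilde{c}_{f_{i,i'}^j}(x) \wedge \inv(x) \wedge \exists y\,(\widetilde{c}_{\nu_j}(y) \wedge x^{m_j} z = y))$ with $\nu_j \in K_i$, $m_j \in \NN$ read off from $\varphi_{i',i}^j$; by Remark~\ref{remark-recognizing-extensions-by-definitions} it suffices to check the resulting biconditionals in the available universal model $\mathcal{O}_{\E_i \cap \E_{i'}}$, a routine localization identity, and provable functionality of these definitions is then automatic. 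Proposition~\ref{proposition-extensions-by-definitions} then shows that $\EE_{i'} + \QQ_{\{i,i'\}}$ is an equivalence extension of $\TT_0 + \EE_i + \phi_{i,i'}$ and symmetrically, i.e.\ $\QQ_{\{i,i'\}}$ is a diagonal quotient extension of the required kind.

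With these choices in hand, Corollary~\ref{corollary-localic-gluing}(ii) says that $S_\Zarfp$ classifies the theory it produces, with $\mathcal{O}_{S_\Zarfp}$ (together with the $p_i$ interpreted as the $U_{S_i}$ and the $E_i$ conditionalized) as universal model; it remains to recognise that theory as $\TT_S$. This is bookkeeping: $\TT_0 + \angles{p_i}_{i \in I} + (\bigvee_{i \in I} p_i) + (\AlgStr{K_i}/p_i)_{i \in I}$ is copied verbatim; in the family $\bigl( (p_{i'} \doubleturnstile{[]} \phi_{i,i'})/p_i \bigr)$ the conditionalization on $p_i$ is redundant because $\phi_{i,i'}$ already entails $p_i$ (each $\widetilde{c}_{f_{i,i'}^j}$ carries the axiom $x \in \widetilde{c}_{f_{i,i'}^j} \turnstile{x} p_i$ coming from $\AlgStr{K_i}/p_i$), so it reduces to $p_i \wedge p_{i'} \turnstile{[]} \phi_{i,i'}$ together with $\phi_{i,i'} \turnstile{[]} p_{i'}$, and distributing the disjunction of $\phi_{i,i'}$ over the left side of the latter yields precisely the second and third families of $\TT_S$; finally $\QQ_{\{i,i'\}}/(p_i \wedge p_{i'})$ collapses to $\QQ_{\{i,i'\}}$ itself, since every premise in $\QQ_{\{i,i'\}}$ mentions a constant of $K_i$ and one of $K_{i'}$ and hence already entails $p_i \wedge p_{i'}$ --- this is the fourth family. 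Note that no presentations are needed for triple or higher overlaps: inside Corollary~\ref{corollary-localic-gluing} these are supplied by Proposition~\ref{proposition-construct-system-of-presentations}, which lets us take the empty extension there, so in particular no cocycle condition among three charts has to be checked by hand.

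I expect the one genuinely mathematical step to be the verification that the explicitly written fourth family of axioms really constitutes a diagonal quotient extension --- i.e.\ that the transition data $\varphi_{i,i'}^j$, repackaged as finitely many equations per generator and per overlap piece, suffices to make the two chart structures mutually definable on each overlap. This is a concrete computation with ring localizations rather than a conceptual difficulty, and the coherence issues one might fear across triple intersections are absorbed by the gluing machinery and need no separate treatment. A minor but error-prone point, as in the proof of Proposition~\ref{proposition-gluing-projective-line}, is keeping the desugaring of the constant symbols $c_\lambda$ into relation symbols $\widetilde{c}_\lambda$ consistent throughout the argument.
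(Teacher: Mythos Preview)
Your proposal is correct and follows essentially the same route as the paper: base theory $\TT_0 = \Ring + (\loc)$ with $M_0 = \mathcal{O}_{S_\Zarfp}$, localic extensions $\EE_i = \AlgStr{K_i}$, the formulas $\phi_{i,i'} = \bigvee_j \inv(c_{f_{i,i'}^j})$, and the explicit diagonal quotient given by the last family of axioms, all fed into Corollary~\ref{corollary-localic-gluing}. Your final bookkeeping paragraph spelling out how the output of the corollary simplifies to $\TT_S$ is in fact more detailed than the paper's, which merely notes that one may drop $p_i$ from any antecedent already containing some $x \in \widetilde{c}_\lambda$ with $\lambda \in K_i$.
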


\begin{proof}
  By Lemma \ref{lemma-for-gluing-Zar},
  the topos $\E \defeq S_\Zarfp$ is covered by the open subtoposes
  $\E_i \defeq (S_i)_\Zarfp$.
  The restrictions $\mathcal{O}|_{\E_i}$
  of the $\Ring$-model $\mathcal{O} = \mathcal{O}_{S_\Zarfp}$
  can be extended by $K_i$-algebra structures,
  resulting in universal models of $\Alg{K_i} + (\loc)$.
  So we can use the base theory
  \[ \TT_0 \defeq \Ring + (\loc) \]
  with the model
  \[ M_0 \defeq \mathcal{O} \;\in\;
     (\Ring + (\loc))\dashmod(\E)
     \rlap{,} \]
  and have presentations of the $\E_i$ with
  \[ \EE_i \defeq \AlgStr{K_i} \rlap{.} \]
  The open subtopos $\E_i \cap \E_{i'} \subseteq \E_i$
  is then presented by the closed geometric formula
  \[ \phi_{i, i'} \;\defeq\;
     \bigvee_{j \in J_{\{i, i'\}}} \inv(c_{f_{i, i'}^j})
     \rlap{.} \]

  To find appropriate diagonal quotient extensions $\QQ_{\{i, i'\}}$,
  consider the model
  \[ M_0|_{\E_i \cap \E_{i'}} = \mathcal{O}_{(S_i \cap S_{i'})_\Zarfp}
     \rlap{,} \]
  which carries both a $K_i$-algebra structure
  and a $K_{i'}$-algebra structure.
  For every $j \in J_{\{i, i'\}}$,
  the further restriction to $(S_{\{i, i'\}}^j)_\Zarfp$
  is the universal local $(K_i)_{f_{i, i'}^j}$-algebra
  and at the same time
  the universal local $(K_{i'})_{f_{i', i}^j}$-algebra,
  with the algebra structures coinciding
  via the isomorphism $\varphi_{i, i'}^j$.
  For every $j \in J_{\{i, i'\}}$
  and every $\lambda \in K_i$,
  write
  \[ \varphi_{i, i'}^j(\lambda) = (f_{i', i}^j)^{-n} \lambda' \]
  for some $\lambda' = \lambda'_{j, \lambda} \in K_{i'}$,
  $n = n_{j, \lambda} \in \NN$.
  Then $\mathcal{O}_{(S_i \cap S_{i'})_\Zarfp}$
  satisfies the axioms
  \begin{align*}
    \QQ_{i, i'} \quad&\defeq\quad
    \Bigl\{\quad
      \inv(c_{f_{i', i}^j})
      \turnstile{[]}
      (c_{f_{i', i}^j})^n c_\lambda = c_{\lambda'}
    \quad\mid\quad
      j \in J_{\{i, i'\}},
      \lambda \in K_i
    \quad\Bigr\}
    \rlap{,} \\
    \QQ_{\{i, i'\}} \quad&\defeq\quad \QQ_{i, i'} + \QQ_{i', i}
    \rlap{.}
  \end{align*}
  Note that the sequents in $\QQ_{i, i'}$ also make
  $\inv(c_{f_{i', i}^j}) \turnstile{[]} \inv(c_{f_{i, i'}^j})$
  provable,
  because we can set $\lambda \defeq f_{i, i'}^j$
  and then $\lambda'$ will be invertible
  as an element of $(K_{i'})_{f_{i', i}}$.

  The quotient extension $\QQ_{i, i'}$
  clearly makes each $c_\lambda$
  definable in terms of the $K_{i'}$-algebra structure
  if we assume one of the formulas $\inv(c_{f_{i', i}^j})$.
  But we need $c_\lambda$ to be definable by
  a single formula of $\TT_0 + \EE_{i'} + \phi_{i', i}$.
  Thus, consider
  \[ \phi_{c_\lambda}(x) \quad\defeq\quad
     \bigvee_{j \in J_{\{i, i'\}}} \Bigl(
       \inv(c_{f_{i', i}^j}) \land
       (c_{f_{i', i}^j})^n x = c_{\lambda'} \Bigr)
     \rlap{.} \]
  Then we see that $\QQ_{i, i'}$
  is exactly what we need,
  together with $\phi_{i', i}$,
  to prove
  \[ \phi_{c_\lambda}(x) \doubleturnstile{x} x = c_\lambda
     \rlap{,} \]
  completing our argument why $\QQ_{\{i, i'\}}$
  is a diagonal quotient extension
  of $\EE_i + \phi_{i, i'}$ and $\EE_{i'} + \phi_{i', i}$
  over $\TT_0$.
  (It is automatic that
  $\phi_{c_\lambda}(x)$ is provably functional
  as a formula of $\TT_0 + \EE_{i'} + \phi_{i', i}$,
  see Remark \ref{remark-recognizing-extensions-by-definitions}.)

  Now Corollary \ref{corollary-localic-gluing}
  can finally be applied to the data
  $\TT_0$, $\EE_i$, $\phi_{i, i'}$ and $\QQ_{\{i, i'\}}$,
  considering all the constant symbols $c_\lambda : A$
  to be \enquote{syntactic sugar} for
  relation symbols $\widetilde{c}_\lambda \subseteq A$
  together with appropriate axioms.
  The resulting theory $\TT$
  is syntactically equivalent to $\TT_S$ as given in the statement.
  We have only simplified the axioms
  by dropping $p_i$ in the antecedent
  whenever a formula of the form $x \in \widetilde{c}_\lambda$
  appears there too,
  for some $\lambda \in K_i$,
  since the equivalence
  \[ p_i \doubleturnstile{[]} \exx{x} x \in \widetilde{c}_\lambda \]
  is already contained in
  $(\AlgStr{K_i})/p_i$.
\end{proof}

\begin{remark}
  Theorem \ref{theorem-gluing-Zar}
  can be regarded as a generalization of
  \cite[Proposition 16.3]{blechschmidt:phd},
  where the $\Set$-based points of $S_\Zarfp$
  are identified as the \enquote{local rings over $S$},
  that is,
  local rings $A$
  together with a scheme morphism $\Spec A \to S$.
  One can check that
  this category is indeed equivalent to
  $\TT_S\dashmod(\Set)$.
\end{remark}


\subsection{Conditional extensions and Artin gluing}

We now consider the question
what the classifying topos of
a conditionally extended theory $\TT + \EE/\phi$
looks like,
that is,
we aim to describe $\Set[\TT + \EE/\phi]$
in terms of the geometric morphisms
\[ \begin{tikzcd}
  \Set[\TT + \phi + \EE] \ar[r, "\pi_\EE"] &
  \Set[\TT + \phi] \ar[r, hook, "\pi_\phi"] &
  \Set[\TT] \rlap{,}
\end{tikzcd} \]
where $\pi_\phi$ is an open embedding,
but $\pi_\EE$ can be an arbitrary geometric morphism
(see Theorem \ref{theorem-extension-from-geometric-morphism}).
This situation can perhaps be visualized
by the following picture.
\begin{center} \begin{tikzpicture}
  [x={(1cm, 0cm)}, y={(0cm, .25cm)}, z={(0cm, 1cm)}]
  \draw (1, 0) circle [radius=3];
  \node[label=east:${\Set[\TT]}$] at (4, 0) {};
  \draw (0, 0) circle [radius=1];
  \node[label=east:${\Set[\TT + \phi]}$] at (1, 0) {};
  \draw (0, 0, 2.5) circle [radius=1];
  \draw (1, 0, 1.5) arc [radius=1, start angle=0, end angle=-180];
  \draw (1, 0, 1.5) -- (1, 0, 2.5);
  \draw (-1, 0, 1.5) -- (-1, 0, 2.5);
  \node[label=east:${\Set[\TT + \phi + \EE]}$] at (1, 0, 2) {};
  \draw[->] (0, 0, 1) -- (0, 0, 0);
\end{tikzpicture} \end{center}

In the special case where
the base theory is simply the theory $\angles{p}$
of a proposition symbol $p$ with no axioms,
we can give an answer
directly by analyzing the syntactic site of the resulting theory.
Recall that the \emph{Sierpiński cone} (or Freyd cover) $\scn(\E)$
of a topos $\E$ is the topos with underlying category
the comma category $(\Set \downarrow \Gamma)$,
where $\Gamma : \E \to \Set$ is the global sections functor,
and that it has a canonical subterminal object
$U = (\varnothing, 1_\E, !)$,
where $! : \varnothing \to \Gamma(1_\E) = \{*\}$,
such that the corresponding open subtopos is
$\scn(\E)_{o(U)} \simeq \E$,
while its closed complement is just a point,
$\scn(\E)_{c(U)} \simeq \Set$.
This corresponds to the fact that
$\angles{p} + \TT/p + p$ is equivalent to $\TT$
and $\angles{p} + \TT/p + \lnot p$
is equivalent to the empty theory
(over the empty signature)
classified by $\Set$.

\begin{proposition}
  \label{proposition-sierpinski-cone}
  For any geometric theory $\TT$,
  the theory $\angles{p} + \TT/p$
  is classified by the Sierpiński cone $\scn(\Set[\TT])$.
\end{proposition}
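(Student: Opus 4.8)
The plan is to compute a site for $\Set[\langle p\rangle + \TT/p]$ from the given syntactic presentation and to recognize it as a site for the Sierpiński cone. Write $\mathbb{S} \defeq \langle p\rangle + \TT/p$, let $\mathcal{C}_\TT$ be the geometric syntactic category of $\TT$ so that $\Set[\TT] = \Sh(\mathcal{C}_\TT, J_\TT)$ with $J_\TT$ the canonical coverage, and similarly $\Set[\mathbb{S}] = \Sh(\mathcal{C}_\mathbb{S}, J_\mathbb{S})$. Inside $\mathcal{C}_\mathbb{S}$ I write $\{[].\,\top\}$ for the terminal object and $\{[].\,p\}$ for the subterminal object coming from the proposition symbol $p$.

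The first and main step is to pin down the shape of $(\mathcal{C}_\mathbb{S}, J_\mathbb{S})$. Since $\TT$, viewed as an extension of $\langle p\rangle + p$, adds no sorts, and in $\TT/p$ every sort $S$ and every relation symbol of $\TT$ carries an axiom entailing $p$ (namely $\top \turnstile{x \oftype S} p$, respectively $R(\vec{x}) \turnstile{\vec{x} \oftype \vec{S}} p$; function symbols of $\TT$ being first re-encoded as relation symbols, as Definition \ref{definition-conditional-extension} does), every formula-in-context of $\mathbb{S}$ with a nonempty context provably entails $p$, and one checks — using Lemma \ref{lemma-conditional-extension-first-properties}~(i) — that every formula-in-context is provably equivalent over $\mathbb{S}$ either to $\{[].\,\top\}$ or to one entailing $p$. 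Hence $\mathcal{C}_\mathbb{S}$ is obtained from its full subcategory $\mathcal{D}$ of objects below $\{[].\,p\}$ (equivalently the slice $\mathcal{C}_\mathbb{S}/\{[].\,p\}$, whose terminal object is $\{[].\,p\}$) by \emph{freely adjoining} the terminal object $\{[].\,\top\}$: one has $\Hom(X, \{[].\,\top\}) = \{*\}$ for all $X$ and $\Hom(\{[].\,\top\}, X) = \varnothing$ for $X \in \mathcal{D}$, because a morphism $\{[].\,\top\} \to X$ would be a provably functional formula forcing $\top \turnstile{[]} p$, which $\mathbb{S}$ does not prove (it has the model in $\Set$ with $p$ interpreted as the initial subterminal). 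For the topology, a sieve on an object of $\mathcal{D}$ is $J_\mathbb{S}$-covering iff it is covering for the induced coverage $K \defeq J_\mathbb{S}|_\mathcal{D}$ (covers of $\mathcal{D}$-objects involve only $\mathcal{D}$-objects), while the only $J_\mathbb{S}$-covering sieve of $\{[].\,\top\}$ is the maximal one — the genuinely $\mathbb{S}$-specific point: an axiom $\top \turnstile{[]} \bigvee_i \psi_i$ of $\TT$ is \emph{weakened} in $\TT/p$ to $p \turnstile{[]} \bigvee_i \psi_i$, so it yields a cover of $\{[].\,p\}$ inside $\mathcal{D}$, never a proper cover of $\{[].\,\top\}$.

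The second step reads off the sheaf topos. Because $\mathcal{C}_\mathbb{S}$ is $\mathcal{D}$ with a freely adjoined terminal, $J_\mathbb{S}$ restricts to $K$ on $\mathcal{D}$, and covers of $\{[].\,\top\}$ are trivial, a $J_\mathbb{S}$-sheaf $F$ is precisely a $K$-sheaf $F_0 \defeq F|_\mathcal{D}$, a set $F(\{[].\,\top\})$, and a map $F(\{[].\,\top\}) \to F(\{[].\,p\}) = \Gamma_{\Sh(\mathcal{D}, K)}(F_0)$ induced by the unique morphism $\{[].\,p\} \to \{[].\,\top\}$ (using that $\{[].\,p\}$ is terminal in $\mathcal{D}$, so that its sections are the global sections), with morphisms of sheaves decomposing the same way; that is, $\Sh(\mathcal{C}_\mathbb{S}, J_\mathbb{S}) \simeq \Set \downarrow \Gamma_{\Sh(\mathcal{D}, K)}$. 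Finally, $\Sh(\mathcal{D}, K)$ is the open subtopos of $\Set[\mathbb{S}]$ cut out by the representable subterminal $\{[].\,p\}$, which classifies $\mathbb{S} + p$; by Lemma \ref{lemma-conditional-extension-first-properties}~(i) this is $\langle p\rangle + p + \TT$, and since adding $p$ as an axiom forces its interpretation to be the top subterminal, that theory is Morita equivalent to $\TT$. Hence $\Sh(\mathcal{D}, K) \simeq \Set[\TT]$, and therefore $\Set[\mathbb{S}] \simeq \Set \downarrow \Gamma_{\Set[\TT]} = \scn(\Set[\TT])$, with $\{[].\,p\}$ going to the canonical subterminal $U = (\varnothing, 1_{\Set[\TT]}, !)$ and the universal model of $\mathbb{S}$ restricting, over the open part $\scn(\Set[\TT])_{o(U)} \simeq \Set[\TT]$, to the universal $\TT$-model.

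I expect the main obstacle to be the syntactic bookkeeping in the first step: checking that, up to provable equivalence over $\mathbb{S}$, every object of $\mathcal{C}_\mathbb{S}$ is either the terminal or lies below $\{[].\,p\}$; that the hom-sets and the coverage at the terminal are exactly as claimed (in particular the non-provability of $\top \turnstile{[]} p$ and the weakening of disjunctive axioms); and that $(\mathcal{D}, K)$ really is a site for $\Set[\TT]$ — which is where the precise clauses of Definition \ref{definition-conditional-extension}, including the re-encoding of function symbols, have to be used. An alternative route avoiding sites is to match universal properties directly: by Proposition \ref{proposition-conditional-model-extensions}, a $(\langle p\rangle + \TT/p)$-model in a topos $\E$ is a subterminal $V \subseteq 1_\E$ together with a $\TT$-model in $\E_{o(V)}$, i.e.\ a geometric morphism $\E_{o(V)} \to \Set[\TT]$, which is exactly the universal property of the Artin gluing $\Set \downarrow \Gamma_{\Set[\TT]}$; but establishing that universal property is essentially the same computation.
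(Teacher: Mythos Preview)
Your proposal is correct and follows essentially the same approach as the paper: both analyze the syntactic site of $\langle p\rangle + \TT/p$, show it is obtained from (a site equivalent to) $\mathcal{C}_\TT$ by freely adjoining a strict terminal object with only the trivial cover, and then recognize this as a site for the Sierpiński cone. The only cosmetic difference is that the paper identifies the slice $\mathcal{C}_{\mathbb{S}}/\{[].\,p\}$ with $\mathcal{C}_\TT$ directly via the syntactic equivalence $\mathcal{C}_{\mathbb{S}+p} \simeq \mathcal{C}_\TT$, whereas you pass through the open subtopos and Lemma~\ref{lemma-conditional-extension-first-properties}~(i); the content is the same.
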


\begin{proof}
  A construction of the Sierpiński cone
  on the level of sites
  proceeds as follows.
  Given a site $(C, J)$,
  we define $C'$ by freely adjoining a terminal object to $C$,
  that is, $\Ob(C') = \Ob(C) \sqcup \{*\}$
  and $\abs{\Hom(c, *)} = 1$, $\Hom(*, c) = \varnothing$
  for all $c \in \Ob(C)$.
  The topology $J'$ is given by simply leaving the covering sieves of any $c \in C$
  (which are still sieves in $C'$, as there is no arrow $* \to c$)
  as they are
  and declaring only the maximal sieve on $*$ to be covering.
  The axioms of a Grothendieck topology are clearly satisfied.
  Then a sheaf $F'$ on $(C', J')$
  is exactly a sheaf $F$ on $(C, J)$
  together with a set $F'(*)$
  and a map $F'(*) \to \Gamma(F)$,
  as required.

  We claim that this is precisely the relationship
  between the syntactic sites $C_\TT$ and $C_{\angles{p} + \TT/p}$.
  First note that we have a full and faithful functor
  \[ C_{\angles{p} + \TT/p + p} \to C_{\angles{p} + \TT/p}, \quad
     \{\vec{x}.\; \phi\} \mapsto \{\vec{x}.\; \phi \land p\}
     \]
  with essential image $C_{\angles{p} + \TT/p} / \{[].\; p\}$,
  the subcategory of all $\{\vec{x}.\; \phi\}$
  with $\phi \turnstile{\vec{x}} p$ provable in $\angles{p} + \TT/p$.
  (This is true for any closed formula of any geometric theory.)
  But of course $C_{\angles{p} + \TT/p + p} \simeq C_{\TT}$.
  And if an object $\{\vec{x}.\; \phi\} \in C_{\angles{p} + \TT/p}$
  does not lie over $\{[].\; p\}$,
  we can show that the context $\vec{x}$ must be empty
  (because $\angles{p} + \TT/p$ contains the axiom
  $\top \turnstile{x \oftype S} p$ for any of its sorts)
  and, by induction over $\phi$,
  that $\top \turnstile{[]} \phi$ is provable in $\angles{p} + \TT/p$.
  So the only missing object (up to isomorphism)
  is the terminal object $\{[].\; \top\}$,
  and this terminal object is strict,
  since $\top \turnstile{[]} p$ is not provable in $\angles{p} + \TT/p$.
  It is easy to verify that
  the Grothendieck topology on $C_{\angles{p} + \TT/p}$
  is also the one coming from $C_{\TT}$ as above.
\end{proof}

For the general case,
recall from \cite[Proposition 4.5.6]{elephant}
that any topos $\E$
equipped with a subterminal object $U$
can be reconstructed from the toposes $\E_{o(U)}$ and $\E_{c(U)}$
(the open respectively closed subtopos of $\E$
corresponding to $U$)
and the left exact functor
\[ F \defeq j^* \circ i_* : \E_{o(U)} \to \E_{c(U)} \rlap{,} \]
called the \emph{fringe functor} of $\E$
(with respect to $U$),
where $i : \E_{o(U)} \to \E$ and $j : \E_{c(U)} \to \E$
are the inclusion geometric morphisms.
Namely,
$\E$ is equivalent to the \emph{Artin gluing} of $F$,
defined, for any left exact functor $F : \E_1 \to \E_2$ between toposes,
as the comma category
\[ \Gl(F) \defeq (\E_2 \downarrow F) \rlap{,} \]
equipped with the subterminal object
$(\varnothing_{\E_2}, 1_{\E_1}, !)$,
where $! : \varnothing_{\E_2} \to F(1_{\E_1}) = 1_{\E_2}$.
In our situation,
the topos $\Set[\TT + \EE/\phi]$
is therefore an Artin gluing of the open subtopos
\[ \Set[\TT + \EE/\phi + \phi] = \Set[\TT + \phi + \EE] \]
and its closed complement
\[ \Set[\TT + \EE/\phi + \lnot \phi] =
   \Set[\TT + \lnot \phi + \EE/\bot] \simeq
   \Set[\TT + \lnot \phi]
   \rlap{.} \]

\begin{conjecture}
  \label{conjecture-replace-open-subtopos}
  Let $\TT$ be a geometric theory,
  $\phi$ a closed formula of $\TT$
  and $\EE$ an extension of $\TT + \phi$.
  Then the theory $\TT + \EE/\phi$
  is classified by the topos
  \[
    \Gl\Bigl( \Set[\TT + \phi + \EE]
    \xrightarrow{{\pi_{\phi + \EE}}_*}
    \Set[\TT]
    \xrightarrow{{\pi_{\lnot \phi}}^*}
    \Set[\TT + \lnot \phi] \Bigr)
    \rlap{.}
  \]
\end{conjecture}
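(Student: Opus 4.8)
The plan is to use the Artin gluing reconstruction recalled just above the statement. Write $\E \defeq \Set[\TT + \EE/\phi]$, fix a universal model, let $U \defeq \interpretation{\phi}$ be the interpretation of $\phi$ in it, and let $i \colon \E_{o(U)} \to \E$, $j \colon \E_{c(U)} \to \E$ be the two inclusions. By \cite[Proposition 4.5.6]{elephant}, $\E$ is the Artin gluing $\Gl(\mathrm{fr})$ of $\E_{o(U)}$ and $\E_{c(U)}$ along the fringe functor $\mathrm{fr} \defeq j^* i_*$. So it is enough to produce equivalences $\E_{o(U)} \simeq \Set[\TT + \phi + \EE]$ and $\E_{c(U)} \simeq \Set[\TT + \lnot\phi]$ under which $\mathrm{fr}$ becomes ${\pi_{\lnot\phi}}^* \circ {\pi_{\phi+\EE}}_*$; then $\Gl(\mathrm{fr}) \simeq \Gl({\pi_{\lnot\phi}}^* \circ {\pi_{\phi+\EE}}_*)$, since Artin gluing is invariant under replacing the functor by an isomorphic one and under post-composing with an equivalence of its codomain.

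For the identifications, $\E_{o(U)} = \Set[\TT + \EE/\phi + \phi] = \Set[\TT + \phi + \EE]$ by Lemma \ref{lemma-conditional-extension-first-properties}~(i), and $\E_{c(U)} = \Set[\TT + \EE/\phi + \lnot\phi] \simeq \Set[\TT + \lnot\phi]$ as noted in the text (since $\TT + \EE/\phi + \lnot\phi = \TT + \lnot\phi + \EE/\bot$ and $\EE/\bot$ is an equivalence extension of $\TT + \lnot\phi$). Put $p \defeq \pi_{\EE/\phi} \colon \E \to \Set[\TT]$. Because $\phi$ mentions only the signature of $\TT$, we have $U = p^* V$ with $V \defeq \interpretation{\phi}$ in $\Set[\TT]$, so $i$ and $j$ are the pullbacks along $p$ of the open and closed inclusions $a \colon \Set[\TT + \phi] \hookrightarrow \Set[\TT]$ and $b \defeq \pi_{\lnot\phi} \colon \Set[\TT + \lnot\phi] \hookrightarrow \Set[\TT]$; writing $q \colon \E_{o(U)} \to \Set[\TT+\phi]$ and $r \colon \E_{c(U)} \to \Set[\TT+\lnot\phi]$ for the other legs of these pullback squares, one checks that $p \circ i \cong \pi_{\phi+\EE}$, $p \circ j \cong \pi_{\lnot\phi}$ (both sides send the relevant universal model to its $\TT$-reduct), and — by the second identification — that $r$ is an equivalence. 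Hence ${\pi_{\lnot\phi}}^* \circ {\pi_{\phi+\EE}}_* \cong (p j)^* (p i)_* = j^* p^* p_* i_*$, and the conjecture reduces to the claim that whiskering the counit $p^* p_* \Rightarrow \id_\E$ by $j^*({-})\, i_*$ is an isomorphism; equivalently, by the pullback square with legs $j$, $b$, that the Beck--Chevalley comparison $b^* p_* \Rightarrow r_* j^*$ is an isomorphism on objects of the form $i_* X$.

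This Beck--Chevalley isomorphism is the main obstacle, and it is presumably why the statement is a conjecture rather than a theorem. It fails for a general geometric morphism in place of $p$: if $p$ were the open inclusion $a$ itself, the pullback of $b$ along it is the empty topos whereas $b^* a_*$ is the identity, so the two sides disagree; what must be used is precisely that $p$ restricts to the \emph{equivalence} $r$ on the closed subtoposes. I would attempt this by writing out the inverse- and direct-image functors of $p$ in Artin-gluing coordinates on $\E = \Gl(j^* i_*)$ and $\Set[\TT] = \Gl(b^* a_*)$: a geometric morphism between Artin gluings amounts to geometric morphisms on the open and closed parts together with a structural $2$-cell between the fringe functors (for $p$ this cell should be readable off the shape of the conditional extension $\EE/\phi$), and with it $b^* p_* Y$ computes as a pullback $r_* Y_c \times_{r_* \mathrm{fr}(Y_o)} (b^* a_* q_* Y_o)$, which one must see collapses onto $r_* Y_c$. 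I expect this to come down to showing that the comparison $b^* a_* q_* \Rightarrow r_* j^* i_*$ is itself invertible --- which is essentially the conjecture again --- so a genuinely new ingredient seems to be needed here.

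A more hands-on route would be to mimic the proof of Proposition \ref{proposition-sierpinski-cone}: build a site for $\Gl({\pi_{\lnot\phi}}^* \circ {\pi_{\phi+\EE}}_*)$ by gluing the syntactic sites $\mathcal C_{\TT+\phi+\EE}$ and $\mathcal C_{\TT+\lnot\phi}$ along the hom-sets $\Hom_{\Set[\TT+\lnot\phi]}\!\big(c', {\pi_{\lnot\phi}}^*{\pi_{\phi+\EE}}_*(c)\big)$ (for representable $c$, $c'$) with the evident topology, and then exhibit a dense, cover-reflecting functor from this site into the syntactic site of $\TT + \EE/\phi$ and invoke the Comparison Lemma. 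As in the Sierpiński-cone case, the full subcategory of objects $\{\vec{x}.\; \psi\}$ with $\psi \turnstile{\vec{x}} \phi$ provable is the slice over $\{[].\; \phi\}$ and recovers $\mathcal C_{\TT+\phi+\EE}$; the delicate new point is that $\Set[\TT+\lnot\phi]$ is not generated by a single object, so there are many objects of the syntactic site of $\TT + \EE/\phi$ lying neither over $\{[].\; \phi\}$ nor in its closed complement, and one must show that the glued site nonetheless presents the same topos --- in particular that the two Grothendieck topologies agree.
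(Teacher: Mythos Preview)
This statement is a \emph{conjecture} in the paper; the author does not prove it. The discussion that follows the conjecture reduces it, exactly as you do, to showing that a Beck--Chevalley transformation attached to the pullback square involving $\pi_{\EE/\phi}$ and $\pi_{\lnot\phi}$ is an isomorphism, and then explains why this cannot follow from the formal properties of that square alone. Your diagnosis of the obstacle therefore matches the paper's.

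One difference worth noting: your counterexample replaces $p$ by an open inclusion, whereas the paper's is more incisive for the purpose at hand. The author observes that the disjoint union $\Set[\TT + \phi + \EE] \amalg \Set[\TT + \lnot\phi]$ sits in a pullback square over $\Set[\TT]$ with the \emph{same} formal features you isolate (horizontal maps closed embeddings, the right vertical map an equivalence), yet it is the Artin gluing along the constant functor with value $1$, not along ${\pi_{\lnot\phi}}^* \circ {\pi_{\phi+\EE}}_*$. So any argument must use something about $\pi_{\EE/\phi}$ beyond the shape of that square --- precisely the ``genuinely new ingredient'' you flag as missing.

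Your second route, assembling a glued site and comparing it to the syntactic site of $\TT + \EE/\phi$ via the Comparison Lemma, does not appear in the paper; the author only carries out the site-level comparison in the Sierpi\'nski-cone special case of Proposition~\ref{proposition-sierpinski-cone}. It is a reasonable line to pursue, but as you already note, when $\Set[\TT + \lnot\phi]$ is not generated by a single object the matching of topologies is exactly where the difficulty resurfaces, and neither you nor the paper resolves it.
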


The functor ${\pi_{\lnot \phi}}^* \circ {\pi_{\phi + \EE}}_*$
can also be understood as the composition $F \circ {\pi_\EE}_*$,
where $F : \Set[\TT + \phi] \to \Set[\TT + \lnot \phi]$
is the fringe functor of $\Set[\TT]$,
that is, $\Gl(F) \simeq \Set[\TT]$.
What the conjecture says, then,
is that the topos $\Set[\TT + \EE/\phi]$
is the result of replacing
the open subtopos $\Set[\TT + \phi]$ of $\Set[\TT]$
by $\Set[\TT + \phi + \EE]$
in a canonical way,
using the given geometric morphism $\pi_\EE$.

To prove it,
we would have to show that the given left exact functor
is isomorphic to the fringe functor of $\Set[\TT + \EE/\phi]$.
One attempt to do this is to show that
the following diagram commutes.
\[ \begin{tikzcd}[row sep=small]
  \Set[\TT + \EE/\phi + \phi]
    \ar[rd, "{\pi_\phi}_*"] \ar[d, equals] & &
  \Set[\TT + \EE/\phi + \lnot \phi]
    \ar[dd, "{\pi_{\EE/\phi}}_*", "\rotatebox{90}{$\simeq$}"']
  \\
  \Set[\TT + \phi + \EE]
    \ar[d, "{\pi_\EE}_*"] &
  \Set[\TT + \EE/\phi]
    \ar[dd, "{\pi_{\EE/\phi}}_*"] \ar[ru, "{\pi_{\lnot \phi}}^*"] &
  \\
  \Set[\TT + \phi]
    \ar[dr, "{\pi_\phi}_*"']
    \ar[ur, phantom, "\rotatebox{45}{$\cong$}"] & &
  \Set[\TT + \lnot \phi]
  \\
  & \Set[\TT] \ar[ur, "{\pi_{\lnot \phi}}^*"']
    \ar[uuur, phantom, "\rotatebox{45}{$\Rightarrow$}"] &
\end{tikzcd} \]
The left half, consisting of direct image functors, does commute.
In the right half,
where direct and inverse images occur,
we can write a \enquote{Beck--Chevalley} natural transformation
which we would hope to be an isomorphism.
For this, however,
it cannot suffice to use the information that
the corresponding square of geometric morphisms is a pullback
with the horizontal geometric morphisms closed embeddings
and the right vertical one an equivalence,
since this would also be true,
for example,
if we wrote instead of $\Set[\TT + \EE/\phi]$
the disjoint union of $\Set[\TT + \phi + \EE]$
and $\Set[\TT + \lnot \phi]$
(which is the Artin gluing of the constant functor
with value $1$).

If we assume the conjecture to be true,
we can spell out another special case
(containing Proposition \ref{proposition-sierpinski-cone})
and give a syntactic presentation for the
\emph{open mapping cylinder} of a geometric morphism,
which is the Artin gluing of its direct image functor.

\begin{proposition}
  Let $\pi_\EE : \Set[\TT + \EE] \to \Set[\TT]$
  be a geometric morphism
  with chosen syntactic presentation.
  Assuming Conjecture \ref{conjecture-replace-open-subtopos},
  the theory $\TT + \angles{p} + \EE/p$
  is classified by the open mapping cylinder of $\pi_\EE$,
  $\Gl({\pi_\EE}_*)$.
\end{proposition}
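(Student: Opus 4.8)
The plan is to obtain this as a direct specialization of Conjecture~\ref{conjecture-replace-open-subtopos}. I would apply it with base theory $\TT + \angles{p}$, closed formula $p$, and the given extension $\EE$ (desugared to contain no function symbols, as discussed before Definition~\ref{definition-conditional-extension}), regarded as an extension of $(\TT + \angles{p}) + p$ --- which is legitimate since $p$ is provably $\top$ there. The conjecture then asserts that $\TT + \angles{p} + \EE/p$ is classified by
\[
  \Gl\Bigl( \Set[\TT + \angles{p} + p + \EE]
    \xrightarrow{{\pi_{p + \EE}}_*} \Set[\TT + \angles{p}]
    \xrightarrow{{\pi_{\lnot p}}^*} \Set[\TT + \angles{p} + \lnot p]
  \Bigr),
\]
and the whole task reduces to identifying this topos with the open mapping cylinder $\Gl({\pi_\EE}_*)$.

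First I would record the building blocks. The extensions $\angles{p} + p$ and $\angles{p} + \lnot p$ of $\TT$ are equivalence extensions --- indeed extensions by definitions, defining $p$ as $\top$ and as $\bot$ respectively, by Proposition~\ref{proposition-extensions-by-definitions} --- so $\Set[\TT + \angles{p} + p + \EE] \simeq \Set[\TT + \EE]$ and $\Set[\TT + \angles{p} + \lnot p] \simeq \Set[\TT]$, compatibly with the evident forgetful morphisms. Writing $\E \defeq \Set[\TT + \angles{p}]$ and $U \defeq \interpretation{p}$ for the interpretation of $p$ in the universal model of $\TT + \angles{p}$, the subtoposes $\E_{o(U)}$ and $\E_{c(U)}$ are exactly these two copies of $\Set[\TT]$, the morphism $\pi_{\lnot p}$ is the closed embedding $j \colon \E_{c(U)} \hookrightarrow \E$, and $\pi_{p + \EE}$ is the composite of a geometric morphism that becomes $\pi_\EE$ under the above identifications, followed by the open embedding $i \colon \E_{o(U)} \hookrightarrow \E$. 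Hence the composite left exact functor appearing in the gluing is
\[
  {\pi_{\lnot p}}^* \circ {\pi_{p + \EE}}_*
  \;\cong\; j^* \circ i_* \circ {\pi_\EE}_*
  \;=\; F \circ {\pi_\EE}_*,
\]
with $F = j^* i_* \colon \E_{o(U)} \to \E_{c(U)}$ the fringe functor of $\E$ with respect to $U$.

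The one step with genuine content is checking that $F \cong \id_{\Set[\TT]}$. Since $\TT + \angles{p}$ is the sum of $\TT$ with the theory of a bare proposition symbol, $\E \simeq \Set[\TT] \times \Set[\angles{p}]$ (a product of classifying toposes classifies the sum of the theories, see Section~\ref{section-extensions}), and under this decomposition $U$ is the pullback along the second projection of the canonical subterminal $U_0$ of $\Set[\angles{p}]$. The open and closed inclusions for a subterminal pulled back from one factor, and their adjoints, are computed factor-wise, so $F \cong \id_{\Set[\TT]} \times F_0$, with $F_0$ the fringe functor of $\Set[\angles{p}]$ with respect to $U_0$. But $\Set[\angles{p}] \simeq \Set^{\to} = \Gl(\id_{\Set})$ (with canonical subterminal $U_0$), whose fringe functor is $\id_{\Set}$ --- directly: $i_*$ sends $B \in (\Set^{\to})_{o(U_0)} \simeq \Set$ to the arrow $\id_B$, which $j^*$ sends back to $B$. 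Therefore $F \cong \id_{\Set[\TT]}$, the composite functor in the gluing is isomorphic to ${\pi_\EE}_*$, and since Artin gluing depends on the functor only up to isomorphism, $\TT + \angles{p} + \EE/p$ is classified by $\Gl({\pi_\EE}_*)$.

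The real obstacle is Conjecture~\ref{conjecture-replace-open-subtopos} itself, which is unproven; granting it, the argument is mostly bookkeeping, the only mildly delicate point being the fringe-functor computation, which I would carry out via the product decomposition $\Set[\TT + \angles{p}] \simeq \Set[\TT] \times \Set^{\to}$ as above.
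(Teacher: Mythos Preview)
Your proposal is correct and follows essentially the same route as the paper: apply the conjecture with base theory $\TT + \angles{p}$ and $\phi = p$, then use the product decomposition $\Set[\TT + \angles{p}] \simeq \Set[\TT] \times \Set[\angles{p}]$ to identify the fringe functor with the identity (the paper phrases this last step as the Sierpi\'nski topos being the Artin gluing along the identity). You have simply spelled out a few steps --- the equivalence extensions $\angles{p}+p$ and $\angles{p}+\lnot p$, and the explicit computation of the fringe functor of $\Set^{\to}$ --- that the paper leaves implicit.
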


\begin{proof}
  We regard $\EE$ as an extension of $\TT + \angles{p} + p$
  and apply the conjecture to the base theory $\TT + \angles{p}$.
  What then remains to show is that the composite
  \[
    \Set[\TT] \simeq \Set[\TT + \angles{p} + p]
    \xrightarrow{{\pi_p}_*}
    \Set[\TT + \angles{p}]
    \xrightarrow{{\pi_{\lnot p}}^*}
    \Set[\TT + \angles{p} + \lnot p] \simeq \Set[\TT]
  \]
  is isomorphic to the identity functor.
  This is true,
  since $\Set[\TT + \angles{p}]$
  is the product of $\Set[\TT]$
  with the Sierpiński topos $\Set[\angles{p}]$,
  which can indeed be obtained as
  the Artin gluing along the identity functor.
\end{proof}

\begin{remark}
  There is a dual notion to conditional extensions,
  where a model involves a model extension in some closed subtopos
  instead of an open one,
  and one could make a dual conjecture
  about replacing a closed subtopos,
  using the inverse image part of a geometric morphism.
  Namely,
  if $\EE$ is an extension
  (without function symbols)
  of $\TT + \lnot \phi$
  (where $\phi$ is a closed geometric formula
  and $\lnot \phi$ is the axiom $\phi \turnstile{[]} \bot$),
  then we would denote $\EE/\lnot \phi$ the extension of $\TT$
  consisting of the following.
  \begin{itemize}
    \item
      For every sort $S$ of $\EE$,
      a sort $S$ and the axioms
      $\phi \turnstile{x, x' \oftype S} x = x'$
      and $\phi \turnstile{[]} \ex{x}{S} \top$.
    \item
      For every relation symbol $R \subseteq \vec{S}$ of $\EE$,
      a relation symbol $R \subseteq \vec{S}$
      and the axiom $\phi \turnstile{\vec{x} \oftype \vec{S}} R(\vec{x})$.
    \item
      For every axiom $\psi \turnstile{\Gamma} \chi$,
      the axiom $\psi \turnstile{\Gamma} \chi \lor \psi$.
  \end{itemize}
\end{remark}

\newpage

\section{Theories of presheaf type}
\label{section-presheaf-type}

\subsection{Introduction}

In this section,
we develop some results,
to be used in the next section,
concerning geometric theories
whose classifying topos is
of the form
\[ \Set[\TT] \simeq \PSh(C) \rlap{,} \]
called theories \emph{of presheaf type}
\cite{beke:presheaf-type}.
This class of theories is of great significance
for the problem of finding a concise syntactic presentation
of a given topos,
since it offers the following shortcut.
Whenever we know that a theory $\TT$ is of presheaf type,
there is a canonical (presheaf) site of definition for $\Set[\TT]$
defined in a purely categorical way
from the category $\TT\dashmod(\Set)$
of $\Set$-based models of $\TT$.
Namely,
$\TT$ is of presheaf type if and only if
\[ \Set[\TT] \simeq [\TT\dashmod(\Set)_c, \Set] \rlap{,} \]
where $\TT\dashmod(\Set)_c$ denotes the category of
compact objects in $\TT\dashmod(\Set)$,
also known as the finitely presentable models of $\TT$.
In particular,
a theory of presheaf type is fully determined
by its $\Set$-based models,
in contrast to the fact that a general geometric theory
can have no $\Set$-based models at all
without being inconsistent.

To make use of this feature,
we need ways to recognize theories of presheaf type
without having to construct presheaf sites
for their classifying toposes by hand.
The basic (and already very useful) result of this kind
is that a theory is of presheaf type
if its axioms satisfy certain syntactic restrictions,
such as, in the simplest case,
algebraic theories.
If a given theory
does not meet this condition,
one can of course try to replace the problematic axioms
by equivalent axioms of the required form,
or, much more drastically,
to transform the whole theory into a Morita equivalent theory
satisfying the syntactic restrictions,
using equivalence extensions
to introduce and eliminate
function symbols, relation symbols and sorts.
However, it is not clear how to find such a transformation
if it is not obvious from the theory.

Instead, one can also regard $\TT$ as a quotient
of a theory $\TT_0$ which contains only syntactically simple axioms.
Then, by the duality between quotient theories and subtoposes,
the additional axioms of $\TT$ correspond to a Grothendieck topology
on the canonical presheaf site for $\Set[\TT_0]$.
If this topology is rigid,
meaning that its sheaves
are just the presheaves on some smaller presheaf site
and therefore $\TT$ is again of presheaf type,
we speak of a \emph{rigid-topology quotient}.
For testing this rigidity condition,
it is important to have a convenient description
of the topology induced by the additional axioms,
which we provide in
Theorem \ref{theorem-induced-topology}.

This strategy can also be used
to show that certain classes of axioms,
such as axioms stated in the empty context,
can be added to \emph{any} theory of presheaf type
without destroying the presheaf type property.
This allows us, for example,
to start with a syntactically simple base theory,
apply some equivalence extension to it
(possibly involving syntactically complex axioms),
and \emph{then} add an axiom
that is known to always preserve presheaf type.
This is in contrast to the syntactically simple axioms,
like algebraic axioms,
which are only harmless in a base theory
but not when added to an arbitrary theory of presheaf type,
as we will see.

We further explore the possibilities
of building up theories of presheaf type incrementally
by looking at extensions involving function
symbols.
While adding finitely many constant symbols
does in fact always preserve presheaf type,
we quickly obtain negative results after that.
Not only can countably many constant symbols
or a single unary function symbol
destroy presheaf type,
they can even do so
when added to a theory which
is trivial up to Morita equivalence,
that is, classified by $\Set$.

The following table summarizes our findings
about which syntactic forms of extensions
always preserve presheaf type
and which can destroy presheaf type.

\begingroup
\small
\begin{center} \begin{tabular}{l l l}
  \toprule
  Extension & Always preserves & Reference \\
  & presheaf type & \\
  \midrule
  Algebraic axiom &
  no &
  Example \ref{example-single-algebraic-axiom} \\
  Axiom in empty context &
  yes &
  Corollary \ref{corollary-axiom-in-empty-context} \\
  Countably many axioms in empty context &
  no &
  Remark \ref{remark-infinitely-many-axioms-in-empty-context} \\
  Any number of negated axioms &
  yes &
  Corollary \ref{corollary-negated-axioms} \\
  \addlinespace
  Constant symbol &
  yes &
  Proposition \ref{proposition-add-constant-symbol} \\
  Function symbol &
  no &
  Example \ref{example-theories-of-a-function} \\
  Countably many constant symbols &
  no &
  Examples \ref{example-add-many-constants-to-quotient}
  and \ref{example-theories-of-a-function} \\
  \bottomrule
\end{tabular} \end{center}
\endgroup

\subsection{Background}

A geometric theory $\TT$ is \emph{of presheaf type}
if it admits a universal model in some presheaf topos,
that is,
\[ \Set[\TT] \simeq \PSh(C) \]
for some small category $C$.
To discuss this notion properly,
we first have to recall the definition of compact objects.

\begin{definition}
  Let $C$ be a category with all filtered colimits.
  An object $X$ of $C$
  is \emph{compact}
  if $\Hom(X, \hole) : C \to \Set$
  preserves filtered colimits.
  We denote the full subcategory of $C$
  on the compact objects
  by $C_c$.
\end{definition}

The compact objects in $\TT\dashmod(\Set)$
are usually called the \emph{finitely presentable} models of $\TT$,
but we will simply call them the compact models
and denote the category of these models by
\[ \TT\dashmod(\Set)_c \rlap{.} \]
For this to make sense,
we must show that the category $\TT\dashmod(\Set)$
has all filtered colimits.
We show this for an arbitrary (Grothendieck) topos $\E$ instead,
to illustrate how well-behaved models of geometric theories are
with respect to filtered colimits.

\begin{lemma}
  \label{lemma-filtered-colimits-of-models}
  Let $\TT$ be a geometric theory and $\E$ be a (Grothendieck) topos.
  Then the category $\mod{\TT}{\E}$
  admits all filtered colimits,
  and for every geometric formula-in-context $\phi$,
  the interpretation functor
  \[ \mod{\TT}{\E} \to \E, \quad
     M \mapsto \interpretation{\phi}_M \]
  preserves filtered colimits.
\end{lemma}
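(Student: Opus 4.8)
The plan is to build filtered colimits in $\mod{\TT}{\E}$ ``sort by sort'' in the topos $\E$, and then to prove by induction on formulas that the interpretation of every geometric formula-in-context is computed as the corresponding filtered colimit; the model axioms for the candidate colimit and the second assertion of the lemma will both follow from this. The only facts about Grothendieck toposes that I need are standard: in such a topos filtered colimits commute with finite limits (hence in particular with finite products and equalizers, and they preserve monomorphisms), and they are compatible with the (regular epi, mono)-factorization --- cf.\ the treatment of Grothendieck toposes in \cite{elephant}. So, given a filtered diagram $(M_i)_i$ in $\mod{\TT}{\E}$, I set $\interpretation{A}_M \defeq \colim_i \interpretation{A}_{M_i}$ for each sort $A$ of $\TT$; since filtered colimits commute with the finite products $\interpretation{A_1} \times \dots \times \interpretation{A_n}$, a function symbol $f \colon \vec A \to B$ is interpreted by the evident composite $\interpretation{\vec A}_M \cong \colim_i \interpretation{\vec A}_{M_i} \to \colim_i \interpretation{B}_{M_i} = \interpretation{B}_M$, and a relation symbol $R \subseteq \vec A$ by the subobject $\colim_i \interpretation{R}_{M_i} \hookrightarrow \colim_i \interpretation{\vec A}_{M_i} = \interpretation{\vec A}_M$, which is monic because filtered colimits preserve monos. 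The cocone maps $M_i \to M$ are then model homomorphisms by construction.

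Next I would show, by induction over the structure of a geometric formula-in-context $\phi$ (in a context $\vec x \oftype \vec A$), that the canonical comparison $\colim_i \interpretation{\phi}_{M_i} \to \interpretation{\phi}_M$ is an isomorphism, compatibly with the inclusions into $\interpretation{\vec A}_M$. First one checks the analogous statement for terms, so that atomic formulas $s = t$ and $R(\vec t)$ are handled by the fact that equalizers and pullbacks are finite limits. The cases $\top$, $\bot$ and $\psi \wedge \chi$ likewise only use commutation with finite limits, while $\bigvee_k \psi_k$ and $\ex{y}{C}\psi$ additionally use that colimits commute with colimits and that the image factorization defining unions and images is preserved by filtered colimits. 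This is precisely the second claim of the lemma, and it also shows $M$ is a model: for an axiom $\phi \turnstile{\vec x \oftype \vec A} \psi$ of $\TT$, applying $\colim_i$ to the inclusions $\interpretation{\phi}_{M_i} \leq \interpretation{\psi}_{M_i}$ gives $\interpretation{\phi}_M \leq \interpretation{\psi}_M$.

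It remains to verify the universal property: a cocone $(M_i \to N)_i$ induces, sortwise, unique morphisms $\interpretation{A}_M \to \interpretation{A}_N$ by the universal property of colimits in $\E$, and these form a $\TT$-model homomorphism $M \to N$ because each $M_i \to N$ is one and because products and monomorphisms are again well behaved under filtered colimits; uniqueness is inherited sortwise as well. I expect the main obstacle to be purely the inductive step for $\ex{y}{C}\psi$ (and, similarly, for $\bigvee_k \psi_k$): there one must argue that the (regular epi, mono)-factorization in $\E$ of a filtered colimit of morphisms is the filtered colimit of their factorizations, which is where the exactness of filtered colimits in a Grothendieck topos is really used; the remaining steps are routine.
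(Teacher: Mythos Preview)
Your proposal is correct and follows essentially the same approach as the paper's proof: both construct the colimit sortwise, interpret function and relation symbols using that filtered colimits commute with finite limits in $\E$, and then prove by induction on geometric formulas that $\interpretation{\phi}_M = \colim_i \interpretation{\phi}_{M_i}$, invoking commutation with image factorizations for the $\exists$ (and $\bigvee$) case. The only difference is that you spell out the verification of the universal property explicitly, whereas the paper leaves it implicit by saying that $\mod{\TT}{\E}$ is closed under filtered colimits in the category of $\Sigma$-structures.
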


\begin{proof}
  Let $(M_i)_{i \in I}$ be a filtered diagram in $\mod{\TT}{\E}$.
  We start with the case where $\TT$ is the empty theory
  over some signature $\Sigma$,
  meaning that $\TT$-models are just $\Sigma$-structures.
  For every sort $A$ of $\Sigma$,
  set $\interpretation{A}_M \defeq
  \colim_{i \in I} \interpretation{A}_{M_i}$.
  For every function symbol $f : A_1 \times \dots \times A_n \to B$,
  we have
  $\interpretation{A_1}_M \times \dots \times \interpretation{A_n}_M =
  \colim_{i \in I} (\interpretation{A_1}_{M_i} \times \dots \times \interpretation{A_n}_{M_i})$
  since filtered colimits commute with finite limits in $\E$,
  and the $\interpretation{f}_{M_i}$ induce an arrow
  $\interpretation{f}_M : \interpretation{A_1}_M \times \dots \times \interpretation{A_n}_M
  \to \interpretation{B}_M$.
  Similarly,
  the interpretations $\interpretation{R}_{M_i} \hookrightarrow
  \interpretation{A_1}_{M_i} \times \dots \times \interpretation{A_n}_{M_i}$
  of a relation symbol
  induce an arrow
  $\interpretation{R}_M \defeq \colim_{i \in I} \interpretation{R}_{M_i}
  \hookrightarrow
  \interpretation{A_1}_M \times \dots \times \interpretation{A_n}_M$,
  which is a monomorphism
  because filtered colimits commute with pullbacks in $\E$.
  In this way,
  the canonical maps $\interpretation{A}_{M_i} \to \interpretation{A}_M$
  obviously constitute $\Sigma$-structure homomorphisms.
  And for a cocone of $\Sigma$-structure homomorphisms $M_i \to \widetilde{M}$,
  the induced maps $\interpretation{A}_M \to \interpretation{A}_{\widetilde{M}}$
  respect the interpretations of function and relation symbols too.

  Now let $\phi$ be a geometric formula (over $\Sigma$)
  in the context $x_1 : A_1, \dots, x_n : A_n$.
  Then we can prove by induction on the structure of $\phi$
  that $\interpretation{\phi}_M =
  \colim_{i \in I} \interpretation{\phi}_{M_i}$
  as subobjects of
  $\interpretation{A_1}_M \times \dots \times \interpretation{A_n}_M =
  \colim_{i \in I} (\interpretation{A_1}_{M_i} \times \dots
  \times \interpretation {A_n}_{M_i})$
  (using for example that filtered colimits
  commute with image factorizations).
  This shows in particular
  that any axiom $\phi \turnstile{} \psi$
  satisfied by all $M_i$
  is also satisfied by $M$,
  so $\mod{\TT}{\E}$ is closed under filtered colimits
  in the category of $\Sigma$-structures in $\E$.
\end{proof}

Let us also prove
the following lemma on compact objects,
which will be a convenient basic tool
for drawing conclusions about compact models
in some situations.

\begin{lemma}
  \label{lemma-adjoint-functor-preserves-compacts}
  Let $F \dashv G$ be adjoint functors
  $F : C \to D$, $G : D \to C$
  between categories admitting filtered colimits.
  If $G$ preserves filtered colimits,
  then $F$ preserves compact objects.
\end{lemma}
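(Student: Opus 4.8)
The plan is to verify directly that if $X$ is a compact object of $C$, then $\Hom_D(FX, \hole) \colon D \to \Set$ preserves filtered colimits, which is exactly what it means for $FX$ to be compact. So I would fix a compact object $X \in C$ together with an arbitrary filtered diagram $(Y_i)_{i \in I}$ in $D$, and write $Y \defeq \colim_{i \in I} Y_i$.

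First I would assemble the natural chain of bijections
\[ \Hom_D(FX, Y) \cong \Hom_C(X, GY) \cong \Hom_C\bigl(X, \colim_{i \in I} GY_i\bigr) \cong \colim_{i \in I} \Hom_C(X, GY_i) \cong \colim_{i \in I} \Hom_D(FX, Y_i), \]
where the first and last bijections are instances of the adjunction $F \dashv G$, the second uses the hypothesis that $G$ preserves filtered colimits (so that the canonical map $\colim_{i} GY_i \to GY$ is an isomorphism), and the third uses that $X$ is compact applied to the filtered diagram $(GY_i)_{i \in I}$ over the same index category $I$. The composite is thus an isomorphism $\colim_{i \in I} \Hom_D(FX, Y_i) \xrightarrow{\ \sim\ } \Hom_D(FX, Y)$.

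The remaining point -- and really the only thing that needs to be spelled out rather than merely asserted -- is that this composite isomorphism is the \emph{canonical comparison map} induced by the colimit cocone $(Y_i \to Y)_{i \in I}$, i.e.\ the map sending the class of $g \colon FX \to Y_i$ to the composite $FX \to Y_i \to Y$. This follows from naturality of the adjunction bijection $\Hom_D(FX, \hole) \cong \Hom_C(X, G\hole)$ applied to each structure morphism $Y_i \to Y$, combined with the observation that $G$ carries the cocone $(Y_i \to Y)$ to a cocone on $(GY_i)$ which is (isomorphic to) the colimit cocone. Once the comparison map is identified as an isomorphism for every filtered diagram, $FX$ is compact by definition, and hence $F$ preserves compact objects.

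I do not expect a genuine obstacle here: the argument is a purely formal diagram chase, and the only subtlety worth being careful about is the final compatibility check identifying the assembled isomorphism with the canonical comparison map -- which is why I would not simply exhibit the chain of bijections and stop there.
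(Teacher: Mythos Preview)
Your proof is correct and follows essentially the same approach as the paper: both argue by the chain of bijections $\Hom(FX, \colim_i Y_i) \cong \Hom(X, \colim_i GY_i) \cong \colim_i \Hom(X, GY_i) \cong \colim_i \Hom(FX, Y_i)$. You are in fact more careful than the paper, which simply displays the chain without commenting on why the composite is the canonical comparison map.
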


\begin{proof}
  Let $X \in C$ be compact
  and let $Y_i$ be a filtered diagram in $D$.
  Then we have
  \begin{align*}
    \Hom(F(X), \colim_i Y_i) &= \Hom(X, \colim_i G(Y_i)) \\
    = \colim_i \Hom(X, G(Y_i)) &= \colim_i \Hom(F(X), Y_i)
    \rlap{.}
    \qedhere
  \end{align*}
\end{proof}

Now we are ready to understand the usefulness
of theories of presheaf type.
It consists in the fact that
if a geometric theory $\TT$ is of presheaf type,
then $\TT$ is classified by the topos
\[ \Set[\TT] =
   \PSh({\TT\dashmod(\Set)_c}^\op) =
   [\TT\dashmod(\Set)_c, \Set]
   \rlap{.} \]
That is,
if there is any presheaf site for $\Set[\TT]$,
then the canonically given category
${\TT\dashmod(\Set)_c}^\op$
is also a presheaf site of definition for $\Set[\TT]$.
We note that the category $\TT\dashmod(\Set)_c$
is essentially small,
so that it can actually be used as a site.
There is also a simple description
of the universal $\TT$-model $M$ in $[\TT\dashmod(\Set)_c, \Set]$,
namely,
$M$ is the \enquote{tautological} model,
interpreting a sort $A$
by the functor
\[ \TT\dashmod(\Set)_c \to \Set, \quad M' \mapsto
   \interpretation{A}_{M'}
   \rlap{,} \]
and similarly for relation and function symbols
\cite[Theorem 6.1.1]{caramello:tst}.

For a theory of presheaf type $\TT$,
the category $\TT\dashmod(\Set)$ is finitely accessible.
This can be a useful criterion
for showing that a theory is not of presheaf type.

The basic source of theories of presheaf type
is the following.
Recall that a geometric theory is a \emph{Horn} theory,
if all of its axioms are of the form
$\phi \turnstile{\Gamma} \psi$
with $\phi$ and $\psi$ finite conjunctions of atomic formulas.
In particular,
algebraic theories are Horn theories.
Recall also that in a \emph{cartesian} theory
the formulas $\phi$ and $\psi$ can
in addition to finite conjunctions
also contain existential quantifiers,
but only if these refer to unique existence,
provably relative to all preceding axioms
in some chosen well-ordering.
Then, all Horn theories
and more generally all cartesian theories
are of presheaf type
\cite[Theorem 2.1.8]{caramello:tst}.
For Horn theories,
there is even a good general description
of the compact models.

\begin{lemma}
  \label{lemma-compact-Horn-models}
  Let $\TT$ be a Horn theory.
  Then a model $M$ of $\TT$ is compact
  if and only if
  it is presented by some Horn formula
  (finite conjunction of atomic formulas)
  in context $\vec{x}.\, \phi$,
  meaning that there is a natural isomorphism
  \[ \Hom(M, \hole) \cong \interpretation{\phi}_\hole
     \;:\; \TT\dashmod(\Set) \to \Set
     \rlap{.} \]
  Furthermore,
  every Horn formula in context
  presents some model.
\end{lemma}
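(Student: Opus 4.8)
\textit{Proof proposal.} The plan is to attach to every Horn formula-in-context $\vec x.\,\phi$ (with $\vec x = (x_1 \oftype A_1, \dots, x_n \oftype A_n)$ and $\phi$ a finite conjunction of atomic formulas) a model $M_\phi$ representing the functor $\interpretation{\phi}_{(\hole)} : \TT\dashmod(\Set) \to \Set$, and then to show conversely that every compact model is of this form.

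\emph{Step 1 (existence and compactness of $M_\phi$, which is the last sentence and the ``if'' direction).} The functor $\interpretation{\phi}_{(\hole)}$ preserves all small limits: limits of $\TT$\nobreakdash-models are created sortwise by the forgetful functors to $\Set$ (a Horn sequent $\beta \turnstile{} \gamma$ passes to such limits because both sides are finite-limit constructions), and the interpretation of a finite conjunction of atomic formulas is built from the structure of a model using only finite limits, compatibly with forming limits of models. By Lemma \ref{lemma-filtered-colimits-of-models} it also preserves filtered colimits. Since $\TT$ is Horn, $\TT\dashmod(\Set)$ is locally finitely presentable (finitely accessible, as recalled above for theories of presheaf type, and complete by the sortwise computation of limits). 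A limit-preserving and filtered-colimit-preserving --- in particular accessible --- functor from a locally presentable category to $\Set$ is representable (the filtered-colimit preservation supplies the solution-set condition in the adjoint functor theorem). Let $M_\phi$ represent it, so $\Hom(M_\phi, \hole) \cong \interpretation{\phi}_{(\hole)}$; then $M_\phi$ is compact, since $\Hom(M_\phi, \hole)$ preserves filtered colimits. (Alternatively one constructs $M_\phi$ directly as the term model on generators $\vec x$ subject to the atoms of $\phi$ and the axioms of $\TT$.)

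\emph{Step 2 (every compact model $M$ is some $M_\phi$).} Form the canonical diagram $D_M$ whose objects are homomorphisms $h \colon M_\psi \to M$ with $M_\psi$ Horn-presented and whose morphisms are commuting triangles. One checks $D_M$ is filtered (two objects are dominated by conjoining their defining formulas over disjoint renamed contexts; a parallel pair is coequalized by adjoining to $\psi$ the equations forcing the two maps to agree on generators, each map being determined by the terms to which it sends the generators), and that the structure maps exhibit $M \cong \colim_{D_M} M_\psi$: the comparison map is sortwise surjective because each element of $M$ is hit by a map out of the free model on one generator, and sortwise injective and relation-reflecting because any equality or relation among finitely many elements of $M$ is already witnessed at some stage, using Lemma \ref{lemma-filtered-colimits-of-models}. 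Compactness of $M$ then makes $\id_M$ factor through some $M_\psi \to M$, so $M$ is a retract of a Horn-presented model: $r \colon M_\psi \to M$, $\iota \colon M \to M_\psi$, $r\iota = \id_M$.

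\emph{Main obstacle / last step: a retract of a Horn-presented model is Horn-presented.} Let $e \defeq \iota r$ be the resulting idempotent on $M_\psi$. Because the canonical map from the free model on $\vec x$ to $M_\psi$ is sortwise surjective, every element of $M_\psi$ is $[t(\vec x)]_{M_\psi}$ for a term $t$, so $e(\xi_k) = [t_k(\vec x)]_{M_\psi}$ for suitable terms $t_k$. Put $\psi' \defeq \psi \land \bigwedge_k \bigl(x_k = t_k(\vec x)\bigr)$, again a Horn formula in context $\vec x$. A tuple in $\interpretation{\psi}_N$ lies in $\interpretation{\psi'}_N$ exactly when the corresponding homomorphism $M_\psi \to N$ coequalizes $\id_{M_\psi}$ and $e$, so $M_{\psi'}$ is the coequalizer of $\id_{M_\psi}$ and $e$; for a split idempotent this coequalizer is the splitting, i.e. $M_{\psi'} \cong M$. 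Hence $M$ is presented by the Horn formula $\vec x.\,\psi'$, which finishes the ``only if'' direction. The delicate point throughout is this idempotent-splitting argument together with the ``every element is a term in the generators'' fact it relies on; the filteredness and convergence of the canonical diagram are routine given Lemma \ref{lemma-filtered-colimits-of-models}.
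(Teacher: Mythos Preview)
Your proof is correct. Note, however, that the paper does not give its own argument for this lemma but simply refers to \cite[Lemma~4.3]{blechschmidt:nullstellensatz}, so there is no in-paper approach to compare against. Your route---obtaining $M_\phi$ either by representability of the limit- and filtered-colimit-preserving functor $\interpretation{\phi}_{(\hole)}$ or directly as the term model, then writing an arbitrary model as the filtered colimit of its canonical diagram of Horn-presented models, extracting a retract via compactness, and finally splitting the idempotent by adjoining the equations $x_k = t_k(\vec x)$---is the standard argument and is complete. The point you single out as delicate, that every element of $M_\psi$ is a term in the generators, is indeed the crux; it holds because Horn axioms contain no existential quantifiers, so the model presented by $\vec x.\,\psi$ is a sortwise quotient of the free (term) model on $\vec x$.
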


\begin{proof}
  See \cite[Lemma 4.3]{blechschmidt:nullstellensatz}.
\end{proof}

In other words,
the compact models of a Horn theory
are the ones presented by
finitely many generators
(with specified sorts)
and finitely many relations
in the form of atomic formulas.

For understanding the notion of rigid-topology quotients,
we should recall the Comparison Lemma,
which allows to eliminate some of the objects
of a site under certain conditions.
If $(C, J)$ is a site
and $C' \subseteq C$ is any full subcategory,
then we can restrict the Grothendieck topology $J$ to $C'$,
letting a sieve $S$ on an object $c \in C'$
be covering for $J|_{C'}$
if and only if the sieve generated by the arrows of $S$
in the bigger category $C$
is covering for $J$.
In other words,
$J|_{C'}$ is the biggest topology on $C'$
such that the inclusion $C' \hookrightarrow C$
preserves covers.
A \emph{dense} full subcategory $C' \subseteq C$
of a site $(C, J)$
is one such that every object of $C$
can be covered by (a sieve generated by arrows from)
objects in $C'$.

\begin{theorem}[Comparison Lemma]
  If $C'$ is a dense full subcategory
  of a site $(C, J)$,
  then restricting sheaves on $C$
  to $C'$ is an equivalence of categories
  \[ \Sh(C', J|_{C'}) \simeq \Sh(C) . \]
\end{theorem}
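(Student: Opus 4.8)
The plan is to exhibit the restriction functor $i^{*} : \Sh(C) \to \Sh(C', J|_{C'})$ along the inclusion $i : C' \hookrightarrow C$ as one half of an adjoint equivalence, with quasi-inverse obtained from the right Kan extension along $i$. First I would recall that on presheaves $i^{*} : \PSh(C) \to \PSh(C')$ has a right adjoint $i_{*}$, the right Kan extension along $i^{\op}$, with $(i_{*}G)(c) = \lim G(c')$ taken over the comma category of arrows $i(c') \to c$ with $c' \in C'$; and that, since $i$ is fully faithful, the counit $i^{*} i_{*} \Rightarrow \id$ is already an isomorphism of functors $\PSh(C') \to \PSh(C')$.

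Next I would verify that both functors restrict to the subcategories of sheaves. For $i^{*}$: a $J|_{C'}$-covering sieve $S$ on an object $c' \in C'$ generates, by the very definition of $J|_{C'}$, a $J$-covering sieve $\overline{S}$ of $c'$ in $C$; the sheaf condition for a sheaf $F$ along $\overline{S}$, combined with density (used to replace the limit over $\overline{S}$ by its restriction to the arrows coming from $C'$), gives the sheaf condition for $i^{*}F$ along $S$. For $i_{*}$: one tests the sheaf condition for $i_{*}G$ at an arbitrary $c \in C$ against a $J$-cover and uses density to reduce to the already-assumed sheaf property of $G$ on $C'$. (If one worries that $i_{*}G$ might fail to be a sheaf, one can instead take the quasi-inverse to be $a \circ i_{*}$ with $a$ sheafification; but density makes this unnecessary.)

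It then remains to show that for a sheaf $F$ on $(C, J)$ the unit $\eta_{F} : F \Rightarrow i_{*} i^{*} F$ is an isomorphism. Evaluated at $c \in C$ this asserts that the canonical map $F(c) \to \lim F(c')$, the limit over arrows $i(c') \to c$, is a bijection. Since $F$ is a sheaf and the sieve on $c$ generated by all arrows from objects of $C'$ is $J$-covering (again by density), $F(c)$ is identified with the set of matching families for $F$ over that sieve, and a cofinality argument then identifies this set with the displayed limit. Together with $i^{*} i_{*} \cong \id$, this shows that $i^{*}$ and $i_{*}$ restrict to mutually quasi-inverse equivalences $\Sh(C) \simeq \Sh(C', J|_{C'})$.

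The crux, and the step I expect to cost the most work, is the single technical lemma underlying all of the above: \emph{for a $J$-sheaf $F$ and any $c \in C$, the canonical map from $F(c)$ to the limit of $F$ over the objects of $C'$ lying over $c$ is a bijection.} This is exactly where the density hypothesis gets consumed, and proving it requires a careful comparison of matching families over a sieve with matching families over the subsieve generated by its arrows whose domain lies in $C'$ — the kind of bookkeeping with covering sieves and their restrictions that the Comparison Lemma is ultimately about.
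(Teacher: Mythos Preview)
The paper does not prove this theorem; its entire proof is a citation to \cite[Theorem C2.2.3]{elephant}. Your sketch is the standard argument and is correct: restriction and right Kan extension along the full inclusion $i$ form an adjunction on presheaves with $i^{*}i_{*}\cong\id$ by full faithfulness, both functors preserve sheaves, and density forces the unit $F\to i_{*}i^{*}F$ to be an isomorphism on sheaves via the matching-family computation you outline. The technical lemma you single out as the crux is exactly the right one, and your indication of how density is consumed (to pass between matching families over a sieve and over its restriction to arrows from $C'$, and to verify well-definedness when extending such families) is accurate. The Elephant reference proves a more general statement for cover-preserving, cover-reflecting functors that need not be fully faithful inclusions, but specialised to the present situation the argument is essentially yours.
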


\begin{proof}
  See \cite[Theorem C2.2.3]{elephant}.
\end{proof}

Now suppose we want to add some axioms
to a theory of presheaf type
and would like to show that the resulting theory
is again of presheaf type.
By the duality between
quotient theories and subtoposes,
and the fact that the latter
correspond to Grothendieck topologies
on any given site of definition,
a quotient extension $\QQ$ of a theory of presheaf type $\TT$
induces a topology on ${\TT\dashmod(\Set)_c}^\op$,
which we denote $J_\QQ$.
If there is a dense full subcategory
of ${\TT\dashmod(\Set)_c}^\op$
with respect to $J_\QQ$,
such that $J_\QQ$ becomes the trivial topology
when restricted to this subcategory,
then by the Comparison Lemma,
$\TT + \QQ$ is again of presheaf type.
This condition can equivalently be expressed by saying that
the \emph{irreducible} objects of ${\TT\dashmod(\Set)_c}^\op$,
which admit no covering sieve except the maximal sieve,
are dense.
A topology with this property is called a \emph{rigid} topology.

\begin{definition}
  Let $\TT$ be a theory of presheaf type.
  We say that a quotient extension $\QQ$ of $\TT$
  is a \emph{rigid-topology quotient}
  if the Grothendieck topology $J_\QQ$
  on ${\mod{\TT}{\Set}_c}^\op$
  is rigid.
\end{definition}

Note that for a quotient extension $\QQ$
of an arbitrary geometric theory $\TT$,
we would have to fix a site of definition for $\Set[\TT]$
before we can ask if the induced topology is rigid.

After proving that some $\QQ$
is a rigid-topology quotient,
we would probably like to have a description of
the compact models of $\TT + \QQ$.
From Lemma \ref{lemma-filtered-colimits-of-models}
it follows that
the full subcategory
\[ (\TT + \QQ)\dashmod(\Set) \subseteq \TT\dashmod(\Set) \]
is closed under filtered colimits.
With this,
one easily sees that a $(\TT + \QQ)$-model
which is compact as a $\TT$-model
is also compact as a $(\TT + \QQ)$-model
(even without any presheaf type conditions).
The converse is part of the following result,
which simultaneously answers the question
whether $\QQ$ being a rigid-topology quotient
is necessary for $\TT + \QQ$ to be again of presheaf type.

\begin{lemma}
  \label{lemma-rigid-topology-quotients-compact-models}
  A quotient extension $\QQ$
  of a theory of presheaf type $\TT$
  is a rigid-topology quotient
  if and only if
  $\TT + \QQ$ is again of presheaf type
  and additionally
  every compact $(\TT + \QQ)$-model
  is also compact as a $\TT$-model.
\end{lemma}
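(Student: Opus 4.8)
The plan is to prove both implications by unpacking the definition of a rigid-topology quotient and using the Comparison Lemma together with the description of the classifying topos of a presheaf type theory.

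First I would recall the setup. Since $\TT$ is of presheaf type, we may identify $\Set[\TT] \simeq [\mod{\TT}{\Set}_c, \Set]$, so that the site is $C \defeq {\mod{\TT}{\Set}_c}^\op$ with the trivial topology. The quotient extension $\QQ$ corresponds to a subtopos of $\Set[\TT]$, hence to a Grothendieck topology $J_\QQ$ on $C$, with $\Set[\TT + \QQ] \simeq \Sh(C, J_\QQ)$. By definition, $\QQ$ is a rigid-topology quotient iff $J_\QQ$ is rigid, i.e. the $J_\QQ$-irreducible objects form a dense full subcategory $C' \subseteq C$ on which $J_\QQ$ restricts to the trivial topology.

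For the forward direction, assume $\QQ$ is a rigid-topology quotient, with $C'$ the dense subcategory of irreducibles. By the Comparison Lemma, $\Set[\TT + \QQ] \simeq \Sh(C', J_\QQ|_{C'}) = \PSh(C')$, so $\TT + \QQ$ is of presheaf type; moreover $C' = {\mod{(\TT+\QQ)}{\Set}_c}^\op$ by the canonical description of a presheaf site, so the objects of $C'$ are exactly the compact $(\TT+\QQ)$-models, and these are in particular compact $\TT$-models since $C' \subseteq C$. For the converse, assume $\TT + \QQ$ is of presheaf type and every compact $(\TT+\QQ)$-model is a compact $\TT$-model. Then ${\mod{(\TT+\QQ)}{\Set}_c}^\op$ is a presheaf site for $\Set[\TT+\QQ] = \Sh(C, J_\QQ)$, and by the hypothesis it is (equivalent to) a full subcategory $C' \subseteq C$. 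I would then argue that $C'$ is dense in $(C, J_\QQ)$ — this is forced by the fact that the associated-sheaf functor $C \to \Sh(C, J_\QQ) \simeq \PSh(C')$ must land among objects covered by representables from $C'$ — and that the objects of $C'$ are $J_\QQ$-irreducible, because a compact model of $\TT + \QQ$ is (as a compact $\TT$-model) a representable presheaf on $C$ whose image in $\Sh(C, J_\QQ)$ is again representable (on $C'$), and a representable whose sheafification is representable admits no nontrivial covering sieve. Hence $J_\QQ$ is rigid.

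The main obstacle is the converse direction, specifically verifying that the presheaf site ${\mod{(\TT+\QQ)}{\Set}_c}^\op$ genuinely \emph{is} the full subcategory of $J_\QQ$-irreducibles of $C$, rather than merely \emph{some} dense subcategory with the induced trivial topology. The key point is that for a rigid topology the dense subcategory witnessing rigidity is canonically determined as the subcategory of irreducible objects, so I must check two things: (i) that every compact $(\TT+\QQ)$-model, viewed via the hypothesis as an object $c \in C$, is $J_\QQ$-irreducible — which follows because its image under sheafification into $\PSh(C')$ is the representable at $c$, and an object of $C$ whose sheafification is representable (projective and indecomposable) cannot carry a proper covering sieve; and (ii) that conversely every $J_\QQ$-irreducible object of $C$ arises this way, which follows from the Comparison Lemma identification $\Sh(C, J_\QQ) \simeq \PSh(C'_{\mathrm{irr}})$ and the uniqueness of the canonical presheaf site $\mod{(\TT+\QQ)}{\Set}_c^\op$ up to idempotent completion. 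Once these identifications are in place the equivalence is immediate, and the remaining verifications (closure of $\mod{(\TT+\QQ)}{\Set}$ under filtered colimits in $\mod{\TT}{\Set}$, and hence that compact $\TT$-models satisfying $\QQ$ are compact $(\TT+\QQ)$-models) are exactly the routine facts already recorded in the text preceding the statement.
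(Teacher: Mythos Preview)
The paper gives no proof of this lemma at all; it simply cites \cite[Theorem 8.2.6]{caramello:tst}. Your proposal therefore supplies considerably more than the paper does, and the overall strategy---identify the $J_\QQ$-irreducibles with the compact $(\TT+\QQ)$-models via the Comparison Lemma and the canonical presheaf site---is the standard one and is correct in outline.

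Two places deserve tightening. In the forward direction, writing ``$C' = \mod{(\TT+\QQ)}{\Set}_c^\op$ by the canonical description of a presheaf site'' is too quick: from $\Set[\TT+\QQ] \simeq \PSh(C')$ one only concludes that $\mod{(\TT+\QQ)}{\Set}_c$ is the idempotent completion of $(C')^\op$, so a compact $(\TT+\QQ)$-model is a priori only a \emph{retract} of an irreducible. The conclusion survives because the inclusion $\mod{(\TT+\QQ)}{\Set} \hookrightarrow \mod{\TT}{\Set}$ is full and retracts of compact objects are compact, but this step should be spelled out where it is first needed, not deferred to the final paragraph. In the converse direction, the density claim is the real content and your sheafification sketch is not yet a proof: observing that $a(y(c))$ is a colimit of representables in $\PSh(C')$ does not by itself exhibit a $J_\QQ$-covering sieve on $c$ by objects of $C'$. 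A cleaner route is to note (using the corollary immediately after Theorem~\ref{theorem-induced-topology}) that the $J_\QQ$-irreducibles are exactly the compact $\TT$-models satisfying $\QQ$, hence by hypothesis exactly the compact $(\TT+\QQ)$-models; then the restriction functor $\Sh(C, J_\QQ) \to \PSh(C')$ sends the universal $(\TT+\QQ)$-model to the tautological model, which is universal since $\TT+\QQ$ is of presheaf type, so restriction is an equivalence and density follows.
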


\begin{proof}
  See \cite[Theorem 8.2.6]{caramello:tst}.
\end{proof}

So for example,
take as $\TT$ the theory consisting of a sort $A$
and countably many constant symbols $c_n \oftype A$,
and let $\QQ$ be the axiom
\[ \top \turnstile{x, y \oftype A} x = y \rlap{.} \]
Then both $\TT$ and $\TT + \QQ$ are algebraic theories
and thus of presheaf type.
But in the unique $\Set$-based model of $\TT + \QQ$,
the constants $c_n$ are all identified,
which can not happen in a compact model of $\TT$,
by Lemma \ref{lemma-compact-Horn-models}.
So $\QQ$ is not a rigid-topology quotient.

\subsection{Rigid-topology quotients}

Let $M \in \TT\dashmod(\Set)_c$ be a compact model
of a theory of presheaf type
and let
\[ \phi \turnstile{\vec{x} \oftype \vec{A}} \psi \]
be an axiom that we would like to add to $\TT$.
This axiom is satisfied in the model $M$
if and only if we have
$\interpretation{\phi}_M \subseteq \interpretation{\psi}_M$.
But in any case,
given some $\vec{x} \in \interpretation{\phi}_M$,
we can consider the collection of all arrows
$f : M \to M'$ in $\TT\dashmod(\Set)_c$
that map $\vec{x}$ into $\interpretation{\psi}_{M'}$,
which turns out to be a cosieve on $M$,
since any further model homomorphism $M' \to M''$
preserves truth of the geometric formula $\psi$.
We now show that these sieves
in ${\TT\dashmod(\Set)_c}^\op$
generate the Grothendieck topology
corresponding to the axiom
$\phi \turnstile{\vec{x} \oftype \vec{A}} \psi$.

\begin{theorem}
  \label{theorem-induced-topology}
  Let $\TT$ be a theory of presheaf type
  and let $\QQ =
  \{\, \phi \turnstile{\vec{x} \oftype \vec{A}} \psi \,\}$
  be a quotient extension of $\TT$
  adding a single axiom.
  Then the Grothendieck topology $J_\QQ$
  on ${\mod{\TT}{\Set}_c}^\op$
  is generated by the sieves
  (cosieves in $\mod{\TT}{\Set}_c$)
  \[ S_{\vec{x}} =
     S_{M, \vec{x}, \psi} \defeq
     \{\, f : M \to M' \mid
     f(\vec{x}) \in \interpretation{\psi}_{M'}
     \,\}
     \rlap{,} \]
  where $M \in \TT\dashmod(\Set)_c$
  and $\vec{x} = (x_1, \dots, x_n)
  \in \interpretation{\phi}_M \subseteq
  \interpretation{A_1}_M \times \dots \times \interpretation{A_n}_M$.
\end{theorem}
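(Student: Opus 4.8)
The plan is to reduce the statement to the standard description of the smallest Grothendieck topology rendering a given monomorphism of presheaves dense. Since $\TT$ is of presheaf type we may take $\Set[\TT] = [\mod{\TT}{\Set}_c, \Set] = \PSh({\mod{\TT}{\Set}_c}^\op)$ with $U$ the tautological universal model, so that for each sort $A$ the object $\interpretation{A}_U \in \Set[\TT]$ is the functor $M \mapsto \interpretation{A}_M$ with the evident action on model homomorphisms \cite[Theorem 6.1.1]{caramello:tst}. The first step is to upgrade this to arbitrary geometric formulas: for every geometric $\chi$ and every compact model $M$ one has $\interpretation{\chi}_U(M) \cong \interpretation{\chi}_M$, naturally in $M$. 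This holds because evaluation at $M$, $\mathrm{ev}_M \colon [\mod{\TT}{\Set}_c, \Set] \to \Set$, preserves all limits and colimits and has a right adjoint (a right Kan extension), hence is the inverse image of a point of $\Set[\TT]$; as $\mathrm{ev}_M(U) = M$, this point is the one corresponding to $M \in \mod{\TT}{\Set}$, and an inverse image functor preserves interpretations of geometric formulas. In particular, with $P \defeq \interpretation{A_1}_U \times \dots \times \interpretation{A_n}_U$, the subobject $\interpretation{\phi}_U \hookrightarrow P$ is objectwise $\interpretation{\phi}_M \hookrightarrow \interpretation{A_1}_M \times \dots \times \interpretation{A_n}_M$, and $\interpretation{\phi \land \psi}_U = \interpretation{\phi}_U \cap \interpretation{\psi}_U$ is objectwise $\interpretation{\phi}_M \cap \interpretation{\psi}_M$.

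Next I would identify $J_\QQ$ itself. By the duality between subtoposes and quotient theories \cite[Theorem 3.2.5]{caramello:tst}, a Grothendieck topology $J$ on ${\mod{\TT}{\Set}_c}^\op$ refines $J_\QQ$ (i.e.\ $J \supseteq J_\QQ$) precisely when $U$ satisfies the axiom $\phi \turnstile{\vec{x} \oftype \vec{A}} \psi$ after sheafification, that is, $a_J(\interpretation{\phi}_U) \le a_J(\interpretation{\psi}_U)$ inside $a_J(P)$. Since $a_J$ is left exact and $\interpretation{\phi \land \psi}_U = \interpretation{\phi}_U \cap \interpretation{\psi}_U$, this is equivalent to $a_J$ sending the monomorphism $m \colon \interpretation{\phi \land \psi}_U \hookrightarrow \interpretation{\phi}_U$ to an isomorphism, i.e.\ to $m$ being $J$-dense. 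Hence $J_\QQ$ is exactly the smallest Grothendieck topology on ${\mod{\TT}{\Set}_c}^\op$ for which $m$ is dense.

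Finally I would invoke the standard fact (see e.g.\ \cite{elephant}) that for a monomorphism $m \colon B \hookrightarrow A$ of presheaves on a small category $D$, a topology $J$ makes $m$ dense if and only if for every object $d$ of $D$ and every $x \in A(d)$ the sieve $\{\, g \colon e \to d \mid A(g)(x) \in B(e) \,\}$ is $J$-covering; thus the smallest such topology is the one generated by all these sieves. Specializing to $D = {\mod{\TT}{\Set}_c}^\op$, $A = \interpretation{\phi}_U$, $B = \interpretation{\phi \land \psi}_U$: an object $d$ is a compact model $M$, an element $x \in A(d)$ is a tuple $\vec{x} \in \interpretation{\phi}_M$, a map $g \colon e \to d$ in $D$ is a model homomorphism $f \colon M \to M'$ in $\mod{\TT}{\Set}_c$, and $A(g)(x) \in B(e)$ says $f(\vec{x}) \in \interpretation{\psi}_{M'}$ --- the condition $f(\vec{x}) \in \interpretation{\phi}_{M'}$ being automatic since model homomorphisms preserve geometric formulas and $\vec{x} \in \interpretation{\phi}_M$. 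So the generating sieves are exactly the $S_{M, \vec{x}, \psi}$, which gives the claim.

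The only point requiring real care is the cited characterisation of $J$-dense monomorphisms --- specifically, that ``$a_J m$ is an isomorphism'' is the same as the objectwise covering condition on the sieves $\{\, g \mid A(g)(x) \in B \,\}$, which is just the usual description of the $J$-closure of a subobject. Given that, both inclusions between $J_\QQ$ and the topology generated by the $S_{M, \vec{x}, \psi}$ fall out immediately from the reformulation of $J_\QQ$ above; in particular no stability of the generating family under pullback is needed.
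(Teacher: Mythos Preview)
Your argument is correct and follows essentially the same route as the paper's proof: both identify $J_\QQ$ as the smallest topology making the monomorphism $\interpretation{\phi \land \psi}_U \hookrightarrow \interpretation{\phi}_U$ dense, then read off the generating sieves as pullbacks of this mono along maps from representables. The paper carries out this last step by hand (the pullback square with $\Hom(M,\hole)$ and the explicit verification that both directions of the local-isomorphism equivalence hold), whereas you package it as the standard characterisation of $J$-dense subobjects; your additional justification via evaluation-at-$M$ being the inverse image of a point is a nice touch that the paper leaves implicit.
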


\begin{proof}
  The axiom $\phi \turnstile{\vec{x} \oftype \vec{A}} \psi$
  is satisfied in a model $M \in \TT\dashmod(\E)$
  (in any topos)
  if and only if
  the inclusion
  $\interpretation{\psi \land \phi}_M \hookrightarrow
  \interpretation{\phi}_M$
  of subobjects of
  $\interpretation{A_1}_M \times \dots \times \interpretation{A_n}_M$
  is an isomorphism.
  For the universal $\TT$-model in $[\TT\dashmod(\Set)_c, \Set]$,
  this is the inclusion
  \[ \begin{tikzcd}
    \interpretation{\psi \land \phi}_\hole \ar[r, hook] &
    \interpretation{\phi}_\hole
  \end{tikzcd} \]
  of functors $\TT\dashmod(\Set)_c \to \Set$.
  The classifying topos of $\TT + \QQ$
  is therefore the greatest subtopos
  (corresponding to the Grothendieck topology
  with the fewest covering sieves)
  with the property that
  this inclusion is a \emph{local isomorphism},
  that is, it becomes an isomorphism when sheafified.

  Now, let $M \in \TT\dashmod(\Set)_c$
  and let $\vec{x} \in \interpretation{\phi}_M$ be given,
  corresponding to an arrow
  $\Hom(M, \hole) \to \interpretation{\phi}_\hole$.
  Then the cosieve $S_{\vec{x}}$ on $M$,
  regarded as a subobject of $\Hom(M, \hole)$
  in $[\TT\dashmod(\Set)_c, \Set]$,
  is the pullback
  \[ \begin{tikzcd}
    \interpretation{\psi \land \phi}_\hole \ar[r, hook] &
    \interpretation{\phi}_\hole \\
    S_{\vec{x}} \ar[u] \ar[r, hook]
    \ar[ru, phantom, very near start, "\urcorner"] &
    \Hom(M, \hole) \ar[u]
    \rlap{.}
  \end{tikzcd} \]
  Thus, if the top row is a local isomorphism,
  then the bottom row
  is a local isomorphism
  for all $M$ and $\vec{x}$.
  But the converse is also true,
  since any local section
  $\widetilde{x} \in a(\interpretation{\phi}_\hole)(M)$
  of the sheafification of $\interpretation{\phi}_\hole$
  is locally given by some $\vec{x} \in \interpretation{\phi}_{M'}$,
  which then locally lies in $\interpretation{\psi \land \phi}_\hole$.
  From this, the statement follows,
  because the covering sieves of a Grothendieck topology
  are precisely those subobjects of representable presheaves
  which are local isomorphisms.
\end{proof}

\begin{remark}
  Theorem \ref{theorem-induced-topology}
  is closely related to \cite[Theorem 8.1.10]{caramello:tst},
  which describes the topology $J_\QQ$
  by only one generating sieve per axiom,
  assuming that the axioms are given in a certain form,
  basically consisting of formulas that present compact models.
  While this yields much more concise descriptions of the topology,
  it seems harder to apply to concrete theories,
  as it might be nontrivial to determine
  whether a formula presents a model
  (this property is characterized in
  \cite[Theorem 6.1.13]{caramello:tst}
  as being an irreducible object of the syntactic site).
  Also,
  for our main purpose of recognizing the topology $J_\QQ$ as rigid,
  the big number of covering sieves $S_{\vec{x}}$
  in Theorem \ref{theorem-induced-topology}
  will actually be quite convenient
  and can therefore be seen as a feature in this context.
\end{remark}

If the quotient extension $\QQ$ contains more than one axiom,
the topology $J_\QQ$ is of course generated by
the union of all the sets of generators for the individual axioms.
(So there is one generating cosieve
for every axiom, compact model and
appropriate family of elements of the model.)
It should be noted that
the collection of cosieves of the form $S_{\vec{x}}$
satisfies the pullback-stability axiom of a Grothendieck topology
(in the terminology of \cite[C2.1]{elephant},
it is a (sifted) coverage).
Indeed, if $g : M \to M'$ is a homomorphism between compact models
then the push-forward of a cosieve $S_{\vec{x}}$ on $M$
for a family of elements $\vec{x} \in \interpretation{\phi}_M$
is simply the cosieve $S_{g(\vec{x})}$ on $M'$,
and we do have $g(\vec{x}) \in \interpretation{\phi}_{M'}$.

As a first application of Theorem \ref{theorem-induced-topology},
we can give a short proof of the following result.

\begin{corollary}[see {\cite[Theorem 8.2.5]{caramello:tst}}]
  Let $\TT$ be a theory of presheaf type
  and $\QQ$ a quotient extension of $\TT$.
  Then a compact model of $\TT$
  is $J_\QQ$-irreducible
  as an object of ${\mod{\TT}{\Set}_c}^\op$
  if and only if
  it satisfies the axioms of $\QQ$.
\end{corollary}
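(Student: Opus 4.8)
The plan is to prove both directions by directly unpacking the meaning of $J_\QQ$-irreducibility in light of Theorem \ref{theorem-induced-topology}. Recall that an object $M$ of ${\mod{\TT}{\Set}_c}^\op$ is $J_\QQ$-irreducible if its only $J_\QQ$-covering sieve is the maximal one. By Theorem \ref{theorem-induced-topology} (applied axiom by axiom, as noted in the remark following it), the topology $J_\QQ$ is generated by the cosieves $S_{M', \vec{x}, \psi} = \{\, f : M' \to M'' \mid f(\vec{x}) \in \interpretation{\psi}_{M''} \,\}$, one for each axiom $\phi \turnstile{\vec{x} \oftype \vec{A}} \psi$ of $\QQ$, each compact $M'$, and each $\vec{x} \in \interpretation{\phi}_{M'}$.

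For the ``only if'' direction, suppose $M$ is $J_\QQ$-irreducible. Fix an axiom $\phi \turnstile{\vec{x} \oftype \vec{A}} \psi$ of $\QQ$ and some $\vec{x} \in \interpretation{\phi}_M$; we must show $\vec{x} \in \interpretation{\psi}_M$. The cosieve $S_{M, \vec{x}, \psi}$ on $M$ is $J_\QQ$-covering by construction, so by irreducibility it is the maximal cosieve on $M$. In particular it contains the identity $\id_M : M \to M$, which means $\id_M(\vec{x}) = \vec{x} \in \interpretation{\psi}_M$, as desired. Hence $M$ satisfies all axioms of $\QQ$.

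For the ``if'' direction, suppose $M$ is a compact $\TT$-model satisfying the axioms of $\QQ$, i.e. $M \in (\TT+\QQ)\dashmod(\Set)$. We must show every $J_\QQ$-covering sieve $S$ on $M$ is maximal. Since the cosieves $S_{M', \vec{x}, \psi}$ form a (sifted) coverage — as observed in the paragraph before the corollary, their pushforwards along $g : M' \to M''$ are again of this form — it suffices to check that every generating cosieve on $M$ is maximal, and then that maximality of generators propagates through the closure operation defining $J_\QQ$ from the coverage (the standard fact that if every covering family in a coverage on $M$ is trivial, then $M$ is irreducible for the generated topology). So take a generating cosieve $S_{M, \vec{x}, \psi}$ on $M$, coming from an axiom $\phi \turnstile{\vec{x} \oftype \vec{A}} \psi$ of $\QQ$ with $\vec{x} \in \interpretation{\phi}_M$. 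Because $M$ satisfies this axiom, $\interpretation{\phi}_M \subseteq \interpretation{\psi}_M$, so $\vec{x} \in \interpretation{\psi}_M$ already. Then for \emph{every} homomorphism $f : M \to M'$ we have $f(\vec{x}) \in \interpretation{\psi}_{M'}$, since geometric formulas are preserved by model homomorphisms; thus $S_{M, \vec{x}, \psi}$ contains all arrows out of $M$, i.e. it is maximal. Hence $M$ is $J_\QQ$-irreducible.

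The main obstacle is the bookkeeping in the ``if'' direction: one must be careful that checking triviality only on the \emph{generating} cosieves of the coverage genuinely suffices to conclude irreducibility for the generated Grothendieck topology. This is a routine but not entirely trivial point — it uses that a $J_\QQ$-covering sieve $S$ on $M$, being obtained by the transfinite saturation of the coverage, must in particular contain (the sieve generated by) some coverage family on $M$; but each such family is maximal, forcing $S$ itself to be maximal. Everything else (preservation of geometric formulas under homomorphisms, the identity lying in $S_{M,\vec x,\psi}$ iff $\vec x\in\interpretation{\psi}_M$) is immediate.
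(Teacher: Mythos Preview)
Your proof is correct and follows essentially the same approach as the paper's: both directions reduce to the observation that $M$ satisfies $\phi \turnstile{\vec{x}} \psi$ precisely when every generating cosieve $S_{M,\vec{x},\psi}$ is maximal, and the ``if'' direction then relies on the fact that saturating a pullback-stable collection of sieves under transitivity cannot introduce nontrivial covers of an object that previously had none. The paper's proof is terser but makes the same moves.
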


\begin{proof}
  A compact model $M$ of $\TT$
  satisfies an axiom $\phi \turnstile{\vec{x} : \vec{A}} \psi$
  if and only if
  for every family of elements $\vec{x} \in \interpretation{\phi}_M$,
  the cosieve $S_{\vec{x}}$
  from Theorem \ref{theorem-induced-topology}
  is the maximal cosieve on $M$.
  This shows the \enquote{only if} direction.
  For the \enquote{if} direction,
  one can check that saturating a collection of sieves
  with respect to the transitivity condition of Grothendieck topologies
  preserves the pullback-stability of the collection and
  can never produce any nontrivial covers of an object
  if there were none before.
\end{proof}

\begin{remark}
  We can understand the cosieves $S_{\vec{x}}$
  from Theorem \ref{theorem-induced-topology}
  as \emph{operations} correcting the failure of a single instance
  of one of the axioms of $\QQ$.
  Showing that $J_\QQ$ is rigid
  then amounts to
  providing an algorithm that turns any compact $\TT$-model
  into a $(\TT + \QQ)$-model using these operations,
  as follows.
  Starting with a compact $\TT$-model $M$
  which is not a model of $\TT + \QQ$,
  we have to pick an axiom $\phi \turnstile{\vec{x} : \vec{A}} \psi$
  of $\QQ$
  and a family $\vec{x}$ of elements of (the appropriate sorts of) $M$
  with $\phi(\vec{x})$ but not $\psi(\vec{x})$.
  Then, if we are lucky,
  the cosieve $S_{\vec{x}}$ is generated by a single arrow
  \[ M \to M' \rlap{,} \]
  meaning that we have reduced
  the problem of covering $M$ by irreducibles
  to covering $M'$ by irreducibles
  by the transitivity property of the Grothendieck topology $J_\QQ$.
  If we can repeat this procedure a finite number of times
  and arrive at a model that satisfies all axioms in $\QQ$,
  then we are done;
  we have covered $M$ by a single irreducible object.

  It can of course happen that $S_{\vec{x}}$ is not a principal cosieve,
  so that we need multiple arrows
  \[ M \to M'_i \]
  to generate it.
  Then $S_{\vec{x}}$ should perhaps be seen as
  a nondeterministic operation,
  that will turn $M$ into one of the $M'_i$,
  but we don't know in advance which one it will be.
  In this case,
  we have to deal with all $M'_i$,
  and our algorithm must terminate in the sense that
  any possible path of execution is finite.
  (On the other hand,
  we can also be even \emph{more} lucky,
  in that $S_{\vec{x}}$ is the empty cosieve on $M$.
  Then we are immediately done with the current branch.)
\end{remark}

Here is a first illustration of this strategy.

\begin{example}
  \label{example-surjectivity-rigid}
  Start with the theory $\TT$
  with two sorts
  and a function symbol $A \to B$,
  and add to it the axiom
  \[ \top \turnstile{y \oftype B} \ex{x}{A} f(x) = y \rlap{,} \]
  requiring $f$ to be surjective.
  We already know that the resulting theory is of presheaf type,
  since it is Morita equivalent to the theory
  with just one sort $A$
  and an equivalence relation ${\sim} \subseteq A \times A$
  by Example \ref{example-materialize-quotient},
  and this is a Horn theory.
  But let us show that the surjectivity axiom
  is a rigid-topology quotient of $\TT$.

  By Lemma \ref{lemma-compact-Horn-models},
  a model $M = (A \to B) \in \TT\dashmod(\Set)$
  is compact if and only if
  both $A$ and $B$ are finite sets.
  Given such a compact model $(A \to B)$
  and an element $y \in B$,
  the cosieve $S_y$ on $(A \to B)$
  from Theorem \ref{theorem-induced-topology}
  is generated by the single arrow
  \[ \begin{tikzcd}[row sep=small]
    (A \to B) \ar[d] \\
    (A \sqcup \{x\} \to B) \rlap{,}
  \end{tikzcd} \]
  where $x$ is sent to $y$ in the new model
  (which is clearly still compact),
  since every model homomorphism
  \[ \begin{tikzcd}[column sep=small]
    A \ar[r, "f"] \ar[d, "g_A"] & B \ar[d, "g_B"] \\
    A' \ar[r, "f'"] & B'
  \end{tikzcd} \]
  with $\exists x \in A'.\; f'(x) = g_B(y)$
  factors through $(A \sqcup \{x\} \to B)$.
  The model $(A \to B)$ can thus be covered
  (in the topology induced by the surjectivity axiom)
  by the model $(A \sqcup \{x\} \to B)$.
  We can go on by creating preimages of other elements of $B$,
  until we have covered $(A \to B)$ by a model
  satisfying the new axiom
  (since $B$ is finite).
\end{example}

\begin{lemma}
  \label{lemma-compose-rigid-topology-quotients}
  If $\TT$ is of presheaf type,
  $\QQ_1$ is a rigid-topology quotient of $\TT$
  and $\QQ_2$ is a rigid-topology quotient of $\TT + \QQ_1$,
  then $\QQ_1 + \QQ_2$ is a rigid-topology quotient of $\TT$.
\end{lemma}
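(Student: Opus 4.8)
The plan is to reduce the claim to a statement about subtoposes via the duality between quotient extensions and subtoposes, and then to invoke Lemma~\ref{lemma-rigid-topology-quotients-compact-models} twice. First I would fix the canonical presheaf site $C \defeq {\mod{\TT}{\Set}_c}^\op$ for $\Set[\TT]$, so that quotient extensions of $\TT$ correspond to Grothendieck topologies on $C$; in particular $J_{\QQ_1}$ is a rigid topology on $C$. Since $\QQ_1$ is a rigid-topology quotient, $\TT + \QQ_1$ is again of presheaf type, and by the ``converse'' half of Lemma~\ref{lemma-rigid-topology-quotients-compact-models} the compact $(\TT+\QQ_1)$-models are exactly the compact $\TT$-models satisfying $\QQ_1$. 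Concretely, the irreducible objects of $(C, J_{\QQ_1})$ form a dense full subcategory $C_1 \subseteq C$, and by the Comparison Lemma together with rigidity, $\Sh(C, J_{\QQ_1}) \simeq \PSh(C_1)$, with $C_1 \simeq {\mod{(\TT+\QQ_1)}{\Set}_c}^\op$ as sites for $\Set[\TT+\QQ_1]$.

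Now I would apply the hypothesis on $\QQ_2$: it is a rigid-topology quotient of $\TT+\QQ_1$, so $J_{\QQ_2}$ is a rigid topology on $C_1$, its irreducibles form a dense subcategory $C_2 \subseteq C_1$, and $\Sh(C_1, J_{\QQ_2}) \simeq \PSh(C_2)$. The key step is to identify the subtopos of $\Set[\TT]$ cut out by $\QQ_1 + \QQ_2$. By the general correspondence between quotient extensions and subtoposes, $\Set[\TT + \QQ_1 + \QQ_2]$ is the subtopos of $\Set[\TT]$ obtained by first passing to $\Set[\TT+\QQ_1]$ and then to its subtopos $\Set[\TT+\QQ_1+\QQ_2]$; equivalently, the topology $J_{\QQ_1+\QQ_2}$ on $C$ is the one whose sheaves are those presheaves on $C$ that are $J_{\QQ_1}$-sheaves and whose restriction to $C_1$ is a $J_{\QQ_2}$-sheaf. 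Chaining the two equivalences gives $\Set[\TT+\QQ_1+\QQ_2] \simeq \Sh(C_1, J_{\QQ_2}) \simeq \PSh(C_2)$, so $\TT+\QQ_1+\QQ_2$ is of presheaf type. Finally, I would check the second condition of Lemma~\ref{lemma-rigid-topology-quotients-compact-models}: a compact $(\TT+\QQ_1+\QQ_2)$-model is, by the $\QQ_2$-step, compact as a $(\TT+\QQ_1)$-model, hence by the $\QQ_1$-step compact as a $\TT$-model. By that lemma, $\QQ_1+\QQ_2$ is a rigid-topology quotient of $\TT$.

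The main obstacle I anticipate is making precise the claim that $J_{\QQ_1+\QQ_2}$ on $C$ ``restricts'' along the dense inclusion $C_1 \hookrightarrow C$ to $J_{\QQ_2}$ on $C_1$ --- i.e.\ that composing the two subtopos inclusions $\Set[\TT+\QQ_1+\QQ_2] \hookrightarrow \Set[\TT+\QQ_1] \hookrightarrow \Set[\TT]$ is the same as the subtopos inclusion presented by $\QQ_1+\QQ_2$. This is essentially the transitivity of the quotient-extension/subtopos correspondence (and is implicit in the bijection between subtoposes of a subtopos of $\E$ and subtoposes of $\E$ contained in it), but it must be stated carefully so that the Comparison-Lemma identification $C_1 \simeq {\mod{(\TT+\QQ_1)}{\Set}_c}^\op$ is compatible with the topologies on both sides. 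Alternatively, one can sidestep the site bookkeeping entirely: Lemma~\ref{lemma-rigid-topology-quotients-compact-models} characterizes rigid-topology quotients purely by the conjunction ``$\TT+\QQ$ is of presheaf type and every compact $(\TT+\QQ)$-model is a compact $\TT$-model,'' and both conjuncts for $\QQ_1+\QQ_2$ over $\TT$ follow by transitivity from the corresponding facts for $\QQ_1$ over $\TT$ and $\QQ_2$ over $\TT+\QQ_1$ --- presheaf type being obvious, and the compactness transfer being the two-step argument above. I would present this second route as the clean proof, using the site picture only as motivation.
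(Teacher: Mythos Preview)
Your proposal is correct, and the clean route you settle on at the end is exactly the paper's proof: apply Lemma~\ref{lemma-rigid-topology-quotients-compact-models} twice to get that $\TT + \QQ_1 + \QQ_2$ is of presheaf type and that compact $(\TT + \QQ_1 + \QQ_2)$-models are compact as $(\TT + \QQ_1)$-models, hence as $\TT$-models. The paper even echoes your hesitation about the site-based route, remarking that arguing directly via transitivity of $J_{\QQ_1 + \QQ_2}$ is not so simple precisely because one would have to relate the topology induced by $\QQ_2$ on $\mod{(\TT + \QQ_1)}{\Set}_c$ with that on $\mod{\TT}{\Set}_c$.
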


\begin{proof}
  This follows easily from
  Lemma \ref{lemma-rigid-topology-quotients-compact-models}:
  $\TT + \QQ_1 + \QQ_2$ is of presheaf type
  and every compact $(\TT + \QQ_1 + \QQ_2)$-model
  is compact as a $(\TT + \QQ_1)$-model
  and hence as a $\TT$-model.

  Note that arguing more directly
  by the transitivity of $J_{\QQ_1 + \QQ_2}$
  is not so simple,
  as we would still have to relate the topology induced by $\QQ_2$
  on $\mod{(\TT + \QQ_1)}{\Set}_c$
  with that on $\mod{\TT}{\Set}_c$.
\end{proof}

While we can assume,
after choosing a representing set of axioms
for the quotient extension $\QQ_2$,
that its base theory is $\TT$ instead of $\TT + \QQ_1$,
the assumptions of Lemma \ref{lemma-compose-rigid-topology-quotients}
are not at all the same as
requiring $\QQ_1$ and $\QQ_2$
to be rigid-topology quotients of $\TT$ independently.
In fact, $\QQ_1 + \QQ_2$ can fail
to be a rigid-topology quotient of $\TT$
in this case,
as we will see in Example
\ref{example-sum-of-two-rigid-topology-quotients}.
By the same method as there,
one can easily construct quotient extensions $\QQ_1, \dots, \QQ_n$
such that the sum of any proper subset
induces a rigid topology,
but adding all $n$ even destroys presheaf type.

\subsection{Syntactic constructions preserving presheaf type}

We now investigate
in which ways a theory of presheaf type can be extended
such that the resulting theory is again of presheaf type.
At first sight,
one might hope for a result like
\enquote{an algebraic (or Horn, or even cartesian) extension
preserves presheaf type},
but unfortunately,
this is far too optimistic.
Indeed,
if we have any geometric axiom
$\phi \turnstile{\vec{x} : \vec{A}} \psi$
we would like to add to a theory,
we can instead first perform an extension by definitions
(which surely preserves presheaf type),
defining $R_\phi, R_\psi \subseteq \vec{A}$
by $\phi$ and $\psi$ respectively,
and then add the Horn axiom
$R_\phi \turnstile{\vec{x} : \vec{A}} R_\psi$.
Example \ref{example-single-algebraic-axiom} shows that
also an algebraic axiom can destroy presheaf type.

\begin{corollary}[see {\cite[Theorem 8.2.8]{caramello:tst}}]
  \label{corollary-negated-axioms}
  Adding arbitrarily many axioms of the form
  \[ \phi \turnstile{\vec{x} : \vec{A}} \bot \]
  to a theory of presheaf type
  is always a rigid-topology quotient.
\end{corollary}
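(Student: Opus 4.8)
The plan is to use the criterion from Lemma \ref{lemma-rigid-topology-quotients-compact-models}, reduced — via Theorem \ref{theorem-induced-topology} — to an explicit description of the generating cosieves. Since we may add the axioms one at a time and then invoke Lemma \ref{lemma-compose-rigid-topology-quotients} (transfinitely, once we check that well-ordering the axioms still yields rigid-topology quotients at each stage), it suffices to treat a single axiom of the form $\phi \turnstile{\vec{x} : \vec{A}} \bot$ added to a theory of presheaf type $\TT$. Write $\QQ = \{\, \phi \turnstile{\vec{x} : \vec{A}} \bot \,\}$. By Theorem \ref{theorem-induced-topology}, the topology $J_\QQ$ on ${\mod{\TT}{\Set}_c}^\op$ is generated by the cosieves
\[ S_{M, \vec{x}} =
   \{\, f : M \to M' \mid f(\vec{x}) \in \interpretation{\bot}_{M'} \,\}
   = \{\, f : M \to M' \mid \interpretation{\bot}_{M'} \text{ contains } f(\vec x) \,\}, \]
for $M \in \TT\dashmod(\Set)_c$ and $\vec{x} \in \interpretation{\phi}_M$. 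But $\interpretation{\bot}_{M'} = \varnothing$, so the condition $f(\vec{x}) \in \interpretation{\bot}_{M'}$ can never hold: the cosieve $S_{M, \vec x}$ is the \emph{empty} cosieve on $M$. Hence $J_\QQ$ is generated by empty covering cosieves on exactly those compact $\TT$-models $M$ admitting some $\vec{x} \in \interpretation{\phi}_M$ (i.e.\ the models \emph{failing} the axiom), and the maximal cosieve on all other compact models.

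First I would record what the sheaf topos for such a topology is. A compact $\TT$-model $M$ is $J_\QQ$-irreducible precisely when it carries no nontrivial covering cosieve; by the description above this happens exactly when $M$ satisfies $\phi \turnstile{\vec x} \bot$, i.e.\ when $M$ is already a $(\TT + \QQ)$-model (this is also the content of the Corollary preceding the statement). I then need the density claim: every compact $\TT$-model $M$ is covered in $J_\QQ$ by irreducible objects. If $M$ is already a $(\TT+\QQ)$-model this is trivial. If not, pick $\vec x \in \interpretation{\phi}_M$; then the empty cosieve covers $M$, and the empty family is a family of arrows out of $M$ all of whose sources are irreducible (vacuously). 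So the irreducible objects are dense, $J_\QQ$ is rigid, and by the Comparison Lemma $\TT + \QQ$ is of presheaf type, classified by $[(\TT+\QQ)\dashmod(\Set)_c, \Set]$. (Concretely: one checks $(\TT+\QQ)\dashmod(\Set)_c$ is the full subcategory of $\TT\dashmod(\Set)_c$ on the models satisfying $\phi \turnstile{\vec x}\bot$, and every compact $(\TT+\QQ)$-model is a compact $\TT$-model, as is always the case for quotient extensions.)

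For arbitrarily many axioms $\phi_k \turnstile{\vec x : \vec A_k} \bot$, $k \in K$, I would argue directly rather than iterating: by the remark following Theorem \ref{theorem-induced-topology}, $J_\QQ$ is generated by the union over all $k$ of the cosieves $S_{M,\vec x}$ coming from the axiom indexed by $k$, and by the computation above each such cosieve is again empty. So $J_\QQ$ is still generated entirely by empty cosieves (on the compact $\TT$-models failing at least one of the $\phi_k \turnstile{\vec x}\bot$) together with maximal cosieves, and the same density argument shows the irreducibles — exactly the compact models of $\TT + \QQ$ — are dense. Thus $J_\QQ$ is rigid. The only point requiring a little care, and the place I expect to have to be slightly careful rather than facing a real obstacle, is confirming that the saturation of this collection of (empty and maximal) cosieves under the transitivity axiom of a Grothendieck topology introduces no \emph{new} nontrivial covers — but this is immediate, since an empty cover of $M$ already forces $M$ to have no sections of any presheaf it covers, and transitivity against an empty family produces nothing; one can also simply cite that this collection is a coverage (pullback-stable) whose generated topology therefore has the same irreducibles.
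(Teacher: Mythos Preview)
Your proof is correct and the core argument is exactly the paper's: the generating cosieves $S_{\vec{x}}$ for an axiom with consequent $\bot$ are empty, so any compact $\TT$-model failing one of the axioms is covered by the empty family, while any model satisfying all of them is irreducible. The paper states this in two sentences, handling all axioms at once from the outset.

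Your initial reduction to a single axiom via a transfinite version of Lemma~\ref{lemma-compose-rigid-topology-quotients} is unnecessary and in fact suspect: the paper explicitly warns (Remark~\ref{remark-infinitely-many-axioms-in-empty-context}) that an infinite chain of rigid-topology quotients need not compose to one, so a transfinite extension of that lemma is not available in general. You recover by arguing directly for arbitrarily many axioms, which is the right move and coincides with the paper's approach; you could simply drop the one-axiom detour entirely.
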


\begin{proof}
  Let $\TT$ be a theory of presheaf type,
  $\QQ = \{\, \phi^i \turnstile{\vec{x}^i : \vec{A}^i} \bot
  \mid i \in I \,\}$
  and $M \in \mod{\TT}{\Set}_c$.
  If there is some $i \in I$
  and $\vec{x} \in \interpretation{\phi^i}_M$,
  then
  \[ S_{\vec{x}} = \{\, f : M \to M'
    \mid f(\vec{x}) \in \interpretation{\bot}_{M'} = \varnothing \,\} \]
  is the empty cosieve on $M$
  and we are done.
  But otherwise
  $M$ was already a model of $\TT + \QQ$,
  i.e. irreducible itself.
\end{proof}

\begin{corollary}
  \label{corollary-axiom-in-empty-context}
  Adding finitely many axioms
  $\phi \turnstile{[]} \psi$
  with empty context
  to a theory of presheaf type
  is always a rigid-topology quotient.
\end{corollary}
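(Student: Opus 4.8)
The plan is to first reduce to the case of a single axiom and then read off the induced topology from Theorem \ref{theorem-induced-topology}. For the reduction, write the finite quotient extension as $\QQ = \QQ_1 + \dots + \QQ_n$, where each $\QQ_k$ consists of the single axiom $\phi_k \turnstile{[]} \psi_k$. Granting the one-axiom case, $\QQ_1$ is a rigid-topology quotient of $\TT$, so $\TT + \QQ_1$ is again of presheaf type by Lemma \ref{lemma-rigid-topology-quotients-compact-models}; since $\phi_2, \psi_2$ are still closed formulas of $\TT + \QQ_1$, the one-axiom case applies again to show that $\QQ_2$ is a rigid-topology quotient of $\TT + \QQ_1$, whence $\QQ_1 + \QQ_2$ is a rigid-topology quotient of $\TT$ by Lemma \ref{lemma-compose-rigid-topology-quotients}. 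Iterating this gives the claim for all $n$.

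So assume $\QQ = \{\, \phi \turnstile{[]} \psi \,\}$ adds a single axiom in the empty context, and let $\TT$ be of presheaf type. By Theorem \ref{theorem-induced-topology}, the topology $J_\QQ$ on ${\mod{\TT}{\Set}_c}^\op$ is generated by the cosieves $S_{M, \vec{x}, \psi}$ for $M \in \mod{\TT}{\Set}_c$ and $\vec{x} \in \interpretation{\phi}_M$. Since the context is empty, $\interpretation{\phi}_M$ is a subterminal object of $\Set$, so such an $\vec{x}$ exists exactly when $M \models \phi$, in which case the generating cosieve is simply
\[ S_M = \{\, f : M \to M' \mid M' \models \psi \,\} \rlap{.} \]

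The key point is that a homomorphism of $\TT$-models preserves the truth of every closed geometric formula: this is an easy induction on the structure of the formula, using that model homomorphisms are compatible with the interpretations of all relation and function symbols. (It is precisely this property that fails for formulas with a nonempty context, which is why the corollary is restricted to the empty context.) Consequently, if $M \models \phi$ and $f : M \to M'$ is any arrow in $\mod{\TT}{\Set}_c$, then $M' \models \phi$ too; hence every $M'$ appearing in $S_M$ satisfies both $\phi$ and $\psi$, so it satisfies the axiom of $\QQ$ and is therefore $J_\QQ$-irreducible by the characterization of irreducible objects given above (see \cite[Theorem 8.2.5]{caramello:tst}). Thus $S_M$ is exactly the cosieve of all arrows from $M$ into irreducible objects.

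Density of the irreducibles then follows at once. Given $M \in \mod{\TT}{\Set}_c$, either $M$ already satisfies the axiom of $\QQ$ and is itself irreducible, or else $M \models \phi$ and $M \not\models \psi$, so $S_M$ is a $J_\QQ$-covering cosieve on $M$ all of whose codomains are irreducible. Either way $M$ is covered by irreducibles, so $J_\QQ$ is a rigid topology, as claimed. I expect no serious difficulty in this argument; the only genuinely delicate step is identifying the generating cosieves for empty-context axioms and, in the same breath, noting the preservation of closed geometric formulas under model homomorphisms — everything else is bookkeeping with Theorem \ref{theorem-induced-topology} and Lemma \ref{lemma-compose-rigid-topology-quotients}.
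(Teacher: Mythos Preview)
Your proposal is correct and follows essentially the same route as the paper: reduce to a single axiom via Lemma~\ref{lemma-compose-rigid-topology-quotients}, then use Theorem~\ref{theorem-induced-topology} to see that the only nontrivial generating cosieve on a compact $M$ (namely when $M \models \phi$) consists of arrows into models satisfying $\psi$, which are irreducible since model homomorphisms preserve closed geometric formulas. The paper's proof is more terse but makes the same moves in the same order; your version just spells out the preservation-of-closed-formulas step and the case distinction more explicitly.
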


\begin{proof}
  For one axiom $\phi \turnstile{[]} \psi$ with empty context,
  Theorem \ref{theorem-induced-topology}
  says that any compact model $M$
  that satisfies the closed formula $\phi$
  is covered by all arrows
  (necessarily preserving truth of $\phi$)
  to compact models that also satisfy $\psi$,
  as required.
  And any model that does not satisfy $\phi$
  satisfies the new axiom anyway.
  For more than one axiom of this form,
  we can use Lemma \ref{lemma-compose-rigid-topology-quotients}.
\end{proof}

\begin{corollary}
  Any propositional theory
  with finitely many axioms
  is of presheaf type.
\end{corollary}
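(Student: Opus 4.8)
The plan is to present the given theory as a theory of presheaf type with finitely many axioms in the empty context adjoined, so that Corollary~\ref{corollary-axiom-in-empty-context} applies directly.

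Concretely, let $\TT$ be a propositional theory with finitely many axioms, and let $P$ be its set of proposition symbols; recall that a propositional theory has no sorts and hence no symbols other than these nullary relation symbols. I would first consider the theory $\TT_0$ over the same signature (the set $P$) with \emph{no} axioms. Since $\TT_0$ has no axioms at all, it is vacuously a Horn theory, hence of presheaf type by the result recalled above (\cite[Theorem~2.1.8]{caramello:tst}).

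The one point to observe is that, because $\TT$ has no sorts, the only context available for its axioms is the empty one: every axiom of $\TT$ is a geometric sequent $\phi \turnstile{[]} \psi$ with $\phi$ and $\psi$ closed geometric formulas over $P$. Therefore $\TT = \TT_0 + \QQ$, where $\QQ$ adjoins no new sorts or symbols (so it is a quotient extension) and consists of finitely many axioms in the empty context. By Corollary~\ref{corollary-axiom-in-empty-context}, $\QQ$ is a rigid-topology quotient of $\TT_0$, and hence $\TT = \TT_0 + \QQ$ is again of presheaf type. There is no real obstacle beyond the observation that propositional axioms automatically live in the empty context; everything else is supplied by Corollary~\ref{corollary-axiom-in-empty-context} (which in turn rests on Theorem~\ref{theorem-induced-topology} and Lemma~\ref{lemma-compose-rigid-topology-quotients}).
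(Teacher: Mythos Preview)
Your proof is correct and takes essentially the same approach as the paper: both reduce to Corollary~\ref{corollary-axiom-in-empty-context} via the observation that a propositional theory has only the empty context. You spell out the implicit starting point (that the axiom-free theory over the signature is Horn, hence of presheaf type) which the paper leaves tacit, but the argument is the same.
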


\begin{proof}
  This immediately follows from
  Corollary \ref{corollary-axiom-in-empty-context},
  as a propositional theory
  has no other contexts than the empty one.
\end{proof}

\begin{remark}
  \label{remark-infinitely-many-axioms-in-empty-context}
  Corollary \ref{corollary-axiom-in-empty-context}
  becomes false if we omit the word \enquote{finitely}.
  Indeed, it would otherwise imply that
  all propositional theories are of presheaf type
  (see Example \ref{example-theories-of-a-function}
  for one which is not).
  In particular,
  Lemma \ref{lemma-compose-rigid-topology-quotients}
  becomes wrong if instead of two consecutive rigid-topology quotients
  we consider the sum $\TT + \sum_{i \in \NN} \QQ_i$
  of an infinite sequence of quotient extensions
  where every $\QQ_n$ is a rigid-topology quotient
  of $\TT + \sum_{i = 0}^{n - 1} \QQ_i$.
\end{remark}

\begin{proposition}
  \label{proposition-add-constant-symbol}
  Let $\TT$ be a theory of presheaf type
  and $A$ a sort of $\TT$.
  Then the theory $\TT + (c : A)$
  obtained from $\TT$ by introducing a new constant symbol $c : A$
  is also of presheaf type.
  Moreover,
  a model of $\TT + (c : A)$ in $\Set$
  is compact if and only if
  the underlying $\TT$-model is compact.
\end{proposition}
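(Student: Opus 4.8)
The plan is to exhibit an explicit presheaf site for $\Set[\TT + (c : A)]$ built from the known presheaf site for $\Set[\TT]$. Since $\TT$ is of presheaf type, we may take $\Set[\TT] \simeq [\TT\dashmod(\Set)_c, \Set]$ with the tautological universal model $M_\TT$. A model of $\TT + (c : A)$ in a topos $\E$ is the same as a $\TT$-model $N$ together with a global element $1 \to \interpretation{A}_N$. First I would show that the forgetful functor $U : (\TT + (c:A))\dashmod(\E) \to \TT\dashmod(\E)$ together with the extra element data makes $(\TT + (c:A))\dashmod(\Set)$ equivalent to the category of pairs $(N, a)$ with $N \in \TT\dashmod(\Set)$ and $a \in \interpretation{A}_N$. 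Equivalently, writing $A_\TT$ for the functor $\TT\dashmod(\Set)_c \to \Set$, $M' \mapsto \interpretation{A}_{M'}$, which is an object of $\Set[\TT]$ (hence a filtered colimit of representables), this is the category of elements of the left Kan extension of $A_\TT$ along $\TT\dashmod(\Set)_c \hookrightarrow \TT\dashmod(\Set)$.

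Next I would identify the compact models. A pair $(N, a)$ is presented, intuitively, by freely adjoining one new generator of sort $A$ to $N$ and forcing it to equal $a$; concretely, for $N$ compact and $a \in \interpretation{A}_N$ arbitrary, the pair $(N, a)$ should be compact in $(\TT + (c:A))\dashmod(\Set)$, and conversely every compact such pair arises this way. The cleanest route is via Lemma \ref{lemma-filtered-colimits-of-models}: the forgetful functor $U$ creates filtered colimits (this follows from the description of filtered colimits of models, since adding a single element is a finite-limit-free piece of data preserved under filtered colimits), so $U$ preserves filtered colimits, and it has a left adjoint $F$ — namely $F(N)$ is the $(\TT + (c:A))$-model on the same $\TT$-model $N$ obtained by adjoining a fresh generator $c$ of sort $A$, which exists because introducing a constant symbol is harmless syntactically. (Alternatively, invoke the isofibration/adjoint structure around $U_\EE$.) Then Lemma \ref{lemma-adjoint-functor-preserves-compacts} shows $F$ preserves compact objects; in particular $F$ of each compact $\TT$-model is compact. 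To get all compacts: any compact $(\TT + (c:A))$-model $(N,a)$ receives a map from $F(N_0)$ for $N_0$ the underlying $\TT$-model, but we rather want the underlying $\TT$-model itself to be compact. For this, write the underlying $\TT$-model as a filtered colimit of compact $\TT$-models $N = \colim_i N_i$; since $\interpretation{A}$ preserves filtered colimits (Lemma \ref{lemma-filtered-colimits-of-models}), $a$ factors through some $\interpretation{A}_{N_i}$, so $(N,a) = \colim_{j \geq i}(N_j, a_j)$ is a filtered colimit of the pairs $(N_j, a_j)$, and compactness of $(N,a)$ forces it to be a retract of some $(N_j, a_j)$; a retract of a compact $\TT$-model is a compact $\TT$-model, giving the claim. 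This already proves the "moreover" clause.

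Finally I would assemble the presheaf-type conclusion. The category $(\TT + (c:A))\dashmod(\Set)_c$ is, by the above, the category of elements of $A_\TT$ restricted to compact models — an essentially small category, call it $D$. It remains to verify $\Set[\TT + (c:A)] \simeq [D, \Set]$. Here I would use the canonical comparison: there is always a geometric morphism $[D,\Set] \to \Set[\TT + (c:A)]$ induced by the tautological $D$-indexed model, and to see it is an equivalence it suffices — since we already know $\TT$ is of presheaf type — to note that $\Set[\TT + (c:A)]$ is the classifying topos obtained from $\Set[\TT] \simeq [\TT\dashmod(\Set)_c,\Set]$ by the object-classifier-style construction adjoining a generic element of the object $A_\TT$, and this construction on a presheaf topos $[C,\Set]$ adjoining a generic element of a representable (or of a filtered colimit of representables, handled by gluing) again yields a presheaf topos, on the category of elements. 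Concretely, $\Set[\TT+(c:A)] \simeq \Set[\TT]/\text{(something)}$ is not quite right because $c$ is a global element not a subobject, so instead I would argue directly: equivalences of models $(\TT+(c:A))\dashmod(\F) \simeq \TT\dashmod(\F) \times_{\text{(pt)}} \dots$ — more simply, the universal property. The honest simplest argument: $(\TT+(c:A))\dashmod$ is, functorially in $\F$, the category of pairs $(N, a\colon 1\to\interpretation{A}_N)$, which is the category of maps from $\F$ into the topos $\int M_\TT^{A}$ of elements construction; and a standard fact (diaconescu / flat functors on categories of elements) identifies this with $[D,\Set]$.

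I expect the main obstacle to be the last step — cleanly verifying that the candidate presheaf topos $[D,\Set]$ genuinely classifies $\TT + (c:A)$, i.e. that a flat functor $D \to \F$ is the same datum as a $(\TT+(c:A))$-model in $\F$. One must check that the extra morphisms in $D$ (those coming from the category-of-elements structure, not just from $\TT\dashmod(\Set)_c$) impose exactly the right coherence so that flatness of a functor out of $D$ unpacks to: a flat functor out of $\TT\dashmod(\Set)_c$ (a $\TT$-model, by presheaf type of $\TT$) plus a compatible element of $\interpretation{A}$. The filtered-colimit lemmas of the paper do most of the bookkeeping, but matching flatness conditions across the category-of-elements projection is the delicate point; one can also sidestep it by citing that adding a generic element of a definable object is an instance of a known presheaf-type-preserving construction, but a self-contained argument requires this verification.
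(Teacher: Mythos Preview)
Your central misstep is dismissing the slice-topos route. You write that ``$\Set[\TT+(c:A)] \simeq \Set[\TT]/\text{(something)}$ is not quite right because $c$ is a global element not a subobject'', but this is precisely what slice toposes classify: for any object $X$ of a topos $\E$, geometric morphisms $\F \to \E/X$ correspond naturally to geometric morphisms $f\colon \F \to \E$ together with a global element $1_\F \to f^*X$. Hence $\Set[\TT]/\interpretation{A}_{M_\TT}$ \emph{is} the classifying topos of $\TT+(c:A)$, and the standard identity $\PSh(C)/X \simeq \PSh(\int_C X)$ immediately yields presheaf type. This is exactly the paper's argument, and it dissolves the ``main obstacle'' you identify in your last paragraph into a one-line citation.

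For the compact models, your argument has a genuine gap in the direction ``$N$ compact $\Rightarrow (N,a)$ compact''. You invoke a left adjoint $F$ to the forgetful functor $U$ and note that $F$ preserves compacts, but $F(N)$ has underlying $\TT$-model $N^+ \neq N$, so this does not show that $(N,a)$ is compact; you would still need to observe that $(N,a)$ is a retract of $F(N)$, which you do not. Moreover, the existence of $F$ (``freely adjoining a generator of sort $A$'') is asserted only on the grounds that ``introducing a constant symbol is harmless syntactically'', but for a general geometric theory---even one of presheaf type---the category $\TT\dashmod(\Set)$ need not be cocomplete, so this free construction is not automatic. Your retract argument for the converse direction is fine. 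The paper avoids all of this by proving a clean standalone lemma (Lemma~\ref{lemma-compacts-in-category-of-elements}): for any filtered-colimit-preserving functor $F\colon C \to \Set$, the projection $\int^C F \to C$ preserves and reflects compact objects. Since $(\TT+(c:A))\dashmod(\Set)$ is the category of elements of $\interpretation{A}_\hole$, both directions follow at once.
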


\begin{proof}
  Let $\Set[\TT]$ be a classifying topos for $\TT$
  with universal model $M_\TT$.
  Then $\TT + (c : A)$ is classified by
  the slice topos $\Set[\TT] / \interpretation{A}_{M_\TT}$,
  see \cite[Exercise X.3]{maclane-moerdijk}.
  But if $\Set[\TT] = \PSh(C)$ is a presheaf topos,
  then we can use the formula
  \cite[Exercise III.8]{maclane-moerdijk}
  \[ \PSh(C) / X \simeq \PSh(\int_C X) \]
  for $X = \interpretation{A}_{M_\TT}$
  to see that $\TT + (c : A)$ is of presheaf type.

  For the compact models,
  we observe that $(\TT + (c : A))\dashmod(\Set)$ is equivalent,
  as a category over $\TT\dashmod(\Set)$,
  to the category
  $\int^{\TT\dashmod(\Set)} \interpretation{A}_\hole$
  of elements of the interpretation functor
  $\interpretation{A}_\hole : \TT\dashmod(\Set) \to \Set$.
  This functor preserves filtered colimits
  by Lemma \ref{lemma-filtered-colimits-of-models},
  so Lemma \ref{lemma-compacts-in-category-of-elements} below
  tells us exactly what we need.
\end{proof}

\begin{lemma}
  \label{lemma-compacts-in-category-of-elements}
  Let $C$ be a category with filtered colimits
  and let $F : C \to \Set$ be a functor preserving filtered colimits.
  Then the category of elements of $F$, $\int^C F$,
  has filtered colimits too,
  and the projection functor $U : \int^C F \to C$
  preserves and reflects compact objects.
\end{lemma}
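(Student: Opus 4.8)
The plan is to verify each of the three assertions directly from the definitions, using the hypothesis that $F$ preserves filtered colimits throughout.

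First I would construct filtered colimits in $\int^C F$. Given a filtered diagram $((c_i, x_i))_{i \in I}$ in $\int^C F$, with $x_i \in F(c_i)$ compatible along the transition maps, I would form $c \defeq \colim_i c_i$ in $C$ and take $x \in F(c) = \colim_i F(c_i)$ to be the element represented by any $x_i$ (the compatibility of the $x_i$ ensures this is well-defined, and the cocone legs $c_i \to c$ send $x_i$ to $x$). A routine check shows $(c, x)$ with these legs is the colimit: a cocone to $(c', x')$ amounts to a cocone $c_i \to c'$ in $C$ carrying each $x_i$ to $x'$, which factors uniquely through $c$, and the factoring map automatically sends $x$ to $x'$ since $F$ applied to the factoring, precomposed with the colimit legs $F(c_i) \to F(c)$, sends each $x_i$ to $x'$. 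So $U$ creates filtered colimits, in particular preserves them.

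Next, $U$ preserves compact objects. This is an instance of Lemma \ref{lemma-adjoint-functor-preserves-compacts}: the forgetful functor $U : \int^C F \to C$ has a left adjoint, sending $c$ to $(c, \text{—})$ — more precisely, $c \mapsto$ the coproduct (in $\int^C F$) indexed by $F(c)$ of copies of the generic element, or concretely, for the adjunction one checks $\Hom_{\int^C F}((c, x), (c', x'))$ is a subset of $\Hom_C(c, c')$, so the left adjoint of $U$ would not exist in general; instead I would argue directly. For $(c, x) \in \int^C F$ with $c$ compact in $C$, and a filtered diagram $((c_i, x_i))_i$ with colimit $(c, x')$ computed as above (colimit $c = \colim c_i$, etc.), a map $(c, x) \to (c, x')$ in $\int^C F$ is a map $c \to \colim_i c_i$ in $C$ carrying $x$ to $x'$; by compactness of $c$ it factors through some $c_i$, and since $F$ preserves the filtered colimit, after enlarging $i$ the composite $c \to c_i$ carries $x$ into $F(c_i)$ hitting the element $x_i$, i.e. gives a map in $\int^C F$. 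Two such maps agreeing after composing to $(c, x')$ agree already at some stage, again by compactness of $c$ in $C$ together with $F$ preserving the colimit (to match the element components). Hence $\Hom_{\int^C F}((c,x), \hole)$ preserves filtered colimits, so $(c, x)$ is compact; that is, $U$ preserves compact objects.

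Finally, $U$ reflects compact objects: suppose $(c, x)$ is compact in $\int^C F$; I must show $c$ is compact in $C$. The key point is that any filtered diagram $(c_i)_i$ in $C$ with a cocone $c_i \to c$ can be lifted to $\int^C F$ by pulling back $x$: set $x_i$ to be the image of $x$ under... no — there is no map $F(c) \to F(c_i)$. Instead I would observe that $U$ has a section-like behaviour on the relevant slice: the comma category over a fixed $c$ inside $\int^C F$, consisting of objects mapping to $c$, has a terminal object, namely $(c, x)$ itself is terminal among $(c', x') \to (c, x)$ with $U$ of the map being... Rather, the clean argument: the functor $U$ is, on the fibre over each object, a discrete opfibration, and $(c,x) \mapsto c$ has the property that $\Hom_C(c, \hole) \cong \coprod_{?}$ — the cleanest route is to note $\Hom_C(c, d) = \coprod_{y \in F(d)} \Hom_{\int^C F}((c,x),(d,y))$ is \emph{false} in general too. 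The honest argument is: the projection $U$ is a \emph{discrete fibration}? No. I expect this last reflection step to be the main obstacle, and the correct tool is that $\int^C F \to C$ is an opfibration with discrete fibres when $F$ is covariant — actually for $F : C \to \Set$ covariant, $\int^C F \to C$ is a discrete \emph{opfibration}, so every map $c \to d$ in $C$ and every $x \in F(c)$ lift uniquely to $(c,x) \to (d, F(f)(x))$. Using this, given a filtered $(c_i)_i$ with colimit $c$ and legs $\ell_i : c_i \to c$, I cannot lift backwards; but I can instead use that $\Hom_C(c, \hole)$ is a retract of $\Hom_{\int^C F}((c, x), \hole)$ composed with $U$ in a suitable sense — concretely, for any $d$, the set $\Hom_C(c, d)$ receives the map $\Hom_{\int^C F}((c,x),(d,y)) \to \Hom_C(c,d)$ for each $y$, and these are jointly surjective (given $g : c \to d$, take $y = F(g)(x)$), so $\Hom_C(c, \hole) = \colim$-friendly because it is a quotient/union of the compact functors $\Hom_{\int^C F}((c,x), U^{-1}(\hole))$ indexed over the (filtered-colimit-preserving) functor $F$; chasing this shows $\Hom_C(c, \hole)$ preserves filtered colimits, so $c$ is compact. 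I would write this reflection argument carefully, as it is the one genuinely delicate point; the rest is bookkeeping.
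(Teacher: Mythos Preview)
Your treatment of filtered colimits in $\int^C F$ and of the preservation direction (if $c$ is compact then $(c,x)$ is compact) is correct and matches the paper's approach, modulo some confusion in your exposition (the initial attempt via Lemma~\ref{lemma-adjoint-functor-preserves-compacts} cannot work since $U$ has no left adjoint, but your direct argument afterwards is fine).

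The genuine gap is in the reflection direction. You correctly identify that $U$ is a discrete opfibration and even write down the key decomposition
\[
  \Hom_C(c, d) \;=\; \coprod_{y \in F(d)} \Hom_{\int^C F}((c,x),(d,y)),
\]
which you earlier dismissed as false but is in fact true (each $g : c \to d$ lies in exactly the summand for $y = F(g)(x)$). However, the phrase ``chasing this shows $\Hom_C(c,\hole)$ preserves filtered colimits'' hides the entire content of the argument. The summands $\Hom_{\int^C F}((c,x),(d,y))$ are not individually functors in $d$, so you cannot simply say the coproduct of filtered-colimit-preserving functors preserves filtered colimits. What is missing is precisely how to turn a filtered diagram $(c_i)_{i \in I}$ in $C$ into one in $\int^C F$ to which compactness of $(c,x)$ applies.

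The paper's proof supplies exactly this step. Given $f : c \to \colim_i c_i$, push forward to $f_* x \in F(\colim_i c_i) \cong \colim_i F(c_i)$, choose a representative $x_{i_0} \in F(c_{i_0})$, and then replace $I$ by the cofinal subcategory $(i_0 \downarrow I)$. Now the opfibration structure lets you push $x_{i_0}$ forward along every transition map to obtain a filtered diagram $(c_i, x_i)_{i_0 \to i}$ in $\int^C F$ whose colimit is $(\colim_i c_i, f_*x)$, and $f$ lifts to a map $(c,x) \to (\colim_i c_i, f_*x)$. Compactness of $(c,x)$ then gives a factorisation through some $(c_i, x_i)$, hence a factorisation of $f$ in $C$. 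Uniqueness requires a similar enlargement argument. This cofinality-and-lift manoeuvre is the one nontrivial idea, and your sketch stops just short of it.
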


\begin{proof}
  Let $(c_i, x_i)_{i \in I}$,
  where $x_i \in F(c_i)$,
  be a filtered diagram in $\int^C F$.
  It is clear that the $x_i$ all represent the same element
  in $\colim_i F(c_i) \cong F(\colim_i c_i)$,
  let $x^* \in F(\colim_i c_i)$ be this element.
  Then we have a cocone on $(c_i, x_i)_{i \in I}$
  with apex $(\colim_i c_i, x^*)$,
  and it is easy to check that this is the colimit in $\int^C F$.
  This also shows that
  the functor $U : \int^C F \to C$
  preserves and reflects filtered colimits.

  Let $(c, x)$ be a compact object of $\int^C F$,
  we want to show that $c$ is compact in $C$.
  So let $(c_i)_{i \in I}$ be a filtered diagram in $C$
  and fix a morphism
  \[ f : c \to \colim_{i \in I} c_i \rlap{,} \]
  then we have to show that there is exactly one
  $[g] \in \colim_i \Hom(c, c_i)$ that induces $f$.
  If we want to use the compactness of $(c, x)$,
  we need to construct a filtered diagram in $\int^C F$.
  Consider the element $f_* x \in F(\colim_i c_i)
  \cong \colim_i F(c_i)$
  and pick a representative $x_{i_0} \in F(c_{i_o})$ for it.
  Since $I$ is filtered,
  the comma category $(i_0 \downarrow I)$ is also filtered
  and the functor $(i_0 \downarrow I) \to I$ is final.
  This means that we have
  $\colim_i c_i = \colim_{i_0 \to i} c_i$
  and $\colim_i \Hom(c, c_i) = \colim_{i_0 \to i} \Hom(c, c_i)$,
  so we can replace the diagram $(c_i)_{i \in I}$
  with $(c_i)_{(i_0 \to i) \in (i_0 \downarrow I)}$
  and assume without loss of generality
  that $i_0$ is initial in $I$.
  Then setting $x_i \defeq ({!} : i_0 \to i)_* x_{i_0} \in F(c_i)$
  yields a filtered diagram
  $(c_i, x_i)_{i \in I}$ in $\int^C F$.
  We also have
  \[ \widetilde{f} : (c, x) \to \colim_{i \in I} {(c_i, x_i)} \]
  with $U(\widetilde{f}) = f$
  by our original choice of $x_{i_0}$.
  So by assumption,
  there is a $\widetilde{g} : (c, x) \to (c_i, x_i)$
  for some $i$
  that induces $\widetilde{f}$.
  Then $g \defeq U(\widetilde{g})$ induces $f$.
  On the other hand,
  if $g_1 : c \to c_{i_1}$ and $g_2 : c \to c_{i_2}$
  both induce $f$,
  then we have $[{g_1}_* x] = [{g_2}_* x] = [x_{i_0}] \in \colim_i F(c_i)$,
  in particular $[{g_1}_* x] = [x_{i_1}]$ and $[{g_2}_* x] = [x_{i_2}]$,
  so we can prolong $g_1$
  to some $g'_1 : c \to c_{i'_1}$
  such that ${g'_1}_* x = x_{i'_1}$,
  and similarly for $g_2$,
  meaning that we have
  $\widetilde{g}_1 : (c, x) \to (c_{i'_1})$ and
  $\widetilde{g}_2 : (c, x) \to (c_{i'_2})$
  that both induce $\widetilde{f}$.
  Then we can conclude
  $[\widetilde{g}_1] = [\widetilde{g}_1]
  \in \colim_i \Hom((c, x), (c_i, x_i))$,
  and from this it follows that
  $[g_1] = [g'_1] = [g'_2] = [g_2]
  \in \colim_i \Hom(c, c_i)$.

  Now let $c \in C$ be compact
  and let $x \in F(c)$,
  we want to show that $(c, x)$ is compact in $\int^C F$.
  So let $(c_i, x_i)_{i \in I}$ be a filtered diagram in $\int^C F$
  and let
  \[ \widetilde{f} : (c, x) \to \colim_{i \in I} (c_i, x_i)
     = (\colim_{i \in I} c_i, x^*) \]
  be given.
  That is, we have $f : c \to \colim_i c_i$
  with $f_* x = x^*$.
  This $f$ is induced by some $g : c \to c_i$,
  implying that $[g_* x] = x^* \in \colim_i F(c_i)$,
  so it is also induced by some
  $g' : c \to c_{i'}$ with $g'_* x = x_{i'}$.
  This means that we have
  $\widetilde{g} : (c, x) \to (c_{i'}, x_{i'})$
  inducing $\widetilde{f}$.
  And for $\widetilde{g}_1$, $\widetilde{g}_2$
  both inducing $\widetilde{f}$,
  we immediately see that
  $[U(\widetilde{g}_1)] = [U(\widetilde{g}_2)]
  \in \colim_i \Hom(c, c_i)$
  and therefore
  $[\widetilde{g}_1] = [\widetilde{g}_2]
  \in \colim_i \Hom((c, x), (c_i, x_i))$.
\end{proof}

\begin{example}
  \label{example-add-constant-to-imported-set}
  From Example \ref{example-import-a-set}
  we know that the theory $\underline{A}$,
  where we have just imported a set $A$,
  is Morita equivalent to the empty theory
  (over the empty signature).
  In particular, it is of presheaf type
  and we can apply Proposition \ref{proposition-add-constant-symbol}
  to conclude that the theory $c : \underline{A}$,
  where we have added one more constant symbol
  is also of presheaf type.
  It is classified by $\Set / A$.

  It is also Morita equivalent to a disjoint disjunction
  of an $A$-indexed family of proposition symbols,
  \[ \top \turnstile{[]} \bigvee_{a \in A} p_a
     \qquad\text{and}\qquad
     p_a \land p_{a'} \turnstile{[]} \bot
     \quad (\text{for $a \neq a' \in A$})
     \rlap{,} \]
  by identifying $p_a$ with $c = c_a$.
  This way,
  we can easily see that the theory is of presheaf type
  using Corollary \ref{corollary-axiom-in-empty-context}
  for the first axiom
  and Corollary \ref{corollary-negated-axioms}
  for the $A$-indexed family of negated axioms.
\end{example}

\begin{example}
  \label{example-add-constant-to-quotient}
  Consider the theory of a surjective function
  $f : A \twoheadrightarrow B$,
  which is of presheaf type by
  Example \ref{example-materialize-quotient}
  or Example \ref{example-surjectivity-rigid}.
  By Proposition \ref{proposition-add-constant-symbol},
  it is still of presheaf type
  after adding a constant symbol $c : B$.

  When we translate this extension
  to the theory of an equivalence relation
  ${\sim} \subseteq A \times A$,
  we obtain a relation symbol $R \subseteq A$ with the axioms
  \[ R(x) \land x \sim x' \turnstile{x, x' \oftype A} R(x')
     \rlap{,} \qquad
     R(x) \land R(x') \turnstile{x, x' \oftype A} x \sim x'
     \rlap{,} \qquad
     \top \turnstile{[]} \ex{x}{A} R(x)
     \rlap{.} \]
  The existential quantifier in the last axiom
  is not cartesian,
  but the context of the axiom is empty
  (and all other axioms of the theory are Horn),
  so we again find that the theory is of presheaf type
  using Corollary \ref{corollary-axiom-in-empty-context}.
  However,
  if the sort $A$ and the equivalence relation $\sim$
  are instead given as part of a theory of presheaf type
  which is not cartesian,
  then the Horn axioms on $R$ can not be argued away so easily,
  but we can still introduce the quotient $B = A/{\sim}$
  and add a constant symbol $c : B$
  without destroying presheaf type.
\end{example}

\subsection{Some counterexamples}

We now give a number of examples
of non-presheaf-type theories,
to show that the above positive results
are optimal in certain aspects
and to illustrate the various techniques
useful for recognizing such theories.
Our first example shows that
even a very simple, finite theory
can fail to be of presheaf type.

\begin{example}
  Consider the theory
  \[ \TT = \left\{ \begin{tikzcd}
    A \ar[loop right, two heads, "f"]
  \end{tikzcd} \right\} \]
  consisting of one sort $A$,
  a unary function symbol $f : A \to A$
  and one axiom
  $\top \turnstile{y : A} \ex{x}{A} f(x) = y$
  stating that $f$ is surjective.
  We show that $\mod{\TT}{\Set}$ is not finitely accessible,
  implying that $\TT$ is not of presheaf type.

  Specifically,
  we show that the model $(\ZZ, +1)$,
  where $A = \ZZ$ and $f(n) = n+1$,
  is not a filtered colimit of compact models.
  Consider the models
  \[ M_n = \Big(\,
     \{\, (x, y) \in \ZZ^2 \mid y \leq \min(n, -x) \,\}
     \,,\,
     (x, y) \mapsto (x+1, \min(y, -x-1))
     \,\Big)
     \rlap{,} \]
  which can be pictured like this.
  \[ \begin{tikzcd}[row sep=tiny, column sep=small]
    & \bullet \ar[r, mapsto] & \bullet \ar[r, mapsto]
    & \bullet \ar[r, mapsto] & \bullet \ar[dr, mapsto] \\
    \mathclap{\dots}
    & \bullet \ar[r, mapsto] & \bullet \ar[r, mapsto]
    & \bullet \ar[r, mapsto] & \bullet \ar[r, mapsto]
    & \bullet \ar[rd, mapsto] \\
    & \bullet \ar[r, mapsto] & \bullet \ar[r, mapsto]
    & \bullet \ar[r, mapsto] & \bullet \ar[r, mapsto]
    & \bullet \ar[r, mapsto] & \bullet \\
    & & & & \mathclap{\dots} & & & \mathclap{\smash{\ddots}}
  \end{tikzcd} \]
  The inclusions $M_n \hookrightarrow M_{n+1}$
  (adding another row on top)
  are model homomorphisms,
  and we have a homomorphism
  \[ (\ZZ, +1) \to M_\omega \defeq \colim_n M_n ,\quad
     k \mapsto (k, -k)
     \rlap{,} \]
  that does not factor through any $M_n$.
  This shows that $(\ZZ, +1)$ is not compact.
  But any map $M \to (\ZZ, +1)$ from a nonempty model $M$
  is necessarily surjective,
  so the composite $M \to (\ZZ, +1) \to M_\omega$
  does not factor through any $M_n$ either,
  showing that $(\ZZ, +1)$ is not a colimit of compact models.
\end{example}

Another method
to show that a theory has \enquote{too few compact models}
to be of presheaf type
is illustrated
by the next example.
In it,
we see that adding countably many constant symbols of the same sort
can destroy presheaf type
(in contrast to Proposition \ref{proposition-add-constant-symbol}).
An even more drastic version of this statement
will be one of the conclusions of
Example \ref{example-theories-of-a-function}.

\begin{example}
  \label{example-add-many-constants-to-quotient}
  Start with the theory of a surjective function
  $f : A \twoheadrightarrow B$
  as in Example \ref{example-add-constant-to-quotient},
  but then add countably many constant symbols
  $c_0, c_1, \ldots : B$
  instead of just a single one.
  We show that this theory $\TT$ is not of presheaf type
  by investigating its compact models.
  \[ A \twoheadrightarrow B \ni c_0, c_1, \dots \]

  Let $\TT'$ be the theory of inhabited sets,
  with one sort $A$ and the axiom
  $\top \turnstile{[]} (\ex{x}{A} \top)$.
  Then we can regard $\TT$ as an extension of $\TT'$
  and the forgetful functor
  \[ U : \TT\dashmod(\Set) \to \TT'\dashmod(\Set)
     , \quad
     (A \twoheadrightarrow B) \mapsto A
     \]
  has a right adjoint
  $A' \mapsto (A' \twoheadrightarrow \{*\})$,
  which preserves filtered colimits
  by their description in
  Lemma \ref{lemma-filtered-colimits-of-models}.
  Thus, $U$ preserves compact objects by
  Lemma \ref{lemma-adjoint-functor-preserves-compacts}.
  But the compact objects of $\TT'\dashmod(\Set)$
  are the finite nonempty sets,
  so, in summary,
  for any compact model $(A \twoheadrightarrow B)$ of $\TT$,
  $A$ is a finite set.
  This means that $B$ is finite too,
  from which we can in particular conclude that
  the geometric formula
  \[ \phi \defeq \quad
     \bigvee_{i \neq j} (c_i = c_j) \]
  is satisfied in all compact models.
  If $\TT$ was of presheaf type,
  this would mean that $\phi$ is satisfied
  in the universal model
  and therefore provable in $\TT$.
  This is not the case,
  as witnessed by the model $(\NN \twoheadrightarrow \NN)$
  with $c_n = n$.
\end{example}

The following example will allow us
to draw several conclusions.
The technique used here is
to compare a theory to
another theory with the same $\Set$-based models
which is known to be of presheaf type.

\begin{example}
  \label{example-theories-of-a-function}
  Given two sets $A$ and $B$,
  there are (at least) two different theories
  \enquote{of a map from $A$ to $B$}.
  One is
  \[ \angles{f : \underline{B^A}} \rlap{,} \]
  where we import
  (see Example \ref{example-import-a-set})
  the set $B^A$ of functions from $A$ to $B$
  and add a constant symbol $f$ of this sort;
  it is of presheaf type
  as in Example \ref{example-add-constant-to-imported-set}.
  The other is
  \[ \angles{f : \underline{A} \to \underline{B}} \rlap{,} \]
  where we import $A$ and $B$ separately
  and add a unary function symbol as indicated.
  For both theories,
  the category of models in $\Set$
  is (up to equivalence)
  the discrete category with $B^A$ objects.
  So if $\angles{f : \underline{A} \to \underline{B}}$
  was of presheaf type too,
  then this would imply that the two theories are Morita-equivalent.

  Specialize to the case $A = \NN$ and $B = 2 = \{0, 1\}$.
  Then $\angles{f : \underline{2^\NN}}$ is classified by $\Set/2^\NN$,
  or equivalently, by $\Sh(X)$,
  where $X$ is the discrete space with $2^\NN$ points.
  But by \cite[Exercise VIII.10]{maclane-moerdijk},
  $\angles{f : \underline{\NN} \to \underline{2}}$
  is classified by $\Sh(2^\NN)$,
  where $2^\NN$ is the Cantor space.
  Since these two sober topological spaces are not homeomorphic,
  $\angles{f : \underline{\NN} \to \underline{2}}$
  is not of presheaf type.

  Another way to see that $\angles{f : \underline{2^\NN}}$
  and $\angles{f : \underline{\NN} \to \underline{2}}$
  are not Morita equivalent
  is as follows.
  First, as in Example \ref{example-add-constant-to-imported-set},
  a constant symbol in an imported set,
  such as $f : \underline{2^\NN}$,
  can be replaced by a disjoint disjunction of $2^\NN$
  proposition symbols.
  On the other hand,
  the function symbol $f : \underline{\NN} \to \underline{2}$
  can be replaced by
  countably many constant symbols $c_n : \underline{2}$
  and therefore by countably many decidable propositions
  (disjoint binary disjunctions).
  In this way,
  one sees that the models of both theories
  in any topos $\E$ form discrete categories,
  namely,
  $\angles{f : \underline{2^\NN}}$ classifies
  decompositions of $\E$ into $2^\NN$ clopen subtoposes,
  while $\angles{f : \underline{\NN} \to \underline{2}}$
  classifies countable families of compositions
  into two clopen subtoposes.
  Now,
  we can consider the topological spaces
  $X_n \defeq \{0, \dots, n\}$
  (with the discrete topology)
  and $X_\omega \defeq \NN \cup \{\infty\}$,
  the one-point compactification of $\NN$,
  and the continuous maps
  \[ X_\omega \to X_n,\quad x \mapsto \min(x, n) \rlap{.} \]
  Then we see that every model of $\angles{f : \underline{2^\NN}}$
  in $X_\omega$
  is (isomorphic to)
  the pullback of a model in some $X_n$,
  because any open set of $X_\omega$ containing $\infty$ is cofinite,
  but this is not the case for
  $\angles{f : \underline{\NN} \to \underline{2}}$,
  because we can choose a family of partitions like
  $X_\omega = \{0, \dots, n\} \sqcup \{n+1, \dots, \infty\}$.

  In summary,
  we can record the following.
  \begin{enumerate}[label=(\roman*)]
    \item
      Adding a single unary function symbol
      (such as $f : \underline{\NN} \to \underline{2}$)
      to a theory of presheaf type
      (even a theory that is classified by $\Set$)
      can destroy presheaf type.

    \item
      Adding countably many constant symbols
      (such as $c_n : \underline{2}$)
      to a theory of presheaf type
      (even a theory that is classified by $\Set$)
      can destroy presheaf type.

    \item
      A propositional theory
      (such as the theory of
      a countable family of decidable propositions,
      with $p_n \lor q_n$ and $p_n \land q_n \turnstile{[]} \bot$)
      can fail to be of presheaf type.
      (There are of course many other examples of this.)

    \item
      Adding countably many closed geometric formulas
      (such as $p_n \lor q_n$ in the previous item,
      where all other axioms were negated ones,
      which can't destroy presheaf type)
      can destroy presheaf type.
  \end{enumerate}
\end{example}

\begin{example}
  \label{example-single-algebraic-axiom}
  Let us show that adding an algebraic axiom
  can destroy presheaf type.
  We do this by showing that
  adding an algebraic axiom is,
  up to Morita equivalence,
  at least as expressive as
  adding an arbitrary family
  $\phi_i$
  of closed geometric formulas
  as axioms.
  We can then for example use $\phi_n = (p_n \lor q_n)$
  to obtain the theory of countably many decidable propositions
  from a theory with only negated axioms,
  as in example \ref{example-theories-of-a-function}.

  So let $\TT$ be a theory
  with closed geometric formulas $\phi_i$, $i \in I$.
  Import the set $I \sqcup \{*\}$.
  We can define (by a geometric formula)
  an equivalence relation $\sim$
  on $\underline{I \sqcup \{*\}}$
  such that $c_i \sim c_* \doubleturnstile{[]} \phi_i$.
  Introduce the quotient
  $\underline{I \sqcup \{*\}} \twoheadrightarrow A$
  by $\sim$.
  These equivalence extensions allow us to formulate the algebraic axiom
  \[ \top \turnstile{x, y : A} x = y \rlap{,} \]
  which is syntactically equivalent (as a quotient extension) to
  $\top \turnstile{x : \underline{I \sqcup \{*\}}} x \sim c_*$,
  and therefore to the set of axioms
  $\{\, \top \turnstile{[]} \phi_i \mid i \in I \,\}$.
\end{example}

\begin{example}
  \label{example-sum-of-two-rigid-topology-quotients}
  Here we give an example of
  two rigid-topology quotients $\QQ_1$, $\QQ_2$
  of the same theory of presheaf type $\TT$
  such that $\QQ_1 + \QQ_2$ is not a rigid-topology quotient again
  and $\TT + \QQ_1 + \QQ_2$ is not even of presheaf type.
  Start with the theory $\TT$
  of a sequence of maps,
  infinite to the left:
  \[ \dots \xrightarrow{f_2} A_2 \xrightarrow{f_1} A_1
     \xrightarrow{f_0} A_0 . \]
  The compact $\Set$-models of $\TT$
  are those with finitely many elements in total.
  Consider the quotient extension $\QQ_1$
  requiring every second map
  to be surjective:
  \[ \top \turnstile{y : A_n} \ex{x}{A_{n+1}} f_n(x) = y
  \qquad\qquad \text{for $n$ even} . \]
  The topology on ${\mod{\TT}{\Set}_c}^\op$ induced by these axioms
  is rigid:
  Given $M = (\dots \to A_1 \to A_0) \in \mod{\TT}{\Set}_c$
  and an element $y \in A_n$ not in the image of $f_n$,
  the cosieve $S_y$
  is generated by the single model homomorphism
  \[ \begin{tikzcd}[row sep=small, column sep=small]
    \dots \ar[r] & A_{n+2} \ar[r] \ar[d] & A_{n+1} \ar[r] \ar[d] &
    A_n \ar[r] \ar[d]& \dots \ar[r] & A_0 \ar[d]
    \\
    \dots \ar[r] & A_{n+2} \ar[r] & A_{n+1} \sqcup \{x\} \ar[r]
    & A_n \ar[r] & \dots \ar[r] & A_0
    \rlap{,}
  \end{tikzcd} \]
  where we have freely added a preimage $x$ of $y$.
  After finitely many steps,
  we have covered $M$
  by a compact $\TT$-model that satisfies $\QQ_1$.

  However,
  denoting by $\QQ_2$ the quotient extension
  requiring $f_n$ to be surjective for $n$ odd,
  which is a rigid-topology quotient for the same reason,
  the theory
  \[ \TT' \defeq \TT + \QQ_1 + \QQ_2 \]
  is \emph{not} of presheaf type.
  To see this,
  we show that $\mod{\TT'}{\Set}$
  is not finitely accessible.
  In fact,
  we can show that
  the only compact object of $\mod{\TT'}{\Set}$
  is the empty model $(\dots \to \varnothing \to \varnothing)$,
  by considering the following sequence of inclusions.
  \[ \begin{tikzcd}[row sep=small, column sep=small]
    \dots \ar[r] & \{0\} \ar[r] & \{0\} \ar[r] &
    \{0\} \ar[r] & \{0\}
    &=& M^0 \ar[d, hook]
    \\
    \dots \ar[r] & \{0, 1\} \ar[r] & \{0, 1\} \ar[r] &
    \{0, 1\} \ar[r, "1 \mapsto 0"] & \{0\}
    &=& M^1 \ar[d, hook]
    \\
    \dots \ar[r] & \{0, 1, 2\} \ar[r] &
    \{0, 1, 2\} \ar[r, "2 \mapsto 1"] &
    \{0, 1\} \ar[r, "1 \mapsto 0"] & \{0\}
    &=& M^2 \ar[d, hook]
    \\
    &&\vdots&&&& \vdots
  \end{tikzcd} \]
  The colimit of this diagram
  is a model $M^\omega$ with $A^\omega_n = \{0, \dots, n\}$.
  If $M = (\dots \to A_1 \to A_0)$ is any nonempty model,
  then $A_n$ is nonempty for all $n$,
  so the model homomorphism $M \to M^\omega$
  sending any $x \in A_n$
  to $n \in A^\omega_n$
  does not factor through any of the models in the diagram,
  showing that $M$ is not compact.
\end{example}

Finally,
it seems appropriate to show that
the theory of local rings,
which played an important role in
Subsection \ref{subsection-gluing-Zar}
and will do so again in
Section \ref{section-crystalline},
is not of presheaf type.
Equivalently,
this shows that the big Zariski topos $(\Spec \ZZ)_\Zarfp$
is not equivalent to a presheaf topos.

\begin{proposition}
  The theory of local rings,
  $\Ring + (\loc)$
  (see Definition \ref{definition-theories-for-Zar}),
  is not of presheaf type.
\end{proposition}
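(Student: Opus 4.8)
The plan is to assume that $\Ring + (\loc)$ is of presheaf type and derive a contradiction, using that a theory of presheaf type is completely determined by its compact $\Set$-models together with the method of Examples \ref{example-add-many-constants-to-quotient} and \ref{example-theories-of-a-function}: I will exhibit a geometric sequent over the language of rings which is satisfied in every compact local ring but is not provable in $\Ring + (\loc)$. Since $\Ring$ is algebraic, it is of presheaf type, with compact models the finitely presented $\ZZ$-algebras (by Lemma \ref{lemma-compact-Horn-models}), and $(\loc)$ is a quotient extension of it. So the first step is to understand the topology $J_{(\loc)}$ that $(\loc)$ induces on ${\Ring\dashmod(\Set)_c}^\op = (\Ring_\fp)^\op$: as sketched after Definition \ref{definition-theories-for-Zar}, this is the Zariski topology, and by the description of the induced topology (the corollary to Theorem \ref{theorem-induced-topology}) its irreducible objects are precisely the finitely presented rings that are local.

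Next I would record two commutative-algebra inputs. First, a ring that is both finitely presented over $\ZZ$ and local is finite: its residue field is a finitely generated $\ZZ$-algebra which is a field, hence finite by the Nullstellensatz, and the ring itself is then built from finitely many finite-dimensional vector spaces over this finite field; and a finite local ring automatically has prime-power characteristic, since a local ring is connected and cannot split as $L/aL \times L/bL$ with $a, b$ coprime. Second, $\ZZ$ is a finitely presented ring whose spectrum cannot be covered, in the Zariski topology of $(\Ring_\fp)^\op$, by finite rings, because the generic point of $\Spec\ZZ$ does not lie in the image of $\Spec B \to \Spec\ZZ$ for any finite ring $B$ (a finite ring cannot contain a copy of $\ZZ$). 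Combining these, the $J_{(\loc)}$-irreducible objects are not dense, so $J_{(\loc)}$ is not rigid, i.e. $(\loc)$ is not a rigid-topology quotient of $\Ring$; by Lemma \ref{lemma-rigid-topology-quotients-compact-models}, the presheaf-type assumption then forces the existence of a finitely presentable local ring that is not compact as a ring, i.e. not finitely presented over $\ZZ$.

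Ruling this out is the heart of the matter, and the step I expect to be the main obstacle: I would need a structural description of the finitely presentable objects of the category of local rings (with arbitrary ring homomorphisms)—the hard direction being that a compact object there is already finitely presented as a $\ZZ$-algebra—or at least the weaker statement that every compact local ring has prime-power characteristic; this is subtle precisely because homomorphisms between local rings are very rigid, which makes the filtered-colimit presentations one would naively write down collapse. Granting it, every compact local ring satisfies the geometric sentence $\psi := \bigvee_{p\text{ prime}}\bigvee_{k \geq 1}\bigl(\underbrace{1 + \dots + 1}_{p^k} = 0\bigr)$, so under the presheaf-type assumption the universal model is the tautological model on $\Ring\dashmod(\Set)_c$ and hence satisfies $\top \turnstile{[]} \psi$, making this sequent provable in $\Ring + (\loc)$; but the ring $\QQ$ is a local ring of characteristic $0$ not satisfying $\psi$, a contradiction.
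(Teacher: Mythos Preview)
Your identified obstacle is not merely hard but insurmountable: the claim that every compact object of $(\Ring+(\loc))\dashmod(\Set)$ has prime-power characteristic is \emph{false}. The localization $\ZZ_{(p)}$ is a compact local ring of characteristic~$0$. To see compactness, note that $\Hom(\ZZ_{(p)},A)$ is either empty or a singleton, namely $\{*\}$ precisely when every prime $q\neq p$ is invertible in $A$. Given a filtered diagram $(A_i)$ of local rings with colimit $A$ a $\ZZ_{(p)}$-algebra, each $A_i$ has at most one non-invertible prime $r_i$; if no $A_i$ is a $\QQ$-algebra then the $r_i$ are all equal (ring maps preserve units and the diagram is connected), and this common $r$ remains non-invertible in $A$, forcing $r=p$, so every $A_i$ was already a $\ZZ_{(p)}$-algebra. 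Thus $\Hom(\ZZ_{(p)},\hole)$ commutes with filtered colimits. Since $\ZZ_{(p)}$ does not satisfy your sentence $\psi$, the argument collapses: there is no reason for $\psi$ to hold in the universal model. (Your intuition that rigidity of maps between local rings makes this step delicate is exactly right --- that rigidity is what makes $\Hom(\ZZ_{(p)},\hole)$ subterminal and hence makes $\ZZ_{(p)}$ compact despite not being finitely presented over $\ZZ$.)

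The paper's proof, interestingly, pivots on the very same objects $\ZZ_{(p)}$ but exploits them differently. Rather than seeking a single geometric sentence true in all compact models, it considers the subterminal objects $U_p=\Hom(\ZZ_{(p)},\hole)$ of $[\mathrm{locRing}_c,\Set]$, observes $\bigvee_p U_p=1$ since every local ring is a $\ZZ_{(p)}$-algebra for some $p$, and uses definability of subobjects in the universal model to get closed formulas $\phi_p$ with $\interpretation{\phi_p}=U_p$. Then $\bigvee_p\phi_p$ is provable, and --- this is the key extra ingredient your approach lacks --- since $\Ring+(\loc)$ is a \emph{coherent} theory, a finite subdisjunction $\bigvee_{p\in P}\phi_p$ must already be provable, which $\ZZ_{(p')}$ for $p'\notin P$ falsifies. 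The coherence step is what lets the paper avoid needing any structural control over arbitrary compact local rings.
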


\begin{proof}
  The argument given here resembles
  the one given in \cite[Section 9.4]{caramello:tst}
  showing that the usual geometric theory of fields
  is not of presheaf type.
  Assume that the theory of local rings
  is of presheaf type,
  that is,
  it is classified by $[\mathrm{locRing}_c, \Set]$
  with the tautological local ring as universal model.
  We aim to reveal a subterminal object of $[\mathrm{locRing}_c, \Set]$
  which is not definable by a closed geometric formula.
  Note that every local ring $A$
  is a $\ZZ_{(p)}$-algebra for some prime number $p$,
  since at most one prime number can be non-invertible
  as an element of $A$.
  This means that the join of
  the subterminal objects
  \[ U_p \defeq \Hom(\ZZ_{(p)}, \hole) :
     \mathrm{locRing}_c \to \Set \]
  is $\bigvee_p U_p = 1$.
  Now, if there were geometric formulas $\phi_p$
  such that $\interpretation{\phi_p} = U_p$
  (i.e. $\phi_p$ expresses that
  all prime numbers except $p$ are invertible)
  for all $p$,
  then the disjunction $\bigvee_p \phi_p$,
  being valid in the universal model,
  would be provable in the theory of local rings.
  But since this is a coherent theory,
  a finite disjunction $\bigvee_{p \in P} \phi_p$
  with $P$ a finite set of primes
  would also have to be provable
  by \cite[Theorem 10.8.6 (iii)]{caramello:tst}.
  This is not the case,
  as $\bigvee_{p \in P} \phi_p$
  is falsified by the local ring $\ZZ_{(p')}$
  for any $p' \notin P$.
\end{proof}

\newpage

\section{Syntactic presentations for crystalline toposes}
\label{section-crystalline}

\subsection{Introduction}

In this section,
we determine a syntactic presentation
of the big crystalline toposes
used for studying crystalline cohomology in algebraic geometry.
This fulfills a promise made by Wraith
in 1979
in \cite[p.\ 743]{wraith},
where he writes:
\enquote{%
It is my belief that
a great many of the toposes occurring in algebraic geometry
can be conveniently described in terms of the theories they classify.
This is certainly so in the case of étale and crystalline toposes,
for example.}
This optimistic statement is however not accompanied by any hint
of what the classified theory might be,
and no answer has been given since.

The reason why the question is not as simple
as for the big Zariski topos,
for which the answer has been known
since the infancy of the field of classifying toposes,
is that much more data is involved in its construction.
To wit,
the crystalline topos depends on two schemes,
a scheme morphism between them,
and a certain structure known as a divided powers structure,
or PD structure on one of them.

In the affine case,
a PD structure on a ring $A$
of positive characteristic $p$
assigns to an element $a \in A$
another element of $A$,
acting as a replacement for the expression
\[ \frac{1}{n} a^n \rlap{,} \]
which can never be taken literally
when $p$ divides $n$.
The intuition why such a structure might be useful
for cohomological techniques
is that
in positive characteristic $p$,
taking the derivative of a polynomial,
or a power series,
\[ t^n \mapsto n t^{n - 1} \rlap{,} \]
is never a surjective map,
as there is no integral of $t^{p-1}$,
and this leads to a failure of the Poincaré Lemma.
But if divided powers of $t$ are available,
then the integral does exist.

For us, this means that the syntactic presentation
of the crystalline topos
will involve two rings and a geometric theory formulation
of a PD structure.
To handle this somewhat complex theory,
we make heavy use of the techniques around theories of presheaf type
developed in Section \ref{section-presheaf-type}.

\subsection{Background on divided power rings}

Let us collect a number of
properties and constructions of divided power rings
that will be relevant.
A \emph{PD structure}
or \emph{divided powers structure}
on an ideal $I$ of a ring $A$
(always commutative and unitary)
is a family of functions
\[ \gamma_n : I \to I, \quad n \geq 1 \]
such that $x^n = n! \gamma_n(x)$
for every $x \in I$,
compatible with natural operations on the ideal $I$
and with each other,
as specified in
Definition \ref{definition-theories-with-ideals} below.
For example,
the zero ideal $(0)$ of any ring
carries the trivial PD structure $\gamma_n(0) = 0$,
while any ideal of a $\QQ$-algebra $A$
admits a unique PD structure given by
$\gamma_n(a) = \frac{a^n}{n!}$.
Slightly more interestingly,
the maximal ideal $(p)$ of $\ZZ_{(p)}$
admits a unique PD structure,
as the prime factor $p$ occurs
at most $n - 1$ times in $n!$,
so $\frac{p^n}{n!} \in (p) \subseteq \ZZ_{(p)}$
for all $n \geq 1$.

An ideal equipped with a PD structure is called a \emph{PD ideal}
and a ring equipped with a PD ideal
(that is,
a model in $\Set$ of the theory $\Ring + \Ideal + \PD$,
see Definition \ref{definition-theories-with-ideals} below)
is a \emph{PD ring}.
This also defines homomorphisms of PD rings.
We sometimes write a PD ring $(A, I, \gamma)$
as $(\gamma \curvearrowright I \triangleleft A)$.

Not every ideal $J \subseteq I$
contained in the PD ideal of a PD ring $(A, I, \gamma)$
is closed under the $\gamma_n$.
But we can always close it up in one step,
that is,
the ideal generated by $\gamma_n(a)$
for $n \geq 1$ and $a \in J$
is a sub-PD-ideal of $I$,
and it actually suffices to take $\gamma_n(a)$
for $a$ in any generating set of the ideal $J$.
More generally,
for any ideal $J$ of $A$,
we let $\overline{J}$ be the ideal generated by $J$
and $\{\, \gamma_n(a) \mid n \geq 1, a \in J \cap I \,\}$
and call it the \emph{PD saturation} of the ideal $J$
(in the PD ring $(A, I, \gamma)$).
There is an induced PD structure
on the ideal $I / (J \cap I)$
of the ring $A/J$
if and only if
$J$ is PD saturated ($J = \overline{J}$)
\citestacks{07H2},
and this constitutes a bijective correspondence
between PD saturated ideals of $A$
and isomorphism classes of
maps of PD rings $(A, I, \gamma) \to (A', I', \gamma')$
such that both $A \to A'$ and $I \to I'$ are surjective.

For a PD ring $(A, I, \gamma)$
and an element $a \in A$,
there is a unique PD structure $\gamma_a$
on the ideal $I_a$ of $A_a = A[a^{-1}]$
such that $A \to A_a$ is a homomorphism of PD rings,
namely $\gamma_{a, n}(\frac{a'}{a^k}) = \frac{\gamma_n(a')}{a^{n k}}$.
The new PD ring
has the expected universal property:
a homomorphism
$(\gamma_a \curvearrowright I_a \triangleleft A_a) \to
(\gamma' \curvearrowright I' \triangleleft A')$
is the same as one
$(\gamma \curvearrowright I \triangleleft A) \to
(\gamma' \curvearrowright I' \triangleleft A')$
that makes $a$ invertible in $A'$.

The \emph{PD envelope} construction
\citestacks{07H8}
is a left adjoint functor for the forgetful functor
\[ (\Ring + \Ideal+ \PD)\dashmod(\Set) \to
   (\Ring + \Ideal)\dashmod(\Set)
   \rlap{,} \]
and more generally,
for a fixed PD ring
$(K, I_K, \gamma_K)$,
a left adjoint for the forgetful functor
\[ \begin{tikzcd}[row sep=small]
  (\Alg{K} + \Ideal_{I_K} + \PD_{\gamma_K})\dashmod(\Set)
  \ar[d, equal] \ar[r] &
  (\Alg{K} + \Ideal_{I_K})\dashmod(\Set)
  \ar[d, equal] \\
  (K, I_K, \gamma_K) /
  (\Ring + \Ideal + \PD)\dashmod(\Set) &
  (K, I_K) / (\Ring + \Ideal)\dashmod(\Set)
  \rlap{.}
\end{tikzcd} \]
Forming the PD envelope $(A', I', \gamma')$ of $(A, I)$
(possibly over some $(K, I_K, \gamma_K)$)
not only \enquote{enlarges} the ideal $I$ as necessary
but also the ring $A$.
However,
there is an isomorphism
$A/I \cong A'/I'$,
induced by the unit of the adjunction
$\eta_{(A, I)} : (A, I) \to (A', I')$.

The polynomial ring $A[X]$
over a PD ring $A = (A, I, \gamma)$
inherits a PD structure on the ideal $I[X]$
(generated by the elements of $I$),
such that PD maps $A[X] \to A'$
over $(A, I, \gamma)$
correspond to elements of $A'$.
The same is true for an arbitrary set of polynomial variables
instead of a single one.
But there is a separate notion of \emph{PD polynomial algebra}
\citestacks{07H4},
denoted $A\angles{X}$,
which is freely generated as an $A$-module
by $1$ and the divided powers $\gamma_n(X)$ of $X$
(instead of the ordinary powers $X^n$).
The PD ideal of $A\angles{X}$
is generated by the elements of $I$ and the $\gamma_n(X)$,
and PD maps $A\angles{X} \to A'$ over $(A, I, \gamma)$
correspond to elements of the PD ideal $I'$ of $A'$.
Equivalently,
$A\angles{X}$ is the PD envelope of
the ideal $I[X] + (X)$ in $A[X]$
over $(A, I, \gamma)$.
Again, an arbitrary set of variables works just as well.

\subsection{Relevant geometric theories}

Here,
we define the various theories and theory extensions
from which the syntactic presentation of the crystalline topos
will be built.
We do this right away,
since it will also be of some use
in the definition of the crystalline topos itself.
In particular,
by giving our formal definition of a PD structure
in the form of an extension of geometric theories,
it can immediately be used to define PD structures on schemes
as well.

\begin{definition}
  \label{definition-theories-with-ideals}
  \leavevmode
  \begin{enumerate}[label=(\roman*)]
    \item
      We denote $\Ideal$ the extension of the theory $\Ring$
      (see Definition \ref{definition-theories-for-Zar})
      consisting of a relation symbol $I \subseteq A$
      with these axioms:
      \[ 0 \in I ,\qquad
         x \in I \land y \in I
         \turnstile{x, y : A} x + y \in I ,\qquad
         x \in I \turnstile{\lambda, x : A} \lambda x \in I
         \rlap{.}
      \]

    \item
      In the theory $\Ring + \Ideal$,
      we can require the ideal to be a nil ideal
      by adding the axiom:
      \[ (\nil) \defeq \qquad
         x \in I \turnstile{x : A} \bigvee_{n \in \NN} (x^n = 0)
         \rlap{.} \]

    \item
      We denote $\PDIdeal$ the following extension
      of the theory $\Ring$.
      First introduce a sort $S_I$,
      a function symbol $\iota : S_I \to A$
      with
      \[ \iota(x) = \iota(y) \turnstile{x, y : S_I} x = y \]
      and function symbols
      $0 : S_I$, ${+} : S_I \times S_I \to S_I$
      and ${\cdot} : A \times S_I \to S_I$
      together with equational axioms stating that
      $\iota$ is an $A$-module homomorphism.
      Then add function symbols $\gamma_n : S_I \to S_I$
      for all $n \geq 1$
      and these equational axioms
      (see \citestacks{07GL}):
      \begin{itemize}[nosep]
        \item
          $\gamma_1(x) = x$
        \item
          $\gamma_n(x + y) =
          \gamma_n(x) + \gamma_n(y)
          + \sum\limits_{i + j = n,\; i, j \geq 1}
            \iota(\gamma_i(x)) \gamma_j(y)$
        \item
          $\gamma_n(\lambda x) = \lambda^n \gamma_n(x)$
        \item
          $\iota(\gamma_m(x)) \gamma_n(x) =
          \binom{m + n}{m} \gamma_{m + n}(x)$
        \item
          $\gamma_m(\gamma_n(x)) =
          \frac{(m n)!}{m! (n!)^m} \gamma_{m n}(x)$
      \end{itemize}

    \item
      To be able to talk about a PD structure
      on an existing ideal,
      we define the extension $\PD$ of $\Ring + \Ideal$
      to be $\PDIdeal$ plus the axiom
      \[ \ex{x}{S_I} \iota(x) = y
         \doubleturnstile{y : A}
         y \in I
         \rlap{.} \]
  \end{enumerate}
\end{definition}

Note that $\Ideal + \PD$ and $\PDIdeal$
are Morita equivalent extensions of the theory $\Ring$,
since the axioms in $\Ideal$ are redundant here,
and the relation symbol $I$ is definable by the formula
$\ex{x}{S_I} \iota(x) = y$.
We prefer to write $\Ring + \Ideal + \PD$
because we think of the divided powers
as an additional structure on an ideal.
Also note that $\PD$ is equivalent to
a localic extension of $\Ring + \Ideal$,
as the divided power structure
could alternatively be implemented as relation symbols
$\widetilde{\gamma}_n \subseteq A \times A$
expressing partial functions from $A$ to $A$.
The main purpose of the sort $S_I$ above
is to be able to write the axioms in a more readable equational style.

\begin{definition}
  \label{definition-extensions-of-K-Alg}
  \leavevmode
  \begin{enumerate}[label=(\roman*)]
    \item
      Similarly to the extension $\AlgStr{K}$ of $\Ring$
      from Definition \ref{definition-theories-for-Zar},
      for a $K$-algebra $R$
      we denote $\AlgStr{R}_K$ the extension of $\Alg{K}$
      adding an $R$-algebra structure
      compatible with the given $K$-algebra structure.
      If necessary,
      we write $\AlgStr{R}_K(A)$ to indicate the sort $A$
      to which the $R$-algebra structure is added.

    \item
      For a ring $K$ and a $K$-algebra $R$,
      we denote $\AlgAlg{K}{R}$ the theory $\Alg{K} + \Alg{R}$
      extended by a function symbol $f : A \to B$
      (where $A$ is the $K$-algebra and $B$ is the $R$-algebra)
      and equational axioms
      expressing that $f$ is a $K$-algebra homomorphism.
      A model can be pictured as
      \[ \begin{tikzcd}
        K \ar[r] \ar[d] & R \ar[d] \\
        A \ar[r, "f"] & B
        \rlap{.}
      \end{tikzcd} \]
      When we treat $\AlgAlg{K}{R}$
      as if it contained $\Alg{K} + \Ideal$,
      we intend to use the kernel of $f$ as the ideal,
      that is, $x \in I$ is defined as $f(x) = 0$.

    \item
      We set $\AlgQuot{K}{R} \defeq \AlgAlg{K}{R} + (\surj)$,
      where
      \[ (\surj) \defeq \qquad
         \top \turnstile{y : B} \ex{x}{A} f(x) = y
         \rlap{.} \]

    \item
      For a ring $K$ and an ideal $I_K \subseteq K$,
      we denote $\Ideal_{I_K}$ the extension of $\Alg{K}$
      consisting of the extension $\Ideal$ and the additional axioms
      $c_\lambda \in I$ for all $\lambda \in I_K$.

    \item
      For a PD ring $(K, I_K, \gamma_K)$,
      we denote $\PD_{\gamma_K}$
      the extension of $\Alg{K} + \Ideal_{I_K}$
      consisting of the extension $\PD$
      and the additional axioms
      (for every $\lambda \in I_K$)
      \[ \iota(x) = c_\lambda \turnstile{x : S_I}
         \iota(\gamma_n(x)) = c_{\gamma_{K, n}(\lambda)}
         \rlap{.} \]
      We can write $\AlgAlg{K}{R} + \PD_{\gamma_K}$
      if $I_K$ vanishes in $R$,
      since $\AlgAlg{K}{R}$ then proves
      $f(c_\lambda) = 0$ for $\lambda \in I_K$.
  \end{enumerate}
\end{definition}

Similarly to the economical version of the theory $\Alg{K}$
mentioned in Remark \ref{remark-economical-formulation-of-Alg},
we can use a presentation
$R = K[X_i]/(r_j)$ of $R$ as a $K$-algebra
to implement the $R$-algebra structure of $B$
in the theory $\AlgAlg{K}{R}$.
That is,
$\AlgAlg{K}{R}$ is equivalent to
the extension of $\AlgAlg{K}{K}$
where we only add constant symbols $c_i : B$
and axioms $r_j(c_i) = 0$,
using the available $K$-algebra structure
to interpret $r_j(c_i)$ as a term of sort $B$.
This will be used in Propositions
\ref{proposition-PD-presheaf-type-finitely-presented}
and
\ref{proposition-pd-thickenings-presheaf-type}.

\begin{remark}
  The theories
  $\Alg{K}$, $\Alg{K} + \Ideal_{I_K}$ and
  $\Alg{K} + \Ideal_{I_K} + \PD_{\gamma_K}$
  have in common that their categories of $\Set$-models
  are equivalent to certain slice categories.
  \begin{align*}
    \Alg{K}\dashmod(\Set) &\simeq
    K / (\Ring\dashmod(\Set)) \\
    (\Alg{K} + \Ideal_{I_K})\dashmod(\Set) &\simeq
    (K, I_K) / ((\Ring + \Ideal)\dashmod(\Set)) \\
    (\Alg{K} + \Ideal_{I_K} + \PD_{\gamma_K})\dashmod(\Set) &\simeq
    (K, I_K, \gamma_K) / ((\Ring + \Ideal + \PD)\dashmod(\Set))
  \end{align*}
  We even have
  \[ (\AlgAlg{K}{R})\dashmod(\Set) \simeq
     (K \to R) / (\Ring^{\to}\dashmod(\Set)) \]
  if we use a theory $\Ring^{\to}$ of ring homomorphisms.
  We could in fact define these theories
  by a general construction
  that turns a geometric theory $\TT$
  and a $\Set$-model $M$ of $\TT$
  into a theory of \enquote{$M$-algebras}.
  This is the same construction that is used by Blechschmidt
  in \cite[Definition 3.1]{blechschmidt:nullstellensatz},
  who however applies it
  internally in the classifying topos of $\TT$,
  to the universal model of $\TT$ instead of a $\Set$-based model.
  But for our present goals,
  we need to be able to combine the above theories and extensions
  in flexible ways,
  so that such a formulation does not help us much.
\end{remark}

\subsection{Definition of the big crystalline topos}

Before we can define the crystalline topos,
we need to introduce a number of notions
involving PD structures on schemes.
Recall that a closed embedding $U \to T$
defined by a quasi-coherent ideal sheaf
$\mathcal{I} \subseteq \mathcal{O}_T$
is a \emph{thickening}
(in the terminology of \citestacks{04EX}),
that is, $U \to T$ is a homeomorphism,
if and only if every local section of $\mathcal{I}$
is locally nilpotent,
which in turn means simply that
the internal ideal $\mathcal{I}$ of the internal ring $\mathcal{O}_T$
in $\Sh(T)$
satisfies the axiom $(\nil)$.

\begin{definition}[see \citestacks{07I1}]
  \leavevmode
  \begin{enumerate}[label=(\roman*)]
    \item
      A \emph{PD scheme} is a scheme $S$
      together with a quasi-coherent ideal sheaf
      $\mathcal{I} \subseteq \mathcal{O}_S$
      and a PD structure on $\mathcal{I}$,
      that is,
      a model extension along $\PD$
      of the $(\Ring + \Ideal)$-model
      $(\mathcal{I} \triangleleft \mathcal{O}_S)$
      in $\Sh(S)$.
      A \emph{morphism} of PD schemes
      $(S, \mathcal{I}, \gamma) \to (S', \mathcal{I}', \gamma')$
      is a morphism of schemes $f : S \to S'$
      such that $f^\sharp : f^{-1}\mathcal{O}_{S'} \to \mathcal{O}_S$
      is a $(\Ring + \Ideal + \PD)$-model homomorphism.
      In particular,
      it induces a morphism
      between the closed subschemes of $S$ and $S'$
      defined by $\mathcal{I}$ respectively $\mathcal{I}'$,
      \[ \begin{tikzcd}
        S' & V(\mathcal{I}') \ar[l, hook] \\
        S \ar[u] & V(\mathcal{I}) \ar[l, hook] \ar[u]
        \rlap{.}
      \end{tikzcd} \]

    \item
      A \emph{PD thickening}
      is a PD scheme $(T, \mathcal{I}, \gamma)$
      such that the closed embedding $V(\mathcal{I}) \to T$
      is a thickening,
      that is,
      the $(\Ring + \Ideal + \PD)$-model
      $(\mathcal{O}_T, \mathcal{I}, \gamma)$
      in $\Sh(T)$
      fulfills the axiom $(\nil)$.
      PD thickenings are sometimes denoted $(U, T, \gamma)$,
      where $U = V(\mathcal{I})$.

    \item
      If $(S, \mathcal{I}_S, \gamma_S)$ is a PD scheme
      and $X$ is a scheme over
      the closed subscheme $S_0 \defeq V(\mathcal{I}_S)$ of $S$,
      then a PD thickening \emph{over} $S$ \emph{and} $X$
      is a PD thickening $(T, \mathcal{I}, \gamma)$
      over $(S, \mathcal{I}_S, \gamma_S)$
      together with a morphism of schemes $V(\mathcal{I}) \to X$ over $S$.
      \[ \begin{tikzcd}[column sep=small]
        S & S_0 \ar[l, hook] & X \ar[l] \\
        T \ar[u] & & V(\mathcal{I}) \ar[ll, hook] \ar[u]
      \end{tikzcd} \]
      A \emph{morphism} of PD thickenings over $S$ and $X$
      is a morphism of PD schemes over $S$
      such that the induced morphism $V(\mathcal{I}) \to V(\mathcal{I}')$
      respects the structure morphisms to $X$.
  \end{enumerate}
\end{definition}


The PD thickenings over $S$ and $X$
will be the objects of the crystalline site of $X/S$.
Most important to us is that
if $S$ and $X$ are affine
and we also require $T$ to be affine,
then we can nicely describe the resulting category
by a geometric theory.

\begin{lemma}
  \label{lemma-affine-PD-objects-are-models}
  The full subcategory of the category of PD thickenings
  where the underlying scheme $T$ is affine
  is equivalent to
  \[ \mod{(\Ring + \Ideal + \PD + (\nil))}{\Set}^\op \rlap{.} \]

  For a fixed PD ring $(K, I_K, \gamma_K)$
  and a $K/I_K$-algebra $R$,
  the full subcategory of the category of PD thickenings
  over $S = \Spec(K)$ and $X = \Spec(R)$
  where the underlying scheme $T$ is affine
  is equivalent to
  \[ \mod{(\AlgQuot{K}{R} + \PD_{\gamma_K} + (\nil))}{\Set}^\op
     \rlap{.} \]
\end{lemma}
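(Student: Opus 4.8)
The plan is to reduce both assertions to commutative algebra over affine schemes, using the anti-equivalence between rings and affine schemes furnished by $\Spec$, and to unwind what each piece of the structure of a PD thickening amounts to once the underlying scheme $T$ is required to be affine.

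For the first statement, let $(T, \mathcal{I}, \gamma)$ be a PD thickening with $T = \Spec A$. Since $\mathcal{I}$ is a quasi-coherent ideal sheaf, it corresponds functorially to an ideal $I \subseteq A$ with $\mathcal{I} = \widetilde{I}$. A PD structure on $\mathcal{I}$ — by definition a model extension along $\PD$ of the $(\Ring + \Ideal)$-model $(\widetilde{I} \triangleleft \mathcal{O}_T)$ in $\Sh(T)$, i.e.\ a compatible family of sheaf maps $\gamma_n : \widetilde{I} \to \widetilde{I}$ satisfying the equational axioms — corresponds bijectively to an ordinary ring-theoretic PD structure on $I$; this is the content of \citestacks{07GL}, together with the fact that a PD structure on $I$ extends uniquely along each localization $A \to A_a$, so that the $\gamma_n$ on $I$ glue to sheaf maps on $\widetilde{I}$ and are recovered from them by taking global sections. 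The thickening condition $V(\mathcal{I}) \to T$ — equivalently, as recorded before Definition~\ref{definition-theories-with-ideals}, the internal model $(\mathcal{O}_T, \mathcal{I}, \gamma)$ satisfying $(\nil)$ — translates in the affine case to $I \subseteq \sqrt{0}$, i.e.\ to the $\Set$-model $(\gamma \curvearrowright I \triangleleft A)$ satisfying $(\nil)$. Hence the data of an affine PD thickening is exactly a $\Set$-model of $\Ring + \Ideal + \PD + (\nil)$ (with the auxiliary sort $S_I$ identified with $I$ via the monomorphism $\iota$, as forced by the axioms of $\PD$). On morphisms: a morphism of PD thickenings between affine ones is a scheme morphism $T \to T'$ whose $f^\sharp$ is a $(\Ring + \Ideal + \PD)$-model homomorphism, which under $\Spec$ is precisely a homomorphism of the associated $\Set$-models in the opposite direction; this explains the passage to the opposite category, and the equivalence is then assembled from the anti-equivalence $\Spec$ (essential surjectivity being clear, full faithfulness being that anti-equivalence).

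For the second statement, fix a PD ring $(K, I_K, \gamma_K)$ and a $K/I_K$-algebra $R$, so that $S = \Spec K$, $S_0 = \Spec(K/I_K)$ and $X = \Spec R$ with its given map to $S_0$. An affine PD thickening over $S$ and $X$ is then: an affine PD thickening $(T = \Spec A, I, \gamma)$ as above; a morphism of PD schemes $T \to S$, equivalently a $K$-algebra structure on $A$ with $c_\lambda \in I$ for $\lambda \in I_K$ and $\gamma$ compatible with $\gamma_K$, i.e.\ a model extension along $\Ideal_{I_K} + \PD_{\gamma_K}$; and a morphism $V(\mathcal{I}) = \Spec(A/I) \to X$ over $S$, i.e.\ a $K$-algebra homomorphism $R \to A/I$. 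Writing $B \defeq A/I$ with this $R$-algebra structure and $f : A \twoheadrightarrow B$ the quotient map, this data is exactly a $\Set$-model of $\AlgAlg{K}{R} + (\surj) = \AlgQuot{K}{R}$ whose ideal $\ker f = I$ carries a PD structure over $\gamma_K$ and is nil — that is, a $\Set$-model of $\AlgQuot{K}{R} + \PD_{\gamma_K} + (\nil)$, using the conventions of Definition~\ref{definition-extensions-of-K-Alg} under which $\AlgAlg{K}{R}$ is treated as containing $\Alg{K} + \Ideal_{I_K}$ via $\ker f$ (and $f$ a $K$-algebra homomorphism encodes exactly the compatibility of the two $K/I_K$-algebra structures on $A/I$, i.e.\ that $V(\mathcal{I}) \to X$ lies over $S$). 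Conversely, every such model yields an affine PD thickening over $S$ and $X$ by applying $\Spec$ and taking $A \to A/\ker f$. Morphisms correspond as in the first part, now additionally respecting the $R$-algebra parts, so the equivalence again follows from $\Spec$.

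The only step that is not purely formal is the dictionary used in the first part between a PD structure on the quasi-coherent ideal sheaf $\widetilde{I}$ on $\Spec A$ and an ordinary PD structure on $I$, together with its naturality in $A$. The subtlety is that each $\gamma_n$ is a morphism of sheaves of sets but not of $\mathcal{O}_T$-modules (it is not additive), so the standard quasi-coherence correspondence does not apply to it directly; what makes the identification and the sheaf-gluing work is precisely the uniqueness of the extension of a ring-theoretic PD structure along localizations from \citestacks{07GL}, which also yields the internal reformulation of the thickening condition used above. Once this dictionary and its compatibility with ring homomorphisms are in place, both equivalences are mechanical consequences of the anti-equivalence $\Spec$ and the definitions of the theories in Definitions~\ref{definition-theories-with-ideals} and~\ref{definition-extensions-of-K-Alg}.
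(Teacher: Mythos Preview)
Your proof is correct and follows essentially the same route as the paper: reduce to the $\Spec$ anti-equivalence, identify quasi-coherent ideal sheaves with ideals, match the thickening condition with $(\nil)$, and invoke the dictionary between PD structures on $I$ and on $\widetilde{I}$. The paper's own proof is terser---it cites \cite[p.~3.18]{berthelot-ogus} for the PD structure correspondence where you argue via unique extension along localizations, and for the second part it merely remarks that $\PD_{\gamma_K}$ is well-defined because $I_K$ vanishes in $R$ and then defers to the first part---but the substance is the same, and your more explicit unpacking of the relative case and your remark on the non-additivity of $\gamma_n$ are welcome clarifications rather than a different strategy.
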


\begin{proof}
  The category
  $\mod{(\Ring + \Ideal)}{\Set}$
  is of course dual to
  the category of affine schemes
  equipped with a quasi-coherent ideal sheaf,
  via $(I \triangleleft A) \mapsto (\Spec A, \tilde{I})$.
  One can check that the closed embedding
  $\Spec (A/I) \to \Spec A$
  is a thickening if and only if every element of $I$ is nilpotent.
  Similarly,
  a PD structure on $I$
  uniquely determines a PD structure on $\tilde{I}$,
  and morphisms respecting one
  correspond to morphisms respecting the other
  \cite[p. 3.18]{berthelot-ogus}.

  For the second part,
  the theory extension $\PD_{\gamma_K}$
  (requiring a PD structure on the kernel of $f : A \to B$
  extending $\gamma_K$)
  makes sense because the PD ideal $I_K$ vanishes in $R$,
  so the theory $\AlgQuot{K}{R}$ proves $f(c_\lambda) = 0$
  for all $\lambda \in I_K$.
  The rest follows from the first part.
\end{proof}

If $(T, \mathcal{I}, \gamma)$ is a PD thickening
and $T' \subseteq T$ is an open subscheme,
then $(T', \mathcal{I}|_{T'}, \gamma|_{T'})$
is again a PD thickening
(as restricting to $T'$ is pulling back along
the geometric morphism
$\Sh(T') \to \Sh(T)$)
and there is a canonical inclusion morphism
$(T', \mathcal{I}|_{T'}, \gamma|_{T'}) \to (T, \mathcal{I}, \gamma)$.
This allows us to define a Zariski topology
on the category of PD thickenings over $S$ and $X$.

\begin{definition}
  Let $(S, \mathcal{I}_S, \gamma_S)$ be a PD scheme
  and let $X$ be a scheme over $S_0 \defeq V(\mathcal{I}_S)$.
  The \emph{big crystalline site} $\Cris(X/S)$
  is the category of PD thickenings over $S$ and $X$,
  endowed with the Zariski topology,
  that is,
  a sieve on $(T, \mathcal{I}, \gamma)$ is covering
  if and only if
  it contains all the arrows
  $(T_i, \mathcal{I}|_{T_i}, \gamma|_{T_i}) \to (T, \mathcal{I}, \gamma)$
  for some open cover $T = \bigcup_i T_i$ of $T$.
\end{definition}

If we want to consider the topos $\Sh(\Cris(X/S))$
of sheaves on the crystalline site,
there remains an issue of size.
There is no way to talk about the collection of presheaves
on a big category such as $\Cris(X/S)$
(or generally,
about the collection of functions
between two proper classes)
in ZFC set theory.
If the category in question is essentially small,
then one can circumvent this
by choosing a skeleton.
More generally,
if $J$ is a Grothendieck topology
on a category $\mathcal{C}$
such that there is a (small) dense set $S \subseteq \mathcal{C}$
of objects of $\mathcal{C}$,
then one can write
\[ \Sh(\mathcal{C}, J) \defeq \Sh(S, J|_S) \]
(where $J|_S$ is the greatest topology on $S$
such that $S \hookrightarrow \mathcal{C}$ preserves covers).
This is justified because
if $S' \subseteq \mathcal{C}$ is another dense set of objects,
then by the Comparison Lemma,
there are canonical equivalences
\[ \Sh(S, J|_S) \simeq \Sh(S \cup S', J|_{S \cup S'})
   \simeq \Sh(S', J|_{S'})
   \rlap{.} \]

However,
the site $\Cris(X/S)$ does not admit a dense set of objects.
To define a crystalline topos,
a choice of some class $\mathcal{C} \subseteq \Cris(X/S)$
which does admit a dense subset
is therefore necessary.
We leave this choice
as an explicit parameter to the definition for now,
but we will specialize to a particular class $\mathcal{C}$
in Definition \ref{definition-Cris-fp}.

To avoid an ambiguity in the topology
on the resulting site,
we require $\mathcal{C}$ to be closed under
taking open subschemes $T' \subseteq T$
(with the induced PD structure on $T'$).
While there is always the induced topology $J_\Zar|_\mathcal{C}$,
for which a sieve on $T \in \mathcal{C}$ is covering
if and only if
it generates a covering sieve in $\Cris(X/S)$,
if $\mathcal{C}$ does not contain all open subschemes of $T$,
then there is no guarantee that such a sieve
actually contains a \enquote{distinguished} Zariski cover
by open subschemes.
With this assumption,
the two conditions become equivalent.

\begin{definition}
  Let $S$, $X$ be as before and
  let $\mathcal{C} \subseteq \Cris(X/S)$
  be a class of objects
  admitting a dense subset
  and closed under taking open subschemes.
  Then we denote $\Cris_\mathcal{C}(X/S)$
  the full subcategory on these objects
  endowed with the Zariski topology as above.
  And the topos
  \[ (X/S)_{\Cris_\mathcal{C}} \defeq \Sh(\Cris_\mathcal{C}(X/S)) \]
  is the \emph{big crystalline topos}
  of $X$ over $(S, \mathcal{I}_S, \gamma_S)$
  (defined using the objects in $\mathcal{C}$).
\end{definition}

There is a structure sheaf on $\Cris_\mathcal{C}(X/S)$
(see \citestacks{07IN}),
\[ \mathcal{O} = \mathcal{O}_{(X/S)_{\Cris_\mathcal{C}}} \;:\;
   \Cris_\mathcal{C}(X/S)^\op \to \Set, \quad
   (T, \mathcal{I}, \gamma) \mapsto \mathcal{O}_T(T)
   \rlap{.} \]
As in the case of the Zariski topos,
this sheaf carries a ring structure.
But here, there is more.
We have a second ring object
\[ \mathcal{O}' = \mathcal{O}'_{(X/S)_{\Cris_\mathcal{C}}} \;:\;
   \Cris_\mathcal{C}(X/S)^\op \to \Set, \quad
   (T, \mathcal{I}, \gamma) \mapsto
   \mathcal{O}_U(U) =
   \mathcal{O}_{V(\mathcal{I})}(V(\mathcal{I}))
   \rlap{,} \]
sometimes denoted $\mathcal{O}_X$.
Furthermore, there is a surjective ring homomorphism
$\mathcal{O} \to \mathcal{O}'$,
and its kernel
\[ \mathcal{J} = \mathcal{J}_{(X/S)_{\Cris_\mathcal{C}}} \;:\;
   \Cris_\mathcal{C}(X/S)^\op \to \Set, \quad
   (T, \mathcal{I}, \gamma) \mapsto \mathcal{I}(T)
   \]
carries a canonical PD structure.
In summary,
we have a model
\[ (\mathcal{O}, \mathcal{J}) \;\in\;
   (\Ring + \Ideal + \PD)\dashmod((X/S)_{\Cris_\mathcal{C}})
   \rlap{.} \]
An extension of this model
(along a localic theory extension)
will turn out to be
the universal model of a theory
classified by $(X/S)_{\Cris_\mathcal{C}}$,
at least for an appropriate choice of $\mathcal{C}$.

The Zariski topology on $\Cris(X/S)$
(or $\Cris_\mathcal{C}(X/S)$)
in particular ensures that every object can be covered
by objects $(T, \mathcal{I}, \gamma)$ with $T$ affine,
in other words, the affine objects are dense.
Thus we can use the Comparison Lemma
to obtain an equivalent site
consisting of affine objects,
which we have already described as a category in
Lemma \ref{lemma-affine-PD-objects-are-models}.
The following lemma complements this
with a description of the topology
on the new site.

\begin{lemma}
  \label{lemma-affine-Zariski-topology-for-Cris}
  Let $(K, I_K, \gamma_K)$ be a PD ring
  and let $R$ be a $K/I_K$-algebra.
  The topology induced via the Comparison Lemma
  on $(\AlgQuot{K}{R} + \PD_{\gamma_K} + (\nil))
  \dashmod(\Set)^\op$
  regarded as a (dense) full subcategory
  of $\Cris(\Spec R / \Spec K)$
  is the following:
  A cosieve on an object
  \[ (\gamma \curvearrowright I \triangleleft
     A \twoheadrightarrow B) \]
  is covering
  if and only if
  it contains the canonical arrows to
  \[ (\gamma_{a_i} \curvearrowright I_{a_i} \triangleleft A_{a_i}
     \twoheadrightarrow B_{a_i}) \]
  for some finite family of elements $a_i \in A$
  with $(a_1, \dots, a_n) = (1)$.
\end{lemma}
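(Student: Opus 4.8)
The plan is to read the induced topology off directly from the Comparison Lemma and then reduce an arbitrary Zariski cover to one by standard opens. Since the Zariski topology on $\Cris(\Spec R/\Spec K)$ lets every PD thickening be covered by its affine open PD subthickenings, the full subcategory of affine objects is dense, so the Comparison Lemma applies, and by its description of the induced topology, a cosieve $S$ on an affine object $(\gamma \curvearrowright I \triangleleft A \twoheadrightarrow B)$ --- equivalently, a sieve on the corresponding PD thickening $(\Spec A, \widetilde I, \gamma)$ with structure morphism $\Spec B \to \Spec R$ --- is covering for the induced topology exactly when the sieve it generates inside $\Cris(\Spec R/\Spec K)$ is covering, i.e.\ contains the restriction morphisms $(T_j, \widetilde I|_{T_j}, \gamma|_{T_j}) \to (\Spec A, \widetilde I, \gamma)$ for some open cover $\Spec A = \bigcup_j T_j$.

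First I would set up the dictionary identifying the ``canonical arrows'' in the statement with geometry. For $a \in A$, the restriction of $(\Spec A, \widetilde I, \gamma)$ to the standard open $D(a) = \Spec A_a$ is, under the identifications of Lemma~\ref{lemma-affine-PD-objects-are-models}, exactly $(\gamma_a \curvearrowright I_a \triangleleft A_a \twoheadrightarrow B_a)$: here $\gamma_a$ is the unique PD structure on $I_a$ making $A \to A_a$ a PD homomorphism (recalled in the background section), which agrees with $\gamma|_{D(a)}$ since pulling back along the open immersion $D(a) \hookrightarrow \Spec A$ is just localization; $B_a = (A/I)_a = A_a/I_a$; and the morphism $\Spec B_a \to \Spec R$ is the composite $\Spec B_a \hookrightarrow \Spec B \to \Spec R$. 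So the canonical arrow to $(\gamma_a \curvearrowright I_a \triangleleft A_a \twoheadrightarrow B_a)$ is precisely the restriction morphism along $D(a)$.

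With this in hand the two implications are short. If $S$ contains the arrows to the $(\gamma_{a_i} \curvearrowright I_{a_i} \triangleleft A_{a_i} \twoheadrightarrow B_{a_i})$ for a finite family with $(a_1, \dots, a_n) = (1)$, then $\{D(a_i)\}_i$ is an open cover of $\Spec A$ and $S$, \emph{a fortiori} the sieve it generates, contains the corresponding restriction morphisms, so that generated sieve is covering and $S$ is covering. Conversely, if the sieve generated by $S$ contains the restriction morphisms for some open cover $\Spec A = \bigcup_j T_j$, I would refine each $T_j$ by standard opens $D(g)$, $g \in A$; since $D(g) \subseteq T_j \subseteq \Spec A$, each restriction morphism $(D(g), \dots) \to (\Spec A, \dots)$ still lies in the generated sieve, and as the $D(g)$ cover $\Spec A$, quasi-compactness yields a finite subcover $D(a_1), \dots, D(a_n)$, i.e.\ $(a_1, \dots, a_n) = (1)$. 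Each $(D(a_i), \dots) \to (\Spec A, \dots)$ lies in the sieve generated by $S$, hence factors through some arrow of $S$; but its domain $\Spec A_{a_i}$ is affine and $S$, being a sieve in the full subcategory of affine objects, is closed under precomposition by morphisms of affine PD thickenings, so this morphism --- that is, the canonical arrow to $(\gamma_{a_i} \curvearrowright I_{a_i} \triangleleft A_{a_i} \twoheadrightarrow B_{a_i})$ --- already lies in $S$ itself.

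The hard part will be only the bookkeeping of the dictionary in the second paragraph: one must check carefully that the PD structure obtained by restricting the sheaf-theoretic $\gamma$ to $D(a)$ is the one denoted $\gamma_a$ (uniqueness makes this painless once both are seen to make $A \to A_a$ a PD map), and that the map to $X$ and the surjection $A_a \twoheadrightarrow B_a$ are the restriction-induced ones; everything else is the standard reduction of an open cover to a cover by standard opens via quasi-compactness of $\Spec A$, together with the closure of a sieve under precomposition.
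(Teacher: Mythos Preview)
Your proposal is correct and follows essentially the same route as the paper's proof: identify the canonical localization arrows with restriction to standard opens, then use that a Zariski-covering sieve on an affine scheme contains an open cover if and only if it contains a finite cover by standard opens. You are simply more explicit than the paper about the bookkeeping (well-definedness of the localized model, quasi-compactness, and the precomposition argument recovering the arrows in $S$ from the generated sieve), all of which the paper leaves implicit.
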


\begin{proof}
  We first note that the model
  $(\gamma_{a_i} \curvearrowright I_{a_i} \triangleleft A_{a_i}
  \twoheadrightarrow B_{a_i})$
  is well-defined
  --- the ring map $A_{a_i} \to B_{a_i}$ is surjective
  and has kernel $I_{a_i}$
  since localization is exact,
  and $I_{a_i}$ is still a nil ideal ---
  and corresponds to the open subscheme $D(a_i)$
  of the PD scheme $\Spec A_{a_i}$.
  A cosieve on
  $(\gamma \curvearrowright I \triangleleft A \twoheadrightarrow B)$
  is covering for the induced topology
  if it generates a covering sieve in $\Cris(\Spec R / \Spec K)$.
  But such a sieve contains
  (the morphisms of
  PD thickenings over $\Spec K$ and $\Spec R$
  induced by)
  an open cover of $\Spec A$
  if and only if it contains a cover by standard opens $D(a_i)$,
  which in turn
  cover $\Spec A$ if and only if $(a_1, \dots, a_n) = (1)$.
\end{proof}

\subsection{Preliminary presheaf type results}

We saw in Lemma \ref{lemma-affine-PD-objects-are-models}
that the crystalline site of affine schemes
$S = \Spec K$, $X = \Spec R$
is closely tied to the theory
\[ \AlgQuot{K}{R} + \PD_{\gamma_K} + (\nil) \rlap{.} \]
Our strategy for proving a classification result
on the crystalline topos
is to show,
under appropriate assumptions,
that this theory is of presheaf type,
then choose the class of objects of the crystalline site
in such a way that the affine objects
turn out to be exactly the compact models of the theory,
and finally add the quotient extension $(\loc)$
to produce the Zariski topology.

Right now,
we want to approach the presheaf type part
by looking at somewhat simpler theories.
While $\AlgAlg{K}{R}$ is of presheaf type
simply because it is an algebraic theory,
the situation already becomes more interesting
for $\AlgQuot{K}{R}$,
as the axiom $(\surj)$ is not cartesian.
We also include the extension $\PD$
but assume the PD structure on $K$ to be trivial
and forget about $(\nil)$ for now.

The following two propositions
can also serve as an illustration
of the different ways in which
one might try to decompose the same theory into parts
to show that it is of presheaf type,
and how this can lead to results of different strengths.

\begin{proposition}
  \label{proposition-PD-presheaf-type-finitely-presented}
  Let $R$ be a finitely presented $K$-algebra.
  Then the theory
  \[ \AlgQuot{K}{R} + \PD \]
  is of presheaf type.
\end{proposition}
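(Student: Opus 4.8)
The plan is to rewrite the theory $\AlgQuot{K}{R} + \PD$, using only equivalence extensions --- in particular extensions by definitions --- which leave the classifying topos unchanged, into a theory obtained from a Horn theory by first adjoining \emph{finitely many} constant symbols and then \emph{finitely many} axioms in the empty context; Proposition~\ref{proposition-add-constant-symbol} and Corollary~\ref{corollary-axiom-in-empty-context} then immediately yield the claim. It is precisely the finite presentation of $R$ that keeps both of these finite: for an infinitely generated $R$ this route would force us to adjoin countably many constants or countably many empty-context axioms, either of which can destroy presheaf type (which is why the more general Proposition~\ref{proposition-pd-thickenings-presheaf-type} will need a different decomposition). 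The Horn theory in question is $\Alg{K} + \PDIdeal$: recalling (the remark after Definition~\ref{definition-theories-with-ideals}) that the ideal-together-with-PD-structure part of our theory may equivalently be presented by $\PDIdeal$, with its sort $S_I$, its injective $A$-linear map $\iota : S_I \to A$, and its \emph{total} divided-power operations $\gamma_n : S_I \to S_I$, we observe that apart from the Horn axiom expressing injectivity of $\iota$ all its axioms are equational; hence $\Alg{K} + \PDIdeal$ is a Horn theory, and so of presheaf type, and in each of its models the image $\iota(S_I) \subseteq A$ is automatically an ideal, which we denote $I$.

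The key observation is that the surjectivity axiom $(\surj)$ --- which is not cartesian and in other settings can destroy presheaf type --- is harmless here, because, together with the sort $B$, its $K$-algebra structure, the homomorphism $f : A \to B$, and the connecting axiom of $\PD$ (which identifies $\ker f$ with $I$), it does nothing more than exhibit $B$ as the quotient $A/I$ with $f$ the projection. More precisely, passing from $\Alg{K} + \PDIdeal$ to the theory that additionally carries this data is an equivalence extension: after an extension by definitions introducing relation symbols for $I$ and for ``$x - x' \in I$'', it is an instance of Example~\ref{example-materialize-quotient} followed by an extension by definitions supplying the quotient ring operations on $B$ and the identification $f = \pi$. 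Consequently $\AlgQuot{K}{R} + \PD$ \emph{minus} the $R$-algebra structure on $B$ --- that is, $\AlgAlg{K}{K} + (\surj) + \PD$ --- is an equivalence extension of the Horn theory $\Alg{K} + \PDIdeal$, and hence still of presheaf type.

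It then remains to reinstate the $R$-algebra structure on $B$. Writing $R = K[X_1, \dots, X_m]/(r_1, \dots, r_k)$ and using the economical formulation of $\AlgAlg{K}{R}$ discussed after Definition~\ref{definition-extensions-of-K-Alg} (in the spirit of Remark~\ref{remark-economical-formulation-of-Alg}), this amounts --- up to a further extension by definitions (see Definition~\ref{definition-extension-by-definitions}) --- to adjoining the $m$ constant symbols $c_1, \dots, c_m : B$ together with the $k$ axioms $\top \turnstile{[]} r_j(c_1, \dots, c_m) = 0$, where each $r_j(c_1, \dots, c_m)$ is the evident closed term of sort $B$ formed from the available $K$-algebra structure. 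Adjoining the constants preserves presheaf type by $m$ applications of Proposition~\ref{proposition-add-constant-symbol}, and the $k$ new axioms, lying in the empty context, preserve it by Corollary~\ref{corollary-axiom-in-empty-context}. The theory so obtained has the same classifying topos as $\AlgQuot{K}{R} + \PD$, which is accordingly of presheaf type.

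I expect the step demanding genuine care to be the verification that the block consisting of $B$, its $K$-algebra structure, $f$, the axiom $(\surj)$, and the identification $\ker f = I$ really is an equivalence extension of the ``$K$-algebra with a PD ideal'' theory: one must check that $B$ is \emph{forced}, up to unique isomorphism, to be $A/I$ with $f = \pi$, so that no choices enter and the whole chain of reductions stays inside equivalence extensions. It matters here that $B$ is kept in the signature throughout rather than eliminated, since recovering a lift to $A$ of the generators of $R$ from $B$ would require an (unavailable) section of $f$ and would change the category of models. The remaining bookkeeping --- the economical reformulation of $\AlgAlg{K}{R}$, and the passage between the $\PDIdeal$ and $\Ideal + \PD$ presentations --- is routine by comparison.
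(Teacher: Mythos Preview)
Your proposal is correct and follows essentially the same route as the paper's proof: start from a presheaf-type theory of a $K$-algebra with a PD ideal (you use the Horn theory $\Alg{K} + \PDIdeal$, the paper uses the cartesian theory $\Alg{K} + \Ideal + \PD$; these are Morita equivalent as noted after Definition~\ref{definition-theories-with-ideals}), recognise that adjoining the sort $B$ together with $(\surj)$ and the identification $\ker f = I$ is an equivalence extension materialising the quotient $A/I$ (Example~\ref{example-materialize-quotient}), and then use the finite presentation of $R$ to add the $R$-algebra structure on $B$ via finitely many constants (Proposition~\ref{proposition-add-constant-symbol}) and finitely many empty-context axioms (Corollary~\ref{corollary-axiom-in-empty-context}). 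Your commentary on why the $(\surj)$ axiom is harmless here and on the care needed in verifying the equivalence extension is apt but not strictly required, as the paper simply appeals to Example~\ref{example-materialize-quotient}.
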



\begin{proof}
  The cartesian theory $\Alg{K} + \Ideal + \PD$
  is Morita-equivalent to $\AlgQuot{K}{K} + \PD$
  by introducing a sort $B$ for the quotient ring $A/I$,
  similarly to Example \ref{example-materialize-quotient}.
  So the latter theory is of presheaf type
  and we are only missing the $R$-algebra structure
  on the $K$-algebra $B$.
  Since $R \cong K[X_1, \dots, X_n]/(r_1, \dots, r_m)$
  is a finitely presented $K$-algebra,
  the extension $\AlgStr{R}_K(B)$ is equivalent to
  finitely many constant symbols $c_i \oftype B$,
  $i = 1, \dots, n$,
  and finitely many axioms in the empty context
  $\top \turnstile{[]} r_j(\vec{c}) = 0$,
  $j = 1, \dots, m$.
  Thus we are done by
  Proposition \ref{proposition-add-constant-symbol}
  and Corollary\ref{corollary-axiom-in-empty-context}.
\end{proof}

We can reduce the assumption on $R$
to finite type instead of finite presentation
if we handle the axiom $(\surj)$
in a different way.
For this, we need a lemma.

\begin{lemma}
  \label{lemma-PD-envelope}
  Let $M = (\gamma \curvearrowright \mathfrak{a}
  \triangleleft A \xrightarrow{f} B) \in
  \mathcal{C} =
  \mod{(\AlgAlg{K}{R} + \PD)}{\Set}$
  and $b \in B$.
  Then there is a universal triple $(M', a', g)$,
  where $M' = (\gamma' \curvearrowright \mathfrak{a}'
  \triangleleft A' \xrightarrow{f'} B')
  \in \mathcal{C}$,
  $a' \in A'$
  and $g = (g_A, g_B) : M \to M'$,
  such that $f'(a') = g_B(b)$;
  that is,
  the following diagram in $[\mathcal{C}, \Set]$
  is a pullback:
  \[ \begin{tikzcd}
    \Hom(M', \hole) \ar[r, "g^*"] \ar[d, "a'", swap]
    \ar[rd, "\lrcorner", very near start, phantom] &
    \Hom(M, \hole) \ar[d, "b"] \\
    \interpretation{A}_\hole \ar[r, "\interpretation{f}_\hole", swap] &
    \interpretation{B}_\hole
  \end{tikzcd} \]
  Furthermore, $g_B : B \to B'$ is an isomorphism.
\end{lemma}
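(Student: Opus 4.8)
The plan is to construct $M'$ explicitly as a divided power envelope and then verify the asserted pullback property by a direct check of universal properties. First I would fix notation: let $\phi \colon A[X] \to B$ be the $K$-algebra homomorphism with $\phi|_A = f$ and $\phi(X) = b$, and put $J \defeq \ker\phi \subseteq A[X]$. Since $f(\mathfrak{a}) = 0$ we have $\mathfrak{a} \cdot A[X] \subseteq J$, so I may form the PD envelope \citestacks{07H8} of $J$ in $A[X]$ over the PD ring $(A, \mathfrak{a}, \gamma)$; call it $(A', \mathfrak{a}', \gamma')$, let $g_A \colon (A, \mathfrak{a}, \gamma) \to (A', \mathfrak{a}', \gamma')$ be the structure map coming from $A \to A[X] \to A'$, and let $a' \in A'$ be the image of $X$. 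Regarding $B$ as a PD ring with zero PD ideal, $\phi$ is a ring map lying over the PD homomorphism $f \colon (A, \mathfrak{a}, \gamma) \to (B, (0))$ and carrying $J$ into $(0)$, hence it extends uniquely to $f' \colon A' \to B$ with $f'|_A = f$ and $f'(a') = b$; using the canonical isomorphism $A'/\mathfrak{a}' \cong A[X]/J$ one gets $\ker f' = \mathfrak{a}'$, so $M' \defeq (\gamma' \curvearrowright \mathfrak{a}' \triangleleft A' \xrightarrow{f'} B)$ is an object of $\mathcal{C}$. I would then set $B' \defeq B$, $g_B \defeq \id_B$ (which makes the final assertion of the lemma trivial) and $g \defeq (g_A, g_B)$, noting $f'(a') = b = g_B(b)$ by construction.

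Next I would establish universality. Given $N = (\delta \curvearrowright \mathfrak{c} \triangleleft C \xrightarrow{f_N} C_0)$ in $\mathcal{C}$, a homomorphism $h = (h_A, h_B) \colon M \to N$ and an element $a'' \in C = \interpretation{A}_N$ with $f_N(a'') = h_B(b)$, I must produce a unique $\widetilde{h} \colon M' \to N$ with $\widetilde{h} \circ g = h$ and $\widetilde{h}_A(a') = a''$. The $B$-component is forced to be $h_B$. For the $A$-component, let $\theta \colon A[X] \to C$ be the $A$-algebra map over $h_A$ with $\theta(X) = a''$; comparing on the generators $A \cup \{X\}$ gives $f_N \circ \theta = h_B \circ \phi$, using compatibility of $h$ with $f$ on $A$ and the hypothesis $f_N(a'') = h_B(b)$ on $X$, whence $\theta(J) \subseteq \ker f_N = \mathfrak{c}$. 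The universal property of the PD envelope --- taken over the base PD ring $(C, \mathfrak{c}, \delta)$ with structure map $h_A$ --- then produces a unique PD homomorphism $\widetilde{h}_A \colon (A', \mathfrak{a}', \gamma') \to (C, \mathfrak{c}, \delta)$ extending $\theta$, so that automatically $\widetilde{h}_A \circ g_A = h_A$ and $\widetilde{h}_A(a') = a''$. It then remains to verify that $\widetilde{h} \defeq (\widetilde{h}_A, h_B)$ is a homomorphism $M' \to N$, the only nonformal point being $f_N \circ \widetilde{h}_A = h_B \circ f'$; as $A'$ is generated as a ring by the image of $A[X] \to A'$ together with the divided powers $\gamma'_n(y)$ of elements $y \in \mathfrak{a}'$, it suffices to compare there: on the image of $A[X]$ both sides equal $h_B \circ \phi$, while $f_N(\widetilde{h}_A(\gamma'_n(y))) = f_N(\delta_n(\widetilde{h}_A(y))) = 0$ since $\delta_n(\widetilde{h}_A(y)) \in \mathfrak{c}$, which matches $h_B(f'(\gamma'_n(y))) = 0$ since $\gamma'_n(y) \in \mathfrak{a}' = \ker f'$. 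Uniqueness is then immediate: $\widetilde{h}_B$ is forced, and a PD homomorphism out of $A'$ is determined by its restriction to $A$ and its value at $a'$.

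The step I expect to be the main obstacle is exactly this bookkeeping around the PD envelope in the second paragraph --- one must be careful that the base over which the envelope is taken when proving universality is $(C, \mathfrak{c}, \delta)$ with structure map $h_A$ (not $f$ or $g_A$), and that the identity $f_N \circ \theta = h_B \circ \phi$ really is the content of the pullback condition together with ``$h$ is a morphism''. Everything else --- that $f'$ has kernel precisely $\mathfrak{a}'$, that $g$ and $\widetilde{h}$ respect the remaining structure, and the naturality in $N$ of the resulting bijection $\Hom(M', \hole) \cong \Hom(M, \hole) \times_{\interpretation{B}_\hole} \interpretation{A}_\hole$ --- is a routine unwinding of definitions, and the claim that $g_B$ is an isomorphism falls out of the construction for free.
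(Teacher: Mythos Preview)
Your proposal is correct and follows essentially the same approach as the paper: form the PD envelope of $\ker(A[X]\to B)$ over $(A,\mathfrak{a},\gamma)$, take $B'=B$ with $g_B=\id_B$, and deduce the universal property from that of the envelope. Your write-up is in fact more detailed than the paper's on the verification that $\widetilde{h}$ is a morphism; the paper simply notes that $\widetilde{\mathfrak{a}}$ automatically lands in $\mathfrak{a}''$ and that the resulting PD map makes the square commute.
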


\begin{proof}
  Consider the $A$-algebra homomorphism
  $\widetilde{f} : \widetilde{A} \defeq A[X] \to B$
  with $\widetilde{f}(X) = b$
  and set $\widetilde{\mathfrak{a}} \defeq \ker \widetilde{f}$.
  Let $\gamma' \curvearrowright \mathfrak{a}' \triangleleft A'$
  be the PD envelope (over $\gamma$)
  of the $(\mathfrak{a} \triangleleft A)$-algebra
  $\widetilde{\mathfrak{a}} \triangleleft \widetilde{A}$.
  In particular, we have
  $\widetilde{A}/\widetilde{\mathfrak{a}} \cong A'/\mathfrak{a}'$
  (as $A$-algebras)
  via the unit
  $\eta_{\widetilde{\mathfrak{a}} \triangleleft \widetilde{A}} :
  (\widetilde{\mathfrak{a}} \triangleleft \widetilde{A}) \to
  (\mathfrak{a}' \triangleleft A')$
  of the adjunction,
  so that $\widetilde{f}$ induces
  an $A$-algebra homomorphism $f' : A' \to B$
  with $\ker f' = \mathfrak{a}'$.
  We have thus defined a model $M' \in \mathcal{C}$
  and an arrow $g : M \to M'$,
  where $g_A : A \to A'$ is just the homomorphism of PD-rings
  from the PD envelope construction
  and $g_B = \id_B$.
  Setting $a' \defeq
  \eta_{\widetilde{\mathfrak{a}} \triangleleft \widetilde{A}}(X)$
  satisfies $f'(a') = b$.

  To check the universal property,
  let $h : M \to M'' = (\gamma'' \curvearrowright \mathfrak{a}''
  \triangleleft A'' \xrightarrow{f''} B'')$
  be given.
  An element $a'' \in A'' = \interpretation{A}_{M''}$
  with $f''(a'') = h_B(b)$
  is the same thing as
  an $A$-algebra homomorphism $\widetilde{A} \to A''$
  fitting in the square
  \[ \begin{tikzcd}
    \widetilde{A} \ar[r, "\widetilde{f}"] \ar[d] & B \ar[d, "h_B"] \\
    A'' \ar[r, "f''"] & B'' \rlap{.}
  \end{tikzcd} \]
  But then it is automatic that
  $\widetilde{\mathfrak{a}}$ is sent to $\mathfrak{a}''$,
  so there is a unique homomorphism of PD-rings
  $(\gamma' \curvearrowright \mathfrak{a}' \triangleleft A') \to
  (\gamma'' \curvearrowright \mathfrak{a}'' \triangleleft A'')$
  over $A$
  sending $a'$ to $a''$,
  and it makes the square
  \[ \begin{tikzcd}
    A' \ar[r, "f'"] \ar[d] & B \ar[d, "h_B"] \\
    A'' \ar[r, "f''"] & B''
  \end{tikzcd} \]
  commute,
  as required for a model homomorphism $M' \to M''$.
\end{proof}

\begin{proposition}
  \label{proposition-PD-presheaf-type-finitely-generated}
  Let $R$ be a finitely generated $K$-algebra.
  Then the theory
  \[ \AlgQuot{K}{R} + \PD \]
  is of presheaf type.
\end{proposition}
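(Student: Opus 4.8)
The plan is to reduce to Proposition \ref{proposition-PD-presheaf-type-finitely-presented} by comparing the theory for a finitely generated $R$ with that for a finitely presented one. Write $R \cong K[X_1, \dots, X_n]/J$ where $J$ is an ideal that need not be finitely generated, and let $R_m \defeq K[X_1, \dots, X_n]/J_m$ for a filtered system of finitely generated subideals $J_m \subseteq J$ with $\bigcup_m J_m = J$. Each $\AlgQuot{K}{R_m} + \PD$ is of presheaf type by Proposition \ref{proposition-PD-presheaf-type-finitely-presented}, and $R = \colim_m R_m$. The hope is that $\AlgQuot{K}{R} + \PD$ inherits presheaf type from this filtered colimit. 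However, presheaf type is not obviously preserved under filtered colimits of theories, so I do not expect this to be completely formal; this comparison is the main obstacle.

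The cleaner route is to mimic the proof of Proposition \ref{proposition-PD-presheaf-type-finitely-presented} but avoid finitely many relations by handling the surjectivity axiom and the $R$-algebra structure together via Lemma \ref{lemma-PD-envelope}. First I would observe, as there, that $\AlgQuot{K}{K} + \PD$ is of presheaf type (being Morita equivalent to the cartesian theory $\Alg{K} + \Ideal + \PD$). Since $R$ is finitely generated, $\AlgStr{R}_K(B)$ is equivalent to adding finitely many constant symbols $c_1, \dots, c_n \oftype B$ together with a possibly infinite family of axioms $\top \turnstile{[]} r_j(\vec{c}) = 0$ (one for each $r_j$ in a generating set of the kernel $J$, which may be infinite). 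By Proposition \ref{proposition-add-constant-symbol} (applied $n$ times), adding the constant symbols $c_i \oftype B$ keeps us in presheaf type; call the resulting theory $\TT_1$ with classifying topos $\PSh(C_1)$, where $C_1 = (\TT_1\dashmod(\Set)_c)^\op$. It then remains to show that adding the family of axioms $\{\, \top \turnstile{[]} r_j(\vec{c}) = 0 \mid j \in J_{\mathrm{gen}} \,\}$ is a rigid-topology quotient of $\TT_1$.

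Here is where the argument must go beyond Corollary \ref{corollary-axiom-in-empty-context}, which only covers finitely many axioms in the empty context (and Remark \ref{remark-infinitely-many-axioms-in-empty-context} warns that infinitely many such axioms can destroy presheaf type). So I would analyze the induced topology $J_\QQ$ on $C_1$ directly using Theorem \ref{theorem-induced-topology}. For a compact $\TT_1$-model $M = (\gamma \curvearrowright \mathfrak{a} \triangleleft A \to B, \vec{c})$ and an axiom $\top \turnstile{[]} r_j(\vec{c}) = 0$ that fails in $M$, the generating cosieve $S_{M, r_j}$ consists of all homomorphisms $M \to M'$ with $r_j(g_B(\vec{c})) = 0$ in $B'$. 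The key claim is that this cosieve is \emph{principal}, generated by a single arrow $M \to M/j$ obtained by forming the quotient of the PD ring $A$ by the PD saturation of the ideal generated by a lift of $r_j(\vec{c})$, as in the discussion of PD saturation in the background section (and using that $\Spec(A/\overline{(r_j(\vec c))}) \to \Spec A$ remains a surjection onto $\Spec B$ with nil kernel — wait, we have dropped $(\nil)$ here, so simply: the kernel of the induced $A/\overline{I} \to B$ is still the image of $\mathfrak a$, by exactness of the quotient in the relevant sense). If each $S_{M, r_j}$ is principal, then by transitivity we can repeatedly apply these operations; the crucial point to verify is \emph{termination}: starting from a compact model $M$, since $A$ is a finitely generated $K$-algebra (each compact model being a quotient of a PD polynomial algebra on finitely many generators), only finitely many of the relations $r_j$ can fail — indeed, $r_j(\vec c) = 0$ already in $B$ iff $r_j \in J_m$ for the finitely generated $J_m$ realized in the presentation of the compact model $M$, and $J$ is the increasing union of such $J_m$. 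Hence after finitely many steps we reach a $(\TT_1 + \QQ)$-model, exhibiting $M$ as covered by a single irreducible object, so $J_\QQ$ is rigid and $\AlgQuot{K}{R} + \PD = \TT_1 + \QQ$ is of presheaf type. The one delicate point, which I expect to be the real obstacle, is justifying that "only finitely many relations fail in a compact model" — this requires knowing that compact models of $\TT_1$ are finitely generated as PD rings over $K$, which should follow from combining Lemma \ref{lemma-compact-Horn-models} (for the Horn part $\Alg{K} + \Ideal + \PD$, after materializing the quotient) with Proposition \ref{proposition-add-constant-symbol}'s description of the compact models of $\TT_1$ as those whose underlying $\Alg{K} + \Ideal + \PD$-model is compact.
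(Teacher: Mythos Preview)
Your approach has a genuine gap in the termination step, and in fact the quotient $\QQ = \{\,\top \turnstile{[]} r_j(\vec c) = 0\,\}$ is \emph{not} a rigid-topology quotient of $\TT_1$ when $R$ is finitely generated but not finitely presented over $K$.

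The claim that ``only finitely many of the relations $r_j$ can fail in a compact model'' is not justified and is false. Take $K = k[T_1, T_2, \ldots]$ (non-Noetherian) and $R = K[X]/(T_i X : i \geq 1)$, a finitely generated but not finitely presented $K$-algebra. The compact $\TT_1$-model $M$ with $A = B = K[Y]$, $I = 0$, $c = Y$ (compact since $K[Y]$ is finitely PD-presented over $K$) violates \emph{every} relation $T_i c = 0$. Killing them one at a time yields $B_k = K[Y]/(T_1 Y, \ldots, T_k Y)$, which still violates $T_{k+1} c = 0$; the process never terminates. More decisively, the compact $(\AlgQuot{K}{R} + \PD)$-model obtained from the paper's construction has $A' = $ PD envelope of $(K[X], (T_i X : i))$, whose PD ideal is not finitely PD-generated over $K$, so $A'$ is not finitely PD-presented and this model is not compact as a $\TT_1$-model. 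By Lemma~\ref{lemma-rigid-topology-quotients-compact-models}, $\QQ$ is therefore not a rigid-topology quotient of $\TT_1$. Your sentence ``$r_j \in J_m$ for the finitely generated $J_m$ realized in the presentation of the compact model $M$'' conflates the presentation of $R$ with the presentation of $M$; there is no such $J_m$ attached to $M$.

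The paper's proof uses the opposite decomposition: it starts from the cartesian theory $\AlgAlg{K}{R} + \PD$ (cartesian for \emph{any} $R$, since the $R$-algebra axioms on $B$ are Horn regardless of how many there are) and then shows that $(\surj)$ is a rigid-topology quotient. This is where Lemma~\ref{lemma-PD-envelope} is actually used: for a compact $M$ and $b \in B$, the cosieve $S_b$ is principal, generated by the arrow to the PD-envelope model $M'$, which is compact by the pullback description. Termination then needs only that $B$ is a finitely generated $K$-algebra, and this follows because $B$ is finitely presented over $R$ (via Lemma~\ref{lemma-adjoint-functor-preserves-compacts} applied to the forgetful/right-adjoint pair $(\gamma \curvearrowright \mathfrak a \triangleleft A \to B) \mapsto B$ and $B' \mapsto (0 \triangleleft B' = B')$) and $R$ is finitely generated over $K$. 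So the finite-generation hypothesis on $R$ is consumed by the surjectivity step, not by the relations step --- the reverse of your arrangement.
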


\begin{proof}
  The theory $\AlgAlg{K}{R} + \PD$
  (with a PD structure on the kernel of the $K$-algebra homomorphism
  $A \to B$)
  is of presheaf type because it is cartesian.
  We show that the missing axiom $(\surj)$
  is a rigid-topology quotient.

  Let a compact model
  \[ M = (\gamma \curvearrowright \mathfrak{a} \triangleleft A
  \xrightarrow{f} B) \]
  of $\AlgAlg{K}{R} + \PD$
  and an element $b \in B$ be given.
  Consider the model homomorphism $g : M \to M'$
  from Lemma \ref{lemma-PD-envelope}.
  We first note that $M'$ is also compact,
  since $\Hom(M', \hole)$ is a finite limit of functors
  preserving filtered colimits.
  (For $\interpretation{A}_\hole$ and $\interpretation{B}_\hole$
  preserving filtered colimits,
  see Lemma \ref{lemma-filtered-colimits-of-models}.)
  Secondly,
  a map $h : M \to M''$ factors through $M'$
  (not necessarily uniquely)
  if and only if
  $h_B$ maps $b$ to something in the image of $f''$.
  This means that the cosieve on $M$ generated by $g$
  (in the category $\mod{(\AlgAlg{K}{R} + \PD)}{\Set}_c$)
  is exactly the cosieve $S_b$
  as in Theorem \ref{theorem-induced-topology}
  and thus $M$ is $J_{(\surj)}$-covered by $M'$.
  And lastly,
  the subset $f'(A') \subseteq B' = B$
  contains of course both $f(A)$ and the element $b$.

  Now,
  since $M$ is compact,
  $B$ is a finitely presented $R$-algebra.
  This follows from
  Lemma \ref{lemma-adjoint-functor-preserves-compacts}
  by observing that the right adjoint of the forgetful functor
  in the following diagram preserves filtered colimits.
  \[ \begin{tikzcd}[row sep=0mm]
    (\AlgAlg{K}{R} + \PD)\dashmod(\Set) \ar[r] &
    (\Alg{R})\dashmod(\Set) \\
    (\gamma \curvearrowright \mathfrak{a} \triangleleft
    A \to B) \ar[r, mapsto] &
    B \\
    ((0) \triangleleft B' \xrightarrow{=} B') &
    B' \ar[l, mapsto]
    \ar[lu, phantom, "\rotatebox{90}{$\vdash$}"]
  \end{tikzcd} \]
  Thus, $B$ is also a finitely generated $K$-algebra by assumption.
  But if $b_1, \dots, b_n \in B$ are $K$-algebra generators of $B$,
  then we can successively apply Lemma \ref{lemma-PD-envelope}
  to them
  to obtain a covering arrow $M \to M_n$
  (using the transitivity of the topology $J_{(\surj)}$)
  such that $b_i \in f_n(A_n)$ for all $i$,
  implying that $f_n$ is surjective.
\end{proof}

\begin{remark}
  Propositions \ref{proposition-PD-presheaf-type-finitely-presented}
  and \ref{proposition-PD-presheaf-type-finitely-generated}
  become wrong if we drop the finiteness assumption entirely.
  For example, take $K = \QQ$, $R = \CC$.
  The extension $\PD$ is an equivalence extension
  of $\AlgQuot{\QQ}{\CC}$
  because the $\gamma_n$ are definable by
  \[ \gamma_n(x) = y \doubleturnstile{x, y \oftype S_I}
     \frac{1}{n!}\iota(x)^n = \iota(y)
     \rlap{,} \]
  using the available $\QQ$-algebra structure.
  And $\AlgQuot{\QQ}{\CC}$ has
  too few compact $\Set$-based models
  to be of presheaf type,
  similarly to the situation in
  Example \ref{example-add-many-constants-to-quotient}.

  Namely,
  we can apply
  Lemma \ref{lemma-adjoint-functor-preserves-compacts}
  to the pair of adjoint functors
  \[ \begin{tikzcd}[row sep=0mm]
    (\AlgQuot{\QQ}{\CC})\dashmod(\Set) \ar[r] &
    (\Alg{\QQ})\dashmod(\Set) \\
    (A \twoheadrightarrow B) \ar[r, mapsto] &
    A \\
    (A' \twoheadrightarrow 0) &
    A' \ar[l, mapsto]
    \ar[lu, phantom, "\rotatebox{90}{$\vdash$}"]
  \end{tikzcd} \]
  to see that
  if $A \twoheadrightarrow B$ is a compact
  model of $\AlgQuot{\QQ}{\CC}$,
  then $A$ is a finitely generated $\QQ$-algebra.
  Then the $\CC$-algebra $B$
  is also a finitely generated $\QQ$-algebra,
  which implies $B = 0$
  for cardinality reasons.
  But there are certainly non-compact models with $B \neq 0$
  (for example, let $A$ be freely generated by the elements of $B$),
  so the sequent
  \[ \top \turnstile{[]} 1_B = 0_B \]
  can not be provable in $\AlgQuot{\QQ}{\CC}$,
  meaning that the compact $\Set$-based models
  are not jointly conservative
  and therefore $\AlgQuot{\QQ}{\CC}$ is not of presheaf type
  (see \cite[Theorem 6.1.1]{caramello:tst}).
\end{remark}

We can now prove a \enquote{lazy version}
of our classification result,
where we simply assume the crystalline site
to contain exactly the desired objects.
More precisely,
we even have to assume that a suitable class of objects
$\mathcal{C} \subseteq \Cris(X/S)$
exists,
which is not trivial
given the additional assumption that
$\mathcal{C}$ is closed under taking open subschemes.
The axiom $(\nil)$ is made redundant here
by an additional assumption on $K$.

\begin{corollary}
  \label{corollary-lazy-classification-result}
  Let $K$ be a ring of nonzero characteristic,
  regard $K$ as a PD ring with the trivial PD ideal $(0)$,
  and let $R$ be a finitely generated $K$-algebra.
  Assume there is a class
  $\mathcal{C} \subseteq \Cris(\Spec R / \Spec K)$
  admitting a dense subset
  and closed under open subschemes,
  such that an affine object belongs to $\mathcal{C}$
  if and only if the corresponding model of
  $\AlgQuot{K}{R} + \PD + (\nil)$
  is compact.
  Then the topos $(\Spec R / \Spec K)_{\Cris_\mathcal{C}}$
  classifies the theory
  \[ \AlgQuot{K}{R} + \PD + (\nil) + (\loc) \rlap{.} \]
\end{corollary}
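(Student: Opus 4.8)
The plan is to follow the three-step strategy announced just before the statement: recognize the theory $\TT' \defeq \AlgQuot{K}{R} + \PD + (\nil)$ as being of presheaf type, identify its classifying topos with the crystalline topos, and finally carve out the Zariski topology by the quotient $(\loc)$.

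First I would show that $(\nil)$ is redundant. Since $K$ has characteristic $m > 0$, for any $n_0$ with $m \divides n_0!$ (for instance $n_0 = m$) the integer $n_0!$ is zero in $K$, hence $n_0! \cdot a = 0$ for every element $a$ of every $\Alg{K}$-model. On the other hand, from the equational axioms of $\PDIdeal$ one derives by induction the identity $\iota(x)^n = n!\,\iota(\gamma_n(x))$ for $x \oftype S_I$ (base case $\gamma_1(x) = x$, inductive step from $\iota(\gamma_k(x))\,\gamma_1(x) = (k+1)\,\gamma_{k+1}(x)$). Hence in $\AlgQuot{K}{R} + \PD$, where $y \in I$ abbreviates $\ex{z}{S_I}\iota(z) = y$, the sequent $y \in I \turnstile{y \oftype A} y^{n_0} = 0$ is provable, so $(\nil)$ may be dropped and $\TT' = \AlgQuot{K}{R} + \PD$. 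Since $R$ is finitely generated over $K$ and the PD structure on $K$ is trivial, Proposition \ref{proposition-PD-presheaf-type-finitely-generated} applies and $\TT'$ is of presheaf type; thus $\Set[\TT'] \simeq \PSh(\mod{\TT'}{\Set}_c^\op)$ with the tautological universal model, and $\Set[\TT' + (\loc)]$ is the subtopos $\Sh(\mod{\TT'}{\Set}_c^\op, J_{(\loc)})$ for the Grothendieck topology $J_{(\loc)}$ induced by the quotient $(\loc)$.

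Next comes the site comparison. Because $\mathcal{C}$ is closed under open subschemes and every PD thickening is covered by its affine open subschemes, the full subcategory $\mathcal{C}_{\mathrm{aff}}$ of affine objects of $\mathcal{C}$ is dense in $\Cris_\mathcal{C}(\Spec R / \Spec K)$, so by the Comparison Lemma the crystalline topos is $\Sh(\mathcal{C}_{\mathrm{aff}}, J_\Zar|_{\mathcal{C}_{\mathrm{aff}}})$. By Lemma \ref{lemma-affine-PD-objects-are-models} together with the hypothesis on $\mathcal{C}$, the category $\mathcal{C}_{\mathrm{aff}}$ is dual to exactly $\mod{\TT'}{\Set}_c$; in particular a localization $M_a$ of a compact model $M$ is again compact — this follows already from the hypotheses (as $D(a)$ is an open subscheme, so the corresponding affine object lies in $\mathcal{C}$), and it can also be checked directly, since $\Hom(M_a, \hole)$ is the pullback of $\Hom(M, \hole) \to \interpretation{A}_\hole \hookleftarrow \interpretation{\inv(x)}_\hole$, a finite limit of functors preserving filtered colimits by Lemma \ref{lemma-filtered-colimits-of-models}. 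It then remains to match $J_\Zar|_{\mathcal{C}_{\mathrm{aff}}}$ with $J_{(\loc)}$ under this identification.

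For the matching, Lemma \ref{lemma-affine-Zariski-topology-for-Cris} exhibits $J_\Zar|_{\mathcal{C}_{\mathrm{aff}}}$ as generated by the cosieves on $M = (\gamma \curvearrowright I \triangleleft A \twoheadrightarrow B)$ consisting of the localization maps to the $M_{a_i}$ for finite families with $(a_1, \dots, a_n) = (1)$, while Theorem \ref{theorem-induced-topology}, applied to the axioms $\ex{\vec{y}}{A^n}(\sum_i x_i y_i = 1) \turnstile{\vec{x} \oftype A^n} \bigvee_i \inv(x_i)$ of $(\loc)$, exhibits $J_{(\loc)}$ as generated by the cosieves $S_{\vec{x}}$ of homomorphisms out of a compact model making some $x_i$ invertible, for $\vec{x}$ generating the unit ideal — and $S_{\vec{x}}$ is precisely the cosieve generated by the maps $M \to M_{x_i}$. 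The two families of generators coincide, so $J_\Zar|_{\mathcal{C}_{\mathrm{aff}}} = J_{(\loc)}$, whence
\[ (\Spec R / \Spec K)_{\Cris_\mathcal{C}} \simeq
   \Sh(\mod{\TT'}{\Set}_c^\op, J_{(\loc)}) = \Set[\TT' + (\loc)]
   = \Set[\AlgQuot{K}{R} + \PD + (\nil) + (\loc)] , \]
and transporting the tautological universal model through the equivalence (via Lemma \ref{lemma-affine-PD-objects-are-models}) identifies it with the structure sheaf $(\mathcal{O}, \mathcal{J})$ carrying its $\AlgQuot{K}{R} + \PD$-structure. I expect the main obstacle to be this last step: carefully verifying that the site equivalence is compatible with the topologies, i.e. that the two descriptions of the covering cosieves agree exactly and that all the localizations involved really stay inside $\mathcal{C}_{\mathrm{aff}}$.
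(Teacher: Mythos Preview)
Your proposal is correct and follows essentially the same approach as the paper: show that $(\nil)$ is redundant from the nonzero characteristic via $a^n = n!\,\gamma_n(a)$, invoke Proposition~\ref{proposition-PD-presheaf-type-finitely-generated} for presheaf type, identify the affine site with ${\TT'\dashmod(\Set)_c}^\op$ via the hypothesis on $\mathcal{C}$, and match the Zariski topology with $J_{(\loc)}$ using Lemma~\ref{lemma-affine-Zariski-topology-for-Cris} and Theorem~\ref{theorem-induced-topology}. Your treatment is in fact slightly more careful than the paper's in two places---the derivation of $\iota(x)^n = n!\,\iota(\gamma_n(x))$ from the PD axioms, and the verification that localizations of compact models remain compact---but these are elaborations of the same argument, not a different route.
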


\begin{proof}
  If $n = 0$ in $K$ for some $n \geq 1$,
  then for any PD ring $(A, I, \gamma)$ over $K$
  and any $a \in I$,
  we have $a^n = n! \gamma_n(a) = 0$.
  This calculation can be carried out
  within the theory $\AlgQuot{K}{R} + \PD$,
  so that the axiom $(\nil)$ is redundant.
  Thus by Proposition
  \ref{proposition-PD-presheaf-type-finitely-generated},
  $\AlgQuot{K}{R} + \PD + (\nil)$ is of presheaf type.
  By our assumption on $\mathcal{C}$,
  the underlying category of
  the site of affine objects
  for $(\Spec R / \Spec K)_{\Cris_\mathcal{C}}$
  is
  \[ {(\AlgQuot{K}{R} + \PD + (\nil))\dashmod(\Set)_c}^\op
     \rlap{,} \]
  the site for the presheaf topos classifying this theory.
  The only difference is the Zariski topology
  as described in Lemma \ref{lemma-affine-Zariski-topology-for-Cris}.
  By Theorem \ref{theorem-induced-topology},
  this is precisely the topology corresponding to
  the quotient extension $(\loc)$,
  as each arrow
  \[ \begin{tikzcd}
    (\gamma \curvearrowright I \triangleleft A \twoheadrightarrow B)
    \ar[r] &
    (\gamma_{a_i} \curvearrowright I_{a_i} \triangleleft A_{a_i}
    \twoheadrightarrow B_{a_i})
  \end{tikzcd} \]
  is the universal arrow
  that sends $a_i \in A$ to something invertible.
\end{proof}

\begin{remark}
  \label{remark-loc-with-surj-and-nil}
  When adding $(\loc)$ to a theory like $\AlgAlg{K}{R}$,
  we mean to impose the axioms on the $K$-algebra $A$,
  not on the $R$-algebra $B$.
  However,
  in the presence of $(\surj)$ and $(\nil)$,
  the two options are in fact syntactically equivalent.
  This is because
  an elementary calculation using these axioms
  shows
  \[ \inv(f(x)) \doubleturnstile{x \oftype A} \inv(x) \rlap{.} \]
\end{remark}

\subsection{Finiteness conditions for PD schemes}

The set of objects that we include in the crystalline site
will be determined by the compactness condition
on models of the appropriate geometric theory.

\begin{lemma}
  \label{lemma-compact-pd-algebras}
  Let $(K, I_K, \gamma_K)$ be a PD ring.
  The compact models of $\Alg{K} + \Ideal_{I_K} + \PD_{\gamma_K}$
  are those PD rings over $K$ which are of the form
  \[ K\angles{X_1, \dots, X_m}[Y_1, \dots, Y_n]
     / \overline{(r_1, \dots, r_k)}
     \rlap{.} \]
\end{lemma}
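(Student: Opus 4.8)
The plan is to combine two ingredients: first, the fact that $\Alg{K} + \Ideal_{I_K} + \PD_{\gamma_K}$ is of presheaf type, and second, a direct identification of the compact (finitely presentable) models via the universal property of the PD polynomial and ordinary polynomial constructions recalled in the background section. Since $\Ideal + \PD$ is (Morita) equivalent to $\PDIdeal$, which is a localic extension of $\Ring + \Ideal$, and $\Alg{K} + \Ideal_{I_K}$ is cartesian, the whole theory $\Alg{K} + \Ideal_{I_K} + \PD_{\gamma_K}$ is cartesian up to the equivalence extensions introducing $S_I$ and the $\gamma_n$; hence it is of presheaf type, and its compact models are exactly the finitely presentable objects of $(\Alg{K} + \Ideal_{I_K} + \PD_{\gamma_K})\dashmod(\Set)$, i.e.\ of the category of PD rings over $(K, I_K, \gamma_K)$.

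First I would show that every PD ring of the displayed form is compact. For this, observe that the PD polynomial algebra $K\angles{X_1, \dots, X_m}$ over $K$ corepresents the functor sending a PD ring $(A,I,\gamma)$ over $K$ to $I^m$ (an $m$-tuple of elements of the PD ideal), by the universal property recalled in the background; likewise $K\angles{\vec X}[\vec Y]$ corepresents $(A,I,\gamma) \mapsto I^m \times A^n$. By Lemma \ref{lemma-filtered-colimits-of-models}, the interpretation functors $M \mapsto \interpretation{S_I}_M$ and $M \mapsto \interpretation{A}_M$ preserve filtered colimits, so these representable functors preserve filtered colimits, i.e.\ $K\angles{\vec X}[\vec Y]$ is compact. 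Quotienting by the PD-saturated ideal $\overline{(r_1,\dots,r_k)}$ corresponds, via the correspondence between PD-saturated ideals and surjective PD maps recalled in the background, to imposing finitely many relations $\iota(x) = r_\ell$ (or $r_\ell = 0$ in the ring); each such quotient is a finite colimit (a pushout along the compact object corepresenting one element and the zero map), so compactness is preserved. Hence every algebra of the displayed form is compact.

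Conversely, I would show every compact PD ring $(A,I,\gamma)$ over $K$ has the displayed form. Pick generators: finitely many elements $y_1,\dots,y_n$ of $A$ and finitely many elements $x_1,\dots,x_m$ of $I$ generating $A$ as a PD algebra over $K$ — such finite generating data exists precisely because $A$ is compact (one expresses $A$ as a filtered colimit of its finitely generated PD subalgebras over $K$ and uses that $\id_A$ factors through one of them). This gives a surjective PD map $\pi \colon K\angles{\vec X}[\vec Y] \twoheadrightarrow A$ sending $X_i \mapsto x_i$, $Y_j \mapsto y_j$; its kernel is a PD-saturated ideal $J$, and $A \cong K\angles{\vec X}[\vec Y]/J$. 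It remains to see $J$ is finitely generated as a PD-saturated ideal, i.e.\ $J = \overline{(r_1,\dots,r_k)}$ for finitely many $r_\ell$. This is where I expect the main obstacle: one writes $A$ as the filtered colimit over finitely generated PD-saturated subideals $J_\alpha \subseteq J$ of $K\angles{\vec X}[\vec Y]/J_\alpha$, and compactness of $A$ forces the identity of $A$ to factor through one of these quotients; combined with surjectivity of $\pi$ this pins $J = J_\alpha$ down to a finitely generated PD-saturated ideal. The delicate point is verifying that the poset of finitely generated PD-saturated subideals of $J$ is filtered and that the quotients form a genuine filtered diagram of PD rings over $K$ with colimit $K\angles{\vec X}[\vec Y]/J$ — this uses that the PD saturation of a finite union of finitely generated ideals is again finitely generated as a PD-saturated ideal, which follows from the "close up in one step" property recalled in the background (the PD saturation needs only the $\gamma_n$ applied to a generating set). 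Granting that, $J$ is finitely generated as a PD-saturated ideal and $A$ has the required form.
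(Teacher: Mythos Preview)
Your approach is sound but takes a different route from the paper's. The paper observes that the theory is (equivalent to) a \emph{Horn} theory --- one replaces $\Ideal + \PD$ by $\PDIdeal$ and adds constants $\widetilde{c}_\lambda : S_I$ for $\lambda \in I_K$ with $\iota(\widetilde{c}_\lambda) = c_\lambda$, so that all compatibility axioms become equational --- and then invokes Lemma~\ref{lemma-compact-Horn-models}, which says that the compact models of a Horn theory are exactly those presented by a Horn formula in context. Identifying the model presented by $\top$ in context $\vec{x}\oftype S_I^m,\ \vec{y}\oftype A^n$ as $K\angles{\vec X}[\vec Y]$, and noting that every atomic formula reduces to $r = 0$ for a term $r$, yields the displayed form in one stroke. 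This handles both directions simultaneously and bypasses the filtered-colimit bookkeeping entirely. Your argument, by contrast, is more self-contained (it does not rely on the cited Horn lemma) at the cost of being longer.

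One point in your converse direction needs more than you have written. From $\id_A$ factoring through some $K\angles{\vec X}[\vec Y]/J_{\alpha_0}$ you obtain only that $A$ is a \emph{retract} of that quotient; surjectivity of $\pi$ alone does not yet give $J = J_{\alpha_0}$, since the section $f$ need not be the inverse of $\pi_{\alpha_0}$. The fix is to also use the compactness of $K\angles{\vec X}[\vec Y]$ established in your forward direction: the two maps $q_{\alpha_0}$ and $f \circ \pi$ from $K\angles{\vec X}[\vec Y]$ to $K\angles{\vec X}[\vec Y]/J_{\alpha_0}$ become equal in the colimit $A$, hence already after passing to some $J_\beta \supseteq J_{\alpha_0}$; then $q_\beta = (t\circ f)\circ \pi$ together with $\ker \pi = J$ forces $J \subseteq J_\beta \subseteq J$, so $J = J_\beta$ is finitely PD-generated.
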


\begin{proof}
  While the theory $\Ring + \Ideal + \PD$
  as defined in Definition \ref{definition-theories-with-ideals}
  is cartesian,
  the equivalent theory $\Ring + \PDIdeal$
  is even a Horn theory.
  We can similarly construct a Horn theory
  equivalent to $\Alg{K} + \Ideal_{I_K} + \PD_{\gamma_K}$
  by adding constants $\widetilde{c}_\lambda : S_I$
  in addition to $c_\lambda : A$
  for every $\lambda \in I_K$,
  with $\iota(\widetilde{c}_\lambda) = c_\lambda$.

  By Lemma \ref{lemma-compact-Horn-models},
  the compact models of a Horn theory
  are those presented by some Horn formula in context.
  In our case,
  the model presented by the formula $\top$
  in the context $\vec{x} : {S_I}^m, \vec{y} : A^n$
  is $K\angles{\vec{X}}[\vec{Y}]$.
  And since our theory has no relation symbols
  and the inclusion $\iota : S_I \to A$ is required to be injective,
  any atomic formula in this context
  is provably equivalent to
  an equality of two terms of sort $A$,
  and therefore also to one of the form $r = 0$,
  where $r$ is a term representing an element of
  $K\angles{\vec{X}}[\vec{Y}]$.
  Thus we are done,
  as the universal arrow out of $K\angles{\vec{X}}[\vec{Y}]$
  that kills $r_1, \dots, r_n$
  is $K\angles{\vec{X}}[\vec{Y}] \to
  K\angles{\vec{X}}[\vec{Y}] / \overline{(\vec{r})}$.
\end{proof}

We now want to globalize this property of PD rings
over $(K, I, \gamma)$
to be able to apply it to morphisms of PD schemes.
This parallels the treatment of
morphisms of finite type and morphisms of finite presentation
as for example in \citestacks{01T0} and \citestacks{01TO}.

\begin{definition}
  We say a homomorphism of PD rings
  is \emph{of finite PD type}
  if it is of the form
  \[ (K, I_K, \gamma_K) \to
     K\angles{\vec{X}}[\vec{Y}] / \rho \]
  for finite sets of variables
  $\vec{X} = X_1, \dots, X_n$
  and $\vec{Y} = Y_1, \dots, Y_m$,
  where $\rho = \overline{\rho}$
  is any PD saturated ideal of
  $K\angles{\vec{X}}[\vec{Y}]$.
  We say it is \emph{of finite PD presentation}
  if it is of the form
  $(K, I_K, \gamma_K) \to
  K\angles{\vec{X}}[\vec{Y}] / \overline{(\vec{r})}$
  for finitely many elements
  $r_1, \dots, r_k \in K\angles{\vec{X}}[\vec{Y}]$.
\end{definition}

\begin{lemma}
  \label{lemma-finite-type}
  A homomorphism of PD rings $(K, I_K, \gamma_K) \to (A, I, \gamma)$
  is of finite PD type
  if and only if
  there are elements $x_1, \dots, x_n \in I$
  and $y_1, \dots, y_m \in A$
  such that $A$ is generated as a $K$-algebra
  by $\gamma_k(x_i)$ and $y_i$,
  and the ideal $I$ is generated by
  the image of $I_K$ and $\gamma_k(x_i)$.
\end{lemma}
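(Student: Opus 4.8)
The plan is to prove both implications directly, using the universal property of the PD polynomial--polynomial algebra $K\angles{\vec X}[\vec Y]$ together with the correspondence, recalled in the background subsection, between PD saturated ideals of a PD ring and its isomorphism classes of quotient maps of PD rings that are surjective both on rings and on PD ideals.

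For the forward direction I would start from a presentation $A = K\angles{\vec X}[\vec Y]/\rho$ with $\rho = \overline\rho$, and take $x_i$ and $y_j$ to be the images of the variables $X_i$ and $Y_j$. Each $x_i$ lies in $I$ because $X_i$ lies in the PD ideal of $K\angles{\vec X}[\vec Y]$, which, by the description of the PD ideals of PD polynomial algebras and of polynomial rings over PD rings, is the ideal generated by (the image of) $I_K$ together with the $\gamma_k(X_i)$ for $k \geq 1$. Since $K\angles{\vec X}$ is generated as a $K$-algebra by the elements $\gamma_k(X_i)$ and adjoining the polynomial variables $Y_j$ adds exactly those further $K$-algebra generators, passing to the quotient shows that $A$ is generated as a $K$-algebra by the $\gamma_k(x_i)$ and the $y_j$, and that $I$, being the image of that PD ideal, is the ideal generated by the image of $I_K$ and the $\gamma_k(x_i)$. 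This direction is essentially bookkeeping with the notions of "generated as a module / as an algebra / as an ideal".

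For the converse, given $x_i \in I$, $y_j \in A$ with the stated generation properties, I would set $B \defeq K\angles{\vec X}[\vec Y]$ and invoke its universal property: a PD homomorphism $\phi\colon B \to A$ over $(K, I_K, \gamma_K)$ amounts to a choice of an element of $I$ for each $X_i$ and an element of $A$ for each $Y_j$, so there is a unique such $\phi$ with $\phi(X_i) = x_i$ and $\phi(Y_j) = y_j$. As $\phi$ is a PD map, $\phi(\gamma_k(X_i)) = \gamma_k(x_i)$, so the image of $\phi$ contains $K$, all $y_j$ and all $\gamma_k(x_i)$, hence $\phi$ is surjective as a ring map by the first hypothesis. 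Writing $I_B$ for the PD ideal of $B$ (the ideal generated by the image of $I_K$ and the $\gamma_k(X_i)$), the set $\phi(I_B)$ is an ideal of $A$ since $\phi$ is surjective, and it contains precisely a generating set of $I$ while being contained in $I$; therefore $\phi$ restricts to a surjection $I_B \twoheadrightarrow I$. Thus $\phi$ is a map of PD rings surjective on both rings and PD ideals, so by the recalled correspondence its kernel $\rho$ is PD saturated and $\phi$ induces an isomorphism of PD rings $B/\rho \xrightarrow{\sim} (A, I, \gamma)$ over $K$. Hence $(K, I_K, \gamma_K) \to (A, I, \gamma)$ is, up to isomorphism under $K$, of the form $(K, I_K, \gamma_K) \to K\angles{\vec X}[\vec Y]/\rho$ with $\rho$ PD saturated, i.e. of finite PD type.

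The step that I expect to require the most care is the identification $\phi(I_B) = I$ in the converse: one must unwind exactly what the PD ideal of $K\angles{\vec X}[\vec Y]$ is, use that the image of an ideal under a surjective ring homomorphism is again an ideal, and match its generators against the hypothesis on $I$. Everything else is a direct application of the universal properties of $K\angles{\vec X}[\vec Y]$ and of the PD-saturated-ideal correspondence already available in the text; an entirely analogous argument (with $\overline{(\vec r)}$ in place of a general PD saturated ideal) would give the corresponding statement for finite PD presentation.
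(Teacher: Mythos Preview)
Your proposal is correct and follows essentially the same approach as the paper: both arguments identify a choice of elements $x_i \in I$, $y_j \in A$ with a PD homomorphism $K\angles{\vec X}[\vec Y] \to A$ via the universal property, and then translate the two generation hypotheses into surjectivity on rings and on PD ideals respectively. The paper's proof simply compresses this into a single biconditional (``$f$ is surjective iff \dots, and in this case the induced map on PD ideals is surjective iff \dots'') rather than separating the two implications, but the content is identical.
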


\begin{proof}
  The choice of elements $x_i \in I$ and $y_j \in A$
  corresponds to a homomorphism $f : K\angles{\vec{X}}[\vec{Y}] \to A$.
  The kernel of $f$ is always a PD saturated ideal.
  This $f$ is surjective if and only if
  $A$ is generated as a $K$-algebra by $\gamma_k(x_i)$ and $y_j$.
  And in this case,
  the induced map between
  the PD ideals of $K\angles{\vec{X}}[\vec{Y}]$ and $A$
  is surjective
  if and only if $I$ is generated by
  the image of $I_K$ and the $\gamma_k(x_i)$.
\end{proof}

\begin{lemma}
  \label{lemma-finite-presentation}
  Homomorphisms of finite PD presentation
  have the following properties.
  \begin{enumerate}[label=(\roman*)]
    \item
      If $R \to A \to A'$ are homomorphisms of PD rings
      with $R \to A$ and $R \to A'$
      of finite PD presentation,
      then $A \to A'$ is also of finite PD presentation.
    \item
      If $\rho$ is a PD saturated ideal of a PD ring $R$
      and $R \to R / \rho$ is of finite PD presentation,
      then there are finitely many elements $r_i \in R$
      such that $\rho = \overline{(\vec{r})}$.
  \end{enumerate}
\end{lemma}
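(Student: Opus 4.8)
The plan is to transcribe the classical argument that finite presentation of ring maps is independent of the chosen presentation (cf.\ the treatment of \citestacks{01TO}) into the PD setting, systematically replacing "polynomial algebra" by "PD polynomial algebra" and "finitely generated ideal" by "PD saturation of a finitely generated ordinary ideal". Everything then reduces to three elementary facts about divided powers, which I would establish first. \emph{(A)} Variables combine, $P\langle \vec X\rangle[\vec Y]\langle\vec X'\rangle[\vec Y']\cong P\langle \vec X,\vec X'\rangle[\vec Y,\vec Y']$, and PD polynomial algebras commute with PD saturated quotients: for an ideal $J\subseteq P$ one has $(P/\overline J)\langle \vec X\rangle[\vec Y]\cong P\langle \vec X\rangle[\vec Y]/\overline{(J\cdot P\langle \vec X\rangle[\vec Y])}$. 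Both isomorphisms fall out of the universal properties recalled in the background section, together with the fact that a PD ring map kills an ideal if and only if it kills its PD saturation. \emph{(B)} If $\phi\colon P\twoheadrightarrow P'$ is a surjective PD homomorphism and $\mathfrak a=\overline{(\vec t)}$ is a PD saturated ideal of $P$ with $\mathfrak a\supseteq\ker\phi$, then $\phi(\mathfrak a)=\overline{(\phi(\vec t))}$: the image $\phi(\mathfrak a)$ is PD saturated (being the kernel of $P'\to P/\mathfrak a$) and contains $(\phi(\vec t))$, hence contains $\overline{(\phi(\vec t))}$, while the reverse inclusion follows because $\phi$ commutes with the $\gamma_n$ and maps the PD ideal of $P$ into that of $P'$. \emph{(C)} For any PD surjection $\phi\colon P\twoheadrightarrow A'$ one has $\phi(I_P)=I_{A'}$, so elements of $I_{A'}$ lift to $I_P$; again this is because $\ker\phi$ is PD saturated.

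The core step is a presentation-independence lemma: given two surjections $\pi_1\colon P_1\twoheadrightarrow A'$ and $\pi_2\colon P_2\twoheadrightarrow A'$ of PD rings with $P_1,P_2$ finite PD polynomial algebras over a fixed PD ring $R$, if $\ker\pi_1$ is the PD saturation of a finitely generated ideal then so is $\ker\pi_2$. To prove it I would form, writing $P_i=R\langle \vec X_i\rangle[\vec Y_i]$, the combined PD polynomial algebra $P_{12}=R\langle \vec X_1,\vec X_2\rangle[\vec Y_1,\vec Y_2]$ with the surjection $P_{12}\twoheadrightarrow A'$ extending both $\pi_1$ and $\pi_2$. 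First, $\ker(P_{12}\to A')$ is the PD saturation of a finitely generated ideal: picking a finite generating set $\vec s$ of $\ker\pi_1$ and, using (C), lifting the images of $\vec X_2,\vec Y_2$ in $A'$ to elements $\widehat{\vec\zeta}$ of the PD ideal of $P_1$ and $\widehat{\vec\omega}$ of $P_1$, a universal-property computation in the spirit of (A) identifies this kernel with $\overline{(\vec s,\ \vec X_2-\widehat{\vec\zeta},\ \vec Y_2-\widehat{\vec\omega})}$. Second, there is a PD surjection $\psi\colon P_{12}\twoheadrightarrow P_2$ over $R$ which is the identity on $\vec X_2,\vec Y_2$ and sends $\vec X_1,\vec Y_1$ to lifts (in $I_{P_2}$, resp.\ $P_2$) of their images in $A'$, and $\psi$ is compatible with the maps to $A'$; since $\ker(P_{12}\to A')\supseteq\ker\psi$, fact (B) gives $\ker\pi_2=\psi\bigl(\ker(P_{12}\to A')\bigr)$, which is again the PD saturation of a finitely generated ideal.

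Part (ii) is then immediate: viewing $R$ itself as the PD polynomial algebra on no variables, the surjection $R\twoheadrightarrow R/\rho$ and the given finite PD presentation $R\langle \vec X\rangle[\vec Y]\twoheadrightarrow R/\rho$ (whose kernel $\overline{(\vec s)}$ is PD--finitely generated) are two surjections onto $R/\rho$ from finite PD polynomial algebras over $R$, so the presentation-independence lemma forces $\rho=\ker(R\to R/\rho)$ to be the PD saturation of a finitely generated ideal, i.e.\ $\rho=\overline{(\vec r)}$. For part (i) I would first observe, using Lemma~\ref{lemma-finite-type}, that $A'$ is of finite PD type over $A$: the finite sets of algebra and ideal generators of $A'$ over $R$ still serve over $A$, since $R\to A$ maps $I_R$ into $I_A$ and hence the image of $I_R$ in $A'$ lies in the image of $I_A$. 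Thus there is a surjection $A\langle \vec Z\rangle[\vec W]\twoheadrightarrow A'$ from a finite PD polynomial algebra over $A$, and by presentation-independence \emph{over the base $A$} it suffices to produce one such surjection with PD--finitely generated kernel. Taking $A=R\langle \vec X\rangle[\vec Y]/\overline{(\vec r)}$ and the finite PD presentation $A'=R\langle \vec X'\rangle[\vec Y']/\overline{(\vec s)}$, fact (A) identifies $A\langle \vec X'\rangle[\vec Y']$ with $R\langle \vec X,\vec X'\rangle[\vec Y,\vec Y']/\overline{(\vec r)}$, so the kernel of $A\langle \vec X'\rangle[\vec Y']\twoheadrightarrow A'$ is the image of $\ker\bigl(R\langle \vec X,\vec X'\rangle[\vec Y,\vec Y']\to A'\bigr)$; the latter is PD--finitely generated by presentation-independence over $R$ (compared with $R\langle \vec X'\rangle[\vec Y']\twoheadrightarrow A'$), and its image is PD--finitely generated by fact (B). Hence $A\to A'$ is of finite PD presentation.

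I expect the main obstacle to be bookkeeping rather than conceptual: the combinatorial skeleton is exactly the classical one, but one must check at each step that PD saturation — which, unlike an ordinary ideal, is not simply generated by a chosen generating set once one leaves the PD ideal — is compatible with the operations used. This is precisely what facts (A)--(C) isolate, and the fiddliest point is verifying the explicit universal-property description of $\ker(P_{12}\to A')$ that drives the presentation-independence lemma.
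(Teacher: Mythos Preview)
Your approach is correct and is the PD analogue of the standard presentation-independence argument for ordinary ring maps.  The paper, however, takes a much shorter route and bypasses the general presentation-independence lemma entirely.

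For (i), the paper simply writes down an explicit finite PD presentation of $A'$ over $A$.  With $A = R\angles{\vec X}[\vec Y]/\overline{(\vec r)}$, $A' = R\angles{\vec X'}[\vec Y']/\overline{(\vec r')}$, and lifts $x_i \in I_{R\angles{\vec X'}[\vec Y']}$, $y_i \in R\angles{\vec X'}[\vec Y']$ of the images of $X_i, Y_i$ in $A'$, one computes in three lines
\[
A' \;=\; A'\angles{\vec X}[\vec Y]/\overline{(X_i - x_i,\, Y_i - y_i)}
\;=\; A'\angles{\vec X}[\vec Y]/\overline{(X_i - x_i,\, Y_i - y_i,\, \vec r)}
\;=\; A\angles{\vec X'}[\vec Y']/\overline{(X_i - x_i,\, Y_i - y_i,\, \vec r')},
\]
using only the elementary observations $R[Y]/\overline{(Y - y)} \cong R$ and $R\angles{X}/\overline{(X - x)} \cong R$ for $x$ in the PD ideal, together with your fact (A).  For (ii), the paper observes that the given isomorphism $R\angles{\vec X}[\vec Y]/\overline{(\vec r)} \cong R/\rho$ is induced by a PD map $f \colon R\angles{\vec X}[\vec Y] \to R$ (choose targets for the variables), and then $\rho = \overline{(f(\vec r))}$ immediately.

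Your route buys a reusable lemma and makes the parallel with the classical case explicit; the paper's buys brevity.  One small redundancy in your argument: in part (i), once you exhibit the surjection $A\angles{\vec X'}[\vec Y'] \twoheadrightarrow A'$ with PD-finitely generated kernel, you are done by definition --- the preliminary appeal to finite PD type and to presentation-independence \emph{over $A$} is not needed.
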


\begin{proof}
  \begin{enumerate}[label=(\roman*)]
    \item
      For any PD ring $R$ and any element $r \in R$,
      we have
      $R[X]/\overline{(X - r)} = R[X]/(X - r) \cong R$,
      but we also have
      $R\angles{X}/\overline{(X - r)} \cong R$
      as long as $r$ lies in the PD ideal of $R$.
      Now,
      let $A = R\angles{\vec{X}}[\vec{Y}]/\overline{(\vec{r})}$,
      $A' = R\angles{\vec{X}'}[\vec{Y}']/\overline{(\vec{r}')}$,
      $f : A \to A'$,
      and choose $x_i$ in the PD ideal of
      $R\angles{\vec{X}'}[\vec{Y}']$
      and $y_i \in R\angles{\vec{X}'}[\vec{Y}']$
      such that $f(X_i) = x_i$ and $f(Y_i) = y_i$ in $A'$.
      Then we can calculate:
      \begin{align*}
        A' &= A' \angles{\vec{X}}[\vec{Y}]
        / \overline{(X_i - x_i, Y_i - y_i)} \\
        &= A' \angles{\vec{X}}[\vec{Y}]
        / \overline{(X_i - x_i, Y_i - y_i, \vec{r})} \\
        &= A \angles{\vec{X}'}[\vec{Y}']
        / \overline{(X_i - x_i, Y_i - y_i, \vec{r}')}
        \rlap{.}
      \end{align*}

    \item
      Let $R\angles{\vec{X}}[\vec{Y}] / \overline{(\vec{r})}
      \cong R / \rho$.
      By choosing elements in $I_R$ respectively $R$
      corresponding to the $X_i$ and $Y_i$,
      this isomorphism is induced by a PD map
      $f : R\angles{\vec{X}}[\vec{Y}] \to R$.
      But then we have $\rho = \overline{(f(r_i))}$.
      \qedhere
  \end{enumerate}
\end{proof}

\begin{lemma}
  \label{lemma-finite-type-presentation-composition}
  Homomorphisms of PD rings of finite type
  and
  homomorphisms of PD rings of finite presentation
  are both stable under composition.
\end{lemma}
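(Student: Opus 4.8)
The plan is to treat the two assertions separately. Stability of finite PD type will be a quick consequence of the generator-theoretic description in Lemma \ref{lemma-finite-type}, whereas stability of finite PD presentation will be obtained by explicitly writing down a presentation of the composite, in the same spirit as the proof of Lemma \ref{lemma-finite-presentation}~(i). So suppose throughout that $R \to A$ and $A \to A'$ are homomorphisms of PD rings of the respective finiteness type.

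For the finite-type part, I would apply Lemma \ref{lemma-finite-type} to both legs: choose $x_1,\dots,x_n$ in the PD ideal $I_A$ and $y_1,\dots,y_m \in A$ that generate $A$ over $R$ via the $\gamma_k(x_i)$ and the $y_j$, with $I_A$ generated by the image of $I_R$ and the $\gamma_k(x_i)$; and similarly $x'_1,\dots,x'_{n'} \in I_{A'}$, $y'_1,\dots,y'_{m'} \in A'$ for $A \to A'$. The point is that $A \to A'$ is a PD homomorphism, hence maps $I_A$ into $I_{A'}$ and commutes with the $\gamma_k$; so writing $\bar x_i \in I_{A'}$, $\bar y_j \in A'$ for the images, the finite systems $\{\bar x_i\}\cup\{x'_p\}$ and $\{\bar y_j\}\cup\{y'_q\}$ are the desired ones for $R \to A'$. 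Indeed $A'$ is generated over $R$ by the $A$-algebra generators $\gamma_l(x'_p)$, $y'_q$ together with the images $\gamma_k(\bar x_i)$, $\bar y_j$ of the $R$-algebra generators of $A$; and the ideal generated in $A'$ by the image of $I_A$ is generated by the image of $I_R$ (through $R \to A'$) and the $\gamma_k(\bar x_i)$, so $I_{A'}$ is generated by the image of $I_R$ and the divided powers of $\{\bar x_i\}\cup\{x'_p\}$. Lemma \ref{lemma-finite-type} then concludes that $R \to A'$ is of finite PD type.

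For the finite-presentation part, I would write $A \cong R\angles{\vec X}[\vec Y]/\overline{(\vec r)}$ and $A' \cong A\angles{\vec X'}[\vec Y']/\overline{(\vec r')}$ with $\vec X,\vec Y,\vec X',\vec Y'$ finite and $\vec r,\vec r'$ finite, and set $S \defeq R\angles{\vec X,\vec X'}[\vec Y,\vec Y']$. Using the universal properties of the ordinary and PD polynomial algebra constructions recalled earlier (a PD homomorphism out of $S$ over $R$ is the same as a choice of PD-ideal elements of the target for the variables $\vec X,\vec X'$ and ring elements for $\vec Y,\vec Y'$), and using that a PD homomorphism kills an ideal if and only if it kills its PD saturation — equivalently, factors through the corresponding quotient PD ring — a Yoneda argument identifies $S/\overline{(\vec r)} \cong A\angles{\vec X'}[\vec Y']$ (the relations $\vec r$ only involve $\vec X,\vec Y$, so imposing them cuts $R\angles{\vec X}[\vec Y]$ down to $A$). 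Since $S \twoheadrightarrow S/\overline{(\vec r)} \cong A\angles{\vec X'}[\vec Y']$, I lift the $r'_j$ to elements $\tilde r'_1,\dots,\tilde r'_k \in S$; a second application of the same computation gives $A' \cong S/\overline{(\vec r,\tilde r'_1,\dots,\tilde r'_k)}$. As $S$ has finitely many polynomial variables over $R$ and the ideal is generated by finitely many elements, this exhibits $R \to A'$ as a homomorphism of finite PD presentation.

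The main obstacle is the bookkeeping in the last paragraph: one must check that the identifications $R\angles{\vec X}[\vec Y]\angles{\vec X'}[\vec Y'] \cong S$, $A\angles{\vec X'}[\vec Y'] \cong S/\overline{(\vec r)}$, and the iterated-saturation identity $(S/\overline{(\vec r)})/\overline{(\tilde{\vec r'})} \cong S/\overline{(\vec r,\tilde{\vec r'})}$ hold as PD rings and not merely as underlying rings, since the PD polynomial algebra $\angles{-}$ does not behave like an ordinary polynomial algebra. These are, however, precisely the kind of manipulations already carried out in the proof of Lemma \ref{lemma-finite-presentation}~(i), so modulo this care the argument is routine.
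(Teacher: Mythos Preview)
Your argument is correct. The finite-presentation half is exactly the paper's approach: the paper simply asserts the identity
\[
\Bigl(K\angles{\vec X}[\vec Y]/\overline{(\vec r)}\Bigr)\angles{\vec X'}[\vec Y']/\overline{(\vec r')}
= K\angles{\vec X,\vec X'}[\vec Y,\vec Y']/\overline{(\vec r,\vec r'')}
\]
with $r''_i$ a lift of $r'_i$, which is precisely your $S/\overline{(\vec r,\tilde{\vec r'})}$ computation spelled out with the Yoneda justification.

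Where you diverge is the finite-type half. You invoke the generator description of Lemma~\ref{lemma-finite-type} and combine the two generating systems by hand. The paper instead observes that the \emph{same} displayed identity handles finite type as well, because any PD-saturated ideal $\rho$ can be written as $\overline{(\vec r)}$ for a (possibly infinite) family $\vec r$; the formula is stated once, with the remark that $\vec r,\vec r'$ may be finite or arbitrary. This is more economical and explains why the paper's proof is three lines, but your generator argument is equally valid and arguably makes the finite-type case more transparent to a reader who has just absorbed Lemma~\ref{lemma-finite-type}.
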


\begin{proof}
  Let $(K, I, \gamma)$ be a PD ring.
  We have
  \[ \Big(
     K\angles{\vec{X}}[\vec{Y}] / \overline{(\vec{r})}
     \Big)
     \angles{\vec{X}'}[\vec{Y}'] / \overline{(\vec{r}')} =
     K\angles{\vec{X}, \vec{X}'}[\vec{Y}, \vec{Y}'] /
     \overline{(\vec{r}, \vec{r}'')}
     \rlap{,} \]
  where $r''_i$ is a lift of $r'_i$
  to $K\angles{\vec{X}, \vec{X}'}[\vec{Y}, \vec{Y}']$.
  Here,
  $\vec{r}$ and $\vec{r}'$
  can either be finite lists
  or arbitrary set-indexed families.
\end{proof}

\begin{lemma}
  \label{lemma-finite-type-presentation-base-change}
  Homomorphisms of PD rings of finite type
  and
  homomorphisms of PD rings of finite presentation
  are both stable under base change
  (pushout in the category of PD rings).
\end{lemma}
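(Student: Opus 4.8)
The plan is to reduce to the universal shape of such a homomorphism and compute the base change by hand. Recall that a homomorphism of PD rings of finite PD type is, up to isomorphism, of the form $(K, I_K, \gamma_K) \to K\angles{\vec X}[\vec Y]/\rho$ with $\vec X = X_1, \dots, X_n$ and $\vec Y = Y_1, \dots, Y_m$ finite and $\rho = \overline{\rho}$ PD saturated, and of finite PD presentation if moreover $\rho = \overline{(\vec r)}$ for finitely many $r_1, \dots, r_k$. Given any homomorphism $(K, I_K, \gamma_K) \to (K', I_{K'}, \gamma_{K'})$, I will exhibit the pushout as $(K', I_{K'}, \gamma_{K'}) \to K'\angles{\vec X}[\vec Y]/\overline{\rho'}$, where $\rho'$ denotes the ordinary ideal of $K'\angles{\vec X}[\vec Y]$ generated by the image of $\rho$; finiteness of the data is then read off directly, once one checks $\overline{\rho'} = \overline{(\vec r')}$ (with $\vec r'$ the images of the $r_i$) in the finitely presented case.

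The first step is that forming PD polynomial algebras commutes with base change: $K\angles{\vec X}[\vec Y]$, together with the evident maps, is the pushout of $(K, I_K, \gamma_K) \to K\angles{\vec X}[\vec Y]$ along $(K, I_K, \gamma_K) \to (K', I_{K'}, \gamma_{K'})$, so that this pushout is $K'\angles{\vec X}[\vec Y]$. This follows from the universal properties recalled in the background section: for a PD-$K'$-algebra $D$, a PD homomorphism $K'\angles{\vec X}[\vec Y] \to D$ over $K'$ is the same datum as a tuple in the PD ideal of $D$ (the values of the $X_i$) together with a tuple in $D$ (the values of the $Y_j$), and, regarding $D$ as a PD-$K$-algebra via $K \to K' \to D$, a PD homomorphism $K\angles{\vec X}[\vec Y] \to D$ over $K$ carries exactly the same datum.

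The second step handles quotients: for a PD ring $B$, a PD saturated ideal $\rho \subseteq B$ and a PD homomorphism $B \to C$, the pushout of $B \to B/\rho$ along $B \to C$ is $C/\overline{\rho C}$, where $\rho C$ is the ordinary ideal of $C$ generated by the image of $\rho$ and the bar is PD saturation (which exists and makes the quotient a PD ring, as recalled). Indeed $C \to C/\overline{\rho C}$ is a PD homomorphism since $\overline{\rho C}$ is PD saturated, and for any PD-$C$-algebra $D$ the structure map $C \to D$ kills $\rho C$ if and only if it kills $\overline{\rho C}$ --- the nontrivial direction because every $\gamma_n$ sends $0$ to $0$ --- and this happens exactly when the composite $B \to C \to D$ kills $\rho$, i.e. factors through $B/\rho$; so $C/\overline{\rho C}$ has the required universal property. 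Combining the two steps via the fact that the outer rectangle of two horizontally adjacent pushout squares is again a pushout, applied to $K \to K\angles{\vec X}[\vec Y] \to K\angles{\vec X}[\vec Y]/\rho$, yields that the base change of $K \to K\angles{\vec X}[\vec Y]/\rho$ along $K \to K'$ is $K'\angles{\vec X}[\vec Y]/\overline{\rho'}$ with $\rho'$ the ideal generated by the image of $\rho$. Since $\overline{\rho'}$ is PD saturated, this is of finite PD type; and for finite PD presentation it remains to verify $\overline{\rho'} = \overline{(\vec r')}$, where $\supseteq$ is clear (as $\vec r' \subseteq \rho' \subseteq \overline{\rho'}$ and $\overline{\rho'}$ is PD saturated) and $\subseteq$ amounts to checking that the base change map, being a PD homomorphism, sends the generators $\vec r$ and $\gamma_l(a)$ (for $a$ in the ideal $(\vec r)$ intersected with the PD ideal) of $\rho = \overline{(\vec r)}$ into $\overline{(\vec r')}$, using that it carries the PD ideal into the PD ideal and commutes with the $\gamma_l$.

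The main obstacle is really only the first step --- reconciling base change with the mixed construction $\angles{\vec X}[\vec Y]$ --- and even that reduces, thanks to the representability statements quoted in the background, to matching up data; the remainder is a diagram chase together with the observation that PD saturation is automatic for any PD quotient, which is exactly what makes the two finiteness classes behave well under base change.
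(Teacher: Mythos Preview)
Your proof is correct and follows the same approach as the paper, which simply asserts in one line that the pushout of $K\angles{\vec{X}}[\vec{Y}]/\overline{(\vec{r})}$ along $f : (K, I, \gamma) \to (K', I', \gamma')$ is $K'\angles{\vec{X}}[\vec{Y}]/\overline{(f(\vec{r}))}$. Your decomposition into ``PD polynomial algebras commute with base change'' and ``PD quotients commute with base change'', glued by pushout pasting, is exactly the verification the paper omits; your explicit check that $\overline{\rho'} = \overline{(\vec r')}$ is likewise the content behind the paper's formula in the finitely presented case.
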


\begin{proof}
  The pushout of $K\angles{\vec{X}}[\vec{Y}]/\overline{(\vec{r})}$
  along $f : (K, I, \gamma) \to (K', I', \gamma')$
  is $K'\angles{\vec{X}}[\vec{Y}]/\overline{(f(\vec{r}))}$.
\end{proof}

\begin{lemma}
  \label{lemma-finite-type-presentation-local}
  Being of finite PD type
  and being of finite PD presentation
  are \emph{local} properties
  of homomorphisms of PD rings,
  in the sense of \citestacks{01SR},
  that is:
  Let $R = (R, I_R, \gamma_R)$ and $A = (A, I, \gamma)$
  be PD rings.
  \begin{enumerate}[label=(\alph*)]
    \item
      If $R \to A$
      is of finite PD type (presentation)
      and $r \in R$,
      then $R_r \to A_r$,
      with the induced PD structures,
      is of finite PD type (presentation).
    \item
      If $r \in R$, $a \in A$
      and $R_r \to A$ is of finite PD type (presentation),
      then $R \to A_a$ is of finite PD type (presentation).
    \item
      If $R \to A$ is a homomorphism of PD rings,
      $a_i \in A$ are elements such that $(a_1, \dots, a_n) = (1)$,
      and $R \to A_{a_i}$ is of finite PD type (presentation)
      for every $i$,
      then $R \to A$ is of finite PD type (presentation).
  \end{enumerate}
\end{lemma}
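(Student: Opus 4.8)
The plan is to transcribe the classical proof that being locally of finite type (resp.\ presentation) is local on the target for ring homomorphisms (\citestacks{01SR}), replacing at each step the ring-theoretic tool by its PD counterpart: ordinary localisations by PD localisations, polynomial algebras by PD polynomial algebras $R\angles{\vec X}[\vec Y]$, ideals $(\vec r)$ by PD saturated ideals $\overline{(\vec r)}$, and the standard two-out-of-three and descent facts by Lemmas \ref{lemma-finite-type-presentation-composition}, \ref{lemma-finite-type-presentation-base-change}, \ref{lemma-finite-presentation} and the characterisation of finite PD type in Lemma \ref{lemma-finite-type}. Two preliminary observations will be used throughout. First, for any PD ring $R$ and $r \in R$ the map $R \to R_r$ is of finite PD presentation, since the universal property of PD localisation identifies $R_r$ with $R[Z]/\overline{(rZ - 1)}$ (a PD map out of the latter over $R$ being precisely a choice of inverse of $r$); likewise $A \to A_a$ is of finite PD presentation for every $a \in A$. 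Second, the image of a PD ring homomorphism $\varphi \colon P \to A$ is a sub-PD-ring of $A$ with PD ideal $\varphi(I_P)$, and $P/\ker\varphi \to \varphi(P)$ is an isomorphism of PD rings (using that $\ker\varphi$ is PD saturated).

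Parts (a) and (b) are then formal. For (a) I would identify $A_r$, with its induced PD structure, as the pushout of $R \to A$ along $R \to R_r$ in the category of PD rings: a PD map out of that pushout over $R$ is a PD map out of $A$ together with a (necessarily unique) compatible PD map out of $R_r$, i.e.\ a PD map out of $A$ inverting the image of $r$, which by the universal property of PD localisation is the same as a PD map out of $A_r$; then stability under base change (Lemma \ref{lemma-finite-type-presentation-base-change}) gives that $R_r \to A_r$ is of finite PD type (resp.\ presentation). For (b), the map $R \to A$ factors as $R \to R_r \to A$ with $R \to R_r$ of finite PD presentation by the first observation, so $R \to A$ is of finite PD type (resp.\ presentation) by Lemma \ref{lemma-finite-type-presentation-composition}; composing once more with $A \to A_a$, again of finite PD presentation, yields the claim for $R \to A_a$.

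Part (c) is the substantial one. For \emph{finite PD type}: using Lemma \ref{lemma-finite-type}, choose for each $i$ finitely many elements of $I_{a_i}$ and of $A_{a_i}$ whose divided powers (resp.\ themselves) generate $A_{a_i}$ as an $R$-algebra, with the former also generating $I_{a_i}$ over the image of $I_R$; clear denominators to obtain finite families $\vec z \subseteq I$ and $\vec w \subseteq A$; and adjoin the $a_i$ together with a family $b_i \in A$ with $\sum_i b_i a_i = 1$. Let $\varphi \colon P \defeq R\angles{\vec X}[\vec Y] \to A$ send the PD variables $\vec X$ to $\vec z$ and the ordinary variables $\vec Y$ to $\vec w$, the $a_i$ and the $b_i$; then $A' \defeq \varphi(P)$ is a sub-PD-ring of $A$ with $R \to A'$ of finite PD type. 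Localising at $a_i$, the image of $P_{Y_{(i)}} \to A_{a_i}$ is a sub-PD-$R$-algebra containing the chosen generators (here one uses the PD localisation formula $\gamma_{a_i,k}(z/a_i^N) = \gamma_k(z)/a_i^{kN}$), hence is all of $A_{a_i}$, so $A'_{a_i} = A_{a_i}$ and $\varphi(I_P)\cdot A_{a_i} = I_{a_i}$ for all $i$; since $(a_1,\dots,a_n) = (1)$ already in $A'$ this forces $A' = A$ and $\varphi(I_P) = I_A$, i.e.\ $A \cong P/\ker\varphi$ as PD rings and $R \to A$ is of finite PD type. For \emph{finite PD presentation}, run the previous step to write $A = P/\rho$ with $P = R\angles{\vec X}[\vec Y]$ finite, $\rho = \overline{\rho}$, and each $a_i$ the image of a variable $Y_{(i)}$; then $R \to P_{Y_{(i)}}$ is of finite PD presentation by the first observation and $R \to A_{a_i}$ is so by hypothesis, so $P_{Y_{(i)}} \to A_{a_i}$ is of finite PD presentation by Lemma \ref{lemma-finite-presentation}(i), and its kernel $\rho P_{Y_{(i)}}$, being the kernel of the localised PD map $P \to A$, is PD saturated, so equals $\overline{(\vec s_i)}$ for a finite $\vec s_i \subseteq P_{Y_{(i)}} = P[Y_{(i)}^{-1}]$ by Lemma \ref{lemma-finite-presentation}(ii). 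Writing each member of $\vec s_i$ as $p/Y_{(i)}^{N}$ with $p \in \rho$, gather all these $p$ together with $1 - \sum_i \beta_i Y_{(i)}$ for lifts $\beta_i \in P$ of the $b_i$ into a finite set $\vec r \subseteq \rho$ and put $\rho' \defeq \overline{(\vec r)} \subseteq \rho$; then $P/\rho'$ surjects onto $A$ with kernel $\rho/\rho'$, the images of the $Y_{(i)}$ generate the unit ideal in $P/\rho'$, and localising $\rho/\rho'$ at the image of $Y_{(i)}$ gives $\rho P_{Y_{(i)}}/\rho' P_{Y_{(i)}}$, which vanishes since $\rho P_{Y_{(i)}} = \overline{(\vec s_i)} \subseteq \overline{\rho' P_{Y_{(i)}}} = \rho' P_{Y_{(i)}}$; hence $\rho = \overline{(\vec r)}$ and $R \to A$ is of finite PD presentation.

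The step I expect to be the main obstacle is the compatibility of PD saturation with localisation that pervades part (c): that the kernel of a localised PD homomorphism is still PD saturated, that a PD saturated ideal stays PD saturated after inverting an element, and that one may therefore pull the finitely many PD generators $\vec s_i$ of a localised kernel back to honest elements of $\rho$ (and, dually, that $\overline{(\vec r)}$ already generates each localised kernel). Once these are in hand the rest is a routine transcription of the classical argument, with Lemma \ref{lemma-finite-type} ensuring that every occurrence of ``generated'' is read as ``generated as a PD algebra / PD ideal''.
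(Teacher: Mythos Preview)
Your proposal is correct and follows essentially the same route as the paper's proof: parts (a) and (b) are handled identically via base change and composition together with $R_r \cong R[X]/\overline{(rX-1)}$, and part (c) is the classical descent argument transcribed to the PD setting, first for finite PD type via a sub-PD-algebra whose localisations exhaust $A_{a_i}$, then for finite PD presentation by writing $A = P/\rho$ and assembling finitely many PD generators of the localised kernels together with the relation $1 - \sum_i \beta_i Y_{(i)}$. The only cosmetic differences are that the paper does the finite PD type step by an explicit element computation with $(\sum_i c_i a_i)^N$ rather than the module-theoretic ``localisations agree, so inclusion is equality'' phrasing, and lifts the $a_i$ to arbitrary $\widetilde a_i \in P$ rather than arranging them to be variables; your worry about PD saturation commuting with localisation is exactly the point the paper also uses without further comment (``$\rho_{\widetilde a_i}$ is also saturated'').
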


\begin{proof}
  \begin{enumerate}[label=(\alph*)]
    \item
      This is a special case of
      Lemma \ref{lemma-finite-type-presentation-base-change}.

    \item
      This follows from
      Lemma \ref{lemma-finite-type-presentation-composition}
      and the fact that $R_r \cong R[X] / \overline{(r X - 1)}$,
      and the same for $A$.

    \item
      Let the $R \to A_{a_i}$ be of finite PD type.
      For every $i$,
      choose elements $x_{i, j} a_i^{-m_{i, j}} \in I_{a_i}$
      and $y_{i, j} a_i^{-n_{i, j}} \in A_{a_i}$
      (with $x_{i, j} \in I$ and $y_{i, j} \in A$)
      according to Lemma \ref{lemma-finite-type}.
      Then $A_{a_i}$ is generated as an $R$-algebra
      by $\gamma_k(x_{i, j} a_i^{-m_{i, j}}) =
      \gamma_k(x_{i, j}) a_i^{-km_{i, j}}$
      and $y_{i, j} a_i^{-n_{i, j}}$.
      Let $\sum_i c_i a_i = 1$ in $A$
      and set $\widetilde{A} \defeq
      R[a_i, c_i, \gamma_k(x_{i, j}), y_{i, j}] \subseteq A$.
      Then we see that for every element $a \in A$
      and every $i$,
      we have $a_i^N a \in \widetilde{A}$
      for $N$ big enough,
      which means that we also have
      $a = (\sum_i c_i a_i)^N a \in \widetilde{A}$
      for $N$ big enough,
      so $\widetilde{A} = A$.
      We also have
      (by Lemma \ref{lemma-finite-type})
      that $I_{a_i}$ is generated as an ideal
      by the elements of $I_R$ and the
      $\gamma_k(x_{i, j}) a_i^{-km_{i, j}}$.
      But then if $\widetilde{I} \subseteq A$ is the ideal
      generated by $I_R$ and all $\gamma(x_{i, j})$,
      we have $\widetilde{I}_{a_i} = I_{a_i}$
      for all $i$,
      which implies $\widetilde{I} = I$.
      Thus, $R \to A$ is of finite PD type
      by the other direction of Lemma \ref{lemma-finite-type}.

      Now let the $R \to A_{a_i}$ be of finite PD presentation.
      We already know
      $A = R\angles{\vec{X}}[\vec{Y}] / \rho$
      for a PD saturated ideal $\rho$.
      For every $i$,
      choose a representative
      $\widetilde{a}_i \in R\angles{\vec{X}}[\vec{Y}]$
      of $a_i \in A$.
      We note that $\rho_{\widetilde{a}_i}$ is also saturated,
      and therefore
      $A_{a_i} = R\angles{\vec{X}}[\vec{Y}]_{\widetilde{a}_i} /
      \rho_{\widetilde{a}_i}$.
      Since both $A_{a_i}$
      and $R\angles{\vec{X}}[\vec{Y}]_{\widetilde{a}_i}$
      are of finite PD presentation over $R$,
      Lemma \ref{lemma-finite-presentation} tells us that
      $\rho_{\widetilde{a}_i} = \overline{(r_{i, j} a_i^{-n_{i, j}})}
      = \overline{(r_{i, j})}$,
      with finitely many $r_{i, j} \in \rho$.
      Let $c_i \in R\angles{\vec{X}}[\vec{Y}]$
      be elements with $1 - \sum_i c_i a_i \in \rho$.
      Then,
      setting $\widetilde{\rho} \defeq
      \overline{(1 - \sum_i c_i a_i, r_{i, j})}
      \subseteq R\angles{\vec{X}}[\vec{Y}]$,
      we have $\widetilde{\rho} \subseteq \rho$,
      $(\widetilde{a}_i) = (1)$
      in $R\angles{\vec{X}}[\vec{Y}]/\widetilde{\rho}$
      and $\widetilde{\rho}_{\widetilde{a}_i} = \rho_{\widetilde{a}_i}$
      for all $i$.
      This implies $\widetilde{\rho} = \rho$.
      \qedhere
  \end{enumerate}
\end{proof}

\begin{definition}
  A morphism of PD schemes
  $(T, \mathcal{I}, \gamma) \to (S, \mathcal{I}_S, \gamma_S)$
  is \emph{locally of finite PD type (presentation)}
  if there are affine open coverings
  $T = \bigcup_i \Spec A_i$
  and $S = \bigcup_i \Spec K_j$
  such that every $\Spec A_i$
  maps to some $\Spec K_{j_i}$
  where $K_{j_i} \to A_i$
  (with the induced structures of PD rings)
  is of finite PD type (presentation).
\end{definition}

The following good properties
of morphisms locally of finite PD type
and morphisms locally of finite PD presentation
now follow from Lemmas
\ref{lemma-finite-type-presentation-composition},
\ref{lemma-finite-type-presentation-base-change}
and \ref{lemma-finite-type-presentation-local}
as in \citestacks{01SQ}:
If $T \to S$ is locally of finite PD type (presentation),
then for any affine open $\Spec A \subseteq T$
mapping into an affine open $\Spec K \subseteq S$,
the homomorphism of PD rings $K \to A$
is of finite PD type (presentation).
In particular,
the morphisms locally of finite PD type (presentation)
between affine PD schemes
correspond to the morphisms
of finite PD type (presentation)
between PD rings.
And morphisms locally of finite PD type (presentation)
are stable under composition and base change.

\begin{definition}
  We say that the PD ideal $I$
  of a PD ring $(A, I, \gamma)$
  is \emph{PD-generated} by a subset $G \subseteq I$
  if $\overline{(G)} = I$,
  that is,
  if it is generated (as an ideal)
  by the elements $\gamma_n(g)$
  for $g \in G$ and $n \geq 1$.
  We say $I$ is \emph{finitely PD-generated}
  if it is PD-generated by a finite set $G$.
\end{definition}

We also globalize the notion of a PD ideal
being finitely PD-generated.
Note that by Lemma \ref{lemma-finite-presentation},
if $(A, I, \gamma)$ is a PD ring
and we equip $A/I$ with the trivial PD structure,
then $I$ is finitely PD-generated
if and only if
$A \to A/I$ is of finite PD presentation.

\begin{definition}
  The PD ideal sheaf $\mathcal{I}$
  of a PD scheme $(S, \mathcal{I}, \gamma)$
  is \emph{locally finitely PD-generated}
  if for every affine open $U \subseteq S$,
  the PD ideal $\mathcal{I}(U) \subseteq \mathcal{O}_S(U)$
  is finitely PD-generated,
  that is,
  if the closed embedding $V(\mathcal{I}) \to S$
  is locally of finite PD presentation,
  where $V(\mathcal{I})$ is regarded as a PD scheme
  with trivial PD structure.
\end{definition}

\subsection{Syntactic presentation of the big crystalline topos}

We can now finish the definition
of the precise variant of the crystalline topos
for which we will give a syntactic presentation.
The objects we want to allow in the site
are those PD thickenings $(T, \mathcal{I}, \gamma)$
over $S$ and $X$
where $T \to S$ is locally of finite PD presentation.
An essentially small dense subcategory is for example
the one where $T$ is additionally required
to be an open subscheme of an affine scheme
which is mapped into an affine open of $S$.
We also impose a finiteness assumption on $X$
without which the connection to compact models
motivating this definition
would be lost.

\begin{definition}
  \label{definition-Cris-fp}
  Let $(S, \mathcal{I}_S, \gamma_S)$ be a PD scheme
  and let $X$ be a scheme
  locally of finite presentation over $S_0 = V(\mathcal{I}_S)$.
  Then we set
  \[ (X / S)_{\Cris_{\fp}} \defeq (X / S)_{\Cris_\mathcal{C}}
     \rlap{,} \]
  where $\mathcal{C}$ is
  the class of PD thickenings $(T, \mathcal{I}, \gamma)$
  over $S$ and $X$
  for which $T \to S$ is locally of finite PD presentation.
\end{definition}

Now there is not much missing for the proof of
the classification result.
The following lemma will be used to show that
the axiom $(\nil)$ is a rigid-topology quotient.

\begin{lemma}
  \label{lemma-pd-generators-nil-ideal}
  Let $(A, \mathfrak{a}, \gamma)$ be a PD ring.
  If $\mathfrak{a}$ is PD-generated by nilpotent elements,
  then $\mathfrak{a}$ is a nil ideal.
\end{lemma}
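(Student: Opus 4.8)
The plan is to isolate one elementary observation about elements of a PD ideal and reduce everything else to it. The observation is: \emph{if $w$ is an element of the PD ideal $\mathfrak{a}$ and $c\,w = 0$ for some integer $c \geq 1$, then $w^{2c} = 0$, and in particular $w$ is nilpotent.} First I would prove this by a short calculation. Multiplying $c\,w = 0$ by $w^{c-1}$ gives $c\,w^c = 0$, hence also $c!\,w^c = (c-1)!\,(c\,w^c) = 0$. On the other hand, since $w \in \mathfrak{a}$ we have the divided power relation $w^c = c!\,\gamma_c(w)$, so that $c!\,w^c = (c!)^2\,\gamma_c(w)$ and therefore $(c!)^2\,\gamma_c(w) = 0$. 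Squaring $w^c = c!\,\gamma_c(w)$ now yields
\[ w^{2c} = \bigl(c!\,\gamma_c(w)\bigr)^2 = \bigl((c!)^2\,\gamma_c(w)\bigr)\,\gamma_c(w) = 0 . \]

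From this I would immediately get the slightly more flexible form: \emph{if $w \in \mathfrak{a}$ and $c\,w$ is nilpotent for some integer $c \geq 1$, then $w$ is nilpotent.} Indeed, if $(c\,w)^L = c^L w^L = 0$, then $w^L$ again lies in the ideal $\mathfrak{a}$ and is annihilated by the integer $c^L$, so the previous observation applies to $w^L$ in place of $w$.

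Now I would prove the lemma. Choose a set $G \subseteq \mathfrak{a}$ of nilpotent elements with $\overline{(G)} = \mathfrak{a}$, so that $\mathfrak{a}$ is generated as an ideal by the elements $\gamma_n(g)$ for $g \in G$ and $n \geq 1$. Since the nilpotent elements of $A$ form an ideal (the nilradical), it suffices to show that each of these generators is nilpotent. But for $g \in G$ and $n \geq 1$ the element $n!\,\gamma_n(g) = g^n$ is a power of the nilpotent element $g$, hence nilpotent, and $\gamma_n(g) \in \mathfrak{a}$; so the flexible form of the observation, applied with $c = n!$ and $w = \gamma_n(g)$, shows that $\gamma_n(g)$ is nilpotent. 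Hence $\mathfrak{a}$ is contained in the nilradical of $A$, i.e.\ $\mathfrak{a}$ is a nil ideal.

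I do not expect a genuine obstacle here; the one thing to get right is the auxiliary observation, whose point is precisely that combining the relation $w^c = c!\,\gamma_c(w)$ with the divisibility $c \divides c!$ lets one cancel the factorial that blocks the naive argument ``$g^n/n!$ is nilpotent because $g^n$ is''. Two minor bookkeeping points: $G$ need not be finite, but this is harmless since nilpotence is tested one element at a time; and the relation $w^c = c!\,\gamma_c(w)$ for $w \in \mathfrak{a}$, although not literally one of the equational axioms listed for $\PD$, follows from the axiom $\iota(\gamma_m(x))\,\gamma_n(x) = \binom{m+n}{m}\,\gamma_{m+n}(x)$ by the usual induction on $c$.
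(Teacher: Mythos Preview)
Your proof is correct and takes a genuinely different route from the paper's. The paper fixes a nilpotent generator $a$ with $a^e = 0$ and computes, in $\QQ[X]$ and then in the universal PD polynomial ring $\ZZ\angles{X}$, an identity of the form $(\gamma_n(X))^k = c\,X^e\,\gamma_{kn-e}(X)$; the main work is checking that the coefficient $c = (kn-e)!/(n!)^k$ is an integer for $k$ large enough, after which the identity transfers to $A$ via the PD map $\ZZ\angles{X} \to A$, $X \mapsto a$. Your argument avoids the universal PD ring and the integrality check entirely: the key observation that $c\,w = 0$ for $w \in \mathfrak{a}$ forces $w^{2c} = 0$ (via $(c!)^2\gamma_c(w) = 0$ and squaring $w^c = c!\,\gamma_c(w)$) is a neat self-contained trick using only the single relation $w^c = c!\,\gamma_c(w)$. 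What the paper's approach buys is a somewhat more transparent nilpotency exponent tied directly to $e$ and $n$; what your approach buys is that it is shorter, stays inside $A$, and needs no arithmetic side lemma about factorials.
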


\begin{proof}
  We need to show that $\mathfrak{a}$ is generated
  (as an ordinary ideal)
  by nilpotent elements.
  So if $a \in \mathfrak{a}$ is nilpotent,
  say $a^e = 0$,
  we need to show that $\gamma_n(a)$ is still nilpotent
  for every $n \geq 1$.
  It is most convenient to do the necessary calculation
  in $\QQ[X]$ first
  (with the unique PD structure $\gamma$ on any ideal):
  \[ (\gamma_n(X))^k = \frac{1}{(n!)^k} X^{kn} =
     \frac{(kn - e)!}{(n!)^k} X^e \gamma_{kn - e}(X)
     \rlap{.} \]
  One can check that $c \defeq \frac{(kn - e)!}{(n!)^k}$
  is an integer for $k$ big enough
  ($k \geq \lceil\frac{e}{n}\rceil (n + 1)$ suffices).
  But then we have the equation
  $(\gamma_n(X))^k = c X^e \gamma_{kn - e}(X)$
  also in the sub-PD-ring
  $\ZZ\angles{X} \subseteq \QQ[X]$,
  and then,
  via the unique PD morphism $\ZZ\angles{X} \to A$
  sending $X$ to $a$,
  we obtain $(\gamma_n(a))^k = 0$ for sufficiently large $k$.
\end{proof}

\begin{proposition}
  \label{proposition-pd-thickenings-presheaf-type}
  Let $(K, I_K, \gamma_K)$ be a PD ring
  with $I_K$ finitely PD-generated
  and let $R$ be a finitely presented $K/I_K$-algebra.
  Then the theory
  $\AlgQuot{K}{R} + \PD_{\gamma_K} + (\nil)$
  is of presheaf type
  and its compact models are those
  $(\gamma \curvearrowright I \triangleleft A \twoheadrightarrow B)$
  where $(A, I, \gamma)$ is of finite PD presentation
  over $(K, I_K, \gamma_K)$.
\end{proposition}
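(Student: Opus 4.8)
The plan is to split off the axiom $(\nil)$ and argue in two stages: first that $\TT_1 \defeq \AlgQuot{K}{R} + \PD_{\gamma_K}$ is of presheaf type with the expected compact models, and then that $(\nil)$ is a rigid\nobreakdash-topology quotient of $\TT_1$; the statement then follows by the definition of a rigid\nobreakdash-topology quotient together with Lemma~\ref{lemma-rigid-topology-quotients-compact-models}.

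For the first stage I would imitate the proof of Proposition~\ref{proposition-PD-presheaf-type-finitely-presented}. The theory $\Alg{K} + \Ideal_{I_K} + \PD_{\gamma_K}$ is cartesian (indeed equivalent to a Horn theory, as in the proof of Lemma~\ref{lemma-compact-pd-algebras}), hence of presheaf type, and by Lemma~\ref{lemma-compact-pd-algebras} its compact models are the PD rings of finite PD presentation over $(K, I_K, \gamma_K)$. Passing to $\AlgQuot{K}{K/I_K} + \PD_{\gamma_K}$ is an equivalence extension: one introduces a sort $B$ for the quotient ring $A/I$ by the equivalence extension of Example~\ref{example-materialize-quotient} applied to the congruence $x - y \in I$, together with an extension by definitions supplying the ring and $K/I_K$\nobreakdash-algebra structure on $B$ and the surjection $f\colon A \to B$ (note $I_K$ maps into $I$ since $\PD_{\gamma_K}$ forces $c_\lambda \in I$, so $B$ is canonically a $K/I_K$\nobreakdash-algebra). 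Finally, since $R$ is finitely presented over $K/I_K$, say $R = (K/I_K)[\vec Z]/(\vec s)$, the extension $\AlgStr{R}_{K/I_K}(B)$ amounts, by the economical formulation of $\AlgAlg{K}{R}$ noted after Definition~\ref{definition-extensions-of-K-Alg}, to adjoining finitely many constants $d_i \oftype B$ and finitely many axioms $s_j(\vec d) = 0$ in the empty context. By Proposition~\ref{proposition-add-constant-symbol} the former preserves presheaf type and compactness of the underlying model, and by Corollary~\ref{corollary-axiom-in-empty-context} the latter is a rigid\nobreakdash-topology quotient; combining these with Lemma~\ref{lemma-rigid-topology-quotients-compact-models} yields that $\TT_1$ is of presheaf type and that its compact models are exactly the $(\gamma \curvearrowright I \triangleleft A \twoheadrightarrow B)$ with $(A, I, \gamma)$ of finite PD presentation over $(K, I_K, \gamma_K)$.

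For the second stage, fix a compact $\TT_1$\nobreakdash-model $M = (\gamma \curvearrowright I \triangleleft A \twoheadrightarrow B)$, so $(A, I, \gamma) = K\angles{\vec X}[\vec Y]/\overline{(\vec r)}$ with $\vec X, \vec Y$ finite. Using Theorem~\ref{theorem-induced-topology} I would describe the cosieve attached to the instance $x \mapsto a$ of the axiom $(\nil)$ (context $x \oftype A$, antecedent $x \in I$, consequent $\bigvee_{n} x^n = 0$) for $a \in I$: it is generated by the canonical maps $M \to M/\overline{(a^n)}$, $n \ge 1$, where $M/\overline{(a^n)}$ denotes the PD quotient by the PD\nobreakdash-saturated ideal $\overline{(a^n)} \subseteq I$ --- a homomorphism $h\colon M \to M'$ with $h(a)^n = 0$ then kills $a^n$ and all $\gamma_m(a^n)$, hence factors through $M/\overline{(a^n)}$, and since $\overline{(a^n)} \subseteq I$ the $B$\nobreakdash-part $A/I$ is untouched. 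Each $M/\overline{(a^n)}$ is again a compact $\TT_1$\nobreakdash-model, because $\overline{(\vec r)}$ followed by $\overline{(a^n)}$ is again the PD saturation of a finitely generated ideal of $K\angles{\vec X}[\vec Y]$ (compare Lemma~\ref{lemma-finite-presentation} and Lemma~\ref{lemma-finite-type-presentation-composition}). Now, since $I_K$ is finitely PD\nobreakdash-generated, say by $\lambda_1, \dots, \lambda_r$, and $A$ is of finite PD presentation, the PD ideal $I$ is PD\nobreakdash-generated by the finitely many elements $g_1, \dots, g_N$ given by the images of the $\lambda_i$ and of the PD polynomial variables $X_j$. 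By Lemma~\ref{lemma-pd-generators-nil-ideal}, $M$ is a model of $(\nil)$ if and only if every $g_k$ is nilpotent. This produces the rigidity algorithm: if all $g_k$ are nilpotent, $M$ already satisfies $(\nil)$ and is therefore $J_{(\nil)}$\nobreakdash-irreducible (a compact model is $J_\QQ$\nobreakdash-irreducible precisely when it satisfies $\QQ$); otherwise pick a non\nobreakdash-nilpotent $g_k$ and use the cosieve above to cover $M$ by the models $M/\overline{(g_k^n)}$, in each of which $g_k$ has become nilpotent while no previously nilpotent $g_l$ has become non\nobreakdash-nilpotent, so the number of non\nobreakdash-nilpotent PD\nobreakdash-generators among $g_1, \dots, g_N$ strictly decreases. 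After at most $N$ steps every branch reaches a model satisfying $(\nil)$, i.e.\ an irreducible object, so $J_{(\nil)}$ is rigid. By the definition of a rigid\nobreakdash-topology quotient, $\AlgQuot{K}{R} + \PD_{\gamma_K} + (\nil)$ is again of presheaf type, and by Lemma~\ref{lemma-rigid-topology-quotients-compact-models} its compact models are exactly the compact $\TT_1$\nobreakdash-models that satisfy $(\nil)$, which are precisely the $(\gamma \curvearrowright I \triangleleft A \twoheadrightarrow B)$ with $(A, I, \gamma)$ of finite PD presentation over $(K, I_K, \gamma_K)$ (the nilpotence of $I$ being automatic for a model of the theory).

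I expect the main obstacle to be the bookkeeping in the second stage: checking that $S_a$ is genuinely generated by the quotient maps $M \to M/\overline{(a^n)}$ (which relies on $a \in I$), that these quotients remain compact (requiring the composition/finiteness lemmas for PD saturations), and above all that the algorithm terminates --- which is exactly where both hypotheses (finite presentation of $R$, finite PD\nobreakdash-generation of $I_K$) and Lemma~\ref{lemma-pd-generators-nil-ideal} are needed to bound the number of PD\nobreakdash-generators one must make nilpotent. The first stage should be routine once the equivalence extension realizing $B = A/I$ is set up with care, being essentially a more elaborate rerun of Proposition~\ref{proposition-PD-presheaf-type-finitely-presented}.
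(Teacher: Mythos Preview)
Your proposal is correct and follows essentially the same route as the paper's proof: start from the cartesian theory $\Alg{K} + \Ideal_{I_K} + \PD_{\gamma_K}$, materialize $B = A/I$ as an equivalence extension, add the $R$-algebra structure via finitely many constants and empty-context axioms (Proposition~\ref{proposition-add-constant-symbol} and Corollary~\ref{corollary-axiom-in-empty-context}), and then show $(\nil)$ is a rigid-topology quotient by covering along the maps $M \to M/\overline{(a^n)}$ for each PD-generator $a$ of $I$, invoking Lemma~\ref{lemma-pd-generators-nil-ideal} for termination. Your write-up is in fact somewhat more detailed than the paper's in justifying compactness of the quotients and the termination bound.
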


\begin{proof}
  We start with the cartesian theory
  $\Alg{K} + \Ideal_{I_K} + \PD_{\gamma_K}$.
  By Lemma \ref{lemma-compact-pd-algebras},
  the compact models are the PD rings
  of finite PD presentation over $(K, I_K, \gamma_K)$.
  It is an equivalence extension
  to add a sort $B$ with $K$-algebra structure
  for the quotient $A / I$,
  arriving at $\AlgQuot{K}{(K/I_K)} + \PD_{\gamma_K}$.
  Since $R$ is a finitely presented $K/I_K$-algebra,
  finitely many constant symbols
  and finitely many axioms in the empty context
  suffice to add an $R$-algebra structure to $B$,
  so by Proposition \ref{proposition-add-constant-symbol}
  and Corollary \ref{corollary-axiom-in-empty-context},
  $\AlgQuot{K}{R} + \PD_{\gamma_K}$
  is of presheaf type
  and a model
  $(\gamma \curvearrowright I \triangleleft A \twoheadrightarrow B)$
  is still compact if and only if
  $(A, I, \gamma)$ is of finite PD presentation over $K$.
  All that remains is to show that $(\nil)$
  is a rigid-topology quotient of this theory.

  Let a compact model $M =
  (\gamma \curvearrowright I \triangleleft A \twoheadrightarrow B)
  \in (\AlgQuot{K}{R} + \PD_{\gamma_K})\dashmod(\Set)_c$
  be given.
  Since $I_K$ is finitely PD-generated
  and $A$ is of finite PD presentation over $K$,
  the PD ideal $I$ is finitely PD-generated.
  Let $a \in I$ be one of the PD generators
  in a chosen finite family.
  The $J_{(\nil)}$-covering cosieve $S_a$
  of Theorem \ref{theorem-induced-topology}
  is the cosieve of all arrows $M \to M'$
  that send $a$ to an nilpotent element;
  it is generated by the countable family
  of the arrows
  $M \to M^n =
  (\gamma \curvearrowright I/\overline{(a^n)} \triangleleft
  A/\overline{(a^n)} \twoheadrightarrow B)$.
  In each of the (still compact) models $M^n$,
  the PD ideal $I/\overline{(a^n)}$
  is PD generated by the images of the PD generators of $I$,
  so we can apply this construction to the other PD generators in turn
  to cover $M$ by models where
  the PD ideal is generated by nilpotent elements
  and where thus,
  by Lemma \ref{lemma-pd-generators-nil-ideal},
  the axiom $(\nil)$ is fulfilled.
\end{proof}

\begin{theorem}
  \label{theorem-affine-crystalline-classifies}
  Let $(K, I_K, \gamma_K)$ be a PD ring
  with $I_K$ finitely PD-generated
  and let $R$ be a finitely presented $K/I_K$-algebra.
  Then the big crystalline topos
  \[ (\Spec R / \Spec K)_{\Cris_\fp} \]
  classifies the geometric theory
  \[ \AlgQuot{K}{R} + \PD_{\gamma_K} + (\nil) + (\loc)
     \rlap{.} \]

  The universal model is
  the short exact sequence
  associated to the structure sheaf
  \[ \mathcal{J} \hookrightarrow \mathcal{O}
     \twoheadrightarrow \mathcal{O}'
     \rlap{,} \]
  with the canonical $K$-algebra structure on $\mathcal{O}$
  and $R$-algebra structure on $\mathcal{O}'$.
\end{theorem}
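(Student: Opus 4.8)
The plan is to follow the strategy of Corollary~\ref{corollary-lazy-classification-result}, now with its standing hypothesis on the class of objects \emph{proved} rather than assumed, and without needing a nonzero-characteristic assumption, since the axiom $(\nil)$ is already dealt with by Proposition~\ref{proposition-pd-thickenings-presheaf-type}. Write $\TT_0 \defeq \AlgQuot{K}{R} + \PD_{\gamma_K} + (\nil)$ and let $\mathcal{C}_0 \defeq \mod{\TT_0}{\Set}_c$. By Proposition~\ref{proposition-pd-thickenings-presheaf-type}, $\TT_0$ is of presheaf type, so $\Set[\TT_0] \simeq [\mathcal{C}_0, \Set]$ with the tautological universal model $M_0$, and moreover $\mathcal{C}_0$ consists precisely of the PD rings $(\gamma \curvearrowright I \triangleleft A \twoheadrightarrow B)$ for which $(A, I, \gamma)$ is of finite PD presentation over $(K, I_K, \gamma_K)$.

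First I would identify the syntactic site with the crystalline one. By Lemma~\ref{lemma-affine-PD-objects-are-models} the affine objects of $\Cris(\Spec R / \Spec K)$ form a category dual to $\mod{\TT_0}{\Set}$; intersecting with the class $\mathcal{C}$ of Definition~\ref{definition-Cris-fp} restricts to those affine $T = \Spec A$ with $K \to A$ of finite PD presentation --- for an affine $T$ over an affine $S$, being locally of finite PD presentation means exactly this, by the stability results of the finiteness subsection --- hence to exactly $\mathcal{C}_0^\op$. Since the affine objects are dense in $\Cris_\fp(\Spec R / \Spec K)$ for the Zariski topology, the Comparison Lemma yields
\[ (\Spec R / \Spec K)_{\Cris_\fp} \;\simeq\; \Sh(\mathcal{C}_0^\op, J_\Zar) \rlap{,} \]
where $J_\Zar$ is the topology described in Lemma~\ref{lemma-affine-Zariski-topology-for-Cris}: a cosieve on $M = (\gamma \curvearrowright I \triangleleft A \twoheadrightarrow B)$ is covering iff it contains the canonical arrows to the localizations $M_{a_i} = (\gamma_{a_i} \curvearrowright I_{a_i} \triangleleft A_{a_i} \twoheadrightarrow B_{a_i})$ for some finite family $a_1, \dots, a_n \in A$ with $(a_1, \dots, a_n) = (1)$.

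Next I would match $J_\Zar$ with the topology coming from the missing axiom. By the duality between quotient extensions and subtoposes, $\Set[\TT_0 + (\loc)]$ is the subtopos of $[\mathcal{C}_0, \Set]$ given by the topology $J_{(\loc)}$ on $\mathcal{C}_0^\op$, and by Theorem~\ref{theorem-induced-topology} this is generated by the cosieves $S_{\vec a}$ of model homomorphisms $M \to M'$ sending at least one $a_i$ to an invertible element, indexed by $M \in \mathcal{C}_0$ and finite families $\vec a$ with $(a_1, \dots, a_n) = (1)$ in $A$. The universal property of localizations of PD rings recalled earlier --- combined with the exactness of localization, which keeps $A_{a_i} \twoheadrightarrow B_{a_i}$ surjective with the nil kernel $I_{a_i}$ --- shows that any such $M \to M'$ factors uniquely through exactly one $M_{a_i}$, so $S_{\vec a}$ is precisely the cosieve generated by $\{M \to M_{a_i}\}$; hence $J_{(\loc)} = J_\Zar$. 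Chaining the equivalences gives that $(\Spec R / \Spec K)_{\Cris_\fp}$ classifies $\TT_0 + (\loc) = \AlgQuot{K}{R} + \PD_{\gamma_K} + (\nil) + (\loc)$. For the universal model, $M_0$ interprets the sorts of $\TT_0$ on an affine object $(\Spec A, \mathcal{I}, \gamma)$ by $A$, $A/I$ and $I$, i.e.\ by the structure sheaves $\mathcal{O}$, $\mathcal{O}'$ and $\mathcal{J}$; these presheaves are already $J_\Zar$-sheaves (ordinary Zariski gluing, surviving for $\mathcal{O}'$ and $\mathcal{J}$ because localization is flat and exact), so no sheafification intervenes and the universal model is the short exact sequence $\mathcal{J} \hookrightarrow \mathcal{O} \twoheadrightarrow \mathcal{O}'$ with its canonical $K$-algebra structure on $\mathcal{O}$ and $R$-algebra structure on $\mathcal{O}'$.

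The step carrying the real content is the identification $J_{(\loc)} = J_\Zar$: one must check that inverting an element $a$ in the category of $\TT_0$-models genuinely coincides with passing to the open PD subscheme $D(a)$, which rests on the compatibility of PD-localization with the kernel--quotient presentation and with the $(\nil)$ condition. This is already the substance of Lemma~\ref{lemma-affine-Zariski-topology-for-Cris} and of the proof of Corollary~\ref{corollary-lazy-classification-result}, so here it is more a matter of assembling the pieces than of fresh difficulty. A minor preliminary point is that the class $\mathcal{C}$ of Definition~\ref{definition-Cris-fp} does admit a dense small subcategory and is closed under open subschemes --- so that $(\Spec R / \Spec K)_{\Cris_\fp}$ is well-defined and its affine objects of finite PD presentation are dense in it --- which follows from the stability properties of \enquote{locally of finite PD presentation} established in the subsection on finiteness conditions.
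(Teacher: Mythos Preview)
Your proposal is correct and follows essentially the same route as the paper: identify the affine objects of $\Cris_\fp(\Spec R/\Spec K)$ with the compact $\TT_0$-models via Proposition~\ref{proposition-pd-thickenings-presheaf-type}, then match the Zariski topology with $J_{(\loc)}$ via Theorem~\ref{theorem-induced-topology} and the universal property of PD-localization, and read off the universal model from the tautological one. One minor slip in wording: a morphism $M \to M'$ in $S_{\vec a}$ need not factor through \emph{exactly one} $M_{a_i}$ (several $a_i$ may become invertible in $M'$), but it factors through at least one, which is all that is needed to conclude that $S_{\vec a}$ equals the cosieve generated by the $M \to M_{a_i}$.
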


\begin{proof}
  Set
  \[ \TT_0 \defeq \AlgQuot{K}{R} + \PD_{\gamma_K} + (\nil)
     \rlap{.} \]
  The underlying category of
  the site of affine objects for $(\Spec R / \Spec K)_{\Cris_\fp}$
  is dual to the subcategory of $\TT_0\dashmod(\Set)$
  where $(A, I, \gamma)$ is of finite PD presentation over $K$.
  By Proposition \ref{proposition-pd-thickenings-presheaf-type},
  this is exactly $\TT_0\dashmod(\Set)_c$,
  and the associated presheaf topos
  \[ [\TT_0\dashmod(\Set)_c, \Set] \]
  classifies $\TT_0$.
  The Zariski topology on this site,
  that is,
  the restriction of the one in
  Lemma \ref{lemma-affine-Zariski-topology-for-Cris}
  to our compact models,
  coincides with the topology induced by $(\nil)$
  via Theorem \ref{theorem-induced-topology}
  (just as in Corollary \ref{corollary-lazy-classification-result}).

  For the universal model,
  we note that the structure sheaf model
  $\mathcal{J} \hookrightarrow \mathcal{O}
  \twoheadrightarrow \mathcal{O}'$,
  when restricted to the site of affine objects,
  becomes simply the tautological model
  in $[\TT_0\dashmod(\Set)_c, \Set]$,
  which is the universal model of $\TT_0$.
  In particular,
  the presheaves of which the universal $\TT_0$-model consists
  are sheaves for the topology induced by $(\loc)$,
  implying that the same model,
  regarded as a model in the subtopos,
  is also the universal model of $\TT_0 + (\loc)$.
\end{proof}

\begin{corollary}
  \label{corollary-points-of-affine-crystallline}
  Let $K$, $R$ be as in
  Theorem \ref{theorem-affine-crystalline-classifies}.
  Then the category of points of the big crystalline topos
  $(\Spec R / \Spec K)_{\Cris_\fp}$
  is equivalent to
  \[ \Bigl( \AlgQuot{K}{R} + \PD_{\gamma_K} + (\nil) + (\loc)
     \Bigr)\dashmod(\Set)
     \rlap{,} \]
  the category of affine PD thickenings $(A, I, \gamma)$
  over $(K, I_K, \gamma_K)$
  (implying that $I$ is a nil ideal)
  together with an $R$-algebra structure on $A/I$,
  such that $A$ is a local ring.
\end{corollary}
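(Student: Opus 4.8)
The plan is to derive the statement directly from Theorem~\ref{theorem-affine-crystalline-classifies} together with the defining universal property of a classifying topos. Recall that if a topos $\E$ classifies a geometric theory $\TT$ with universal model $M$, then for every topos $\E'$ the functor $f \mapsto f^*M$ is an equivalence $\Geom(\E', \E) \simeq \TT\dashmod(\E')$. Specializing to $\E' = \Set$, the category of points of $\E$ — that is, geometric morphisms $\Set \to \E$ and geometric transformations between them — is equivalent to $\TT\dashmod(\Set)$. Applying this with $\E = (\Spec R / \Spec K)_{\Cris_\fp}$ and $\TT = \AlgQuot{K}{R} + \PD_{\gamma_K} + (\nil) + (\loc)$, which is legitimate by Theorem~\ref{theorem-affine-crystalline-classifies}, immediately yields the first asserted equivalence.

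What then remains is purely a matter of unwinding the definition of a $\Set$-based model of this theory and matching it to the algebraic description in the statement. First I would note that a model of $\AlgAlg{K}{R}$ in $\Set$ is a commuting square of ring homomorphisms with a distinguished map $f : A \to B$, where $A$ carries a $K$-algebra structure and $B$ an $R$-algebra structure compatible with it; adding $(\surj)$ forces $f$ to be surjective, so that, up to the canonical isomorphism, $B = A/I$ with $I \defeq \ker f$, and the $R$-algebra structure on $B$ becomes an $R$-algebra structure on $A/I$. Under the convention that $x \in I$ abbreviates $f(x) = 0$, the extension $\PD_{\gamma_K}$ equips $I$ with a PD structure $\gamma$ restricting to $\gamma_K$ on the image of $I_K$, that is, makes $(A, I, \gamma)$ a PD ring under $(K, I_K, \gamma_K)$; the axiom $(\nil)$ says exactly that $I$ is a nil ideal (so that $\Spec(A/I) \to \Spec A$ is a thickening); and $(\loc)$, imposed on $A$, says that $A$ is local — by Remark~\ref{remark-loc-with-surj-and-nil} it is immaterial whether one reads $(\loc)$ as referring to $A$ or to $B$. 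A homomorphism of such models is a compatible pair of ring homomorphisms $A \to A'$, $B \to B'$ respecting all of this structure, which is precisely a morphism in the category described in the statement, so the equivalence is one of categories, not merely of their object classes.

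There is essentially no hard step here: the corollary is a direct consequence of the classification theorem, and the main obstacle is merely the bookkeeping in the previous paragraph — keeping track of how the several layers $\AlgAlg{K}{R}$, $(\surj)$, $\PD_{\gamma_K}$, $(\nil)$, $(\loc)$ combine, and using the identification $B \cong A/I$ to see that the data of a model is literally that of an affine PD thickening over $(K, I_K, \gamma_K)$ with an $R$-algebra structure on its closed subscheme and a locality condition on the ambient ring. If desired, I would also remark that $I_K$ being finitely PD-generated and $R$ being finitely presented play no further role in this corollary beyond guaranteeing, via Theorem~\ref{theorem-affine-crystalline-classifies}, that the topos in question classifies the stated theory in the first place.
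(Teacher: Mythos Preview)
Your proof is correct and follows essentially the same approach as the paper's own proof, which simply notes that points are geometric morphisms from $\Set$ and invokes Theorem~\ref{theorem-affine-crystalline-classifies} together with the definition of classifying topos. Your additional paragraph unwinding the model data into the algebraic description is helpful elaboration but not a different argument.
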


\begin{proof}
  The points of a topos are by definition
  the geometric morphisms from $\Set$ to it.
  So this follows immediately from
  Theorem \ref{theorem-affine-crystalline-classifies}
  and the definition of classifying topos.
\end{proof}

\begin{remark}
  If the ring $R$ has non-zero characteristic,
  then there is a different way of dealing with the axiom $(\nil)$.
  Namely,
  one can see that if the PD ideal $I$ of $A$
  contains a non-zero integer,
  then $(\nil)$ is actually equivalent to
  \[ \top \turnstile{[]} \bigvee_{0 \neq n \in \ZZ} (c_n = 0)
     \rlap{,} \]
  which is an axiom in the empty context.
  So in Theorem \ref{theorem-affine-crystalline-classifies},
  instead of assuming that $I_K$ is finitely PD-generated,
  we can alternatively assume $R$ to have non-zero characteristic.
\end{remark}

\subsection{The non-affine case}

Now we apply the results of Section \ref{section-gluing}
to the big crystalline topos
to obtain a syntactic presentation of $(X/S)_\Crisfp$
in the case where $S$ and $X$ are not affine.
This will be an adaptation of
the treatment of the Zariski topos
in Section \ref{section-gluing}
to the crystalline topos.
and we will mostly be concerned with the aspects which differ
in these two situations
and how to handle them.

Since we only know a theory classified by $(X/S)_\Crisfp$
in the case where both $S$ and $X$ are affine,
we will want to cover both $S$ and $X$
by affine opens $S_i$ respectively $X_i$
such that for each $i$,
$X_i$ lies over $S_i$.
\[ \begin{tikzcd}
  S & X \ar[l] \\
  S_i \ar[u, hook] &
  X_i \ar[u, hook] \ar[l]
\end{tikzcd} \]
This is always possible,
as we can first choose an affine cover of $S$
by some $S_i$
and then cover the $X \times_S S_i$ by affine opens.
Also,
an open subscheme $S' \subseteq S$ is canonically a PD scheme
as $(S', \mathcal{I}_S|_{S'}, \gamma_S|_{S'})$,
and we note that if $X' \to S$ factors through $S'$,
then it automatically also factors through
$S'_0 = V(\mathcal{I}_S|_{S'})$.

When we turn to defining open subtoposes
corresponding to such pairs of open subschemes $(S', X')$,
there are two things to note.
Firstly,
it is no longer the case that $(S', X')$
is an object of the site $\Crisfp(X/S)$,
since $X'$ is not at all required to be isomorphic to $S'_0$,
and the PD scheme $(S', \mathcal{I}_S|_{S'}, \gamma_S|_{S'})$ itself
does not have to be a PD thickening.
But we can still consider the class of all objects
$(T, \mathcal{I}, \gamma) \in \Crisfp(X/S)$
such that $T \to S$ factors through $S'$
and $V(\mathcal{I}) \to X$ factors through $X'$,
that is,
the subterminal presheaf
\[ U_{(S', X')} \defeq \Hom_{X/S}(\hole, (S', X'))
   \;:\; \Crisfp(X/S)^\op \to \Set
   \rlap{,} \]
and it is easy to see that this is in fact a sheaf
for the Zariski topology on $\Crisfp(X/S)$.

Secondly, however,
this subterminal object $U_{(S', X')}$
depends in fact only on $X'$ and not on $S'$,
namely,
we have $U_{(S', X')} = U_{(S, X')}$.
This is because the closed embedding
$V(\mathcal{I}) \hookrightarrow T$
is a homeomorphism,
and whether or not $T \to S$ factors through
the open subscheme $S'$
can be tested on the level of points.
\[ \begin{tikzcd}
  S & X \ar[l] \\
  S' \ar[u, hook] & X' \ar[u, hook] \ar[l] \\
  T \ar[u, dashed] \ar[uu, bend left=45] &
  V(\mathcal{I}) \ar[u] \ar[l, hook]
\end{tikzcd} \]
Thus, we only have induced open subtoposes
$U_{X'} \defeq U_{(S, X')}$
for all open subschemes $X' \subseteq X$.

This leads to the slightly subtle situation
that a cover of $X$ by affine open $X_i$
with the property that every $X_i \to S$
factors through some affine open $S_i \subseteq S$
induces an open cover of $(X/S)_\Crisfp$
by subtoposes for which we know classified theories,
but if $S_i = \Spec K_i$ and $S'_i = \Spec K'_i$
are two different choices for the same $X_i = \Spec R_i$,
then we have no direct algebraic relation between $K_i$ and $K'_i$,
so we have no way to construct a diagonal extension
between the classified theories.
Therefore it seems more reasonable
to work with pairs $(S_i, X_i)$ of open subschemes from the beginning
and only drop the condition that the $S_i$ cover $S$.

Here is the analogue of
Lemma \ref{lemma-for-gluing-Zar}
that we need.

\begin{lemma}
  \label{lemma-for-gluing-Cris}
  \leavevmode
  \begin{enumerate}[label=(\roman*)]
    \item
      The mapping
      \[ X' \mapsto ((X/S)_\Crisfp)_{o(U_{X'})} \]
      from open subschemes of $X$ to open subtoposes of $(X/S)_\Zarfp$
      is monotone
      and preserves finite intersections
      and arbitrary unions.

    \item
      For open subschemes $S' \subseteq S$ and $X' \subseteq X$
      with $X' \to S'$,
      the open subtopos $((X/S)_\Crisfp)_{o(U_{X'})}$
      is equivalent to $(X'/S')_\Crisfp$.

    \item
      If $S = \Spec K$ and $X = \Spec R$ are affine,
      with $I_K$ finitely PD-generated
      and $R$ finitely presented over $K/I_K$,
      so that $(X/S)_\Crisfp$ classifies
      \[ \TT_{K, R} \;\defeq\;
         \AlgQuot{K}{R} + \PD_{\gamma_K} + (\nil) + (\loc)
         \rlap{,} \]
      and $h \in R$,
      then the open subtopos $((X/S)_\Crisfp)_{o(U_{D(h)})}$
      is presented by the closed geometric formula
      \[ \inv(c_h) \rlap{,} \]
      which is also equivalent to
      \[ \inv(c_g \oftype A) \land \inv(c_h \oftype B) \]
      for any $g \in K$ such that $g \divides h$ in $R$.
  \end{enumerate}
\end{lemma}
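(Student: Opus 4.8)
The plan is to transpose the proof of Lemma~\ref{lemma-for-gluing-Zar} from the Zariski to the crystalline setting, replacing open subschemes of $S$ by open subschemes of $X$ and feeding in the PD-theoretic facts collected above. For (i) I would identify open subtoposes of $(X/S)_\Crisfp$ with subterminal sheaves $F \subseteq 1$, which in turn correspond to classes of objects of $\Crisfp(X/S)$ closed under precomposition; under this identification $((X/S)_\Crisfp)_{o(U_{X'})}$ is the class of all PD thickenings $(T,\mathcal{I},\gamma)$ for which $V(\mathcal{I}) \to X$ factors through $X'$ (recall from the preamble that $U_{(S',X')}$ depends only on $X'$). Monotonicity, $U_X = 1$, and preservation of binary --- hence finite --- intersections are then immediate, since $V(\mathcal{I}) \to X$ factors through $X'_1 \cap X'_2$ if and only if it factors through each $X'_i$. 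For a union $X' \defeq \bigcup_i X'_i$, given a subterminal sheaf $F$ with $U_{X'_i} \leq F$ for all $i$ and an object with $V(\mathcal{I}) \to X$ factoring through $X'$, I would pull the cover of $X'$ back along $V(\mathcal{I}) \to X'$ and, via the homeomorphism $V(\mathcal{I}) \cong T$, obtain an open cover $T = \bigcup_i T_i$ by objects of $\Crisfp(X/S)$ with $V(\mathcal{I}|_{T_i}) \to X$ factoring through $X'_i$; then $\abs{F(T_i)} = 1$ for all $i$, and the sheaf condition for the Zariski topology forces $\abs{F(T)} = 1$, i.e. $U_{X'} \leq F$.

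For (ii), let $\mathcal{D} \subseteq \Crisfp(X/S)$ be the full subcategory on those $(T,\mathcal{I},\gamma)$ with $V(\mathcal{I}) \to X$ factoring through $X'$. This is closed under precomposition, so presheaves on $\Crisfp(X/S)$ supported on $\mathcal{D}$ are the same as presheaves on $\mathcal{D}$, and $((X/S)_\Crisfp)_{o(U_{X'})}$ is the full subcategory of those which are moreover sheaves. The key step is an isomorphism of categories $\mathcal{D} \simeq \Crisfp(X'/S')$: a PD thickening over $S'$ and $X'$ becomes one over $S$ and $X$ by composing with the open immersions, and $T \to S$ stays locally of finite PD presentation because open immersions of PD schemes are locally of finite PD presentation (using $R_r \cong R[Z]/\overline{(rZ-1)}$ as in the proof of Lemma~\ref{lemma-finite-type-presentation-local}) and these are stable under composition; conversely, if $(T,\mathcal{I},\gamma) \in \Crisfp(X/S)$ has $V(\mathcal{I}) \to X$ factoring through $X'$, then since $V(\mathcal{I}) \hookrightarrow T$ is a homeomorphism and $S' \hookrightarrow S$ is an open immersion, the structure map $T \to S$ factors through $S'$, hence through the PD scheme $(S',\mathcal{I}_S|_{S'},\gamma_S|_{S'})$. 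Morphisms on the two sides correspond because $X' \hookrightarrow X$ and $S' \hookrightarrow S$ are monomorphisms, and covering sieves of an object of $\mathcal{D}$ come from open covers of $T$, which stay inside $\mathcal{D}$, so the Zariski topology restricts to that of $\Crisfp(X'/S')$. This gives $((X/S)_\Crisfp)_{o(U_{X'})} \simeq (X'/S')_\Crisfp$.

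For (iii), I would argue directly in the structure-sheaf model. By Theorem~\ref{theorem-affine-crystalline-classifies}, $(X/S)_\Crisfp = \Set[\TT_{K,R}]$ with universal model $\mathcal{J} \hookrightarrow \mathcal{O} \twoheadrightarrow \mathcal{O}'$, where $\mathcal{O}'(T,\mathcal{I},\gamma) = \mathcal{O}_{V(\mathcal{I})}(V(\mathcal{I}))$ carries the $R$-algebra structure coming from $V(\mathcal{I}) \to X = \Spec R$. Computing the interpretation of $\inv(c_h \oftype B)$ --- the image in $1$ of the subobject $\{\overline{x} : \overline{x}\, c_h = 1\}$ of $\mathcal{O}'$ --- at an object $(T,\mathcal{I},\gamma)$, it is a singleton precisely when $h$ is locally a unit in $\mathcal{O}'$ over $T$, which by locality of invertibility on $V(\mathcal{I}) \cong T$ means $h$ is a unit in $\mathcal{O}_{V(\mathcal{I})}(V(\mathcal{I}))$, i.e. precisely when $V(\mathcal{I}) \to \Spec R$ factors through $D(h) = \Spec R_h$. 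But that is exactly the condition for $U_{D(h)} = U_{(S,D(h))}$ to have a singleton at $(T,\mathcal{I},\gamma)$, so $\interpretation{\inv(c_h \oftype B)} = U_{D(h)}$ and $\inv(c_h \oftype B)$ presents $((X/S)_\Crisfp)_{o(U_{D(h)})}$. (This is consistent with part~(ii) together with Theorem~\ref{theorem-affine-crystalline-classifies} applied to $R_h$, which is again finitely presented over $K/I_K$.) For the alternative formula, if $g \in K$ with $g \divides h$ in $R$, say $h = g r$ with $r \in R$, then $\TT_{K,R}$ proves $c_h = f(c_g)\, c_r$ in $B$, so $\inv(c_h \oftype B) \turnstile{[]} \inv(f(c_g))$, and Remark~\ref{remark-loc-with-surj-and-nil} gives $\inv(f(c_g)) \doubleturnstile{[]} \inv(c_g \oftype A)$; hence $\inv(c_h \oftype B)$ and $\inv(c_g \oftype A) \land \inv(c_h \oftype B)$ are provably equivalent in $\TT_{K,R}$ and present the same open subtopos.

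The step I expect to require the most care is the site identification in (ii): matching objects, morphisms, the \emph{locally of finite PD presentation} condition, and the Zariski topology across the two crystalline sites. The one substantive input --- that a PD thickening whose $V(\mathcal{I})$ maps into $X'$ automatically has its structure map to $S$ factoring through $S'$ --- is precisely the observation already recorded in the preamble; granting that and the PD analogues of the standard permanence properties of morphisms locally of finite presentation (Lemmas~\ref{lemma-finite-type-presentation-composition},~\ref{lemma-finite-type-presentation-base-change},~\ref{lemma-finite-type-presentation-local}), the remainder is bookkeeping.
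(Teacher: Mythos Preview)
Your proposal is correct and, for parts (i) and (ii), follows the paper's argument essentially verbatim --- you even supply more detail in (ii), spelling out the site identification and the check that ``locally of finite PD presentation'' is preserved under the passage from $S$ to $S'$, which the paper dispatches with a reference back to the Zariski case.

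For part (iii) you take a genuinely different route. The paper argues by applying part (ii) and then Theorem~\ref{theorem-affine-crystalline-classifies} a second time to the pair $(K, R_h)$: the open subtopos $((X/S)_\Crisfp)_{o(U_{D(h)})} \simeq (D(h)/S)_\Crisfp$ classifies $\TT_{K, R_h}$, which is syntactically $\TT_{K,R} + \inv(c_h)$, and the structure sheaves agree under restriction, so $\inv(c_h)$ is the presenting formula. You instead compute $\interpretation{\inv(c_h \oftype B)}$ directly in the universal model $(\mathcal{J} \hookrightarrow \mathcal{O} \twoheadrightarrow \mathcal{O}')$, using that invertibility is a Zariski-local condition to handle the existential quantifier, and identify the resulting subterminal sheaf with $U_{D(h)}$. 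Your approach avoids re-invoking the classification theorem for $R_h$ and is arguably more self-contained; the paper's approach is shorter and more conceptual, leveraging the already-proved result rather than unwinding the interpretation. Both are sound, and your parenthetical remark already acknowledges the paper's route as a consistency check. The treatment of the alternative formula via $g \divides h$ and Remark~\ref{remark-loc-with-surj-and-nil} is the same in both.
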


\begin{proof}
  \begin{enumerate}[label=(\roman*)]
    \item
      Monotonicity and finite intersections are clear.
      For unions,
      let $X'_i$ be a family of open subschemes of $X$
      and let $F \subseteq 1_{(X/S)_\Crisfp}$
      be a subterminal sheaf
      with $U_{X'_i} \leq F$ for all $i$.
      We want to show $U_{X'} \leq F$ for $X' \defeq \bigcup_i X'_i$.
      So let $T = (T, \mathcal{I}, \gamma) \in \Crisfp(X/S)$
      be given with $V(\mathcal{I}) \to X'$.
      By pulling back the $X'_i$,
      this induced an open cover of $V(\mathcal{I})$,
      or equivalently an open cover $T = \bigcup_i T_i$,
      such that $V(\mathcal{I}|_{T_i}) \to X_i$.
      This means $\abs{U_{X_i}(T_i)} = 1$,
      which implies $\abs{F(T_i)} = 1$,
      and since $F$ is a sheaf for the Zariski topology on $\Crisfp(X/S)$,
      we obtain $\abs{F(T)} = 1$.

    \item
      As explained above,
      $U_{X'}$ selects exactly those objects of $\Crisfp(X/S)$
      which belong to $\Crisfp(X'/S')$.
      The rest is exactly as in the case of the Zariski topos.

    \item
      We know that $(X/S)_\Crisfp$ classifies $\TT_{K, R}$
      and
      \[ ((X/S)_\Crisfp)_{o(U_{D(h)})} \simeq (D(h)/S)_\Crisfp \]
      classifies $\TT_{K, R_h}$,
      which is equivalent to $\TT_{K, R} + \inv(c_h)$.
      It follows that $\inv(c_h)$ presents this open subtopos,
      since one structure sheaf
      pulls back to the other,
      including the extra structure that makes up the universal models.
      For $g \in K$ with $g \divides h$ in $R$,
      the theory $\TT_{K, R}$ shows
      \[ \inv(c_h) \turnstile{[]} \inv(f(c_g)) \]
      and also,
      by Remark \ref{remark-loc-with-surj-and-nil},
      \[ \inv(f(c_g)) \doubleturnstile{[]} \inv(c_g) \rlap{.} \]
  \end{enumerate}
\end{proof}

As the final prerequisite,
we show that intersections of pairs of open subschemes
can be covered by appropriate pairs of standard opens.

\begin{lemma}
  \label{lemma-cover-by-pairs-of-standard-opens}
  Let $X \to S$ be a morphism of schemes,
  let $S_1, S_2 \subseteq S$
  and let $X_1, X_2 \subseteq X$
  be affine open subschemes,
  $S_i = \Spec K_i$,
  $X_i = \Spec R_i$,
  such that $X_i \to S_i$.
  Then there are families of elements
  $g_i^j \in K_i$, $h_i^j \in R_i$, $j \in J$, $i \in \{1, 2\}$,
  such that
  for every $j \in J$
  we have $g_i^j \divides h_i^j$ in $R_i$
  and
  \[ \Spec K_1 \supseteq D(g_1^j) = D(g_2^j) \subseteq \Spec K_2
     \rlap{,} \]
  \[ \Spec R_1 \supseteq D(h_1^j) = D(h_2^j) \subseteq \Spec R_2 \]
  as open subschemes of $S$ respectively $X$,
  and
  \[ X_1 \cap X_2 =
     \bigcup_{j \in J} D(g_1^j) =
     \bigcup_{j \in J} D(g_2^j)
     \rlap{.} \]
\end{lemma}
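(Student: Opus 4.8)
The plan is to apply the fact recalled just above (\cite[5.3.1. Proposition]{vakil}) twice — first downstairs on $S$, then upstairs on $X$ — and to extract the divisibility condition from the standard description of a principal open of a localized ring.

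First I would check that the restriction of $X \to S$ to the overlap $X_1 \cap X_2$ gives a well-defined morphism $\pi \colon X_1 \cap X_2 \to S_1 \cap S_2$: the composites $X_i \hookrightarrow X \to S$ both agree over $X_1 \cap X_2$ with the restriction of the given map $X \to S$, while the $X_1$-composite factors through $S_1$ and the $X_2$-composite through $S_2$, so the common restriction lands in $S_1 \cap S_2$. Since $S_i = \Spec K_i$ are affine opens of $S$, their intersection is covered by opens $W_\alpha$ ($\alpha$ in some index set) that are principal opens in both, say $W_\alpha = D(g_1^\alpha) = D(g_2^\alpha)$ with $g_i^\alpha \in K_i$. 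Pulling these back along $\pi$ yields an open cover $(V_\alpha)_\alpha$ of $X_1 \cap X_2$, and factoring $\pi$ through the two structure maps $X_i \to S_i$ one computes $V_\alpha = D(\bar g_1^\alpha) \cap D(\bar g_2^\alpha)$, where $\bar g_i^\alpha \in R_i$ denotes the image of $g_i^\alpha$; in particular $V_\alpha$ is the intersection of the two affine opens $D(\bar g_1^\alpha) = \Spec (R_1)_{\bar g_1^\alpha}$ and $D(\bar g_2^\alpha) = \Spec (R_2)_{\bar g_2^\alpha}$ of $X$.

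Next I would apply \cite[5.3.1. Proposition]{vakil} again, this time to each intersection of affine opens $D(\bar g_1^\alpha) \cap D(\bar g_2^\alpha) \subseteq X$, covering it by opens $Y_{\alpha\beta}$ that are principal in both $D(\bar g_1^\alpha)$ and $D(\bar g_2^\alpha)$. A principal open of a localization $\Spec (R_1)_{\bar g_1^\alpha}$, viewed inside $X_1 = \Spec R_1$, is — after clearing the denominator — of the form $D(h_1^{\alpha\beta})$ for some $h_1^{\alpha\beta} \in R_1$ with $\bar g_1^\alpha \divides h_1^{\alpha\beta}$, and symmetrically on the $R_2$ side; hence $Y_{\alpha\beta} = D(h_1^{\alpha\beta}) = D(h_2^{\alpha\beta})$ with $\bar g_i^\alpha \divides h_i^{\alpha\beta}$ in $R_i$. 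Setting $J$ to be the set of pairs $(\alpha,\beta)$ and putting $g_i^{(\alpha,\beta)} \defeq g_i^\alpha$, $h_i^{(\alpha,\beta)} \defeq h_i^{\alpha\beta}$, one obtains families with $g_i^j \divides h_i^j$ in $R_i$, with $D(g_1^j) = D(g_2^j)$ in $S$ and $D(h_1^j) = D(h_2^j)$ in $X$, and with $\bigcup_{j \in J} D(h_i^j) = \bigcup_\alpha V_\alpha = X_1 \cap X_2$ (and, as a byproduct, $\bigcup_{j \in J} D(g_i^j) = S_1 \cap S_2$), which is the assertion.

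The whole argument is essentially bookkeeping over two nested covers; the only steps that demand a little care are the initial verification that $X_1 \to S$ and $X_2 \to S$ restrict to the same map over $X_1 \cap X_2$ (needed for $\pi$ to exist) and the clearing-of-denominators identification in the last step — the latter being precisely what makes the divisibility $g_i^j \divides h_i^j$ come out with $g_i^j$ drawn from the base ring $K_i$ rather than merely from $R_i$.
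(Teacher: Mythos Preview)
Your argument is correct and close in spirit to the paper's, but the mechanics differ in an instructive way. The paper does not nest the two applications of \cite[5.3.1]{vakil}: it directly covers $X_1 \cap X_2$ by simultaneous standard opens $D(h_1^j) = D(h_2^j)$, using the fact that such opens form a \emph{basis} (not just a cover) to arrange that each $D(h_i^j)$ maps into a simultaneous standard open $D(g_i^j)$ of $S_1 \cap S_2$. The divisibility $g_i^j \divides h_i^j$ is then extracted a posteriori: the factoring $R_i \to (K_i)_{g_i^j} \to (R_i)_{h_i^j}$ shows $g_i^j$ becomes invertible, hence divides some power $(h_i^j)^N$, and one replaces $h_i^j$ by that power (which does not change $D(h_i^j)$). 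Your nested construction --- first cover downstairs, pull back, then refine inside each $D(\bar g_1^\alpha) \cap D(\bar g_2^\alpha)$ --- avoids this power-replacement trick entirely, since the divisibility is built into the denominator-clearing step. The paper's route is shorter to state; yours is more explicitly constructive and makes transparent why the $g$'s can be taken from $K_i$ rather than $R_i$. (Incidentally, the displayed equation $X_1 \cap X_2 = \bigcup_j D(g_i^j)$ in the statement is a typo for $\bigcup_j D(h_i^j)$; your proof handles the intended assertion.)
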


\begin{proof}
  As mentioned before,
  we can cover $S_1 \cap S_2$ by open subschemes
  simultaneously standard open in $S_1$ and $S_2$.
  The same is true for $X_1 \cap X_2$,
  and since the simultaneously standard opens
  even form a basis of opens of $X_1 \cap X_2$,
  we can also arrange that $D(h_1^j)$ maps to $D(g_1^j)$,
  and, equivalently, $D(h_2^j)$ maps to $D(g_2^j)$.
  This means that
  for each $i$ and $j$,
  we have the dashed arrow in
  \[ \begin{tikzcd}
    R_i \ar[r] & (R_i)_{h_i^j} \\
    K_i \ar[u] \ar[r] &
    (K_i)_{g_i^j} \ar[u, dashed]
    \rlap{,}
  \end{tikzcd} \]
  so $g_i^j$ is invertible in $(R_i)_{h_i^j}$,
  so $g_i^j$ divides some power of $h_i^j$,
  and replacing $h_i^j$ with this power,
  we obtain $g_i^j \divides h_i^j$ in $R_i$.
\end{proof}

Recall the notation $\AlgStr{R}_K(B)$
for the extension adding
to a sort $B$
an $R$-algebra structure
compatible with a previously given $K$-algebra structure,
from Definition \ref{definition-extensions-of-K-Alg}.
For a PD ring $(K, I_K, \gamma_K)$,
we also denote
$\gamma/\gamma_K$
the axioms
distinguishing the extension $\PD_{\gamma_K}$
of $\Alg{K} + \Ideal_{I_K}$
from $\PD$.

\begin{theorem}
  \label{theorem-gluing-Cris}
  Let $(S, \mathcal{I}_S, \gamma_S)$ be a PD scheme
  with $\mathcal{I}_S$ locally finitely PD-generated,
  and let $X$ be a scheme locally of finite presentation
  over $S_0 = V(\mathcal{I}_S)$.
  Let
  \[ \Spec K_i = S_i \subseteq S, \qquad
     \Spec R_i = X_i \subseteq X \]
  be open subschemes
  such that $X_i \to S_i$
  and $X = \bigcup_{i \in I} X_i$.
  For every $i \neq i' \in I$,
  let
  \[
    g_{i, i'}^j \in K_i, \quad
    g_{i', i}^j \in K_{i'}, \quad
    h_{i, i'}^j \in R_i, \quad
    h_{i', i}^j \in R_{i'},
  \]
  $j \in J_{\{i, i'\}}$,
  be families of elements as in
  Lemma \ref{lemma-cover-by-pairs-of-standard-opens},
  with corresponding ring isomorphisms
  \[ \varphi_{i, i'}^j = (\varphi_{i', i}^j)^{-1} :
     (K_i)_{g_{i, i'}^j} \to (K_{i'})_{g_{i', i}^j}, \qquad
     \widetilde{\varphi}_{i, i'}^j =
     (\widetilde{\varphi}_{i', i}^j)^{-1} :
     (R_i)_{h_{i, i'}^j} \to (R_{i'})_{h_{i', i}^j}
     \rlap{.} \]

  Then $(X/S)_\Crisfp$ classifies the geometric theory
  \begin{align*}
    \TT_{X/S} \;\defeq\;
    {}&\AlgQuot{\ZZ}{\ZZ} + \PD + (\nil) + (\loc)
    + \angles{p_i}_{i \in I}
    + (\bigvee_{i \in I} p_i)
    \\
    &+ \Bigl( \Bigl( \AlgStr{K_i}(A) + \AlgStr{R_i}_{K_i}(B)
                     + \gamma/\gamma_{K_i}
              \Bigr)/p_i
       \Bigr)_{i \in I}
    \\
    &+ \Bigl(
         x \in \widetilde{c}_{h_{i, i'}^j} \land \inv(x)
         \turnstile{x \oftype B}
         p_{i'}
       \Bigr)_{i \neq i' \in I, j \in J_{\{i, i'\}}}
    \\
    &+ \Bigl(
         p_i \land p_{i'}
         \turnstile{[]}
         \bigvee_{j \in J_{\{i, i'\}}} \ex{x}{B}
           (x \in \widetilde{c}_{h_{i, i'}^j} \land \inv(x))
       \Bigr)_{i \neq i' \in I, j \in J_{\{i, i'\}}}
    \\
    &+ \Bigl(
         x \in \widetilde{c}_{g_{i', i}^j} \land
         \inv(x) \land
         y \in \widetilde{c}_\lambda \land
         z \in \widetilde{c}_{\lambda'}
         \turnstile{x, y, z \oftype A}
         x^n y = z
       \Bigr)_{i \neq i' \in I, j \in J_{\{i, i'\}}, \lambda \in K_i}
    \\
    &+ \Bigl(
         x \in \widetilde{c}_{h_{i', i}^j} \land
         \inv(x) \land
         y \in \widetilde{c}_\mu \land
         z \in \widetilde{c}_{\mu'}
         \turnstile{x, y, z \oftype B}
         x^n y = z
       \Bigr)_{i \neq i' \in I, j \in J_{\{i, i'\}}, \mu \in R_i}
       \rlap{,}
  \end{align*}
  where in the last two families of axioms,
  $\lambda' \in K_{i'}$ and $n \in \NN$
  are chosen for each $\lambda \in K_i$
  such that
  $\varphi_{i, i'}^j(\lambda) = (g_{i', i}^j)^{-n} \lambda'$,
  and
  $\mu' \in R_{i'}$ and $n \in \NN$
  are chosen for each $\mu \in R_i$
  such that
  $\widetilde{\varphi}_{i, i'}^j(\mu) = (h_{i', i}^j)^{-n} \mu'$,
\end{theorem}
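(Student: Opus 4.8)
Following the pattern of the proof of Theorem~\ref{theorem-gluing-Zar}, the plan is to substitute the big crystalline topos for the big Zariski topos and Lemma~\ref{lemma-for-gluing-Cris} for Lemma~\ref{lemma-for-gluing-Zar}. By Lemma~\ref{lemma-for-gluing-Cris}~(i) and the hypothesis $X = \bigcup_i X_i$, the topos $\E \defeq (X/S)_\Crisfp$ is covered by the open subtoposes $\E_i \defeq ((X/S)_\Crisfp)_{o(U_{X_i})}$, which by Lemma~\ref{lemma-for-gluing-Cris}~(ii) (using $X_i \to S_i$) are equivalent to $(X_i/S_i)_\Crisfp$; by Theorem~\ref{theorem-affine-crystalline-classifies} the latter classifies $\TT_{K_i, R_i} = \AlgQuot{K_i}{R_i} + \PD_{\gamma_{K_i}} + (\nil) + (\loc)$, with universal model the structure sheaf sequence $\mathcal{J} \hookrightarrow \mathcal{O} \twoheadrightarrow \mathcal{O}'$ equipped with its $K_i$- and $R_i$-algebra structures. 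I would take as common base theory $\TT_0 \defeq \AlgQuot{\ZZ}{\ZZ} + \PD + (\nil) + (\loc)$ and as base model $M_0$ the sequence $\mathcal{J} \hookrightarrow \mathcal{O} \twoheadrightarrow \mathcal{O}'$ on $\E$ itself, which is a model of $\TT_0$ because each restriction $M_0|_{\E_i}$ is (the underlying ring map being surjective by construction and the ideal carrying a PD structure), and $(\nil)$, $(\loc)$ are geometric sequents, hence checkable on the cover. Restricting $M_0$ to $\E_i$ and adjoining the $K_i$-algebra structure on $\mathcal{O}$, the $R_i$-algebra structure on $\mathcal{O}'$, and the compatibility axioms $\gamma/\gamma_{K_i}$ yields the universal model of $\TT_{K_i, R_i}$; since $\TT_0 + \EE_i = \TT_{K_i, R_i}$, this exhibits a presentation of $\E_i$ over $(\TT_0, M_0)$ by the localic extension $\EE_i \defeq \AlgStr{K_i}(A) + \AlgStr{R_i}_{K_i}(B) + \gamma/\gamma_{K_i}$.

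Next I would identify the overlaps. By Lemma~\ref{lemma-for-gluing-Cris}~(i), $\E_i \cap \E_{i'} = ((X/S)_\Crisfp)_{o(U_{X_i \cap X_{i'}})}$, and since $X_i \cap X_{i'} = \bigcup_j D(h_{i, i'}^j)$ (Lemma~\ref{lemma-cover-by-pairs-of-standard-opens}), Lemma~\ref{lemma-for-gluing-Cris}~(iii) shows $\E_i \cap \E_{i'} \subseteq \E_i$ is presented by $\phi_{i, i'} \defeq \bigvee_{j \in J_{\{i, i'\}}} \inv(c_{h_{i, i'}^j})$, which in $\TT_0 + \EE_i$ is also equivalent to $\bigvee_j (\inv(c_{g_{i, i'}^j} \oftype A) \land \inv(c_{h_{i, i'}^j} \oftype B))$ because $g_{i, i'}^j \divides h_{i, i'}^j$ in $R_i$. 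The model $M_0|_{\E_i \cap \E_{i'}}$, further restricted along $\inv(c_{h_{i, i'}^j})$, is simultaneously the universal object over $((K_i)_{g_{i, i'}^j}, (R_i)_{h_{i, i'}^j})$ and over $((K_{i'})_{g_{i', i}^j}, (R_{i'})_{h_{i', i}^j})$, the two structures being matched by the PD-ring isomorphism $\varphi_{i, i'}^j$ and the ring isomorphism $\widetilde{\varphi}_{i, i'}^j$. Exactly as in Theorem~\ref{theorem-gluing-Zar}, writing $\varphi_{i, i'}^j(\lambda) = (g_{i', i}^j)^{-n} \lambda'$ and $\widetilde{\varphi}_{i, i'}^j(\mu) = (h_{i', i}^j)^{-n} \mu'$, this model satisfies the quotient extension $\QQ_{i, i'}$ consisting of the sequents $\inv(c_{g_{i', i}^j}) \turnstile{[]} (c_{g_{i', i}^j})^n c_\lambda = c_{\lambda'}$ (of sort $A$, for $\lambda \in K_i$) and $\inv(c_{h_{i', i}^j}) \turnstile{[]} (c_{h_{i', i}^j})^n c_\mu = c_{\mu'}$ (of sort $B$, for $\mu \in R_i$), and I would set $\QQ_{\{i, i'\}} \defeq \QQ_{i, i'} + \QQ_{i', i}$. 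Together with $\phi_{i', i}$ these sequents make every $c_\lambda \oftype A$ and $c_\mu \oftype B$ definable by a single (automatically provably functional) formula of $\TT_0 + \EE_{i'} + \phi_{i', i}$, and taking $\lambda \defeq g_{i, i'}^j$, $\mu \defeq h_{i, i'}^j$ also renders $\phi_{i, i'}$ and $\phi_{i', i}$ interderivable; the remaining axioms $\gamma/\gamma_{K_i}$ of $\EE_i$ then become provable because $\varphi_{i, i'}^j$ respects divided powers, and the verification that $\QQ_{\{i, i'\}}$ is a genuine diagonal quotient extension of $\EE_i + \phi_{i, i'}$ and $\EE_{i'} + \phi_{i', i}$ over $\TT_0$ is completed via Remark~\ref{remark-recognizing-extensions-by-definitions}, the model extension $M_0|_{\E_i \cap \E_{i'}}$ being available.

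Finally I would apply Corollary~\ref{corollary-localic-gluing} to $\TT_0$, the $\EE_i$, the $\phi_{i, i'}$ and the $\QQ_{\{i, i'\}}$, desugaring each constant symbol $c_\nu$ into a relation symbol $\widetilde{c}_\nu$ with its defining axioms; the resulting theory classifies $(X/S)_\Crisfp$. It then remains a routine syntactic comparison — identical in spirit to the end of the proof of Theorem~\ref{theorem-gluing-Zar} — to check that, after dropping $p_i$ (and $p_{i'}$) from the antecedent of every axiom in which a formula $x \in \widetilde{c}_\nu$ with $\nu \in K_i \cup R_i$ already appears, one obtains precisely the theory $\TT_{X/S}$ of the statement. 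The two points where the real work lies, and which are genuinely new compared with the Zariski case, are: first, that the induced open subtoposes depend only on the $X_i$ and not on the choices of $S_i$, which is exactly why one fixes pairs $(S_i, X_i)$ from the outset and is dealt with by Lemma~\ref{lemma-for-gluing-Cris} and the discussion preceding it; and second, that the shared PD structure together with the extra compatibility axioms $\gamma/\gamma_{K_i}$ transforms correctly under the $\QQ_{\{i, i'\}}$ — this reduces to each $\varphi_{i, i'}^j$ being an isomorphism of PD rings and to a PD saturated localization having the expected universal property. I expect this second point to be the main obstacle, since it is the only step not already covered verbatim by the Zariski argument.
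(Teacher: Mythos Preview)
Your proposal is correct and follows essentially the same approach as the paper's proof: the same base theory $\TT_0 = \AlgQuot{\ZZ}{\ZZ} + \PD + (\nil) + (\loc)$ and base model $M_0$, the same localic extensions $\EE_i$, the same formulas $\phi_{i,i'}$, and the same diagonal quotient extensions (the paper merely splits your $\QQ_{i,i'}$ into an $A$-part $\QQ_{i,i'}$ and a $B$-part $\widetilde{\QQ}_{i,i'}$), followed by Corollary~\ref{corollary-localic-gluing}. Your discussion of why the $\gamma/\gamma_{K_i}$ axioms transform correctly under $\QQ_{\{i,i'\}}$ is in fact more explicit than the paper, which simply asserts that $\QQ_{\{i,i'\}}$ is a diagonal quotient extension; the only small omission is that you do not spell out why Theorem~\ref{theorem-affine-crystalline-classifies} applies to each $(K_i, R_i)$, namely that the global hypotheses on $\mathcal{I}_S$ and $X \to S_0$ localize to give $I_{K_i}$ finitely PD-generated and $R_i$ finitely presented over $K_i/I_{K_i}$.
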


\begin{proof}
  Since $\mathcal{I}_S$ is locally finitely PD-generated,
  the PD ideals $I_{K_i} \defeq \mathcal{I}_S(S_i) \subseteq K_i$
  with PD structure $\gamma_{K_i} \defeq \gamma_S(S_i)$
  are finitely PD-generated,
  and since $X$ is locally of finite presentation over $S_0$,
  each $R_i$ is a finitely presented $K_i/I_{K_i}$-algebra.
  So $(X_i/S_i)_\Crisfp$ classifies
  \[ \TT_{K_i, R_i} =
     \AlgQuot{K_i}{R_i} + \PD_{\gamma_{K_i}} + (\nil) + (\loc)
     \rlap{.} \]

  By Lemma \ref{lemma-for-gluing-Cris},
  these toposes
  \[ \E_i \defeq ((X/S)_\Crisfp)_{o(U_{X_i})}
     \simeq (X_i/S_i)_\Crisfp \]
  form an open cover of $(X/S)_\Crisfp$,
  and the universal models are extensions of
  the restrictions of the short exact sequence
  $\mathcal{J} \hookrightarrow \mathcal{O}
  \twoheadrightarrow \mathcal{O}'$
  surrounding the structure sheaf.
  So we can use the base theory
  and base model
  \begin{align*}
    \TT_0 &\defeq \AlgQuot{\ZZ}{\ZZ} + \PD + (\nil) + (\loc)
    \rlap{,} \\
    M_0 &\defeq
    (\mathcal{J} \hookrightarrow \mathcal{O}
    \twoheadrightarrow \mathcal{O}')
    \rlap{,}
  \end{align*}
  where $\TT_0$ is of course equivalent to
  $\Ring + \Ideal + \PD + (\nil) + (\loc)$,
  but we want to have the sort $B = A/I$
  in the base theory.
  And we have presentations of the $\E_i$
  over $(\TT_0, M_0)$
  with
  \[ \EE_i \defeq \AlgStr{K_i}(A) + \AlgStr{R_i}_{K_i}(B) +
      \gamma/\gamma_{K_i}
      \rlap{.} \]
  Closed geometric formulas for the intersections
  $\E_i \cap \E_{i'} \subseteq \E_i$
  are given by
  \[ \phi_{i, i'} \;\defeq\;
     \bigvee_{j \in J_{\{i, i'\}}} \inv(c_{h_{i, i'}^j})
     \rlap{.} \]

  The diagonal quotient extensions $\QQ_{\{i, i'\}}$
  have to make both the $K_{i'}$-algebra structure on $A$
  and the $R_{i'}$-algebra structure on $B$
  definable in terms of $\TT_0 + \EE_i + \phi_{i, i'}$.
  Since the formula $\phi_{i, i'}$ also implies
  \[ \bigvee_{j \in J_{\{i, i'\}}} \inv(c_{g_{i, i'}^j})
     \rlap{,} \]
  we can proceed as in Theorem \ref{theorem-gluing-Zar}
  for both $K_i$ and $R_i$.
  That is,
  for every $\lambda \in K_i$
  and $\mu \in R_i$,
  we write
  \[ \varphi_{i, i'}^j(\lambda) = (g_{i', i}^j)^{-n} \lambda' \]
  for some $\lambda' \in K_{i'}$, $n \in \NN$,
  respectively
  \[ \widetilde{\varphi}_{i, i'}^j(\mu) = (h_{i', i}^j)^{-n} \mu' \]
  for some $\mu' \in R_{i'}$, $n \in \NN$.
  Then $\mathcal{O}_{((X_i \cap X_{i'})/(S_i \cap S_{i'}))_\Crisfp}$
  satisfies
  \begin{align*}
    \QQ_{i, i'} \quad&\defeq\quad
    \Bigl\{\quad
      \inv(c_{g_{i', i}^j})
      \turnstile{[]}
      (c_{g_{i', i}^j})^n c_\lambda = c_{\lambda'}
    \quad\mid\quad
      j \in J_{\{i, i'\}},
      \lambda \in K_i
    \quad\Bigr\}
    \rlap{,} \\
    \widetilde{\QQ}_{i, i'} \quad&\defeq\quad
    \Bigl\{\quad
      \inv(c_{h_{i', i}^j})
      \turnstile{[]}
      (c_{h_{i', i}^j})^n c_\mu = c_{\mu'}
    \quad\mid\quad
      j \in J_{\{i, i'\}},
      \mu \in R_i
    \quad\Bigr\}
    \rlap{,}
  \end{align*}
  and
  \[ \QQ_{\{i, i'\}} \defeq \QQ_{i, i'} + \QQ_{i', i} +
     \widetilde{\QQ}_{i, i'} + \widetilde{\QQ}_{i', i} \]
  is a diagonal quotient extension
  of $\EE_i + \phi_{i, i'}$ and $\EE_{i'} + \phi_{i', i}$
  over $\TT_0$.

  Applying Corollary \ref{corollary-localic-gluing}
  to these data
  and simplifying the axioms slightly
  yields the theory $\TT_{X/S}$ in the statement.
\end{proof}

%
%

%
%

\newpage
\bibliography{dissertation.bib}

\end{document}